\newtheorem{theorem}{Theorem}[section]
\newtheorem{corollary}[theorem]{Corollary}
\newtheorem{lemma}[theorem]{Lemma}
\newtheorem{proposition}[theorem]{Proposition}
\theoremstyle{definition}
\newtheorem{definition}{Definition}
\newtheorem{remark}{Remark}
\newtheorem*{acknow}{Acknowledgments}
\numberwithin{equation}{section}
\newcommand{\R}{\mathbb{R}}
\newcommand{\C}{\mathbb{C}}
\newcommand{\D}{\mathbb{D}}
\newcommand{\St}{\mathbb{S}}
\newcommand{\F}{\mathbb{F}}
\newcommand{\E}{\mathbb{E}}
\newcommand{\Z}{\mathbb{Z}}
\newcommand{\f}{\mathfrak{f}}
\newcommand{\di}{\operatorname{diag}}
\newcommand{\tr}{\operatorname{tr}}
\begin{document}

\title[Minimal Lagrangian surfaces in $\mathbb{C}P^2$ : Part II ]
{Minimal Lagrangian surfaces in $\mathbb{C}P^2$ via the loop group method   Part II: The general case}

\author{Josef F.~Dorfmeister}
\address{Fakult\"{a}t F\"{u}r Mathematik, TU-M\"{u}nchen, Boltzmann Str. 3,
D-85747, Garching, Germany}
\email{dorfm@ma.tum.de}
\author{Hui Ma}
\address{Department of Mathematical Sciences, Tsinghua University,
Beijing 100084, P.R. China} \email{ma-h@mail.tsinghua.edu.cn}
\thanks{2010 {\it Mathematics Subject Classification}. Primary 
 53C42; Secondary 53D12}
\thanks{{\it Keywords.} Lagrangian submanifold, minimal surface, Loop group method}

\begin{abstract} 
 We extend the techniques introduced in \cite{DoMaB1} for contractible Riemann surfaces to construct minimal Lagrangian immersions from arbitrary Riemann surfaces into $\mathbb{C}P^2$ via  the loop group method.
Based on the potentials of translationally equivariant minimal Lagrangian surfaces, we introduce perturbed equivariant  minimal Lagrangian surfaces in $\C P^2$ and 
construct  a  class of minimal Lagrangian cylinders. Furthermore, we show that these minimal Lagrangian cylinders  approximate  Delaunay cylinders with respect to some weighted Wiener norm of the twisted loop group $\Lambda SU(3)_{\sigma}$. 
\end{abstract}

\maketitle
 
%%%%%%%%%%%%%%%%%%%%%
\tableofcontents

\section{Introduction}

This paper is a continuation of Part I (\cite{DoMaB1}), where we explored minimal Lagrangian surfaces defined on a contractible Riemann surface using the loop group method \cite{DPW}.
Notably, any such a minimal Lagrangian surface admits a horizontal lift to $S^5$. 
Consequently,  the investigation of  the minimal Lagrangian surfaces presented in \cite{DoMaB1}  is equivalent to  studying  minimal Legendrian surfaces in $S^5$.

However, minimal Lagrangian immersions in $\C P^2$ do not admit  a global horizontal (i.e. Legendrian) lift to $S^5$ in general. Instead, a minimal Lagrangian surface $f: M \rightarrow \mathbb{C}P^2$ either has a horizontal lift or can be obtained as a quotient of a minimal Lagrangian surface possessing an order three  symmetry (see Theorem \ref{covering to horizontal mLi}).  In this paper we extend the techniques introduced in \cite{DoMaB1}  to minimal Lagrangian immersions of arbitrary Riemann surfaces into $\mathbb{C}P^2$.

Among all minimal Lagrangian surfaces, the most beautiful class is the one with symmetries.
For a minimal Lagrangian immersion $f: M\rightarrow \mathbb{C} P^2$,  a symmetry means a pair $(\gamma, \mathcal{R}) \in
(\mathrm{Aut}(M), \mathrm{Iso}_0(\mathbb{C} P^2))$ such that 
$f(\gamma \cdot z) = \mathcal{R}  f(z)$ holds for all $z \in M$.  For each full minimal Lagrangian immersion $f$ with symmetry, its natural lift  $\tilde{f}$ to the universal cover of $M$ admits a symmetry $(\tilde{\gamma}, \mathcal{R})\in (\mathrm{Aut}(\mathbb{D}), \mathrm{Iso}_0(\mathbb{C} P^2))$. When $\tilde{\gamma}$ normalizes $\pi_1(M)$,  it descends a symmetry on $M$.  Moreover, under the symmetry of $f$, the horizontal lift, the extended frame, and the potential transform accordingly. Thus, starting from an arbitrary Riemann surface  different from $S^2$ and a  potential satisfying appropriate transformation formulas defined from $\pi_1(M)$, one can construct a minimal Lagrangian immersion into $\C P^2$. 

However, the transformation formulas are not easy to handle. Fortunately, one can prove the existence of invariant holomorphic (respectively, meromorphic) potentials in both cases where the Riemann surface is non-compact or compact.
This implies that one can actually carry out the loop group construction of all minimal Lagrangian immersions by starting from some potential where the gauged matrix is $I$. 
 Thus only the closing conditions for the generators of the fundamental group need to be discussed, which is, of course, in general a very difficult task.

In particular, an immersion admitting  a 1-parameter group $(\gamma_t, R_t)$ of symmetries is called an equivariant immersion. By classical complex analysis, we know that there are two types of equivariant surfaces:  translationally equivariant surfaces and rotationally equivariant surfaces. It turns out that any rotationally equivariant minimal Lagrangian surfaces are either standard examples or  can be obtained from some periodic translationally equivariant minimal Lagrangian surfaces. Furthermore, the rotationally equivariant immersions of type $(R0)$ (refer to Section \ref{subsec:6.1}) correspond in a one-to-one relation to the translationally equivariant minimal Lagrangian $2 \pi$-periodic immersions.
And translationally equivariant minimal Lagrangian surfaces in $\C P^2$ are the simplest non-trivial examples. They have been investigated very well and can be constructed by the loop group method (\cite{DoMaNL, DoMaEX}).

It is noteworthy that one of the goals of the loop group method is to provide a tool for the explicit construction of surfaces, here minimal Lagrangian surfaces.  This approach helps  to find potentials  that lead to  minimal Lagrangian surfaces endowed with certain  desired additional properties. 
We first consider minimal Lagrangian immersions from Riemann surfaces with abelian fundamental group 
(refer to \cite{Farkas-Kra} or Section \ref{sec:abelian pi_1}).
Besides equivariant minimal Lagrangian surfaces (\cite{DoMaNL, DoMaEX}),  examples 
for the following cases are presented:
\begin{enumerate}
\item all minimal Lagrangian surfaces of contractible Riemann surfaces with abelian symmetry group.
By the results in  Section \ref{subsec:construction} ,  these can be constructed from periodic or doubly periodic potentials.
\item all minimal Lagrangian surfaces defined on  non-compact Riemann surfaces with abelian fundamental group $\Z$.
By the results in Sections \ref{subsec:non-compact-C} and \ref{subsec:non-compact-H} (See Theorem \ref{Thm 5.10}),  these can be constructed from periodic potentials, defined on a contractible Riemann surface, for which the monodromy representation is trivial for some $\lambda \in S^1.$

\item  all minimal Lagrangian tori. By the results  of Section \ref{sec:compact} these can be constructed from doubly periodic potentials, defined on $\C$, for which the monodromy representation is trivial for some $\lambda \in S^1$.
Alternatively, there exists a different  construction method, called finite type method, by using algebraic geometry described in  \cite{MM} and \cite{McIn}.
\end{enumerate}

 Inspired by the work \cite{DW08} on the CMC case, in this paper we perturb the potentials of rotationally equivariant immersions of type $(R0)$ and 
define perturbed Delaunay minimal Lagrangian surfaces. For the sake of simplicity, we further impose more restrictions to introduce $*$-perturbed Delaunay potentials and corresponding surfaces. For a given $*$-perturbed Delaunay potential $\eta$ with a single singular point, by adapting the method of \cite{Sch-Sch}, we solve the complex differential equations $dH=H\eta$, which yields a  minimal Lagrangian cylinder eventually. With respect to the topology induced from certain weighted Wiener norm, we show that $*$-perturbed minimal Lagrangian cylinder in $\C P^2$  approximate minimal Lagrangian Delaunay cylinders. 

 The paper is organized as follows: Section \ref{sec:2} provides a review of basic notions and results on minimal Lagrangian surfaces in $\mathbb{C} P^2$ and the loop group method.
 Section \ref{sec:3} examines  the liftablity of a minimal Lagrangian immersion.
 In Section \ref{sec:4}  symmetries for minimal Lagrangian immersions from arbitrary Riemann surfaces are discussed, including and their effects on horizontal lifts and extended frames.
 Additionally, the construction of minimal Lagrangian immersions from a general Riemann surface $M$ different from $S^2$ is outlined. 
 Section \ref{sec:inv frame} demonstrates  the existence of an  invariant  holomorphic potential (resp. meromorphic potential) for any minimal Lagrangian map to $\mathbb{C} P^2$ from any non-compact (resp. compact) Riemann surface which is invariant under $\pi_1(M)$.  
 This shows that any  minimal Lagrangian
immersion  from some Riemann surface $M$ into $\mathbb{C} P^2$
can be obtained by the loop group method from some 
invariant holomorphic potential or meromorphic potential defined on the universal cover of $M$. 
 The final section covers the characterization of equivariant minimal Lagrangian surfaces in $\mathbb{C}P^2$, the construction of $*$-perturbed equivariant minimal Lagrangian surfaces in $\mathbb{C} P^2$, and the description of their asymptotic behavior. The uniqueness of $*$-perturbed equivariant minimal Lagrangian surfaces with respect to the potentials is also discussed in this section.The Appendix explains the norms used for the loop groups, including their properties, which are utilized in the asymptotic estimates of Section  \ref{sec:pertubed}.

%%%%%%%%%%%%%%%%%%%%%
\section{Recalling notation and some basic results}\label{sec:2}

In this section we recall notation and results from \cite{DoMaB1}.
All statements there were made under the assumption that  the
domain of a minimal Lagrangian immersion is a contractible open subset of $\C$. 
For details we refer to \cite{DoMaB1}.

\subsection{Minimal Lagrangian surfaces in \texorpdfstring{$\mathbb{C}P^2$}{} }
\label{subsec:miL}
%%%%%%%%%%%%%%%%%%%%%%%%%%%%%

Suppose that $f:M \rightarrow \C P^2$ is a minimal Lagrangian immersion from a 
Riemann surface
in the complex projective plane endowed with the Fubini-Study metric of constant holomorphic sectional curvature $4$.
The induced metric on $M$ generates a conformal structure with
respect to which the metric is $g=2e^{u} dzd{\bar z}$, and where $z=x+iy$ is a local
conformal coordinate on $M$ and $u$ is a real-valued function defined on $M$ locally.

It is useful to point out that any  minimal Lagrangian immersion $f:M\rightarrow {\mathbb C}P^2$ 
has a natural lift  $\tilde{f}: \tilde{M} \rightarrow {\mathbb C}P^2$ as a  minimal Lagrangian immersion over the universal covering $\tilde{M}$ of $M$.
 
For our approach in this paper we will need certain lifts to  $S^5 =\{Z\in \mathbb{C}^3 | Z\cdot \bar{Z}=1\}$, 
where $Z\cdot \overline{W}=\sum_{k=1}^{3} z_{k} \overline{w_{k}}$
denotes the Hermitian inner product for any $Z=(z_1,z_2,z_3)$ and $W=(w_1,w_2,w_3)\in \mathbb{C}^3$. Write $Z\cdot W=\langle Z, W\rangle + \sqrt{-1}\langle Z, JW\rangle$, where $\langle \,,\, \rangle$ is the Riemannian metric of $\C^3$ and $J$ is the standard complex structure of $\C^3$.

\begin{definition}
a) Let $M$ be an arbitrary Riemann surface and  $\f: M \rightarrow S^5$ an immersion.
Then $\f$ is called  
\emph{Legendrian}  if it satisfies 
$ d\f\cdot  \bar{\f}=0$.

b) Let $M$ be an arbitrary Riemann surface and  $f: M \rightarrow  {\mathbb C}P^2$  an immersion.
Then a map $\f: M \rightarrow S^5$ is called 
a horizontal lift of $f$ if and only if 
$f = \pi \circ \f$ and 
$ d\f\cdot  \bar{\f}=0$.
\end{definition}

\begin{remark} 
Recall that a Legendrian submanifold $L$ in a contact manifold $(M^{2n+1}, \alpha)$ is an $n$-dimensional submanifold of $M$ on which the contact form $\alpha$ vanishes.  The restriction of the one form $\sigma(Z)=\langle dZ, JZ\rangle$ on $\mathbb{C}^3$  to $S^5$ gives the standard contact form $\alpha$ on $S^5$. Notice
$dZ\cdot \bar{Z}=\langle dZ, Z\rangle+\sqrt{-1} \langle dZ, JZ\rangle.$ 
Hence $\alpha(Z)=-\sqrt{-1} dZ\cdot \bar{Z}$ for $Z\in S^5$.
Thus $\f: M \rightarrow S^5$ is Legendrian if $d\f\cdot \bar{\f}=0$.
\end{remark}

The following result leads to the local existence and uniqueness of a horizontal lift of a minimal Lagrangian surface in $\C P^2$. 

 \begin{theorem} [\cite{DoMaB1}, Theorem 2.5]
Let $M $ be a contractible Riemann surface and  $f: M \rightarrow {\mathbb C}P^2$ a Lagrangian immersion.
Then  $f$  has a horizontal lift ${\f}: M \rightarrow S^5 $, i.e. ${\f}$ satisfies the  equations
\begin{equation}\label{horizontal}
\f_z \cdot \overline{\f} = \f_{\bar z} \cdot \overline {\f} =0.
\end{equation}
In particular, ${\f}: M \rightarrow S^5 $ is Legendrian.
Moreover, $\f$ is uniquely determined by this property up to a constant factor $\delta \in S^1$. 
\end{theorem}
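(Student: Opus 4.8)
The plan is to build the lift in two stages: first obtain \emph{some} smooth lift using only the topology of $M$, then correct its phase using the Lagrangian hypothesis. Since $M$ is contractible and $\pi\colon S^5\to\mathbb{C}P^2$ is a principal $S^1$-bundle, the pullback bundle $f^{*}S^5\to M$ is trivial, so there is a smooth map $F\colon M\to S^5$ with $\pi\circ F=f$. Every other lift is of the form $\mathfrak{f}=\lambda F$ with $\lambda\colon M\to S^1$, and a direct computation using $F\cdot\overline F\equiv 1$ gives
\begin{equation*}
d\mathfrak{f}\cdot\overline{\mathfrak{f}}=\overline{\lambda}\,d\lambda+\alpha,\qquad \alpha:=dF\cdot\overline F .
\end{equation*}
Differentiating $F\cdot\overline F\equiv1$ shows $\alpha+\overline\alpha=0$, so $\alpha$ is purely imaginary. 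Seeking $\lambda$ in the form $\lambda=e^{i\theta}$ with $\theta\colon M\to\mathbb{R}$, one has $\overline\lambda\,d\lambda=i\,d\theta$, so the horizontality equation $d\mathfrak{f}\cdot\overline{\mathfrak{f}}=0$ becomes the single \emph{real} first-order equation $d\theta=i\alpha$, where $i\alpha$ is a real $1$-form.

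Second, I would solve this equation. On the simply connected surface $M$ the equation $d\theta=i\alpha$ admits a global real solution $\theta$ if and only if the real $1$-form $i\alpha$ is closed, equivalently $d\alpha=0$. This is exactly where the Lagrangian hypothesis must enter, and it is the crux of the argument. The point is that $-i\alpha=F^{*}\alpha_0$ is the pullback of the standard contact (connection) form $\alpha_0(Z)=-\sqrt{-1}\,dZ\cdot\overline Z$ of the Hopf bundle, described in the Remark above, whose curvature is the Fubini--Study K\"ahler form: $d\alpha_0=c\,\pi^{*}\omega_{FS}$ for a real constant $c$. Hence
\begin{equation*}
-i\,d\alpha=d\bigl(F^{*}\alpha_0\bigr)=F^{*}\pi^{*}(c\,\omega_{FS})=c\,f^{*}\omega_{FS}=0,
\end{equation*}
the final equality being precisely the Lagrangian condition $f^{*}\omega_{FS}=0$, so that $d\alpha=0$. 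Equivalently, and more concretely, one may compute in the conformal coordinate $z$ that $d\alpha=\bigl(|F_{\bar z}|^{2}-|F_{z}|^{2}\bigr)\,dz\wedge d\bar z$ and observe that the Lagrangian condition forces $|F_{z}|=|F_{\bar z}|$; this is the only step I expect to require genuine care, and it is where conformality and the Lagrangian property are used. Either way $d\alpha=0$, so $\theta$ exists, $\lambda=e^{i\theta}$ is determined up to an additive constant in $\theta$, and $\mathfrak{f}=\lambda F$ is the desired horizontal lift, which by the Remark is automatically Legendrian.

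For uniqueness I would take two horizontal lifts $\mathfrak{f},\mathfrak{f}'$, write $\mathfrak{f}'=\mu\mathfrak{f}$ with $\mu\colon M\to S^1$, and apply the same identity to obtain
\begin{equation*}
0=d\mathfrak{f}'\cdot\overline{\mathfrak{f}'}=\overline{\mu}\,d\mu+d\mathfrak{f}\cdot\overline{\mathfrak{f}}=\overline{\mu}\,d\mu .
\end{equation*}
Thus $d\mu=0$, and since $M$ is connected, $\mu\equiv\delta$ is a constant in $S^1$, which is the asserted freedom. The main obstacle is the closedness $d\alpha=0$: the existence of a topological lift and the integration of $d\theta=i\alpha$ are soft and hold for \emph{any} immersion into $\mathbb{C}P^2$, so it is the translation of ``Lagrangian'' into $d\alpha=0$, via the identification of the curvature of the Hopf connection with $\omega_{FS}$, that carries the real content of the theorem.
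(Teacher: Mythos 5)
Your proof is correct and follows essentially the same route as the proof of Theorem 2.5 in Part I (\cite{DoMaB1}), which the present paper only quotes: take any smooth lift $F$ over the contractible domain, note that $\alpha=dF\cdot\overline{F}$ is purely imaginary with $d\alpha$ a nonzero real multiple of $f^{*}\omega_{FS}$, and use the Lagrangian condition to integrate the phase correction $\theta$. The uniqueness argument via $\overline{\mu}\,d\mu=0$ is also the standard one, so there is nothing to add.
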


\begin{corollary} [\cite{DoMaB1}, Proposition 2.6]\label{Cor:globle-lift}
Let $f:M\rightarrow {\mathbb C}P^2$ be a minimal Lagrangian immersion of an oriented
surface and $\tilde{f}:\tilde{M} \rightarrow {\mathbb C}P^2$  its natural lift to the universal cover $\tilde{M}$ of $M$. 
If  $M \neq S^2$, then $\tilde{M} $ is contractible and 
$\tilde{f}:\tilde{M} \rightarrow {\mathbb C}P^2$  can be lifted to a horizontal map
$\tilde{\f}:\tilde{M} \rightarrow S^5$ and this map is uniquely determined up to some constant factor $\delta \in S^1$.
\end{corollary}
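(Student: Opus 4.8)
The plan is to reduce the statement to the contractible case already settled by Theorem~2.5, so that the only genuinely new content is a topological assertion about the universal cover. Two ingredients are needed: first, that $\tilde{M}$ is contractible, and second, that the natural lift $\tilde{f}$ is itself a minimal Lagrangian immersion. The latter is already recorded in the text above, and it holds because the covering projection $\pi\colon \tilde{M}\to M$ is a local biholomorphism with $\tilde{f}=f\circ\pi$: the conformality, the Lagrangian condition, and minimality are all local and are expressed through the conformal structure and the induced metric, hence pull back verbatim under $\pi$.

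The substantive step is establishing contractibility of $\tilde{M}$. Here I would invoke the uniformization theorem, which asserts that the universal cover of any Riemann surface is biholomorphic to exactly one of $S^2$, $\C$, or $\D$. Since $\C$ and $\D$ are both contractible, it suffices to exclude $\tilde{M}\cong S^2$. This is precisely where the hypotheses that $M$ is oriented and $M\neq S^2$ enter. If $\tilde{M}\cong S^2$, then $M$ is the quotient of $S^2$ by the free and properly discontinuous action of the deck group $G=\pi_1(M)$; as $S^2$ is compact, $G$ is finite, and a standard Euler characteristic count gives $2=\chi(S^2)=|G|\,\chi(M)=|G|(2-2g)$, where $g$ denotes the genus of $M$. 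For an oriented closed surface this forces $|G|=1$ and $g=0$, i.e.\ $M\cong S^2$, contradicting the hypothesis. Hence $\tilde{M}\cong\C$ or $\tilde{M}\cong\D$, and in particular $\tilde{M}$ is contractible.

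Finally I would apply Theorem~2.5 directly to the contractible Riemann surface $\tilde{M}$ and the Lagrangian immersion $\tilde{f}$. It produces a horizontal (Legendrian) lift $\tilde{\f}\colon\tilde{M}\to S^5$ satisfying $\tilde{\f}_z\cdot\overline{\tilde{\f}}=\tilde{\f}_{\bar z}\cdot\overline{\tilde{\f}}=0$, and it guarantees that $\tilde{\f}$ is unique up to a constant factor $\delta\in S^1$, which is exactly the claim of the corollary. The main obstacle is the topological step of excluding $\tilde{M}\cong S^2$; once that is in hand, the existence and uniqueness statements are a verbatim citation of the contractible case, so the corollary amounts to transporting the local existence result to the universal cover.
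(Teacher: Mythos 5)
Your proposal is correct and follows the same route the paper intends: the statement is quoted from Part I (\cite{DoMaB1}, Proposition 2.6), and its proof is exactly the reduction you describe, namely uniformization plus the observation that a nontrivial deck group cannot act freely on $S^2$ (your Euler characteristic count, using orientability, does this cleanly), followed by a verbatim application of the contractible-case theorem to $\tilde{f}$ on $\tilde{M}$. No gaps.
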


%%%%%%%%%%%%%%%%%%%%
 For general Riemann surfaces $M$  and general conformal immersions 
 $f: M \rightarrow \C P^2$ such a (global) lift  $\f:M \rightarrow S^5$ may 
 not exist.
 Actually, the well known Clifford torus does not admit a global horizontal lift \cite{DoMaB1}, example in section 2.4.
 
Since any minimal Lagrangian sphere in $\mathbb{C}P^2$  is totally geodesic, 
and thus congruent to a piece of $\R P^2 \subset \C P^2,$ as in 
 \cite{DoMaB1}  
we exclude $M =S^2$ throughout this  paper.  
Thus in this paper we will first lift any minimal Lagrangian surface  $f: M \rightarrow\C P^2$
naturally to a minimal surface $\tilde{f}: \D \rightarrow \C P^2$, where $\D$ denotes the (contractible) 
universal cover of $M$, and then consider the global  horizontal lift $\tilde{\f} : \D \rightarrow S^5$ of $\tilde{f}$.

%%%%%%%%%%%%%%%%%%%%%%%%%%%%%%
\subsection{Frames for horizontal lifts}\label{subsec:frames}
%%%%%%%%%%%%%%%%%%%%%%%%%%%%%%
In this subsection we assume that $\D$ is a contractible Riemann surface   (i.e. the unit disk $\mathcal{D}$ or the complex plane $\mathbb{C}$), $f: \D \rightarrow \C P^2$ a minimal Lagrangian immersion and $\f : \D  \rightarrow S^5$ a horizontal lift of $f$.

The fact that the induced metric $g$ is conformal is equivalent to
\begin{equation}\label{fconf1} 
\begin{split}
&{\f}_z\cdot \overline{\f_z}=\f_{\bar z}\cdot \overline{\f_{\bar z}}=e^{u},\\
& \f_z\cdot \overline{\f_{\bar z}}=0.
\end{split}
\end{equation}
Thus 
\begin{equation} \label{coordframe}
\mathcal{F}=(e^{-\frac{u}{2}}\f_z, e^{-\frac{u}{2}}\f_{\bar z}, \f),
\end{equation}
called the {\it coordinate frame }of $f$,  defines a Hermitian orthonormal moving frame on the surface $\D$.

It follows from \eqref{horizontal}, \eqref{fconf1} and the minimality of
 $f$ that $\mathcal{F}$ satisfies the frame equations (see e.g. \cite{MM})
\begin{equation}\label{eq:frame1}
\mathcal{F}_{z}=\mathcal{F} {\mathcal U}, \quad \mathcal{F}_{\bar z}=\mathcal{F} {\mathcal V},
\end{equation}
where
\begin{equation}\label{eq:UV1}
{\mathcal U}=\left(\begin{array}{ccc}
                   \frac{u_z}{2} & 0 & e^{\frac{u}{2}} \\
      e^{-u}\psi  &-\frac{u_z}{2}  & 0 \\
       0 & -e^{\frac{u}{2}}  &0 \\
     \end{array}
   \right),
   \quad
{\mathcal V}=\left(
     \begin{array}{ccc}
      -\frac{u_{\bar z}}{2}  &  - e^{-u}\bar\psi  &0 \\
       0& \frac{u_{\bar z}}{2} &e^{\frac{u}{2}} \\
       -e^{\frac{u}{2}}  &0 &0 \\
     \end{array}
   \right),
\end{equation}
with
\begin{equation}\label{eq:phipsi}
\psi=\f_{zz}\cdot\overline{\f_{\bar z}}.
\end{equation}

Using (\ref{fconf1}) one can easily check that the cubic differential $\Psi=\psi dz^3$ is actually independent of the choice of the lift $\f$ and the complex coordinate $z$ of $\D$ and thus is globally defined on the Riemann surface $\D$. The differential $\Psi$ is called the \emph{Hopf differential} of $f$.

The compatibility condition of  the equations \eqref{eq:frame1} is
${\mathcal U}_{\bar z}-{\mathcal V}_z =[{\mathcal U},{\mathcal V}]$,  
and using \eqref{eq:UV1} this turns out to be equivalent to
 \begin{align}
u_{z\bar z}+e^u-e^{-2u}|\psi|^2&=0,\label{eq:mLsurfaces}\\
\psi_{\bar z}&=0.\label{eq:Codazzi}
\end{align}

%%%%%%%%%%%%%%%

Since  $\f$   is uniquely determined up to a constant factor $\delta \in S^1$,
also $\mathcal{F}$ is only defined up to this  constant factor. 

We can actually  have 

\begin{proposition} (\cite{DoMaB1}, Proposition 2.7) \label{prop:constDet}
Let $\D$ be a contractible Riemann surface  and  $f: \D \rightarrow {\mathbb C}P^2$ a minimal Lagrangian immersion. Then for the corresponding  frame $\mathcal{F}$ we can assume without loss of generality
$\det \mathcal{F} = -1$.
Under this assumption $\mathcal{F}$ is uniquely determined up to some factor $\delta$ satisfying $\delta^3 = 1$.
\end{proposition}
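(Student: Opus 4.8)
The plan is to exploit the fact that the Maurer--Cartan form of $\mathcal{F}$ is trace-free, which forces $\det\mathcal{F}$ to be a unimodular constant, and then to absorb that constant using the scaling freedom in the horizontal lift.

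First I would record that $\mathcal{F}$ is unitary. By \eqref{fconf1} together with the horizontality \eqref{horizontal}, the three columns $e^{-u/2}\f_z$, $e^{-u/2}\f_{\bar z}$, $\f$ of the frame \eqref{coordframe} are orthonormal with respect to the Hermitian inner product $Z\cdot\overline{W}$; hence $\mathcal{F}(z)\in U(3)$ for every $z\in\D$, and in particular $\det\mathcal{F}$ is a map $\D\to S^1$. Next I would show this map is constant: applying Jacobi's formula to the logarithmic derivative of a determinant and using the frame equations \eqref{eq:frame1} gives
\[
(\det\mathcal{F})_z=(\det\mathcal{F})\,\tr(\mathcal{F}^{-1}\mathcal{F}_z)=(\det\mathcal{F})\,\tr\mathcal{U},\qquad (\det\mathcal{F})_{\bar z}=(\det\mathcal{F})\,\tr\mathcal{V}.
\]
A direct glance at the explicit matrices in \eqref{eq:UV1} shows $\tr\mathcal{U}=\tr\mathcal{V}=0$, since in each the two nonzero diagonal entries $\pm\tfrac{u_z}{2}$ (resp.\ $\mp\tfrac{u_{\bar z}}{2}$) cancel. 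Therefore both partial derivatives of $\det\mathcal{F}$ vanish, and as $\D$ is connected we conclude $\det\mathcal{F}\equiv c$ for some fixed $c\in S^1$.

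Then I would invoke the scaling freedom. Since the horizontal lift $\f$ is unique up to a constant factor $\delta\in S^1$, replacing $\f$ by $\delta\f$ leaves $u$ unchanged (because $|\delta|=1$ in \eqref{fconf1}) and multiplies each column of $\mathcal{F}$ by $\delta$, so that $\mathcal{F}\mapsto\delta\mathcal{F}$ and hence $\det\mathcal{F}\mapsto\delta^3 c$. As cube roots exist in $S^1$, I may choose $\delta$ with $\delta^3=-c^{-1}$, which realizes $\det\mathcal{F}=-1$ and establishes that the normalization can be imposed. For the uniqueness assertion, if both $\mathcal{F}$ and $\delta\mathcal{F}$ have determinant $-1$, then $\delta^3(-1)=-1$, i.e.\ $\delta^3=1$; thus the residual ambiguity is exactly a cube root of unity.

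This argument is essentially routine, and I do not expect a genuine obstacle. The only points that need care are confirming that the rescaling $\f\mapsto\delta\f$ does not alter $u$ (so that it acts on $\mathcal{F}$ simply as multiplication by $\delta$), and noting that a smooth function on a connected domain whose differential vanishes is constant. I would also remark that the specific value $-1$, rather than $+1$, is merely a convention: because $\delta\mapsto\delta^3$ is surjective onto $S^1$, any prescribed element of $S^1$ could be reached by the same choice of $\delta$.
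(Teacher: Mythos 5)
Your proof is correct and follows essentially the same route the authors use (the paper defers to Part I here, but the identical argument appears in its Theorem \ref{Thm: normalized coordinate frame}): show $\det\mathcal{F}$ is a unimodular constant via the trace-free frame equations, absorb it using the $S^1$-ambiguity of the horizontal lift, and read off the residual cube-root-of-unity freedom. The only cosmetic difference is that the paper normalizes to $+1$ in the later theorem and to $-1$ here, which, as you note, is immaterial since $\delta\mapsto\delta^3$ is onto $S^1$.
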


From here on we will always assume that $ \det \mathcal{F} = -1$ holds.
%%%%%%%%%%%%%%%%%%
\subsection{The loop parameter and extended frames}
\label{subsec:2.3}

From here on we will adopt loop groups, following a similar approach to those used for many surface classes.
The notation for minimal Lagrangian surfaces in $\C P^2$  has been introduced and discussed in \cite{DoMaB1}.
We provide a list of basic notation in the Appendix and briefly mention some relevant facts in the text below. However, for any detailed information and additional results, we refer the reader to \cite{DoMaB1}.

Introducing the \emph{spectral parameter}  $\lambda$ as usual (See for example, Subsection 2.5, \cite{DoMaB1}) one  obtains that 
one can also assume without loss of generality  $\det \mathcal{F}(z,\bar{z}, \nu) = -1$.  

It turns out to be convenient to consider in place of the  frames $\mathcal{F}(z,\bar{z}, \nu)$ the gauged frames 
\begin{equation} \label{F}
\mathbb{F}(\lambda)=\mathcal{F}(\nu)\left(
                                   \begin{array}{ccc}
                                     -i\lambda & 0 & 0 \\
                                     0 &  -i\lambda^{-1} & 0 \\
                                     0 & 0 &  1 \\
                                   \end{array}
                                 \right),
\end{equation}
where $i\lambda^3 \nu=1$. Note that we have  $\det \mathcal{F} = -1$ and  
$\det \F = 1$, 
hence  $\F \in \Lambda SU(3)$; for more details on this notation see the Appendix. 

 We will always assume $\mathbb{F}(z_0,\bar{z}_0, \lambda) = \mathrm{I}$ for some fixed base point $z_0$, unless the opposite is stated explicitly. With this normalization, the frame $\mathbb{F}$ is uniquely determined by $f$. Note that this normalization implies the normalizations 
 $\f (z_0, \bar{z}_0,\lambda) = e_3$ and $\mathbb{F}(z_0, \bar{z}_0,\lambda) = \mathrm{I}$ which determines these quantities uniquely.

For the frame  $\mathbb{F}$ we obtain the equations
\begin{equation}\label{eq:mathbbF}
\begin{split}
\mathbb{F}^{-1}\mathbb{F}_z&=
\frac{1}{\lambda}\left(
   \begin{array}{ccc}
     0 & 0 & i e^{\frac{u}{2}} \\
  -i \psi e^{-u}   & 0 & 0 \\
     0 &  i e^{\frac{u}{2}} & 0 \\
   \end{array}
 \right)+\left(
   \begin{array}{ccc}
   \frac{u_z}{2}  & &  \\
      & -\frac{u_z}{2} &  \\
      & & 0 \\
   \end{array}
 \right)\\
 &:=\lambda^{-1}U_{-1}+U_0,\\
\mathbb{F}^{-1}\mathbb{F}_{\bar{z}} &=\lambda \left(
   \begin{array}{ccc}
     0 &  -i\bar{\psi} e^{-u} & 0 \\
     0& 0   & i e^{\frac{u}{2}} \\
     i e^{\frac{u}{2}} &0 & 0 \\
   \end{array}
 \right)+\left(
   \begin{array}{ccc}
     -\frac{u_{\bar z}}{2} & &  \\
      & \frac{u_{\bar z}}{2}  & \\
     & & 0 \\
   \end{array}
 \right)\\
  &:=\lambda V_{1}+V_0.
\end{split}
\end{equation}

\begin{proposition}\label{prop:frame} \cite{MM, DoKoMa}
 Retaining the assumptions and the conventions above we consider a contractible Riemann surface $M $ and let $\mathbb{F}(z, \bar{z},\lambda),  \lambda\in S^1, z \in M, $ be a solution to the system \eqref{eq:mathbbF}.
Then $[\mathbb{F}(z, \bar{z}, \lambda)e_3]$ gives a minimal Lagrangian surface defined on $M$ with values in $\mathbb{C}P^2$ and 
with the metric $g=2e^{u}dzd\bar{z}$ and the Hopf differential $\Psi^{\nu}=\nu \psi dz^3$.

Conversely, suppose $f^{\nu}:M\rightarrow \mathbb{C}P^2$ is a conformal parametrization of
a minimal Lagrangian surface in $\mathbb{C}P^2$ with the metric $g=2e^{u}dzd\bar{z}$ and Hopf differential
$\Psi^{\nu}=\nu\psi dz^3$.  Then  on $M$ there exists  a  frame $\mathbb{F}: U \rightarrow SU(3)$
satisfying \eqref{eq:mathbbF}. This frame is unique if we choose a base point $z_0 \in M$ and normalize  $\F(z_0, \bar{z}_0,\lambda) = I$.
\end{proposition}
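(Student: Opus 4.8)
The plan is to prove the two implications separately. The converse implication is essentially the construction recalled in Section~\ref{sec:2} run backwards, so most of its content is already in place; the forward implication is a direct verification that $\f := \mathbb{F}e_3$ is a horizontal lift of a minimal Lagrangian immersion, followed by an identification of the metric and the Hopf differential. I expect the only genuinely geometric point to be the minimality claim in the forward direction.

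For the forward direction I would fix $\lambda \in S^1$ and use that $\mathbb{F}(z,\bar z,\lambda)\in SU(3)$, so its columns $\mathbb{F}e_1,\mathbb{F}e_2,\mathbb{F}e_3$ form a Hermitian orthonormal basis of $\C^3$ and $\f=\mathbb{F}e_3\in S^5$. Applying \eqref{eq:mathbbF} to $e_3$ and using the explicit shape of $U_{-1},U_0,V_1,V_0$ gives $(\lambda^{-1}U_{-1}+U_0)e_3=i\lambda^{-1}e^{u/2}e_1$ and $(\lambda V_1+V_0)e_3=i\lambda e^{u/2}e_2$, hence
\[
\f_z=i\lambda^{-1}e^{u/2}\,\mathbb{F}e_1,\qquad \f_{\bar z}=i\lambda e^{u/2}\,\mathbb{F}e_2.
\]
Orthonormality of the columns immediately yields $\f_z\cdot\bar\f=\f_{\bar z}\cdot\bar\f=0$, so $\f$ satisfies \eqref{horizontal} and is Legendrian; consequently $f=[\mathbb{F}e_3]=\pi\circ\f$ is a Lagrangian immersion with horizontal lift $\f$. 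The same orthonormality together with $|\lambda|=1$ gives $\f_z\cdot\overline{\f_z}=\f_{\bar z}\cdot\overline{\f_{\bar z}}=e^u$ and $\f_z\cdot\overline{\f_{\bar z}}=0$, which is precisely \eqref{fconf1}, so the induced metric is $g=2e^{u}\,dz\,d\bar z$.

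To pin down the Hopf differential and minimality I would differentiate once more. Using \eqref{eq:mathbbF} one obtains $\f_{zz}=i\lambda^{-1}e^{u/2}\big(u_z\,\mathbb{F}e_1-i\lambda^{-1}\psi e^{-u}\,\mathbb{F}e_2\big)$, and pairing against $\overline{\f_{\bar z}}$ while using orthonormality, $|\lambda|=1$, and the relation $i\lambda^{3}\nu=1$ gives $\f_{zz}\cdot\overline{\f_{\bar z}}=-i\lambda^{-3}\psi=\nu\psi$, so by \eqref{eq:phipsi} the Hopf differential is $\Psi^{\nu}=\nu\psi\,dz^{3}$. For minimality the cleanest route is to undo the gauge \eqref{F}: setting $\mathcal{F}(\nu)=\mathbb{F}(\lambda)\,\di(-i\lambda,-i\lambda^{-1},1)^{-1}$ one checks $\mathcal{F}e_3=\f$, $\mathcal{F}e_1=e^{-u/2}\f_z$ and $\mathcal{F}e_2=e^{-u/2}\f_{\bar z}$, so $\mathcal{F}$ is exactly the coordinate frame \eqref{coordframe} and satisfies \eqref{eq:frame1} with $\mathcal{U},\mathcal{V}$ of the form \eqref{eq:UV1}. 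Since a solution of \eqref{eq:mathbbF} exists only when the zero-curvature (integrability) condition holds, $u$ and $\psi$ automatically satisfy \eqref{eq:mLsurfaces}--\eqref{eq:Codazzi}; as \eqref{eq:mLsurfaces} is the structure equation characterizing minimal Lagrangian surfaces in this gauge, $f$ is minimal.

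For the converse I would simply reverse the construction of Section~\ref{sec:2}: on the contractible $M$ the minimal Lagrangian $f^{\nu}$ admits a horizontal lift $\f$ (\cite{DoMaB1}, Theorem~2.5, and Corollary~\ref{Cor:globle-lift}), whose coordinate frame \eqref{coordframe}, normalized by $\det\mathcal{F}=-1$ via Proposition~\ref{prop:constDet}, satisfies \eqref{eq:frame1}--\eqref{eq:UV1}; introducing the spectral parameter and gauging through \eqref{F} produces $\mathbb{F}$ whose logarithmic derivatives are computed to be exactly \eqref{eq:mathbbF}. Uniqueness then follows from the uniqueness theorem for linear systems of ODEs once $\mathbb{F}(z_0,\bar z_0,\lambda)=I$ is prescribed. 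The step I expect to be the main obstacle is the minimality assertion in the forward direction: orthonormality alone delivers only the Lagrangian and conformal properties, and one must genuinely invoke the geometric input of Section~\ref{sec:2}, namely that the special vanishing pattern of \eqref{eq:UV1} together with the holomorphicity \eqref{eq:Codazzi} encodes the vanishing of the mean curvature, so that the passage from a minimal Lagrangian surface to \eqref{eq:UV1} is a reversible correspondence.
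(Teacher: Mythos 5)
The paper itself contains no proof of this proposition: it is recalled from \cite{MM, DoKoMa} in Section \ref{sec:2}, so there is no in-paper argument to compare against, and your proposal must be judged on its own. Your verification is the standard one and is essentially correct: $\f_z = i\lambda^{-1}e^{u/2}\,\mathbb{F}e_1$ and $\f_{\bar z}=i\lambda e^{u/2}\,\mathbb{F}e_2$ together with unitarity of $\mathbb{F}$ give horizontality and \eqref{fconf1}, and the second-derivative computation correctly yields $\f_{zz}\cdot\overline{\f_{\bar z}}=-i\lambda^{-3}\psi=\nu\psi$. The only place where the write-up is looser than it should be is the minimality step: the Gauss equation \eqref{eq:mLsurfaces} is a compatibility condition on the pair $(u,\psi)$ and does not by itself certify that the particular map $[\mathbb{F}e_3]$ is minimal. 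The clean way to close this is to differentiate once more in the conjugate variable and read off from \eqref{eq:mathbbF} that $\f_{z\bar z}=-e^{u}\f$ (the second column of $V_1$ applied to $e_1$ produces only an $e_3$-component, and the diagonal terms cancel), i.e.\ $\f$ is a conformal harmonic, hence minimal, Legendrian map into $S^5$, which projects to a minimal Lagrangian surface; this replaces the appeal to ``\eqref{eq:mLsurfaces} characterizes minimality.'' With that one-line computation inserted the forward direction is complete, and the converse together with the uniqueness under the normalization $\mathbb{F}(z_0,\bar z_0,\lambda)=I$ is, as you say, exactly the construction of Section \ref{sec:2} run backwards plus ODE uniqueness.
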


We have already pointed out that one can interpret the gauged frames $\F(z, \bar{z}, \lambda)$ above as elements of the loop group $\Lambda SU(3)$. It actually turns out that they belong to a smaller, twisted, loop group.

%%%%%%%%%%%%%%%%%%%%%%%%
\subsection{The loop group characterization for minimal Lagrangian surfaces}
%%%%%%%%%%%%%%%%%%%%%%%%

 Let $\sigma$ denote the  automorphism of $G^{\mathbb{C}}=SL(3,\mathbb{C})$ of order $6$ defined by
\begin{equation} \label{sigmaandtaugroup}
\sigma: g\mapsto P (g^t)^{-1} P^{-1}, \quad \text{ where } 
P=\left(
  \begin{array}{ccc}
    0 & \epsilon^2 & 0 \\
    \epsilon^4 & 0 & 0 \\
    0 &0 & 1\\
  \end{array}
\right), \text{ with } \epsilon=e^{\pi i/3}.
\end{equation}
Let $\tau$ denote the anti-holomorphic involution of $\mathfrak{g}^{\mathbb{C}}=SL(3,\mathbb{C})$ which defines  the real form $G=SU(3)$, 
given by $$\tau(g):=(\bar{g}^t)^{-1}.$$
Then on the Lie algebra level the corresponding automorphism $\sigma$ of order $6$ and 
the anti-holomorphic automorphism $\tau$
of $sl(3,\mathbb{C})$ are
\begin{equation} \label{sigma and tau Lie}
\sigma: \xi \mapsto -P\xi^t P^{-1}, \quad \tau: \xi \mapsto -\bar{\xi}^t.
\end{equation}

By $\mathfrak{g}_l$ we denote the $\epsilon^l$-eigenspace of $\sigma$ in  $\mathfrak{g}^{\mathbb C}$. 
Explicitly these eigenspaces are given as follows
\begin{equation*}
\begin{split}
\mathfrak{g}_0&=\left\{
                    \begin{pmatrix}
                    a &  &  \\
                     & -a &  \\
                     &  & 0 \\
                \end{pmatrix}
                \mid a\in \C
\right\},
\quad
\mathfrak{g}_1=\left\{\begin{pmatrix}
                    0 & b & 0 \\
                     0 & 0 & a \\
                     a & 0 & 0 \\
                  \end{pmatrix}\mid a,b\in\C
\right\},
\\
\mathfrak{g}_2&=\left\{
                  \begin{pmatrix}
                    0 & 0& a  \\
                     0& 0 & 0 \\
                     0& -a & 0 \\
                  \end{pmatrix} \mid a\in \C
              \right\},
\quad
\mathfrak{g}_3=\left\{
                  \begin{pmatrix}
                    a &  &  \\
                      & a &  \\
                      &  & -2a \\
                  \end{pmatrix} \mid a\in \C
              \right\},\\
\mathfrak{g}_4&=\left\{
                  \begin{pmatrix}
                    0 & 0 & 0  \\
                     0& 0 & a \\
                     -a & 0 & 0 \\
                  \end{pmatrix} \mid a\in \C
            \right\},
\quad
\mathfrak{g}_5=\left\{
                  \begin{pmatrix}
                    0 & 0 & a \\
                     b & 0 & 0 \\
                     0 & a & 0 \\
                  \end{pmatrix} \mid a, b\in \C 
                \right\}.
\end{split}
\end{equation*}
Remark that the automorphism $\sigma$ defines the $6$-symmetric space 
$SU(3)/U(1)$
and any minimal Lagrangian surface in $\mathbb{C}P^2$ frames a primitive harmonic map $\mathbb{F}: M\rightarrow SU(3)/U(1)$.

In view of (\ref{eq:mathbbF}) and the eigenspaces stated just above one is led to consider the twisting automorphism
\begin{equation}\label{sigma:twist}
(\hat{\sigma} (g))(\lambda) = \sigma ( g( \epsilon^{-1} \lambda))
\end{equation}
of $\Lambda SL(3,\C)$.
Then the $\sigma$-twisted loop groups are defined as fixed point sets of this twisting automorphism (See the Appendix). It is easy to verify

\begin{proposition}
The frames $\mathbb{F} (z, \bar{z}, \lambda)$ satisfying 
$\mathbb{F} (z_0, \bar{z}_0, \lambda) = I$ are elements of the twisted loop group
$\Lambda SU(3)_\sigma$. 
\end{proposition}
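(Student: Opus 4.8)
The plan is to leverage the uniqueness part of Proposition~\ref{prop:frame}. Since $\mathbb{F}$ is the unique solution of the linear system \eqref{eq:mathbbF} subject to $\mathbb{F}(z_0,\bar z_0,\lambda)=I$, it suffices to produce a \emph{second} solution of the same system with the same initial value and observe that it must coincide with $\mathbb{F}$. The natural candidate is the image of $\mathbb{F}$ under the twisting automorphism \eqref{sigma:twist}: set $\hat{\mathbb{F}}:=\hat\sigma(\mathbb{F})$, so that $\hat{\mathbb{F}}(z,\bar z,\lambda)=\sigma\big(\mathbb{F}(z,\bar z,\epsilon^{-1}\lambda)\big)$. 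Proving $\hat{\mathbb{F}}=\mathbb{F}$ is exactly the $\hat\sigma$-invariance that characterizes membership in the twisted loop group, and since we already know $\mathbb{F}\in\Lambda SU(3)$, this will complete the proof.

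The initial condition is immediate: evaluating at $z_0$ gives $\hat{\mathbb{F}}(z_0,\bar z_0,\lambda)=\sigma(I)=P(I^t)^{-1}P^{-1}=I$, matching $\mathbb{F}$. The substantive computation is that of the Maurer--Cartan form of $\hat{\mathbb{F}}$. Writing $g(\lambda)=\mathbb{F}(\epsilon^{-1}\lambda)$ so that $\hat{\mathbb{F}}=P(g^t)^{-1}P^{-1}$, and using $\partial\big((g^t)^{-1}\big)=-(g^t)^{-1}(\partial g)^t(g^t)^{-1}$ together with $(g_z)^t(g^t)^{-1}=(g^{-1}g_z)^t$, a direct calculation yields
\[
\hat{\mathbb{F}}^{-1}\hat{\mathbb{F}}_z=-P\,(g^{-1}g_z)^t\,P^{-1}=\sigma\big((\mathbb{F}^{-1}\mathbb{F}_z)(\epsilon^{-1}\lambda)\big),
\]
where in the last step I invoke the Lie-algebra form of $\sigma$ from \eqref{sigma and tau Lie}. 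Because $\sigma$ is holomorphic and no complex conjugation intervenes, the identical identity holds with $z$ replaced by $\bar z$.

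The decisive step is then to combine the $\lambda$-rescaling $\lambda\mapsto\epsilon^{-1}\lambda$ with the eigenspace behaviour of $\sigma$. Reading off the explicit eigenspaces one checks that $U_{-1}\in\mathfrak{g}_5$, $V_1\in\mathfrak{g}_1$, and $U_0,V_0\in\mathfrak{g}_0$, so $\sigma$ scales these by $\epsilon^{5},\epsilon,1,1$ respectively. Substituting into the holomorphic part gives
\[
\sigma\big((\mathbb{F}^{-1}\mathbb{F}_z)(\epsilon^{-1}\lambda)\big)=\sigma\big(\epsilon\lambda^{-1}U_{-1}+U_0\big)=\epsilon^{6}\lambda^{-1}U_{-1}+U_0,
\]
and likewise the antiholomorphic part produces $\epsilon^{-1}\!\cdot\epsilon\,\lambda V_1+V_0$. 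Since $\epsilon=e^{\pi i/3}$ satisfies $\epsilon^{6}=1$, both forms are left unchanged, i.e. $\hat{\mathbb{F}}^{-1}\hat{\mathbb{F}}_z=\mathbb{F}^{-1}\mathbb{F}_z$ and $\hat{\mathbb{F}}^{-1}\hat{\mathbb{F}}_{\bar z}=\mathbb{F}^{-1}\mathbb{F}_{\bar z}$. By uniqueness, $\hat{\mathbb{F}}=\mathbb{F}$, hence $\mathbb{F}\in\Lambda SU(3)_\sigma$.

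I expect the main obstacle to be the eigenspace bookkeeping intertwined with the $\lambda$-rescaling, rather than any analytic difficulty: the crux is recognizing that the gauge \eqref{F} (with $i\lambda^3\nu=1$) was engineered precisely so that the exponent contributed by the substitution $\lambda\mapsto\epsilon^{-1}\lambda$ and the exponent contributed by $\sigma$ on each off-diagonal block add up to a multiple of $6$, so that the two $\epsilon$-factors cancel. Once the correct eigenspace assignment of $U_{-1}$ and $V_1$ is pinned down, everything else is the routine uniqueness argument.
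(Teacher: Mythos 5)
Your proof is correct and is precisely the standard verification that the paper omits (it only says ``It is easy to verify''): the eigenspace assignments $U_{-1}\in\mathfrak{g}_5$, $V_1\in\mathfrak{g}_1$, $U_0,V_0\in\mathfrak{g}_0$ are right, the Maurer--Cartan computation for $\hat\sigma(\mathbb{F})$ is correct, and the cancellation of the $\epsilon$-factors together with uniqueness of the normalized solution of \eqref{eq:mathbbF} gives $\hat\sigma(\mathbb{F})=\mathbb{F}$, hence $\mathbb{F}\in\Lambda SU(3)_\sigma$. No gaps.
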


Using loop group terminology, we can state (refer to \cite{MM}):

\begin{proposition} \label{char-mLi}
Let $f: \D \rightarrow \mathbb{C}P^2$ be a conformal parametrization 
of a contractible Riemann surface and $\f$ its horizontal lift.
Then the following statements are equivalent:
\begin{enumerate}
\item $f$ is minimal Lagrangian.
\item There exists a frame $\mathbb{F}: \D \rightarrow SU(3)$ which is primitive harmonic relative to $\sigma$. 
\item  There exists an extend frame $\F: \D\rightarrow \Lambda SU(3)_{\sigma}$ such that $\mathbb{F}^{-1}d\mathbb{F} =(\lambda^{-1} U_{-1}+U_0)dz+(\lambda V_1+V_0)d\bar{z} 
\subset \Lambda su(3)_{\sigma}$ is a one-parameter family of flat connections for all $\lambda \in \C^*$.
\end{enumerate}
\end{proposition}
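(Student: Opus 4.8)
The plan is to prove the cycle $(1)\Rightarrow(3)\Rightarrow(2)\Rightarrow(1)$, exploiting the fact that, after the gauge \eqref{F}, almost all of the computation is already recorded in \eqref{eq:mathbbF}. The conceptual starting point is to read off the $\sigma$-grading of the Maurer--Cartan form. Comparing \eqref{eq:mathbbF} with the explicit eigenspaces, I would first verify that $U_{-1}\in\mathfrak{g}_5$ (the $\epsilon^{-1}$-eigenspace), $V_1\in\mathfrak{g}_1$, and $U_0,V_0\in\mathfrak{g}_0$, together with the reality relations $V_1=-\overline{U_{-1}}^{\,t}=\tau(U_{-1})$ and $V_0=\tau(U_0)$. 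By the definition of the twisting \eqref{sigma:twist}, the requirement that the coefficient of $\lambda^k$ lie in $\mathfrak{g}_{k\bmod 6}$ is precisely the condition that $\alpha_\lambda:=(\lambda^{-1}U_{-1}+U_0)\,dz+(\lambda V_1+V_0)\,d\bar z$ be a loop in $\Lambda su(3)_\sigma$; thus $\alpha_\lambda$ has exactly the shape of the Maurer--Cartan form of a primitive lift of a map into the $6$-symmetric space $SU(3)/U(1)$.

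For $(1)\Rightarrow(3)$ I would begin with the coordinate frame \eqref{coordframe}, normalized by $\det\mathcal F=-1$ and $\mathbb F(z_0,\bar z_0,\lambda)=I$, and impose the zero-curvature condition $\partial_{\bar z}(\lambda^{-1}U_{-1}+U_0)-\partial_z(\lambda V_1+V_0)=[\lambda^{-1}U_{-1}+U_0,\ \lambda V_1+V_0]$ for the inserted family. Expanding both sides as a Laurent polynomial in $\lambda$ and separating powers yields three equations: the coefficient of $\lambda^{-1}$ gives, after a short computation using $U_{-1}\in\mathfrak{g}_5$, exactly the Codazzi equation \eqref{eq:Codazzi} $\psi_{\bar z}=0$; the coefficient of $\lambda^{+1}$ gives its conjugate $\overline{\psi_{\bar z}}=0$; and the coefficient of $\lambda^0$, where $[U_0,V_0]=0$ and $[U_{-1},V_1]$ is diagonal, collapses to the metric equation \eqref{eq:mLsurfaces} $u_{z\bar z}+e^u-e^{-2u}|\psi|^2=0$. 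Since these are precisely the compatibility conditions characterizing a minimal Lagrangian $f$, the loop $\alpha_\lambda$ is flat for every $\lambda\in\C^*$, and integrating it with the base-point normalization over the contractible $\D$ produces the extended frame $\F:\D\to\Lambda SU(3)_\sigma$.

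The step $(3)\Rightarrow(2)$ is a matter of unwinding the definition of primitivity: evaluating the extended frame at a fixed $\lambda$ (say $\lambda=1$) returns a frame $\mathbb F:\D\to SU(3)$, and the grading established above shows that the $(1,0)$-part of its Maurer--Cartan form takes values in $\mathfrak{g}_0\oplus\mathfrak{g}_5$; by the Burstall--Pedit description of primitive harmonic maps into $k$-symmetric spaces, flatness of the full loop $\alpha_\lambda$ is equivalent to primitive harmonicity, relative to $\sigma$, of the projected map $\D\to SU(3)/U(1)$. Finally, for $(2)\Rightarrow(1)$ I would apply Proposition \ref{prop:frame}: primitive harmonicity returns the Maurer--Cartan form to the graded form \eqref{eq:mathbbF}, so that its entries define functions $u$ and $\psi$ satisfying \eqref{eq:mLsurfaces}--\eqref{eq:Codazzi}, and $[\mathbb F e_3]=f$ is therefore the corresponding minimal Lagrangian immersion with horizontal lift $\f=\mathbb F e_3$.

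I expect the only genuinely substantive point to be the comparison in $(3)\Leftrightarrow(2)$ between the concrete ``flat loop of connections'' and the intrinsic notion of primitive harmonicity: one must recognize the $\mathfrak{g}_0$-component as the pullback of the canonical $U(1)$-connection and the $\mathfrak{g}_5$-component as encoding the $(1,0)$-differential of the symmetric-space map, so that flatness for all $\lambda$ is the harmonic map equation and nothing more. By contrast, the $(1)\Leftrightarrow(3)$ direction is essentially the bookkeeping carried out above, and the reality conditions under $\tau$, the twisting \eqref{sigma:twist}, and the global integrability on the contractible $\D$ are routine consequences of the earlier propositions.
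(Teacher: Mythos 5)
Your argument is correct, and the computations you sketch do check out: $U_{-1}\in\mathfrak{g}_5$, $V_1=\tau(U_{-1})\in\mathfrak{g}_1$, $U_0,V_0\in\mathfrak{g}_0$, and the $\lambda^{-1},\lambda^0,\lambda^{+1}$ coefficients of the zero-curvature equation reproduce exactly \eqref{eq:Codazzi}, \eqref{eq:mLsurfaces} and the conjugate of \eqref{eq:Codazzi}. The paper itself offers no proof of Proposition \ref{char-mLi} (it is quoted from \cite{MM}), and your cycle $(1)\Rightarrow(3)\Rightarrow(2)\Rightarrow(1)$ is precisely the standard loop-group argument that reference supplies, so there is nothing to flag beyond noting that in $(2)\Rightarrow(1)$ one should read ``frame'' as a frame adapted to $f$ (so that $\f=\mathbb{F}e_3$), which is the intended meaning and the reading you in fact use.
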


%%%%%%%%%%%%%%%%%%%%%%%%%%%%%%%
\subsection{The basic loop group method}
\label{subsection:loop method}
%%%%%%%%%%%%%%%%%%%%%%%%%%%%%%%

Let us start from some primitive harmonic map $f: \D \rightarrow G/K$ from a contractible Riemann surface to a $k$-symmetric space $G/K$ with respect to an order $k$ $(k\geq 2)$ automorphism of $G$ and let's consider an  extended frame  $\F:  \D \rightarrow \Lambda G_{\sigma}$ of $f$. 
Unless stated otherwise we will always assume $\F(0,0,\lambda) = e$.

While the frame $\F$ satisfies a non-linear integrability condition, the objects we construct next will trivially satisfy the integrability condition.
\vspace{2mm}

{\bf Construction 1: Holomorphic potentials}
\vspace{2mm}

For any extended frame $\F : \D\rightarrow \Lambda G_{\sigma}$ with $\F(0, 0)=e$, of some primitive harmonic map $f$ one can show that there exists a global matrix function
$h: \D\rightarrow \Lambda^{+}G_{\sigma}^{\mathbb C}$ solving the $\bar{\partial}$-problem
\begin{equation}
 h^{-1}\bar\partial h =-(\alpha_{\mathfrak k}^{\prime\prime}+\lambda \alpha_{\mathfrak m}^{\prime \prime} ), \quad\quad
 h(0)=e
\end{equation}
 over   $\D$, so that $C=\F h$ gives a \emph{holomorphic 
extended frame}. The Maurer-Cartan form of $C$,  
$\mu=C^{-1}\partial C$  is a $(1,0)$-form defined on $\D$ and takes values in 
\begin{eqnarray*}
\Lambda_{-1,\infty}&:=&\{\xi\in \Lambda \mathfrak{g}_{\sigma}^{\mathbb C} | \xi \text{ extends holomorphically to } \\
&& \quad\quad 0<|\lambda|<1 
\text{ with a simple pole at } 0\}\\
&=&\{\xi=\sum_{l\geq -1} \lambda^l \xi_l\in \Lambda \mathfrak{g}_{\sigma}^{\mathbb C}\}. 
\end{eqnarray*}
This differential 1-form on $\D$ is called a \emph{holomorphic potential} for $f$.

Conversely, starting from a holomorphic $(1,0)$-form $\eta=\sum_{l\geq -1} \lambda^l \eta_l$, one can first solve the ODE $dC=C\eta$, $C(0, \lambda) = e$ over  $\D$, where $C\in \Lambda G^{\mathbb C}_{\sigma}$. Performing an Iwasawa decomposition of $C$: $C={\mathbb F}V_{+}$,
where $\mathbb{F}=\mathbb{F}(z, \bar{z},  \lambda) \in \Lambda G_{\sigma}$
and $V_{+}=V_0+\lambda V_1+\lambda^2 V_2+\cdots \in \Lambda^+ G^{\mathbb C}_{\sigma}$, it turns out that $\F$ is the extended frame of some primitive harmonic map $f=\pi\circ \F_{\lambda=1}:\D\rightarrow G/K.$

\begin{remark}
Below we will frequently assume that the $k$-symmetric space $G/K$ is compact, since this is the case of main interest to this paper and since for non-compact $k$-symmetric spaces the Iwasawa decomposition is not global which would require additional remarks/cases at many places.
\end{remark}

Altogether we obtain

\begin{theorem}[\cite{DPW}]\label{Th:DPW-hol}
 Let $G/K$ be a compact $k$-symmetric space.
Let $f: \D\rightarrow G/K$ be a primitive harmonic map with $f(0,0)=eK$ and 
$\F: \D \rightarrow \Lambda G_{\sigma}$  an extended frame of $f$ satisfying $\F(0, 0,\lambda) = e$. 
Then there exists a matrix function 
$V_+: \D \rightarrow  \Lambda^{+}G_{\sigma}^{\mathbb{C}}$ such that 
$C = \F V_+$ is holomorphic in $z \in \D$  and $C(0,\lambda) = I$ and $V_+(0,\lambda) = I$ hold.
Then $\eta = C^{-1} dC \in \Lambda_{-1,\infty}$ is a holomorphic $(1,0)$-form on $\D$, called a \emph{holomorphic potential} for $f$.

 Conversely, given a  holomorphic $(1,0)$-form $\eta \in \Lambda_{-1,\infty}$ on $\D$ 
we obtain a map  $C:\D \rightarrow \Lambda G_{\sigma}^{\mathbb C}$, 
satisfying $dC = C \eta$ and $C(0,\lambda)=e$.
Performing an Iwasawa decomposition of $C$ we obtain an extended frame $\F: \D \rightarrow \Lambda G_{\sigma}$ of some  primitive harmonic map $f=\pi\circ \F_{\lambda=1}$ and  $\F(0, 0, \lambda) = I$ holds. 
\end{theorem}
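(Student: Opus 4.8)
The plan is to treat the two directions separately: for the direct statement I would gauge away the $(0,1)$-part of the Maurer--Cartan form of $\F$ by means of the $\bar\partial$-problem of Construction~1, while for the converse I would rely on the global Iwasawa splitting of $\Lambda G^{\mathbb C}_{\sigma}$ (available because $G/K$ is compact) together with a reality argument.

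For the direct statement, recall that the extended frame satisfies $\F^{-1}d\F=(\lambda^{-1}\alpha'_{\mathfrak m}+\alpha'_{\mathfrak k})+(\alpha''_{\mathfrak k}+\lambda\alpha''_{\mathfrak m})$, where the first bracket is the $(1,0)$-part and the second the $(0,1)$-part. First I would produce $V_+:\D\to\Lambda^+G^{\mathbb C}_{\sigma}$ with $V_+(0,\lambda)=I$ solving the $\bar\partial$-problem of Construction~1, so that $C:=\F V_+$ satisfies $\bar\partial C=0$, i.e. $C$ is holomorphic in $z$; since $\F(0,0,\lambda)=I$ this also gives $C(0,\lambda)=I$. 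The existence of such a $V_+$ is the main obstacle: it amounts to solving a linear $\bar\partial$-equation with values in the complex Banach loop group $\Lambda^+G^{\mathbb C}_{\sigma}$, which I would settle using the contractibility of $\D$ and the solvability of $\bar\partial$ over contractible domains (this is exactly the nontrivial analytic content behind Construction~1).

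Once $C$ is in hand I would set $\eta=C^{-1}dC$. Because $C$ is holomorphic, $\eta=C^{-1}\partial C$ is a $(1,0)$-form whose coefficient $C^{-1}C_z$ is holomorphic in $z$ (differentiate in $\bar z$ and use $C_{\bar z}=0$ and $C_{z\bar z}=0$). To identify the $\lambda$-range, substitute $C=\F V_+$ to get
\[
\eta=V_+^{-1}(\lambda^{-1}\alpha'_{\mathfrak m}+\alpha'_{\mathfrak k})V_++V_+^{-1}\partial V_+ .
\]
Since $V_+,V_+^{-1}\in\Lambda^+$ contribute only nonnegative powers of $\lambda$ and the conjugated term carries at worst a simple pole at $\lambda=0$, we conclude $\eta=\sum_{l\geq-1}\lambda^l\eta_l\in\Lambda_{-1,\infty}$.

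For the converse, given $\eta\in\Lambda_{-1,\infty}$ I would first observe that, $\eta$ being a holomorphic $(1,0)$-form, one has $d\eta=0$ and $\eta\wedge\eta=0$, so the equation $dC=C\eta$ is integrable; contractibility of $\D$ then yields a unique global solution $C:\D\to\Lambda G^{\mathbb C}_{\sigma}$ with $C(0,\lambda)=e$, automatically holomorphic. Next I would apply the (global, by compactness of $G/K$) Iwasawa decomposition $C=\F V_+$ with $\F\in\Lambda G_{\sigma}$ and $V_+\in\Lambda^+G^{\mathbb C}_{\sigma}$, normalized so that $\F(0,0,\lambda)=I$. It remains to check that $\F$ is an extended frame, i.e. that $\F^{-1}d\F$ has the shape \eqref{eq:mathbbF}. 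Writing $\alpha'=\F^{-1}\F_z$ and $\alpha''=\F^{-1}\F_{\bar z}$: from $C=\F V_+$ with $\eta$ of lowest order $\lambda^{-1}$ and $V_+\in\Lambda^+$ one gets $\alpha'\in\Lambda_{-1,\infty}$ (powers $\geq-1$), while from $C_{\bar z}=0$ one computes $\alpha''=-(V_+)_{\bar z}V_+^{-1}\in\Lambda^+$ (powers $\geq0$). Finally the unitarity of $\F$ on $S^1$ gives $\alpha''=-(\alpha')^*$, an operation sending the $\lambda^k$-coefficient of $\alpha'$ to minus the conjugate transpose of the $\lambda^{-k}$-coefficient; combined with the two previous statements this forces $\alpha'$ to have only the powers $\lambda^{-1},\lambda^0$ and $\alpha''$ only $\lambda^0,\lambda^1$. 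The $\sigma$-twisting then places the extreme terms in $\mathfrak m^{\mathbb C}$, so $\F^{-1}d\F$ has exactly the form \eqref{eq:mathbbF} and $\F$ frames the primitive harmonic map $f=\pi\circ\F_{\lambda=1}$, with $\F(0,0,\lambda)=I$ by construction. I expect the two genuinely substantive points to be the $\bar\partial$-solvability in the direct statement and this final reality-and-holomorphicity collapse of the Maurer--Cartan form in the converse.
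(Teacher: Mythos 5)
Your proposal is correct and follows essentially the same route the paper takes: the forward direction via the $\bar\partial$-problem of Construction~1 producing a holomorphic frame $C=\F V_+$ with potential in $\Lambda_{-1,\infty}$, and the converse via global Iwasawa splitting plus the reality/holomorphicity argument that collapses $\F^{-1}d\F$ to the powers $\lambda^{-1},\lambda^0,\lambda^1$. The paper itself only sketches this (it cites \cite{DPW} for the details), and your write-up supplies exactly the standard missing steps.
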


 \begin{remark}  \label{general DPW}
 The procedure above is sometimes used in a generalized form, i.e. there are no conditions imposed with regard to initial conditions.  For instance, one can start from a potential $\eta$ defined on a (say contractible) domain $\D$ and integrate $dH = H \eta$ over $\D$, without prescribing initial conditions. 
 Subsequently, an Iwasawa decomposition $H = F V_+$ yields a frame  which induces a harmonic map 
 $f: \D \rightarrow G/K$. From this, one then derives a surface of the required type as before. However, the main goal of the loop group procedure is to construct new surfaces and to determine a class of potentials which produce such surfaces.
 In such a case it is usually helpful to fix initial conditions. In particular, fixing initial conditions allows  us to demonstrate a one-to-one correspondence between harmonic maps and certain potentials. 
 \end{remark}

{\bf Construction 2: Normalized potentials}
\vspace{2mm}

If one does not require that $C = \F V_+$ is necessarily holomorphic in $z$, but only meromorphic,  then by using the  Birkhoff decomposition  
$\F_- = \F V_+$, \cite{DPW} shows that any  harmonic map 
$f: \D\rightarrow G/K$ can  be obtained from a meromorphic potential of the form 
$ \F_-^{-1} d\F_- =\lambda^{-1}\mu_{-1}$, with $\mu_{-1}$ meromorphic on $\D$.
Note, in this step $\F_-$ is automatically meromorphic on $\D$.

The converse procedure follows the ``reverse pattern'' outlined above.
We collect the results for the meromorphic case by

\begin{theorem}[\cite{DPW}] Let $G/K$ be a compact $k$-symmetric space.
Let $f: \D\rightarrow G/K$ be a primitive harmonic map with $f(0, 0)=eK$ and $\F: \D \rightarrow \Lambda G_{\sigma}$  an extended frame of $f$ satisfying $\F(0, 0, \lambda) = e$. 
Then there exists a discrete subset $S\subset \D\backslash \{0\}$ such that for any point $z\in \D\backslash S$, the Birkhoff decomposition 
$\F(z,\cdot)=\F_{-}(z,\cdot) \F_{+}(z,\cdot)$ exists, with 
$\F_{-}(z,\cdot)\in \Lambda^{-}_{*}G_{\sigma}^{\mathbb{C}}$ and 
$\F_{+}(z,\cdot)\in \Lambda^{+}G_{\sigma}^{\mathbb{C}}$, and 
$\eta= \F_{-}(z,\lambda)^{-1}d\F_{-}(z,\lambda)$ is a $\mathfrak{m}^{\mathbb C}$-valued meromorphic $(1,0)$-form with poles in $S$ and which only contains the power $\lambda^{-1}$.

 Conversely, given a  $\mathfrak{m}^{\mathbb C}$-valued meromorphic $(1,0)$-form $\eta$ on $\D$ containing only the power $\lambda^{-1}$, for which the solution to $\F_{-}(z,\lambda)^{-1}d\F_{-}(z,\lambda)= \eta$ with $\F_{-}(0,\cdot)=e$ is meromorphic, we obtain a map  $\F_{-}:\D\backslash S \rightarrow \Lambda_{*}^{-} G_{\sigma}^{\mathbb C}$, where the discrete subset $S\subset \D\backslash\{0\}$ consists of the poles of $\eta$.
Performing an Iwasawa decomposition of $\F_-$ we obtain an extended frame $\F: \D\backslash S\rightarrow \Lambda G_{\sigma}$ of some  primitive harmonic map $f$  which satisfies $\F(0,\lambda) = e$. 

The two constructions explained in this theorem are inverse to each other. 
\end{theorem}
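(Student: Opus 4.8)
The plan is to run both halves of the correspondence using the two frames at hand: the unitary extended frame $\F$ and the holomorphic extended frame $C=\F V_{+}$ furnished by Theorem~\ref{Th:DPW-hol}. The unitary frame will control the \emph{algebraic} shape of the output (that only the power $\lambda^{-1}$ survives and that its coefficient is $\mathfrak{m}^{\mathbb{C}}$-valued), while the holomorphic frame will control the \emph{analytic} issues (discreteness of $S$ and meromorphy of $\F_{-}$).

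First I would produce the decomposition and the set $S$ from $C$. On the big cell, Birkhoff-factorize $C=C_{-}C_{+}$ with $C_{-}\in\Lambda^{-}_{*}G_{\sigma}^{\mathbb{C}}$ and $C_{+}\in\Lambda^{+}G_{\sigma}^{\mathbb{C}}$. Since $C(0,\cdot)=I$ lies in the big cell, whose complement is characterized by the vanishing of a holomorphic (generalized-determinant) function on $\Lambda G_{\sigma}^{\mathbb{C}}$, the failure set $S=\{z:C(z,\cdot)\notin\text{big cell}\}$ is the zero set of a holomorphic function of $z$ not vanishing at $0$, hence a discrete subset of $\D\setminus\{0\}$; on $\D\setminus S$ the factor $C_{-}$ depends holomorphically on $z$ and extends meromorphically across $S$. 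Writing $\F=C V_{+}^{-1}=C_{-}\bigl(C_{+}V_{+}^{-1}\bigr)$ with $C_{+}V_{+}^{-1}\in\Lambda^{+}G_{\sigma}^{\mathbb{C}}$, uniqueness of the Birkhoff factorization identifies this as the splitting of $\F$: thus $\F_{-}=C_{-}$ (holomorphic, then meromorphic, in $z$, with $\F_{-}(0,\cdot)=e$) and $\F_{+}=C_{+}V_{+}^{-1}$.

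Next I would pin down the shape of $\eta=\F_{-}^{-1}d\F_{-}$. Because $\F_{-}\in\Lambda^{-}_{*}$ is normalized to $I$ at $\lambda=\infty$, $\eta$ contains only strictly negative powers of $\lambda$. On the other hand $\F_{-}=\F\F_{+}^{-1}$ gives
\[
\eta=\F_{+}\,\alpha\,\F_{+}^{-1}-(d\F_{+})\,\F_{+}^{-1},\qquad
\alpha:=\F^{-1}d\F=\lambda^{-1}U_{-1}\,dz+\bigl(U_0\,dz+V_0\,d\bar z\bigr)+\lambda\,V_1\,d\bar z .
\]
Conjugation by $\F_{+}\in\Lambda^{+}$ cannot lower the minimal power $-1$ of $\alpha$, while $(d\F_{+})\F_{+}^{-1}$ carries only powers $\geq 0$; intersecting with ``only negative powers'' forces $\eta=\lambda^{-1}\mu_{-1}$ with $\mu_{-1}=B_0\,U_{-1}\,B_0^{-1}\,dz$, where $B_0=\F_{+}(z,0)\in K^{\mathbb{C}}$. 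Since $U_{-1}\in\mathfrak{g}_{-1}=\mathfrak{g}_{5}\subset\mathfrak{m}^{\mathbb{C}}$ and $\operatorname{Ad}(K^{\mathbb{C}})$ preserves each eigenspace $\mathfrak{g}_{l}$, the coefficient $\mu_{-1}$ is $\mathfrak{m}^{\mathbb{C}}$-valued, and it is a $(1,0)$-form because $U_{-1}$ occurs only with $dz$ (equivalently $\F_{-}$ is holomorphic, so $\F_{-}^{-1}\bar\partial\F_{-}=0$). This yields the direct statement.

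For the converse I would follow the reverse pattern. Given $\eta=\lambda^{-1}\mu_{-1}$ of the stated type with meromorphic solution $\F_{-}$ (this meromorphy is part of the hypothesis), the global Iwasawa decomposition available for the compact $G/K$ yields $\F_{-}=\F W_{+}$ with $\F:\D\setminus S\to\Lambda G_{\sigma}$ and $\F(0,\cdot)=e$. Reversing the identity above, $\F^{-1}d\F=W_{+}(\lambda^{-1}\mu_{-1})W_{+}^{-1}-(dW_{+})W_{+}^{-1}$ has powers $\geq -1$; the twisted $SU(3)$ reality condition satisfied by the unitary factor $\F$ then bounds the powers symmetrically to $\{-1,0,1\}$ and splits them into a $(1,0)$-part of powers $-1,0$ and a $(0,1)$-part of powers $0,1$, i.e. exactly the extended-frame shape, so $\F$ frames the primitive harmonic map $f=\pi\circ\F_{\lambda=1}$. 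That the two assignments are mutually inverse follows from uniqueness of the Birkhoff (resp. Iwasawa) factorization, which identifies the recovered potential (resp. map) with the original. The main obstacle is the analytic input of the first step—that Birkhoff factorization of the holomorphic family $C(z,\cdot)$ fails only on a discrete $S$ and that $\F_{-}=C_{-}$ extends meromorphically; this rests on the big-cell structure of $\Lambda G_{\sigma}^{\mathbb{C}}$ and the holomorphic dependence of the factorization, not on anything special to $\mathbb{C}P^{2}$.
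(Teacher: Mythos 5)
The paper does not prove this theorem itself but quotes it from \cite{DPW}, and your argument is a correct reconstruction of the standard DPW proof: Birkhoff-factorizing the holomorphic frame $C=\F V_+$ to get discreteness of $S$ and meromorphy of $\F_-=C_-$, squeezing $\eta$ between ``only negative powers'' (from $\F_-(\infty)=e$) and ``powers $\ge -1$'' (from $\F_-=\F\F_+^{-1}$), and using the reality condition to recover the extended-frame shape in the converse. You also correctly isolate the one genuinely nontrivial analytic input — that the complement of the big cell is the zero set of a holomorphic function and that $C_-$ continues meromorphically across it — which is exactly the content the paper delegates to \cite{DPW}.
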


\begin{remark} \begin{enumerate}
\item The  $\mathfrak{m}^{\mathbb C}$-valued meromorphic $(1,0)$-form $\eta$  on $\D$,  unique after the choice of a base point, 
is called the {\it normalized potential of $f$ with the point $0$ as the reference point}. 

\item  We would like to emphasize  that $\bar\partial \F_{-}(z,\lambda)=0$ on $\D\backslash S$.  
 
\item  In the generality discussed above, a meromorphic potential $\eta$ generally does not yield a globally smooth minimal Lagrangian immersion.  To ensure global smoothness, one needs impose specific relations between the poles and zeros of the coefficients of the potential. For CMC surfaces in $\R^3$, see, for instance, \cite{DoHaMero}.

\end{enumerate}
\end{remark}

%%%%%%%%%%%%%%%%
\section{The liftability of minimal Lagrangian immersions}
\label{sec:3}

We have considered so far mainly full and horizontally liftable minimal Lagrangian immersions into $\C P^2$. In this section we will generalize our discussion to arbitrary full, possibly not liftable, minimal Lagrangian immersions into $\C P^2$.

%%%%%%%%%%%%%%%%%%%%%%%%
\subsection{The basic set-up} \label{subsection: basic set-up}
 \label{subsection: non-compact case}

In \cite{DoKoMa},  we discussed immersions $f: M \to \C P^2$  without complex points, and considered ``lifts" to an immersion $\f: M \to S^5$.
The question of liftability in this sense was addressed in the Appendix  of \cite{DoKoMa}.

However, in this paper, our focus is not on any lift but specifically on horizontal lifts. 
Therefore, in this section  as in the previous sections,  we discuss the ``horizontal liftability'' of an immersion 
$f: M \to \C P^2$ to a horizontal, i.e., Legendrian,  immersion $\f: M \to S^5$.
In the subsequent subsections, we adapt the argumentation presented in the Appendix of \cite{DoKoMa} to fit current context of our discussion. 
 
 %%%%%%%%%%%%%%%%%%%%%%%%%%%
\subsection{Non-compact minimal Lagrangian immersions into \texorpdfstring{$\C P^2$}{}  by means of \cite{DoKoMa}}
\label{sec:3.2 non-cpt}

 \begin{theorem}\label{Thm: normalized coordinate frame}
 Let $\D \subset \C$ be a simply-connected domain and $f:\D \rightarrow \C P^2$ a full minimal Lagrangian immersion.
 Let $\f_0: \D \rightarrow S^5$ be a horizontal lift of $f$ and $\mathcal{F}(\f_0 )$ 
 the corresponding
 coordinate frame.  
 Then 
 \begin{enumerate}
  \item[a)] There exists some constant $\delta \in  S^1$ such that 
 $\det  \mathcal{F}(\delta\f_0 ) = 1$.
 
 \item[b)] Any two horizontal lifts $\f_0$ and  $\f_1$ of $f$ for which   $\det  \mathcal{F}(\f_0 ) = 1$ and 
  $\det  \mathcal{F}(\f_1 ) = 1$ differ by  a cubic root of unity.
  
  \item[c)] If we also impose the condition
  $\mathcal{F}(\f_0 ) (z_0,\bar{z}_0) = \mathrm{I}$ for some fixed base point $z_0$, 
  then the horizontal lift is uniquely determined.
 \end{enumerate}
 \end{theorem}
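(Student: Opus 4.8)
The plan is to reduce everything to a single scaling law for the coordinate frame under a change of horizontal lift, combined with the local uniqueness of horizontal lifts. First I would record that, by \eqref{horizontal} and \eqref{fconf1}, the three columns $e^{-\frac{u}{2}}\f_z$, $e^{-\frac{u}{2}}\f_{\bar z}$, $\f$ of $\mathcal{F}(\f_0)$ are Hermitian orthonormal, so $\mathcal{F}(\f_0)\in U(3)$ and in particular $\det\mathcal{F}(\f_0)$ takes values in $S^1$. Next I would show that this determinant is in fact \emph{constant} on $\D$: from the frame equations \eqref{eq:frame1} one has $\mathcal{F}^{-1}\mathcal{F}_z=\mathcal{U}$ and $\mathcal{F}^{-1}\mathcal{F}_{\bar z}=\mathcal{V}$, and since $\tr\mathcal{U}=\tr\mathcal{V}=0$ by \eqref{eq:UV1}, Jacobi's formula gives $(\det\mathcal{F})_z=(\det\mathcal{F})_{\bar z}=0$, hence $\det\mathcal{F}(\f_0)\equiv c$ for some $c\in S^1$ on the connected domain $\D$.

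For a), I would compute the effect of replacing $\f_0$ by $\delta\f_0$ with $\delta\in S^1$. Because $|\delta|=1$, the conformal factor $u$ is unchanged, while each of the three columns of $\mathcal{F}$ is multiplied by $\delta$; thus $\mathcal{F}(\delta\f_0)=\delta\,\mathcal{F}(\f_0)$ and $\det\mathcal{F}(\delta\f_0)=\delta^3 c$. Choosing $\delta$ to be any cube root of $c^{-1}$ then yields $\det\mathcal{F}(\delta\f_0)=1$. Part b) follows at once: if $\f_0$ and $\f_1$ are horizontal lifts of the same $f$, the local existence and uniqueness of horizontal lifts (\cite{DoMaB1}, Theorem 2.5) forces $\f_1=\delta\f_0$ for some $\delta\in S^1$, and the two normalizations $\det\mathcal{F}(\f_0)=\det\mathcal{F}(\f_1)=1$ then give $\delta^3=1$. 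For c), I would observe that the hypothesis $\mathcal{F}(\f_0)(z_0,\bar z_0)=\mathrm{I}$ already forces $\det\mathcal{F}(\f_0)\equiv 1$ (the determinant is constant and equals $\det\mathrm{I}=1$), so the computation of b) applies; evaluating $\mathcal{F}(\f_1)=\delta\mathcal{F}(\f_0)$ at $z_0$ gives $\mathrm{I}=\delta\,\mathrm{I}$, whence $\delta=1$ and $\f_1=\f_0$.

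The only step that requires genuine care is the constancy of $\det\mathcal{F}$ on $\D$, which is what upgrades the pointwise fact $\det\mathcal{F}\in S^1$ into a usable normalization; this rests entirely on the trace-free structure of $\mathcal{U}$ and $\mathcal{V}$ recorded in \eqref{eq:UV1}. Once the scaling law $\mathcal{F}(\delta\f_0)=\delta\,\mathcal{F}(\f_0)$ and the constancy of the determinant are in hand, parts a)--c) are formal consequences, so I do not anticipate any substantial obstacle beyond this bookkeeping.
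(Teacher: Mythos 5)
Your proposal is correct and follows essentially the same route as the paper: normalize the (constant) determinant by a cube root in a), use the scaling law $\det\mathcal{F}(\delta\f_0)=\delta^3\det\mathcal{F}(\f_0)$ to force $\delta^3=1$ in b), and evaluate at the base point for c). The only differences are that you supply the justification (trace-freeness of $\mathcal{U},\mathcal{V}$ plus Jacobi's formula) for the constancy of $\det\mathcal{F}$, which the paper merely asserts, and that in b) you import the constancy of the factor relating two horizontal lifts from Theorem 2.5, whereas the paper writes $\f_1=h\f_0$ for an a priori non-constant $h:\D\to S^1$ and deduces constancy from $h^3=1$; both are sound.
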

 
 \begin{proof}
 a) Put $\delta_0 =  \det \mathcal{F}(\f_0 ).$ Then it follows, if $f$ is full, minimal and Lagrangian 
 that $\delta_0 : \D \rightarrow S^1$ is locally constant.
 Since $\D$ is simply-connected, $\delta_0$ is a constant and
with the  constant  $\delta = \delta_0^{-1/3} \in S^1$  we obtain
$\det  \mathcal{F}(\delta \f_0 ) = 1$.
 
 b) Assume  $\det  \mathcal{F}(\f_0 ) = \det  \mathcal{F}(\f_1 ) = 1$. Since $\f_0$ and $\f_1$ are both lifts 
 of $f$ on $\D$, there exists some smooth function $h: \D \rightarrow S^1$ such that 
 $\f_1 = h \f_0$ holds. Then  $ \det  \mathcal{F}(\f_1 ) = \det  \mathcal{F}(h \f_0 ) = 1$
 implies $h^3 = 1$. Hence $h$ is a constant.
 
 The last statement follows from the second one.
 \end{proof}
 
From the Appendix of \cite{DoKoMa} we infer
 
 \begin{theorem} \label{non-compact mLi Legendrian}
 Let $M$ be a non-compact Riemann surface and $f:M \rightarrow \C P^2$ 
 a full minimal Lagrangian  immersion. Then there exists a global lift $\f : M \rightarrow S^5$.
 \end{theorem}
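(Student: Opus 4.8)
The plan is to start from the horizontal lift on the universal cover, to measure its failure to descend as a monodromy character, and then to remove that character by a modification that is allowed to be non-horizontal, whose existence is guaranteed precisely by the non-compactness of $M$.

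First I would reduce to the universal cover $\tilde M$ of $M$. Since $M$ is non-compact and $M\neq S^2$, the cover $\tilde M$ is contractible, so Corollary~\ref{Cor:globle-lift} supplies a horizontal lift $\tilde\f:\tilde M\to S^5$ of the natural lift $\tilde f$, and by Theorem~\ref{Thm: normalized coordinate frame}(a) I may normalize it so that $\det\mathcal F(\tilde\f)=1$; with this normalization it is unique up to a cube root of unity. For each deck transformation $\gamma\in\pi_1(M)$ the map $\tilde\f\circ\gamma$ is again a horizontal lift of $\tilde f\circ\gamma=\tilde f$, and since passing to $\gamma$ multiplies the frame (evaluated at $\gamma(z)$) on the right by $\operatorname{diag}(\gamma'/|\gamma'|,\overline{\gamma'}/|\gamma'|,1)$, which has determinant $1$, one still has $\det\mathcal F(\tilde\f\circ\gamma)=1$. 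Theorem~\ref{Thm: normalized coordinate frame}(b) then yields $\tilde\f\circ\gamma=m(\gamma)\,\tilde\f$ with $m(\gamma)^3=1$, so that $m:\pi_1(M)\to\{1,\epsilon^2,\epsilon^4\}$ is a homomorphism; the lift descends to a \emph{horizontal} lift on $M$ exactly when $m$ is trivial.

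The main step is to cancel $m$. Since any two horizontal lifts differ by a constant, $m$ cannot be altered within the horizontal class, so I would pass to a non-horizontal modification: I seek a smooth $g:\tilde M\to S^1$ with $g\circ\gamma=m(\gamma)^{-1}g$ for all $\gamma$, whereupon $g\tilde\f$ is $\pi_1(M)$-invariant and descends to a global lift $\f:M\to S^5$ with $\pi\circ\f=f$. Such a $g$ exists because $M$ is an open Riemann surface, hence homotopy equivalent to a $1$-dimensional complex, so that $H^2(M,\Z)=0$; from the coefficient sequence $0\to\Z\to\R\to S^1\to0$ the map $H^1(M,\R)\to H^1(M,S^1)=\operatorname{Hom}(\pi_1(M),S^1)$ is then surjective, and so $m$ is the exponentiated-period character of some closed real $1$-form $\beta$, i.e. $\oint_\gamma\beta\equiv-\arg m(\gamma)\pmod{2\pi}$. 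Setting $g=\exp\!\big(i\int^z\pi^*\beta\big)$, which is well defined on the simply connected $\tilde M$, gives the required equivariance. Equivalently, a lift of $f$ is just a section of the pulled-back principal $S^1$-bundle $f^*(S^5\to\C P^2)$ over $M$, whose only obstruction is its Euler class in $H^2(M,\Z)=0$.

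I expect the sole genuine difficulty to be this realization of the monodromy character by a closed $1$-form (equivalently, the vanishing of the bundle obstruction), and this is exactly where non-compactness enters: for compact $M$ one has $H^2(M,\Z)\neq0$ and the $\Z_3$-monodromy is a real obstruction, which is why the compact case is treated separately. Finally I would record that the lift produced here is in general only smooth and \emph{not} Legendrian: because horizontal lifts differ only by a constant in $S^1$, a nontrivial $m$ can never be removed while preserving $d\f\cdot\bar\f=0$, so the resulting $\f$ is horizontal precisely when $m\equiv1$.
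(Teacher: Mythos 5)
Your proposal is correct. The paper itself gives no argument for this theorem --- it simply infers it from the Appendix of \cite{DoKoMa} --- so there is nothing internal to compare against, but your self-contained proof is exactly the standard obstruction-theoretic argument one expects that reference to contain: a global lift of $f$ is a section of the pulled-back principal $S^1$-bundle $f^*(S^5\to\C P^2)$, whose sole obstruction is the Euler class in $H^2(M,\Z)$, and this group vanishes because an open surface is homotopy equivalent to a $1$-complex. Your reformulation via the monodromy character $m:\pi_1(M)\to\mathbb{A}_3$ of the normalized horizontal lift on $\D$ (which is precisely the homomorphism $c$ of Proposition \ref{prop:general} and Corollary \ref{cor:imc}) and its cancellation by an equivariant $g:\D\to S^1$ built from a closed $1$-form is a correct and equivalent implementation, and your closing remark that the resulting lift need not be Legendrian matches the note following the theorem in the paper.
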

 
 Note, this lift is not necessarily horizontal.
 However we can apply  \cite{DoKoMa} and obtain:
 
 \begin{theorem}
  Let $M$ be a non-compact Riemann surface and $f:M \rightarrow \C P^2$ 
 a full minimal Lagrangian  immersion. Let $\f : M \rightarrow S^5$ denote a global lift
 and  $\tilde{\f} : \D  \rightarrow S^5$ a lift of $\f$ to its universal covering $\D$.
 Moreover, let $\mathcal{F}(\tilde{\f})$ be  the (unique) frame associated with $\tilde{\f}$
 in the sense of Theorem \ref{Thm: normalized coordinate frame} (c). 
 Then the  Gauss maps $\mathcal{G}_j : \D \rightarrow FL_j$ $(j=1, 2, 3)$, defined in section 3.3 of \cite{DoKoMa},  are primitive relative to 
 $\sigma$.
 \end{theorem}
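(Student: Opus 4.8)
The plan is to reduce the claim to the local, horizontally liftable situation analysed in Section \ref{sec:2}, using that primitivity relative to $\sigma$ is a local condition that is insensitive to the phase ambiguity in the choice of lift. Since $\D$ is contractible, the natural lift $\tilde{f} = \pi \circ \tilde{\f}: \D \to \C P^2$ is a full minimal Lagrangian immersion of a contractible Riemann surface, so everything in Subsection \ref{subsec:frames} applies. First I would replace $\tilde{\f}$ by a genuine horizontal lift: although the global lift $\f$ on $M$ need not be horizontal, on the simply connected cover $\D$ Corollary \ref{Cor:globle-lift} provides a horizontal lift, differing from $\tilde{\f}$ by a function $\D \to S^1$, and Theorem \ref{Thm: normalized coordinate frame} pins it down uniquely via the normalization $\mathcal{F}(\tilde{\f})(z_0,\bar z_0) = \mathrm{I}$. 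Thus $\mathcal{F}(\tilde{\f})$ may be taken to be the coordinate frame of a horizontal lift on $\D$.

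Next I would translate primitivity into the graded structure of the Maurer-Cartan form of this frame. After introducing the spectral parameter and applying the gauge \eqref{F}, the extended frame satisfies \eqref{eq:mathbbF}: its $(1,0)$-part is $\lambda^{-1}U_{-1} + U_0$ and its $(0,1)$-part is $\lambda V_1 + V_0$. Comparing the explicit matrices in \eqref{eq:UV1} and \eqref{eq:mathbbF} with the eigenspace decomposition $\mathfrak{g}^{\C} = \bigoplus_l \mathfrak{g}_l$ of $\sigma$, one checks directly that $U_{-1} \in \mathfrak{g}_{-1} = \mathfrak{g}_5$, $V_1 \in \mathfrak{g}_1$ and $U_0, V_0 \in \mathfrak{g}_0$, so that $\F^{-1} d\F$ lies in $\Lambda \mathfrak{su}(3)_\sigma$ with its $\lambda$-degrees matched to the $\sigma$-eigenspaces. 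This is exactly the infinitesimal statement of Proposition \ref{char-mLi}(3). The Gauss maps $\mathcal{G}_j: \D \to FL_j$ of Section 3.3 of \cite{DoKoMa} are obtained by projecting $\mathcal{F}(\tilde{\f})$ onto the flag factors $SU(3)/K_j$; since primitivity of $\mathcal{G}_j$ amounts to the $(1,0)$-part of its connection form lying in $\mathfrak{k}_j^{\C} \oplus \mathfrak{g}_{-1}$, the grading just exhibited shows that each $\mathcal{G}_j$ is primitive relative to $\sigma$.

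The hard part will be reconciling the two frame formalisms and confirming globality. In \cite{DoKoMa} the Gauss maps are attached to an adapted frame built from an arbitrary, not necessarily horizontal, lift of a general immersion without complex points, whereas Subsection \ref{subsec:frames} uses the coordinate frame of a horizontal lift; I would therefore verify that, for a minimal Lagrangian $f$, the adapted frame of \cite{DoKoMa} agrees with $\mathcal{F}(\tilde{\f})$ up to a diagonal gauge, so that the computation \eqref{eq:UV1}--\eqref{eq:mathbbF} is the correct one. For globality I would use that primitivity is local and that the flag-valued maps $\mathcal{G}_j$ are unchanged under the residual $S^1$- (respectively, by Theorem \ref{Thm: normalized coordinate frame}(b), cube-root-of-unity) ambiguity of the lift; hence the eigenspace computation, valid near each point where a horizontal lift exists, patches to primitivity of $\mathcal{G}_1, \mathcal{G}_2, \mathcal{G}_3$ on all of $\D$. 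Once this bookkeeping is in place, the conclusion is immediate from the block form \eqref{eq:mathbbF}.
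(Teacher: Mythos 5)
You should first be aware that the paper does not actually prove this theorem: the sentence preceding it reads ``However we can apply \cite{DoKoMa} and obtain'', so the statement is imported wholesale from the Ruh--Vilms paper \cite{DoKoMa} and there is no internal argument to compare yours against. Measured against that, your proposal is more explicit than the source. Your central computation is correct and is indeed the substance of the matter: $U_{-1}\in\mathfrak{g}_5=\mathfrak{g}_{-1}$, $V_1\in\mathfrak{g}_1$ and $U_0,V_0\in\mathfrak{g}_0$ in \eqref{eq:mathbbF}, so the Maurer--Cartan form of the gauged frame has exactly the grading required by Proposition \ref{char-mLi}(3); and your observation that the flag-valued maps $\mathcal{G}_j$ only see the lines and planes spanned by $\tilde{\f}$, $\tilde{\f}_z$, $\tilde{\f}_{\bar z}$, and hence are unchanged when $\tilde{\f}$ is multiplied by a function $h:\D\rightarrow S^1$ (since $(h\tilde{\f})_z\equiv h\,\tilde{\f}_z$ modulo $\C\tilde{\f}$), is the right way to dispose of the fact that $\tilde{\f}=\f\circ\pi$ need not itself be horizontal, which is exactly the subtlety the hypotheses of the theorem introduce.

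The weak point is that the one genuinely nontrivial step is announced rather than carried out. You write that you ``would verify'' that the adapted frame of \cite{DoKoMa} agrees with $\mathcal{F}(\tilde{\f})$ up to a diagonal gauge, and you summarize primitivity of $\mathcal{G}_j$ as the $(1,0)$-part of its connection form lying in $\mathfrak{k}_j^{\C}\oplus\mathfrak{g}_{-1}$; but the three Gauss maps take values in three different flag manifolds $FL_1, FL_2, FL_3$ with three different isotropy algebras and, a priori, three different $k$-symmetric structures, so the primitivity condition must be checked separately for each $j$ against the definitions in Section 3.3 of \cite{DoKoMa} --- a single grading statement for $\F^{-1}d\F$ does not by itself yield all three conclusions. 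That identification is precisely the content the paper delegates to \cite{DoKoMa}, so your argument is no less complete than the paper's; but as a self-contained proof, what you have written for the decisive step is a plan rather than the step itself, and a reader who does not already have \cite{DoKoMa} open cannot yet consider the theorem established from your text.
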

 
  \begin{corollary}
 Let $M$ be a non-compact Riemann surface and $f:M \rightarrow \C P^2$ 
 a full minimal Lagrangian  immersion. Then $f$ can be treated as an immersion without complex points as in \cite{DoKoMa} and thus can be constructed as there.
 \end{corollary}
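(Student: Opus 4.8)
The plan is to verify that a full minimal Lagrangian immersion meets the hypotheses of the construction in \cite{DoKoMa}, whose central standing assumption is the absence of complex points, and which crucially does \emph{not} require a horizontal lift. Recall that a point $p \in M$ is a \emph{complex point} of $f$ precisely when the tangent plane $df_p(T_p M) \subset T_{f(p)} \C P^2$ is invariant under the complex structure $J$ of $\C P^2$. The first step is to observe that this cannot occur for a Lagrangian immersion: the Lagrangian condition forces each tangent plane to be isotropic for the K\"ahler form, hence totally real, and a totally real plane is not $J$-invariant. Thus $f$ is an immersion without complex points, and the flag-theoretic framework of \cite{DoKoMa} applies.

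Next I would assemble the two preceding results into the global construction. Although $f$ need not admit a global horizontal lift, Theorem \ref{non-compact mLi Legendrian} guarantees, by non-compactness of $M$, an ordinary global lift $\f : M \to S^5$; passing to the universal cover $\D$ yields $\tilde{\f} : \D \to S^5$ together with its normalized frame $\mathcal{F}(\tilde{\f})$ from Theorem \ref{Thm: normalized coordinate frame}(c). The theorem immediately preceding this corollary then shows that the associated Gauss maps $\mathcal{G}_j : \D \to FL_j$ ($j=1,2,3$) are primitive relative to $\sigma$. Primitivity of the Gauss maps, together with the absence of complex points, is exactly the input from which the construction of \cite{DoKoMa} (section 3.3) reproduces the surface; hence $f$ can be recovered by that method.

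The only point requiring genuine care is matching normalizations: in \cite{DoKoMa} the immersion $f$ is handled directly through its Gauss maps, whereas here everything has been routed through lifts to $S^5$ and the determinant-normalized frame of Theorem \ref{Thm: normalized coordinate frame}. I would therefore check that the frame $\mathcal{F}(\tilde{\f})$ and the Gauss maps built from it agree, up to the gauge freedom already quantified in Theorem \ref{Thm: normalized coordinate frame}(b) and (c), with the corresponding objects in \cite{DoKoMa}. Since both are determined by $f$ up to the same cubic-root-of-unity ambiguity, I expect this to be a routine reconciliation of conventions rather than a substantive obstacle, and the corollary then follows at once.
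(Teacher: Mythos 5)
Your proposal is correct and follows essentially the same route the paper intends: the paper states this corollary without proof as an immediate consequence of the preceding theorems, relying on exactly the observation you make (a Lagrangian tangent plane is totally real, so no complex points can occur — a fact the paper also invokes in Section 4.2 citing \cite{DoMaB1}) combined with the primitivity of the Gauss maps from the theorem immediately before. Your additional care about matching normalizations is reasonable but, as you suspect, amounts only to the cubic-root-of-unity ambiguity already quantified in Theorem \ref{Thm: normalized coordinate frame}.
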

 
 From this we obtain the following 
 {\bf Construction Principle of all Minimal Lagrangian  Immersions into $\C P^2$}
 defined on a non-compact Riemann surface $M$.
   
{\it First construct a primitive Gauss map relative to $\sigma$ defined on $\D$ which is invariant under the fundamental group of $M$ following the recipe of \cite{DoKoMa}. 

Next pick  the last column of the corresponding extended frame. We obtain a map 
 $\tilde{\f} :\D \rightarrow S^5$. 
This map  projects down to a map $\f:M \rightarrow S^5$, since by construction,  $\tilde{\f} $ is invariant under $\pi_1(M)$. 
 
Finally, an application of the Hopf fibration produces the desired minimal Lagrangian surface in $\C P^2$.}
 
 %%%%%%%%%%%%%%%%%%%%%%%%%%
  \subsection{Horizontal lifts and threefold covers for minimal Lagrangian immersions into \texorpdfstring{$\C P^2$}{}}
  \label{subsection: general case}
 
Recall that we assume that $M$ is different from $S^2$. We use this right below,
  when we state that  $\tilde{f}: \D \rightarrow \C P^2$  has a lift 
  $\tilde{\f}: \D \rightarrow S^5.$
   This is proven by considering the pull back bundle and using that $\D$ is contractible.
   
 \begin{proposition}\label{prop:general}
 Let  $f : M \rightarrow \C P^2$ be a minimal Lagrangian  immersion  and $\tilde{f} : \D \rightarrow \C P^2$ denote the lift $\tilde{f} = f \circ \pi $ of $f$ to the 
 universal cover $\pi : \D \rightarrow M$.
 Then  $\tilde{f} $ has a horizontal lift  $\tilde{\f} : \D \rightarrow S^5 $ and the following statements hold.
 \begin{enumerate}
 \item For $\gamma \in \pi_1(M)$, acting on $\D$ by M\"{o}bius transformations, we obtain that also  $\gamma^*\tilde{\f} $  is a horizontal lift of  $\tilde{f} .$
 
 \item For all $\gamma \in \pi_1 (M)$ we have 
 $(\gamma^*\tilde{\f} ) (z, \bar{z}) = c(\gamma) \tilde{\f} (z, \bar z)$  with the constant $c$ taking values in $S^1$.
 
 \item If $f$ has a global horizontal lift $\f,$ then the homomorphism $c$ is trivial.
 
 \item After  multiplying $\tilde{\f}$ by a scalar multiple in $S^1$ we can assume without loss of generality that 
  $\mathcal{F}(\tilde{\f} )$ is contained in $SU(3)$.
  
  \item For $\tilde{f}$ as just above and $\gamma \in \pi_1 (M)$ we obtain
  \begin{equation} \label{define k}
  \gamma^*( \mathcal{F}(\tilde{\f} ) )(z, \bar z) = c(\gamma) \mathcal{F}(\tilde{\f} ) (z, \bar z) k(\gamma, z ,\bar z),
  \end{equation}
  with $k(\gamma, z ,\bar z) = \di (|\gamma'(z)| / \gamma'(z), |\gamma'(z)| / \overline{\gamma'(z)},1)$, where $\gamma^{\prime}(z) =\gamma_z (z)$.
 \end{enumerate}
  \end{proposition}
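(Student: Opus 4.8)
The plan is to derive the transformation formula \eqref{define k} by a direct computation, differentiating the scalar relation of part (2) and tracking how each of the three columns of the coordinate frame \eqref{coordframe} behaves under the substitution $z\mapsto \gamma(z)$. Throughout I would read $\gamma^{*}(\mathcal{F}(\tilde{\f}))(z,\bar z)$ as the frame $\mathcal{F}(\tilde{\f})$ evaluated at the image point $\gamma(z)$, so the goal is to express $\mathcal{F}(\tilde{\f})(\gamma(z))$ in terms of $\mathcal{F}(\tilde{\f})(z)$. The starting input is part (2), namely $\tilde{\f}(\gamma(z)) = c(\gamma)\,\tilde{\f}(z)$ with $c(\gamma)\in S^1$ constant, which already settles the third column of \eqref{coordframe}.

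Next I would differentiate this identity. Since $\pi_1(M)$ acts on $\D$ by M\"obius transformations, $\gamma$ is holomorphic, so $\partial_{\bar z}\gamma(z)=0$ and $\partial_z\overline{\gamma(z)}=0$; the chain rule therefore produces no cross terms and yields
\begin{equation*}
(\partial_z \tilde{\f})(\gamma(z))\,\gamma'(z) = c(\gamma)\,(\partial_z \tilde{\f})(z), \qquad (\partial_{\bar z}\tilde{\f})(\gamma(z))\,\overline{\gamma'(z)} = c(\gamma)\,(\partial_{\bar z}\tilde{\f})(z).
\end{equation*}
These two relations are the heart of the computation: the holomorphicity of $\gamma$ is precisely what forces the $z$- and $\bar z$-derivatives to transform by the scalar factors $\gamma'(z)^{-1}$ and $\overline{\gamma'(z)}^{-1}$, and this is the origin of the diagonal (and mutually conjugate) structure of $k$.

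The remaining ingredient is the transformation of the conformal factor $u$. I would extract it from the conformality relation \eqref{fconf1}: evaluating $e^{u}=\tilde{\f}_z\cdot\overline{\tilde{\f}_z}$ at $\gamma(z)$ and inserting the first derivative identity above gives $e^{u(\gamma(z))} = |\gamma'(z)|^{-2}e^{u(z)}$, hence $e^{-u(\gamma(z))/2} = |\gamma'(z)|\,e^{-u(z)/2}$. (Equivalently, this expresses the $\gamma$-invariance of the induced metric, which holds because $\gamma$ is a deck transformation, so $\tilde f\circ\gamma=\tilde f$.) Substituting this together with the two derivative identities into the first two columns of \eqref{coordframe}, the factors combine as $|\gamma'|\cdot\gamma'^{-1}$ and $|\gamma'|\cdot\overline{\gamma'}^{-1}$; pulling the common scalar $c(\gamma)$ out of all three columns then gives exactly \eqref{define k} with $k(\gamma,z,\bar z) = \di(|\gamma'(z)|/\gamma'(z),\, |\gamma'(z)|/\overline{\gamma'(z)},\,1)$. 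As a consistency check I would verify that each diagonal entry of $k$ has modulus one and that $\det k = 1$, so that $k(\gamma,\cdot)$ lands in $SU(3)$, compatibly with the normalization of part (4).

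Since every step is elementary, there is no genuine analytic obstacle; the only real difficulty is bookkeeping. The point to get right is the distinction between differentiating $\tilde{\f}$ composed with $\gamma$ and evaluating the already-computed derivatives at $\gamma(z)$, together with the observation that it is the holomorphicity of the M\"obius action that prevents any mixing of the two off-diagonal directions --- were $\gamma$ merely smooth, the matrix $k$ would fail to be diagonal.
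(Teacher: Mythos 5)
Your proposal is correct and follows essentially the same route as the paper: the paper likewise disposes of items (1)--(4) as immediate, and proves (5) by differentiating $\tilde{\f}(\gamma.z)=c(\gamma)\tilde{\f}(z)$ in $z$ and $\bar z$ and combining the resulting factors $\gamma'(z)^{-1}$, $\overline{\gamma'(z)}^{-1}$ with the transformation $e^{\gamma^*u}|\gamma'|^2=e^u$ of the conformal factor. Your derivation of that last relation from \eqref{fconf1} is just the computational form of the paper's appeal to $\gamma$ being an isometry of the induced metric, so the two arguments coincide in substance.
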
 
   
 \begin{proof} 
Items $(1)$, $(2)$ and (4) follow easily. Item $(3)$ is a consequence of the identity $f = \pi_H \circ \f,$
which holds by assumption.
It remains to  show the last statement here.
Following P.~Lang's idea in his thesis (for the coordinate frame for CMC immersions  \cite{PLang}), 
we differentiate both sides of $\tilde{\f}(\gamma.z, \overline{\gamma. z})=c(\gamma) \tilde{\f}(z, \bar{z})$, and obtain
$$\tilde{\f}_z (\gamma . z,\overline{\gamma. z}) \gamma^{\prime} = 
c(\gamma) \tilde{\f}_z(z, \bar z).$$
An analogous  result we obtain for  the $\bar z$ derivative. 
Also, since $\gamma$ is an isometry of the induced metric $g=2e^{u} dzd{\bar z}$, we have  equivalently $e^{\gamma^*u} |\gamma^{\prime}|^2 = e^u$.
Putting this together we derive 
$$\gamma^* \mathcal{F}  = c(\gamma) \mathcal{F} k,$$
where  $k(\gamma, z ,\bar z) = \di (|\gamma'(z)| / \gamma'(z),  
|\gamma'(z)| / \overline{\gamma'(z)},1)$, with $\gamma^{\prime}(z) =\gamma_z (z).$
Thus the claim follows. 
 \end{proof}
 
 \begin{corollary}\label{cor:imc}
 Retaining the assumptions above we obtain that 
  $c: \pi_1 (M) \rightarrow S^1$ is a homomorphism of groups.
  Moreover,  we have $c(\gamma)^3=1 \in S^1$
 for all $\gamma \in  \pi_1(M)$. 
 In particular, $c$ is a homomorphism with
  values in the  group 
 $\mathbb{A}_3$ of cubic  
 roots of unity, whence the image of $c$ is either 
 $\{e \}$ or all of $\mathbb{A}_3$.
  \end{corollary}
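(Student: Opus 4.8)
The plan is to read off both assertions directly from Proposition \ref{prop:general}: the homomorphism property from the equivariance of $\tilde{\f}$, and the order-three condition from a determinant computation in \eqref{define k}.

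First I would verify that $c$ is a homomorphism. Writing the defining relation $(\gamma^*\tilde{\f})(z,\bar z) = c(\gamma)\tilde{\f}(z,\bar z)$ in the form $\tilde{\f}(\gamma.z,\overline{\gamma.z}) = c(\gamma)\tilde{\f}(z,\bar z)$ and applying it twice, for $\gamma_1,\gamma_2 \in \pi_1(M)$ I compute $\tilde{\f}((\gamma_1\gamma_2).z) = \tilde{\f}(\gamma_1.(\gamma_2.z)) = c(\gamma_1)\tilde{\f}(\gamma_2.z) = c(\gamma_1)c(\gamma_2)\tilde{\f}(z)$. Comparing with $\tilde{\f}((\gamma_1\gamma_2).z) = c(\gamma_1\gamma_2)\tilde{\f}(z)$ and using that $\tilde{\f}$ never vanishes (it takes values in $S^5$), I obtain $c(\gamma_1\gamma_2) = c(\gamma_1)c(\gamma_2)$. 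Since $S^1$ is abelian, the order in which the deck transformations compose is immaterial, so this conclusion is insensitive to the action convention.

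The order-three statement is where the geometry enters. Here I would take determinants in the transformation rule \eqref{define k},
$$\gamma^*(\mathcal{F}(\tilde{\f}))(z,\bar z) = c(\gamma)\,\mathcal{F}(\tilde{\f})(z,\bar z)\,k(\gamma,z,\bar z).$$
By item (4) of Proposition \ref{prop:general} we may normalize so that $\mathcal{F}(\tilde{\f}) \in SU(3)$, hence $\det\mathcal{F}(\tilde{\f}) \equiv 1$ on $\D$; in particular the left-hand side, being $\mathcal{F}(\tilde{\f})$ evaluated at $\gamma.z$, also has determinant $1$. Since $c(\gamma)$ multiplies a $3\times 3$ matrix as a scalar, its contribution to the determinant is $c(\gamma)^3$, while the diagonal factor $k = \di(|\gamma'|/\gamma', |\gamma'|/\overline{\gamma'}, 1)$ has determinant $\tfrac{|\gamma'|}{\gamma'}\cdot\tfrac{|\gamma'|}{\overline{\gamma'}} = \tfrac{|\gamma'|^2}{\gamma'\overline{\gamma'}} = 1$. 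Combining these, $1 = c(\gamma)^3\cdot 1\cdot 1$, so $c(\gamma)^3 = 1$.

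The final assertion is then purely group-theoretic: $c$ is a homomorphism into the group $\mathbb{A}_3$ of cubic roots of unity, and since $\mathbb{A}_3 \cong \Z/3\Z$ has prime order, its only subgroups are $\{e\}$ and $\mathbb{A}_3$ itself; hence the image of $c$ is one of these two. I do not anticipate a genuine obstacle here. The only points requiring care are bookkeeping ones: ensuring that the normalization of item (4) is in force so that $\det\mathcal{F}(\tilde{\f}) = 1$, and checking that the two conjugate off-diagonal entries of $k$ cancel to give $\det k = 1$.
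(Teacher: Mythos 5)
Your proposal is correct and follows essentially the same route as the paper: the homomorphism property is read off from item (2) of Proposition \ref{prop:general}, and $c(\gamma)^3=1$ is obtained by taking determinants in \eqref{define k} under the normalization $\det\mathcal{F}(\tilde{\f})=1$, noting $\det k=1$. The paper states this in two lines; your version simply fills in the same computations explicitly.
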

  \begin{proof}
  Item $(2)$ of the proposition above implies directly the first claim.
  The second claim follows from (\ref{define k}), since we can assume $ \det  \mathcal{F}( \tilde{\f}) = 1$.
  \end{proof}
  
  We also point out
  \begin{corollary} 
  The gauge $k(\gamma,z, \bar z)$ defined in (\ref{define k}) above is a crossed homomorphism, i.e. $k$ satisfies
  \begin{equation} \label{crossed homomorphism}
  k(\gamma \mu, z, \bar z) = k(\mu, z, \bar z) k(\gamma, \mu(z), \overline{ \mu(z)}) 
  \hspace{2mm}  \text{for all} \hspace{2mm} \gamma,\mu \in \pi_1(M).
   \end{equation}
  \end{corollary}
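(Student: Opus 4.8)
The plan is to read off the cocycle relation \eqref{crossed homomorphism} directly from the twisted equivariance law \eqref{define k}, using only that composing deck transformations pulls functions back contravariantly, together with the homomorphism property of $c$ already recorded in Corollary \ref{cor:imc}. Throughout I would abbreviate $\mathcal{F} := \mathcal{F}(\tilde{\f})$ and write $k(\gamma)$ for the matrix-valued map $z \mapsto k(\gamma, z, \bar z)$. The two structural facts I intend to invoke are: first, that with $\pi_1(M)$ acting on the left by $(\gamma\mu)(z) = \gamma(\mu(z))$ one has $(\gamma\mu)^* = \mu^* \gamma^*$ on functions on $\D$; and second, that $c(\gamma\mu) = c(\gamma)c(\mu) \in S^1$ is a scalar, so that applying any pullback $\mu^*$ leaves it unchanged.

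First I would apply $\mu^*$ to both sides of \eqref{define k}. Since $c(\gamma)$ is constant, pullback is multiplicative, and $\mathcal{F}$ satisfies \eqref{define k} again for $\mu$, this yields
\begin{equation*}
(\gamma\mu)^* \mathcal{F} = \mu^*(\gamma^* \mathcal{F}) = c(\gamma)\,(\mu^* \mathcal{F})\,(\mu^* k(\gamma)) = c(\gamma)c(\mu)\,\mathcal{F}\,k(\mu)\,(\mu^* k(\gamma)),
\end{equation*}
where $(\mu^* k(\gamma))(z,\bar z) = k(\gamma, \mu(z), \overline{\mu(z)})$. On the other hand, applying \eqref{define k} directly to the element $\gamma\mu$ gives $(\gamma\mu)^* \mathcal{F} = c(\gamma\mu)\,\mathcal{F}\,k(\gamma\mu)$. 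Comparing the two expressions, cancelling the invertible frame $\mathcal{F}$ on the left and the scalar $c(\gamma\mu) = c(\gamma)c(\mu)$, I would obtain precisely
\begin{equation*}
k(\gamma\mu, z, \bar z) = k(\mu, z, \bar z)\, k(\gamma, \mu(z), \overline{\mu(z)}),
\end{equation*}
which is \eqref{crossed homomorphism}.

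As an independent check, and to see that no further smoothness or integrability hypothesis is needed, I would verify the identity entrywise from the explicit formula. Writing $k(\gamma, z, \bar z) = \di(a_\gamma(z), b_\gamma(z), 1)$ with $a_\gamma = |\gamma'|/\gamma'$ and $b_\gamma = |\gamma'|/\overline{\gamma'}$, the chain rule $(\gamma\mu)'(z) = \gamma'(\mu(z))\,\mu'(z)$ immediately gives $a_{\gamma\mu}(z) = a_\mu(z)\,a_\gamma(\mu(z))$ and likewise for $b_\gamma$; since diagonal matrices commute, these two scalar cocycle identities reassemble into the matrix relation above. The one point that genuinely requires care — and the only place an error could slip in — is the bookkeeping of the action convention: whether $\pi_1(M)$ acts on the left or the right, and hence whether $(\gamma\mu)^* = \mu^*\gamma^*$ or $\gamma^*\mu^*$, is exactly what fixes the order of the factors and forces the shifted argument $\mu(z)$ to appear rather than $\gamma(z)$. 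Matching this convention to the one already used in \eqref{define k} is what produces the stated form of the crossed homomorphism rather than its transpose.
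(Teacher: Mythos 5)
Your proposal is correct and is exactly the argument the paper leaves implicit: the corollary is stated without proof as a consequence of \eqref{define k}, and your derivation — pulling back the equivariance law by $\mu$, using $(\gamma\mu)^*=\mu^*\gamma^*$ and the homomorphism property of $c$ from Corollary \ref{cor:imc}, then cancelling the invertible frame — is the intended route, with the entrywise chain-rule check serving as a clean independent confirmation. No gaps.
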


  From this we derive the following
  
  \begin{theorem} \label{describe hatM}
  Let $M$ be a Riemann surface, different from $S^2$, and $f:M \rightarrow \C P^2$ a minimal Lagrangian immersion. 
  Let $\pi: \D \rightarrow M$ denote the universal covering of $M$ and 
  $\tilde{f} = f \circ  \pi : \D \rightarrow \C P^2$ the natural lift of $f$ to $\D$.
  Let $\tilde{\f} : \D \rightarrow S^5 $ denote a horizontal lift of $\tilde{f}$  satisfying 
  $ \det  \mathcal{F}( \tilde{\f}) = 1$. 
  Let $c: \pi_1 (M) \rightarrow S^1$ denote the homomorphism induced by $\tilde{\f}$ and put
  $\Gamma = \ker (c)$. Furthermore, define the Riemann surface   
  $\hat{M} = \Gamma \backslash  \D$.
  Then the following statements hold$:$
  
\begin{enumerate}
 \item[a)] The definitions above induce naturally a sequence of coverings 
  \begin{equation}
 \xymatrix{\D \ar[r]^{\hat{\pi}}&\hat{M}\ar[r]^{\tau}&M.}
  \end{equation}
   Recall that our definitions imply $\pi = \tau \circ \hat{\pi}$.
  Moreover, the covering map $\tau$ has either order $1$ or order $3$.
  
\item[b)] Putting $\hat{f} = f \circ \tau: \hat{M} \rightarrow  \C P^2$ we obtain the commuting diagram
  \[
  \begin{diagram}
    \node{\D}  \arrow{r,l}{\tilde{\f}} \arrow{s,l}{\hat{\pi}} \node{S^5} \arrow{s,r}{\pi_H}  \\
    \node{\hat{M}}\arrow{s,l}{\tau}\arrow{ne,l}{\hat{\f}} \arrow{r,l}{\hat{f}}\node{\C P^2} \\
    \node{M} \arrow{ne,r}{f}
  \end{diagram}
\]
  where  $\hat{\f} : \hat{M} \rightarrow S^5$ is the naturally global horizontal lift of $\hat{f}$.
  
  Then, either $\hat{M} = M$ and $f$ itself has a global lift or $\tau: \hat{M} \rightarrow M$ has order three and $\hat{f}$ has the global lift $\hat{\f} $. Moreover, $\tilde{\f}$ is the common natural lift of $f$ and $\hat{\f}$ to $\D$.
\end{enumerate}
  \end{theorem}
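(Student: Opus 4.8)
The plan is to recognize $\Gamma=\ker(c)$ as exactly the subgroup of $\pi_1(M)$ under which the horizontal lift $\tilde{\f}$ is invariant, and then to descend the lift along $\hat\pi$.

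First I would settle part (a). By Corollary \ref{cor:imc}, $c:\pi_1(M)\to S^1$ is a group homomorphism, so $\Gamma$ is normal in $\pi_1(M)$ and $\pi_1(M)/\Gamma\cong\mathrm{im}(c)$; since that image is either $\{e\}$ or $\mathbb{A}_3$, the index $[\pi_1(M):\Gamma]$ is $1$ or $3$. Being a subgroup of the deck group $\pi_1(M)$, the group $\Gamma$ acts freely and properly discontinuously on $\D$, so $\hat{M}=\Gamma\backslash\D$ is a Riemann surface, $\hat\pi:\D\to\hat M$ is its universal covering, and the induced map $\tau:\hat M\to M$ is a covering whose order equals $[\pi_1(M):\Gamma]\in\{1,3\}$. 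The identity $\pi=\tau\circ\hat\pi$ is then immediate from the construction of the two quotients.

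The core of part (b) is the descent of $\tilde{\f}$. For $\gamma\in\Gamma$ one has $c(\gamma)=1$, so Proposition \ref{prop:general}(2) gives $\tilde{\f}(\gamma\cdot z,\overline{\gamma\cdot z})=\tilde{\f}(z,\bar z)$; hence $\tilde{\f}$ is $\Gamma$-invariant and factors uniquely as $\tilde{\f}=\hat{\f}\circ\hat\pi$ for a smooth map $\hat{\f}:\hat M\to S^5$. I would then check that $\hat{\f}$ is a horizontal lift of $\hat f=f\circ\tau$: the covering relation $\pi_H\circ\hat{\f}=\hat f$ follows by precomposing with the surjection $\hat\pi$ and using $\pi_H\circ\tilde{\f}=\tilde f=\hat f\circ\hat\pi$, while the horizontality $d\hat{\f}\cdot\overline{\hat{\f}}=0$ follows because its pullback under the local biholomorphism $\hat\pi$ equals $d\tilde{\f}\cdot\overline{\tilde{\f}}$, which vanishes since $\tilde{\f}$ is horizontal. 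This yields the commuting diagram with $\hat{\f}$ the global natural horizontal lift of $\hat f$.

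From here the dichotomy and the naturality statement read off directly. If $\mathrm{im}(c)=\{e\}$, then $\Gamma=\pi_1(M)$, so $\hat M=M$, $\tau=\mathrm{id}$, $\hat f=f$, and $\hat{\f}$ is a global horizontal lift of $f$ itself; if $\mathrm{im}(c)=\mathbb{A}_3$, then $\tau$ has order three, $\hat{\f}$ is a global horizontal lift of $\hat f$, while $f$ itself admits none by the contrapositive of Proposition \ref{prop:general}(3). In both cases, combining $\tilde{\f}=\hat{\f}\circ\hat\pi$ with $\pi=\tau\circ\hat\pi$ exhibits $\tilde{\f}$ as the common natural lift of $f$ and $\hat f$ to $\D$. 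I expect the only delicate point to be not the descent itself, which is forced once $\Gamma$ is identified as the invariance subgroup, but the verification that $\hat{\f}$ is genuinely a smooth global horizontal lift of $\hat f$ and that the normalization $\det\mathcal{F}(\tilde{\f})=1$ descends to $\det\mathcal{F}(\hat{\f})=1$; with this in hand, the uniqueness of horizontal lifts up to a cube root of unity carries over to $\hat M$ and renders the identification of $\hat{\f}$ canonical.
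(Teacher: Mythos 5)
Your proof is correct and follows essentially the same route as the paper: both rest on Corollary \ref{cor:imc} to identify the image of $c$ as $\{e\}$ or $\mathbb{A}_3$, hence $[\pi_1(M):\Gamma]\in\{1,3\}$, and then descend $\tilde{\f}$ along $\hat\pi$ using its $\Gamma$-invariance. You simply spell out the descent, horizontality, and covering-relation checks that the paper's very terse proof leaves implicit.
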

  
  \begin{proof}

It follows from Corollary \ref{cor:imc} that the image of $c$ is either only the identity of $S^1$ or the group $\mathbb{A}_3$ of cubic roots of unity, thus the kernel of $c$ is $\pi_1(M)$ or a subgroup $\Gamma$ satisfying $\mathbb{A}_3\cong \pi_1(M)/\Gamma$. In the first case, $\hat{M}=M$ and $\hat{\f}$ is a global lift of $f$. In the second case, $\hat{M}$ is a threefold covering of $M$ and $\hat{\f}$ is a global lift of $\hat{f}$.
 \end{proof}
 
 \begin{corollary} \label{3-fold cover}
 Let $M$ be a Riemann surface different from $S^2$ and $f: M \rightarrow \C P^2$ a minimal Lagrangian immersion.
 Then either $f$ has a global horizontal lift $\f : M \rightarrow S^5,$  or there exists a threefold covering 
 $\tau: \hat{M} \rightarrow M$ of $M$ such that the minimal Lagrangian immersion 
 $\hat{f} = f \circ \tau: \hat{M} \rightarrow \C P^2$ has a global horizontal lift, while the given 
 $f : M \rightarrow \C P^2$ has no such lift.
In this case we have  $$\pi_1(M) / {\pi_1(\hat{M})} \cong \mathbb{A}_3,$$
 the group of cubic roots of unity, if $\hat{M}\neq M$.
 \end{corollary}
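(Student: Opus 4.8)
The plan is to read off this corollary from Theorem \ref{describe hatM}, which already carries out the essential construction; what remains is to reorganize its conclusions and to make explicit the identification of the deck group. First I would invoke Theorem \ref{describe hatM} with the horizontal lift $\tilde{\f}: \D \to S^5$ normalized so that $\det \mathcal{F}(\tilde{\f}) = 1$, producing the homomorphism $c: \pi_1(M) \to S^1$, the subgroup $\Gamma = \ker(c)$, the surface $\hat{M} = \Gamma \backslash \D$, and the tower of coverings $\D \xrightarrow{\hat\pi} \hat{M} \xrightarrow{\tau} M$. By Corollary \ref{cor:imc} the image of $c$ is either $\{e\}$ or all of $\mathbb{A}_3$, and these two possibilities correspond exactly to the two alternatives in the statement.

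In the case that $c$ is trivial, I would observe that $\Gamma = \ker(c) = \pi_1(M)$, so that $\hat{M} = \pi_1(M) \backslash \D = M$ and $\tau = \mathrm{id}$. Then $\hat{f} = f \circ \tau = f$, and the global horizontal lift $\hat{\f}: \hat{M} \to S^5$ furnished by Theorem \ref{describe hatM} is precisely a global horizontal lift of $f$ itself, which yields the first alternative. In the case that $c$ has image $\mathbb{A}_3$, the first isomorphism theorem gives $\pi_1(M)/\Gamma \cong \mathbb{A}_3$, so $\Gamma$ has index three in $\pi_1(M)$ and $\tau: \hat{M} \to M$ is a threefold covering. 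Since $\D$ is simultaneously the universal cover of $\hat{M}$, one has $\pi_1(\hat{M}) = \Gamma$, whence $\pi_1(M)/\pi_1(\hat{M}) \cong \mathbb{A}_3$, the asserted isomorphism; and Theorem \ref{describe hatM} again supplies the global horizontal lift $\hat{\f}$ of $\hat{f} = f \circ \tau$.

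The point requiring care — and the one I regard as the main obstacle — is the remaining assertion that $f$ itself admits no global horizontal lift in this second case, since Theorem \ref{describe hatM} only produces a lift of $\hat{f}$ and does not by itself rule out a lift of $f$. To close this I would use the contrapositive of item (3) of Proposition \ref{prop:general}: were $f$ to possess a global horizontal lift, the homomorphism $c$ would be trivial, contradicting that its image is $\mathbb{A}_3$. For this to be airtight I would first check that $c$ is independent of the residual ambiguity in $\tilde{\f}$, namely that replacing $\tilde{\f}$ by $\delta\tilde{\f}$ with constant $\delta \in S^1$ leaves the defining relation $\gamma^*\tilde{\f} = c(\gamma)\tilde{\f}$ unchanged, so that $c$ is intrinsic to $f$. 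Granting this, non-triviality of $c$ is a genuine invariant of $f$, and Proposition \ref{prop:general}(3) then forbids a global lift of $f$, completing the second alternative.
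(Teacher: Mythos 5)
Your proposal is correct and follows the same route as the paper: the corollary is read off from Theorem \ref{describe hatM} together with Corollary \ref{cor:imc}, the two alternatives corresponding to the image of $c$ being trivial or all of $\mathbb{A}_3$, and the isomorphism $\pi_1(M)/\pi_1(\hat{M}) \cong \mathbb{A}_3$ coming from the first isomorphism theorem applied to $\Gamma = \ker(c) = \pi_1(\hat{M})$. Your additional care in deducing the non-existence of a global horizontal lift of $f$ in the second case from the contrapositive of Proposition \ref{prop:general}(3), after checking that $c$ is unchanged under the residual $S^1$-ambiguity $\tilde{\f} \mapsto \delta\tilde{\f}$, makes explicit a step the paper leaves implicit.
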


 %%%%%%%%%%%%%%%%%%%%%%%%%%%
\subsection{Threefold quotients of horizontally liftable minimal Lagrangian immersions}
 \label{subsection: compact case}
%%%%%%%%%%%%%%%%%%%%%%%%%%%

This following theorem can be viewed as a converse to the results in the previous two sections.

\begin{theorem} \label{covering to horizontal mLi}
Consider the commuting diagram
  \[
  \begin{diagram}
    \node{\D}  \arrow{r,l}{\tilde{\hat{\f}}} \arrow{s,l}{\hat{\pi}} \node{S^5} \arrow{s,r}{\pi_H}  \\
    \node{\hat{M}}\arrow{s,l}{\tau}\arrow{ne,l}{\hat{\f}} \arrow{r,l}{\hat{f}}\node{\C P^2} \\
    \node{M} 
  \end{diagram}
\]
 Here  $M$ and $\hat{M}$ are  Riemann surfaces and $\D$ is a contractible domain in $\C$.
 Moreover, $\hat{f} : \hat{M} \rightarrow \C P^2$  is a minimal Lagrangian immersion
 with (global) horizontal lift $\hat{\f} : \hat{M} \rightarrow S^5$
 and natural lift $\tilde{\hat{f}} : \D \rightarrow S^5.$ 
 We assume that
 $ \xymatrix{\D \ar[r]^{\hat{\pi}}&\hat{M}\ar[r]^{\tau}&M}$ is a sequence of coverings 
 and that $\tau$ has order three.
 
 Finally, we assume that for the action of $\pi_1(M)$ on $\D$ we have 
  $\tilde{\hat{\f}}(\gamma.z) = q(\gamma) \tilde{\hat{\f}}(z) $  for some  
  homomorphism $q: \pi_1(M) \rightarrow S^1$ and all $\gamma \in \pi_1(M)$
  and that $\pi_1(\hat{M})$ is a normal subgroup of $\pi_1(M).$
 Then there exists a minimal Lagrangian immersion $f: M \rightarrow \C P^2$ 
 satisfying $ f \circ \tau = \hat{f}$.
 Moreover,  the natural lifts $\tilde{f}$ and  $\tilde{\hat{\f}}$ are equal.
  \end{theorem}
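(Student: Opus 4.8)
The plan is to descend $\hat{f}$ along $\tau$. Since $\D$ is contractible, it is the universal cover of both $\hat{M}$ and $M$; hence $\pi_1(M)$ acts on $\D$ by deck transformations with $M=\pi_1(M)\backslash\D$, while $\pi_1(\hat{M})\leq\pi_1(M)$ acts with $\hat{M}=\pi_1(\hat{M})\backslash\D$, and $\pi=\tau\circ\hat{\pi}$. The normality of $\pi_1(\hat{M})$ in $\pi_1(M)$ together with $\tau$ having order three means that $\tau$ is a regular covering whose deck group is $\cong\mathbb{A}_3$, consistent with Corollary \ref{3-fold cover}.

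First I would apply the Hopf projection $\pi_H$ to the equivariance relation $\tilde{\hat{\f}}(\gamma.z)=q(\gamma)\,\tilde{\hat{\f}}(z)$. Since $q(\gamma)\in S^1$ and $\pi_H$ is invariant under scaling by $S^1$, the map $\tilde{\hat{f}}:=\pi_H\circ\tilde{\hat{\f}}:\D\to\C P^2$ satisfies $\tilde{\hat{f}}(\gamma.z)=\tilde{\hat{f}}(z)$ for every $\gamma\in\pi_1(M)$; that is, $\tilde{\hat{f}}$ is invariant under the full group $\pi_1(M)$, even though $\tilde{\hat{\f}}$ itself is only $S^1$-equivariant. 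Using the horizontal lift property $\pi_H\circ\hat{\f}=\hat{f}$ together with $\tilde{\hat{\f}}=\hat{\f}\circ\hat{\pi}$, this map is exactly $\tilde{\hat{f}}=\hat{f}\circ\hat{\pi}$, the natural lift of $\hat{f}$ to $\D$.

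Next, because $\tilde{\hat{f}}$ is $\pi_1(M)$-invariant and $M=\pi_1(M)\backslash\D$, it factors through $\pi$ to a well-defined map $f:M\to\C P^2$ with $f\circ\pi=\tilde{\hat{f}}$. I would then verify $f\circ\tau=\hat{f}$: from $f\circ\pi=\hat{f}\circ\hat{\pi}$ and $\pi=\tau\circ\hat{\pi}$ we obtain $f\circ\tau\circ\hat{\pi}=\hat{f}\circ\hat{\pi}$, and surjectivity of $\hat{\pi}$ gives $f\circ\tau=\hat{f}$. That $f$ is a conformal minimal Lagrangian immersion is a local matter inherited from $\hat{f}$: as $\tau$ is a covering, hence a local biholomorphism, over any sheet $U$ on which $\tau$ restricts to a diffeomorphism one has $f=\hat{f}\circ(\tau|_U)^{-1}$, so $f$ is an immersion, conformal, minimal and Lagrangian there, and such sheets cover $M$. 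Finally the natural lift $\tilde{f}=f\circ\pi$ coincides by construction with $\tilde{\hat{f}}$, and $\tilde{\hat{\f}}$ is its horizontal lift to $S^5$; this yields the asserted equality of the natural lifts.

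The step I expect to be the crux is establishing genuine $\pi_1(M)$-invariance of $\tilde{\hat{f}}$ rather than mere $\pi_1(\hat{M})$-invariance. This is precisely where the hypothesis that $\tilde{\hat{\f}}$ transforms by the $S^1$-valued homomorphism $q$ is indispensable: the scalar factor $q(\gamma)$ is exactly the datum that the Hopf projection forgets, so the obstruction to pushing $\hat{f}$ down from $\hat{M}$ to $M$ disappears after projecting to $\C P^2$. The normality and order-three hypotheses on $\tau$ keep the quotient $M$ a Riemann surface and the descent consistent, but the analytic content is carried entirely by the $S^1$-equivariance of the horizontal lift.
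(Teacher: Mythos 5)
Your proposal is correct and rests on the same key idea as the paper's proof: the Hopf projection $\pi_H$ forgets the scalar factor $q(\gamma)\in S^1$, so the $q$-equivariance of the horizontal lift becomes genuine $\pi_1(M)$-invariance of the projected map, which therefore descends to $M$. The only (cosmetic) difference is that you descend directly from $\D$ to $M$, while the paper first passes to $\hat{M}$ and the induced order-three automorphism $a_0^*$ before descending; your version also spells out the local verification of the immersion and minimal Lagrangian properties, which the paper leaves as "straightforward."
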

 
 \begin{proof}
 From item $(3)$ of Proposition 3.5 we know that $q(\gamma) = 1$ for $\gamma \in \pi_1(\hat{M}).$
 Hence $q$ descends to a homomorphism $q_0:  \pi_1(M) / {\pi_1(\hat{M})} \cong \mathbb{A}_3.$
 Let $a_0$ be a generator of  $ \pi_1(M) / {\pi_1(\hat{M})} $ and $a^*_0 = q_0(a_0)  \in S^1.$  
 Then $\tilde{\hat{\f}}(a_0.z) = q_0^* {\tilde{\hat{\f}}}(z)$ for $z \in \D$.  As a consequence, 
from $  \pi_H \circ \tilde{\hat{\f} } = \hat{f} \circ \hat{\pi}$ we obtain by projection
 $\hat{\f}(a_0^*.z) = [q_0]^* \hat{\f}(z)$, where $a_0^*$ is the image of the generator $a_0$ to an automorphism of $\hat{M}$, which makes sense, since $\pi_1(M)$ normalizes $\pi_1(\hat{M})$ .
 Thus there exists a map $f: M \rightarrow \C P^2$ satisfying  $ f \circ \tau = \hat{f}$.
 The remaining claims can be verified by a straightforward argument.
 \end{proof}

 By the discussion above  it is clear that to obtain a minimal Lagrangian immersion 
 from $M$ into $\C P^2$  with a global horizontal lift the fundamental group of $M$ needs to be equal to the fundamental group of 
 $\hat{M}$ or it needs to contain one more generator ``$\kappa$", the cube of which is contained in 
 $\pi_1(\hat{M}).$  The latter situation can be characterized as follows:
 
\begin{theorem} \label{degree covering to mLi}
Assume we have the situation as in Theorem  \ref{covering to horizontal mLi}, in particular, 
we assume $M \neq \hat{M}$. Let $\kappa$ be a generator of $\pi_1(M)$ which acts on $\D$ as 
a non-trivial symmetry of $\tilde{f},$ normalizes  $ \pi_1(\hat{M})$ and has order three as an 
automorphism of $\hat{M}$.
Then the minimal Legendrian immersion $\hat{\f} : \hat{M} \rightarrow S^5$ induces a 
minimal Lagrangian immersion $f:M \rightarrow \C P^2$ if and only if the additional generator 
$\kappa$ of $\pi_1(M)$
 maps under the monodromy  representation of $\hat{f}$ to a multiple of the identity matrix, $cI$.
 
 In this case $M$ is  the quotient of $\hat{M}$ induced by the symmetry  $\kappa$  of order three
 induced by $\kappa$. In particular,  $\hat{M}$ is a threefold cover of $M$.
\end{theorem}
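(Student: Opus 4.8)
The plan is to reduce the assertion to the descent criterion already established in Theorem~\ref{covering to horizontal mLi}, after translating the geometric condition ``$\hat{f}$ descends along $\tau$'' into an algebraic condition on the image of $\kappa$ under the monodromy representation of the extended frame $\tilde{\hat\F}$ of $\tilde{\hat{f}}$. Since $\kappa$ normalizes $\pi_1(\hat M)$ and acts on $\D$ as a symmetry of $\tilde{f}=\tilde{\hat{f}}$, the symmetry/monodromy representation of $\hat f$ extends from $\pi_1(\hat M)$ to the group generated by $\pi_1(\hat M)$ and $\kappa$, that is, to $\pi_1(M)$, so that $\rho(\kappa,\lambda)$ is well defined. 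The starting observation I would record is how $\kappa$ acts on the horizontal lift: the extended frame satisfies $\kappa^*\tilde{\hat\F}(z,\lambda)=\rho(\kappa,\lambda)\,\tilde{\hat\F}(z,\lambda)\,k(\kappa,z,\bar z)$ with the diagonal gauge $k$ of \eqref{define k} and a monodromy matrix $\rho(\kappa,\lambda)\in\Lambda SU(3)_\sigma$, and the isometry $\mathcal R(\kappa)\in\mathrm{Iso}_0(\C P^2)$ realizing the symmetry is the projectivization of $\rho(\kappa,\lambda)$ at $\lambda=1$. Because $k$ fixes $e_3$, reading off the third column (which yields the lift) gives $\kappa^*\tilde{\hat\f}=\rho(\kappa,1)\,\tilde{\hat\f}$.

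With this in hand I would run the equivalence. For the direction ``$\Leftarrow$'', if $\rho(\kappa)=cI$ then $\kappa^*\tilde{\hat\f}=c\,\tilde{\hat\f}$, so $\kappa$ acts on $\tilde{\hat\f}$ by a scalar in $S^1$; combined with the fact (item~$(3)$ of Proposition~\ref{prop:general}) that the associated homomorphism is trivial on $\pi_1(\hat M)$, this produces a homomorphism $q:\pi_1(M)\to S^1$ with $q(\kappa)=c$, which is exactly the hypothesis needed to apply Theorem~\ref{covering to horizontal mLi}. That theorem then yields a minimal Lagrangian immersion $f:M\to\C P^2$ with $f\circ\tau=\hat f$, and $f$ inherits minimality, the Lagrangian property and being an immersion from $\hat f$ since $\tau$ is a local biholomorphism. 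For the direction ``$\Rightarrow$'', if such an $f$ exists then $\hat f=f\circ\tau$ is constant on the fibres of $\tau$, i.e.\ invariant under the order-three deck transformation induced by $\kappa$; lifting to $\D$ this says $[\tilde{\hat\f}(\kappa.z)]=[\tilde{\hat\f}(z)]$ in $\C P^2$, so $\tilde{\hat\f}(\kappa.z)=c(z)\,\tilde{\hat\f}(z)$ for some $c(z)\in S^1$. Comparing with $\kappa^*\tilde{\hat\f}=\rho(\kappa,1)\tilde{\hat\f}$ and using that $f$ is full, so that the values of $\tilde{\hat\f}$ span $\C^3$, forces $\rho(\kappa,1)=cI$ with $c$ constant; and since $\kappa^3\in\pi_1(\hat M)$ while $\rho$ restricts to the trivial representation on $\pi_1(\hat M)$ at $\lambda=1$, we get $c^3=1$, so $c\in\mathbb{A}_3$, consistent with Corollary~\ref{cor:imc}.

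Finally I would record the geometric conclusion: $\langle\kappa\rangle\cong\mathbb{Z}/3$ acts as the deck group of the order-three covering $\tau$, hence freely and properly discontinuously, so $M=\hat M/\langle\kappa\rangle$ is precisely the threefold quotient of $\hat M$ by the symmetry $\kappa$, as also recorded in Corollary~\ref{3-fold cover}. The step I expect to be the main obstacle is the careful identification underlying the first paragraph: one must verify that the isometry $\mathcal R(\kappa)$ realizing the symmetry of $\tilde{f}$ is genuinely the projectivization of the monodromy matrix $\rho(\kappa,1)$, that the gauge factor $k(\kappa,z,\bar z)$ does not interfere (it acts as the identity on the third column producing the lift), and that $\rho$ restricts to the trivial representation on $\pi_1(\hat M)$ at $\lambda=1$. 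It is this last fact, together with fullness, that upgrades the pointwise projective identity $[\tilde{\hat\f}(\kappa.z)]=[\tilde{\hat\f}(z)]$ to the matrix identity $\rho(\kappa)=cI$ and pins down $c$ as a cube root of unity.
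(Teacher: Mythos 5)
Your proof is correct, and in fact it supplies considerably more detail than the paper does: the paper disposes of this theorem in two sentences, deferring the ``if'' direction to the diagram of Theorem \ref{describe hatM} and offering for the converse only the remark that $\kappa^*\hat{f}=\hat{f}$ forces $\hat{f}$ to descend. Your ``if'' direction is the same reduction the paper intends (scalar monodromy gives the homomorphism $q$ needed to invoke Theorem \ref{covering to horizontal mLi}), so there the approaches coincide. Where you genuinely go beyond the paper is the ``only if'' direction: the identification of the symmetry with the projectivized monodromy via the frame relation $\kappa^*\tilde{\hat{\F}}=\rho(\kappa,\lambda)\tilde{\hat{\F}}\,k$, the observation that $k$ fixes $e_3$, and the upgrade from the projective identity $[\tilde{\hat{\f}}(\kappa.z)]=[\tilde{\hat{\f}}(z)]$ to the matrix identity $\rho(\kappa,1)=cI$ using that the values of $\tilde{\hat{\f}}$ span $\C^3$, together with $c^3=1$ from $\kappa^3\in\pi_1(\hat{M})$. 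This is exactly the content the theorem needs and the paper leaves implicit. Two small points you should make explicit: first, $c(z)$ is a priori only a pointwise eigenvalue of the fixed matrix $\rho(\kappa,1)$, so you need continuity of $c$ plus connectedness of $\D$ (or, more cleanly, differentiate $\rho(\kappa,1)\tilde{\hat{\f}}=c\,\tilde{\hat{\f}}$ and use that $\f,\f_z,\f_{\bar z}$ form a frame) to conclude $c$ is constant before arguing that a single eigenspace contains the whole image; second, the spanning of $\C^3$ by the image of $\tilde{\hat{\f}}$ follows already from the Lagrangian condition (the image cannot lie in a $\C P^1$, which is a complex curve), so fullness is not strictly needed there, though it is needed to know $\rho$ is trivial on $\pi_1(\hat{M})$ at $\lambda=1$. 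Neither point is a gap, only a line each to add.
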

\begin{proof}
The ``if'' part follows easily from the diagram in Theorem  \ref{describe hatM}.
Conversely, put $\hat{f} = \pi_H \circ \hat{\f}.$ Then $\kappa^* \hat{f} = \hat{f}$ and $\hat{f}$ descends to a map
$f : M \rightarrow \C P^2$, proving the claim.
\end{proof}

 %%%%%%%%%%%%%%%%
\section{Symmetries for minimal Lagrangian immersions from arbitrary Riemann surfaces}
\label{sec:4}
%%%%%%%%%%%%%%%%%%%%%%%%%

\subsection{Basic definition of symmetry}
In \cite{DoMaB1} we studied symmetries for minimal Lagrangian immersions from contractible Riemann surfaces. Now
let $M$ be an arbitrary Riemann surface different from $S^2$.
Let $f: M \rightarrow \C P^2$ be an immersion.
A central topic in this paper will be the notion of a ``symmetry of $f$''.
While a basic definition of a symmetry $\mathcal{R}$ for $f$ may  only be 
$$\mathcal{R} f(M) = f(M),$$ 
 for $\mathcal{R}\in \mathrm{Iso}_0(\mathbb{C}P^2)=PSU(3)$, it is certainly  helpful to have additional information. The idea how this could be
obtained will follow from

\begin{theorem}\label{thm:symmetry}
Let $M$ be a   Riemann surface and $\pi: \D\rightarrow M$ its universal cover
and  $f:M \rightarrow \C P^2$  an immersion. We assume
\begin{itemize}
\item[a)] The metric induced by $f$ on $M$ is complete,
\item[b)] $\mathcal{R} f(M) = f(M)$ for some  $\mathcal{R} \in  \mathrm{Iso}_0(\C P^2)$.
\end{itemize}
Then there exists some $\tilde{\gamma}  \in \mathrm{Aut}(\D)$ such that for the natural lift
$\tilde{f} = f \circ \pi$ we have
\begin{equation}
\tilde{f}(\tilde{\gamma}. z) = \mathcal{R} \tilde{f}(z) \hspace{2mm} \mbox{for all} \hspace{2mm} z \in \D.
\end{equation}
Moreover, 
$\tilde{\gamma}$ normalizes $\pi_1(M)$, the group  of deck transformations corresponding to $M$ on $\D$,
 therefore there exists an  automorphism $\gamma \in \mathrm{Aut}(M)$ such that
\begin{equation}
f(\gamma.z) = \mathcal{R} f(z)  \hspace{2mm} \mbox{for all} \hspace{2mm} z \in M.
\end{equation}
\end{theorem}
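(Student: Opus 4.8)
The plan is to reduce the whole statement to the single task of lifting the symmetry to the universal cover, that is, to producing $\tilde\gamma\in\mathrm{Aut}(\D)$ with $\tilde f\circ\tilde\gamma=\mathcal R\circ\tilde f$; once this is available, the normalizer property and the descent to $M$ are short formal arguments. The starting point is that $\mathcal R\in\mathrm{Iso}_0(\C P^2)=PSU(3)$ acts on $\C P^2$ as a \emph{biholomorphic} isometry, so that $\mathcal R\circ\tilde f:\D\to\C P^2$ is again a conformal immersion and pulls the Fubini--Study metric back to the same metric as $\tilde f$: indeed $(\mathcal R\circ\tilde f)^{*}g_{\mathrm{FS}}=\tilde f^{*}(\mathcal R^{*}g_{\mathrm{FS}})=\tilde f^{*}g_{\mathrm{FS}}=:\tilde g=\pi^{*}g$. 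By hypothesis (a), and since $\pi:(\D,\tilde g)\to(M,g)$ is a Riemannian covering, the metric $\tilde g$ on $\D$ is complete. Moreover $\tilde f(\D)=f(M)=\mathcal R f(M)=(\mathcal R\circ\tilde f)(\D)$, so the two conformal immersions $\tilde f$ and $\mathcal R\circ\tilde f$ have the very same image.

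First I would construct the germ of $\tilde\gamma$ at a suitable base point. Choose $z_0\in\D$ so that both $p_0:=\tilde f(z_0)$ and $q_0:=\mathcal R p_0$ are points at which the immersed surface $f(M)$ is locally a single embedded sheet; since $f$ is minimal Lagrangian, hence real-analytic, its self-intersection locus is nowhere dense, so a generic $z_0$ works. Because $\mathcal R$ is a diffeomorphism of $\C P^2$ carrying $f(M)$ onto itself, it maps the embedded sheet through $p_0$ onto the (unique) embedded sheet through $q_0$; the latter is parametrized by $\tilde f$ near some $w_0$ with $\tilde f(w_0)=q_0$, and there $\tilde f$ is invertible. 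Hence $\tilde\gamma:=\tilde f^{-1}\circ\mathcal R\circ\tilde f$ is a well-defined local diffeomorphism near $z_0$, with $\tilde\gamma(z_0)=w_0$ and $\tilde f\circ\tilde\gamma=\mathcal R\circ\tilde f$ near $z_0$. As a composition of local isometries it is a local isometry of $(\D,\tilde g)$, and since $\tilde g$ is conformal to the flat metric and $\tilde\gamma$ is orientation preserving (both $\mathcal R$ and the complex structure it respects are), $\tilde\gamma$ is holomorphic.

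The heart of the argument, and the step I expect to be the main obstacle, is the globalization of this germ. I would continue $\tilde\gamma$ along every path in $\D$ by the same recipe $\tilde\gamma=\tilde f^{-1}\circ\mathcal R\circ\tilde f$, inverting $\tilde f$ locally near the current image point — always possible, as $\tilde f$ is everywhere an immersion, and the requisite preimage always exists because $\mathcal R\tilde f(z)\in f(M)=\tilde f(\D)$. The only conceivable failure is the image point escaping $\D$ in finite parameter time; this is precisely what completeness excludes, since $\tilde\gamma$ is a local $\tilde g$-isometry and thus sends a finite-length path to a finite-length path, which by Hopf--Rinow converges in $(\D,\tilde g)$, so the continuation proceeds without obstruction along geodesics joining $z_0$ to any point. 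As $\D$ is simply connected, the monodromy theorem renders the continuation single-valued, yielding a global holomorphic map $\tilde\gamma:\D\to\D$ with $\tilde f\circ\tilde\gamma=\mathcal R\circ\tilde f$ throughout. Running the identical construction with $\mathcal R^{-1}$ produces a two-sided inverse, so $\tilde\gamma$ is a holomorphic automorphism of $\D$, i.e. $\tilde\gamma\in\mathrm{Aut}(\D)$, which is the first displayed identity.

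Finally I would read off the normalizer property and descend. For a deck transformation $\delta\in\pi_1(M)$ one has $\tilde f\circ\delta=\tilde f$ because $\pi\circ\delta=\pi$. Using $\tilde f\circ\tilde\gamma=\mathcal R\circ\tilde f$ in the equivalent form $\mathcal R\circ\tilde f\circ\tilde\gamma^{-1}=\tilde f$, one computes $\tilde f\circ(\tilde\gamma\,\delta\,\tilde\gamma^{-1})=\mathcal R\circ\tilde f\circ\delta\circ\tilde\gamma^{-1}=\mathcal R\circ\tilde f\circ\tilde\gamma^{-1}=\tilde f$, so $\tilde\gamma\,\delta\,\tilde\gamma^{-1}$ is again a deck transformation. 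Thus $\tilde\gamma$ normalizes $\pi_1(M)$ and descends to an automorphism $\gamma\in\mathrm{Aut}(M)$ with $\pi\circ\tilde\gamma=\gamma\circ\pi$. Evaluating $f(\gamma.\pi(z))=f(\pi(\tilde\gamma.z))=\tilde f(\tilde\gamma.z)=\mathcal R\tilde f(z)=\mathcal R f(\pi(z))$ and invoking surjectivity of $\pi$ gives $f(\gamma.z)=\mathcal R f(z)$ for all $z\in M$, completing the proof.
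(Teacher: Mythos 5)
Your overall strategy --- lift, build $\tilde\gamma$ locally as $\tilde f^{-1}\circ\mathcal R\circ\tilde f$, globalize using completeness and analyticity, then descend --- is exactly the one the paper has in mind; the paper's own proof consists of a single line deferring the existence of $\tilde\gamma$ to Section 4.1 of Part I and an asserted sentence for the normalizer property, so your write-up is essentially a fleshed-out version of the intended argument. Two steps, however, have genuine gaps. First, in the continuation step the justification ``the requisite preimage always exists because $\mathcal R\tilde f(z)\in\tilde f(\D)$'' is not enough: what you need is a preimage \emph{close to the previously chosen value} $\tilde\gamma(z)$, and at a self-intersection of the image $f(M)$ the point $\mathcal R\tilde f(z')$ for $z'$ near $z$ may lie on a different local sheet of $\tilde f(\D)$ than the one parametrized near $\tilde\gamma(z)$, so the naive local inversion of $\tilde f$ can fail or jump branches. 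Your Hopf--Rinow remark rules out the image escaping to the ideal boundary, but not this sheet-switching. The standard repair is to continue $\tilde\gamma$ not via the formula but as a local isometry of the complete, simply connected, real-analytic Riemannian surface $(\D,\tilde g)$ (the Kobayashi--Nomizu extension theorem for isometric immersions of analytic manifolds), and then to propagate the identity $\tilde f\circ\tilde\gamma=\mathcal R\circ\tilde f$ from the initial neighbourhood to all of $\D$ by real-analyticity of both sides. Note that this, like your ``nowhere dense self-intersection locus'' claim, uses that $f$ is minimal (hence $\tilde f$ and $\tilde g$ are real-analytic), although the theorem is stated for a bare immersion; that hypothesis should be made explicit.

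Second, the normalizer step: your computation correctly shows $\tilde f\circ(\tilde\gamma\,\delta\,\tilde\gamma^{-1})=\tilde f$, but this only places $\tilde\gamma\,\delta\,\tilde\gamma^{-1}$ in the full stabilizer $\Sigma=\{\mu\in\mathrm{Aut}(\D)\mid\tilde f\circ\mu=\tilde f\}$, which contains the deck group $\pi_1(M)$ but can be strictly larger whenever $f$ factors through a nontrivial further covering $M\rightarrow M'$. To conclude $\tilde\gamma\,\delta\,\tilde\gamma^{-1}\in\pi_1(M)$ one needs $\Sigma=\pi_1(M)$ (i.e.\ $M$ is the maximal quotient through which $f$ factors) or a separate argument. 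To be fair, the paper asserts ``$\pi_1(M)$ is normalized by $\tilde\gamma$'' with no more justification than you give (and its introduction even phrases this as a hypothesis, ``when $\tilde\gamma$ normalizes $\pi_1(M)$''), so this gap is shared with the source. A minor further point: the claim that $\tilde\gamma$ is orientation-preserving ``since $\mathcal R$ is'' does not follow --- a holomorphic isometry of $\C P^2$ can reverse the orientation of a Lagrangian (totally real) surface it preserves, e.g.\ a coordinate swap on the Clifford torus --- so without further argument $\tilde\gamma$ might a priori be antiholomorphic rather than an element of $\mathrm{Aut}(\D)$.
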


\begin{proof}
Let $\tilde{f} : \D \rightarrow \C P^2$ denote the lift of $f$ to the universal cover 
$\pi: \D \rightarrow M$ of $M$.

It is easy to see that $\tilde{f}$ satisfies the conditions a) and b) as well. Therefore, by the argument given in Section 4.1,
\cite{DoMaB1},
there exists some automorphism  $\tilde{\gamma}$ of $\D$
such that $\tilde{f}(\tilde{\gamma}.z)  = \mathcal{R} \tilde{f}(z)$ for all $z\in \D$.

Moreover, 
$\pi_1(M)$ is normalized by $\tilde{\gamma}$, 
therefore $\tilde{\gamma}$ 
descends to an automorphism of $M$ and the last claim follows by a simple calculation.
\end{proof} 
 
 In view of the theorem above,  in this paper, for an immersion $f: M\rightarrow \C P^2$,  a \emph{symmetry} will always be a pair $(\gamma, \mathcal{R}) \in
\mathrm{Aut}(M)\times \mathrm{Iso}_0(\C P^2)$, such that 
\begin{equation} \label{basic-symmetry}
f(\gamma . z) = \mathcal{R}  f(z) \hspace{2mm} \mbox{for all} \hspace{2mm} z \in M
\end{equation}
 holds.
 
 An analogous definition will apply to immersions $\f : M \rightarrow S^5$.

 From here on we will always assume 
 that $f: M \rightarrow \mathbb{C}P^2$ is  \emph{full},  i.e.  if we have some  $\mathcal{R} \in  \mathrm{Iso}_0(\C P^2)$ such that
$\mathcal{R}f(z) = f(z)$ for all $z \in M$, then $\mathcal{R}  = id$.

The following result follows by a straightforward argument.
\begin{lemma}
Let $M$ be a Riemann surface different from $S^2$. Let  $f: M\rightarrow \C P^2$ be a full immersion and
$(\gamma, \mathcal{R}) \in
\mathrm{Aut}(M)\times \mathrm{Iso}_0(\C P^2)$ a symmetry of $f$.
Then $\mathcal{R}$ is uniquely determined by $\gamma$ and $f$.
Moreover, if $G$ is a group of automorphisms of $M$ such that for each $\gamma \in G$ there exists some $\mathcal{R}_\gamma$ such that $(\gamma, \mathcal{R}_\gamma )$ is a symmetry for $f$, then the map $\gamma \mapsto \mathcal{R}_\gamma$  is well defined and a homomorphism of groups.
\end{lemma}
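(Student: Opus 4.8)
The plan is to derive both assertions directly from the fullness hypothesis, which is essentially the only input available.

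For the uniqueness claim, I would suppose that $(\gamma, \mathcal{R}_1)$ and $(\gamma, \mathcal{R}_2)$ are two symmetries of $f$ sharing the same automorphism $\gamma$. Evaluating the defining relation \eqref{basic-symmetry} for each gives $\mathcal{R}_1 f(z) = f(\gamma . z) = \mathcal{R}_2 f(z)$ for all $z \in M$, so that $\mathcal{R}_2^{-1}\mathcal{R}_1$ fixes $f(z)$ pointwise. Since $\mathrm{Iso}_0(\C P^2)$ is a group, $\mathcal{R}_2^{-1}\mathcal{R}_1 \in \mathrm{Iso}_0(\C P^2)$, and the fullness of $f$ then forces $\mathcal{R}_2^{-1}\mathcal{R}_1 = \mathrm{id}$, i.e. $\mathcal{R}_1 = \mathcal{R}_2$. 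Consequently, whenever a symmetry $(\gamma, \mathcal{R}_\gamma)$ exists for a given $\gamma$, the isometry $\mathcal{R}_\gamma$ is unambiguously determined by $\gamma$ and $f$, so the map $\gamma \mapsto \mathcal{R}_\gamma$ is well defined on $G$.

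For the homomorphism property, I would fix $\gamma_1, \gamma_2 \in G$ and compute $f((\gamma_1\gamma_2). z)$ in two ways. Using associativity of the action and applying \eqref{basic-symmetry} twice yields $f(\gamma_1 . (\gamma_2 . z)) = \mathcal{R}_{\gamma_1} f(\gamma_2 . z) = \mathcal{R}_{\gamma_1}\mathcal{R}_{\gamma_2} f(z)$. Since $G$ is a group, $\gamma_1\gamma_2 \in G$, so the symmetry $(\gamma_1\gamma_2, \mathcal{R}_{\gamma_1\gamma_2})$ is available and gives $f((\gamma_1\gamma_2) . z) = \mathcal{R}_{\gamma_1\gamma_2} f(z)$. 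Comparing the two expressions and invoking the uniqueness just established (applied to the automorphism $\gamma_1\gamma_2$) yields $\mathcal{R}_{\gamma_1\gamma_2} = \mathcal{R}_{\gamma_1}\mathcal{R}_{\gamma_2}$, which is exactly the homomorphism identity.

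The argument presents no genuine obstacle; it is a formal consequence of fullness. The only points requiring care are bookkeeping ones: confirming that $\mathcal{R}_2^{-1}\mathcal{R}_1$ and the product $\mathcal{R}_{\gamma_1}\mathcal{R}_{\gamma_2}$ remain inside $\mathrm{Iso}_0(\C P^2)$ (automatic, since it is a group), and fixing the convention for the action of $\mathrm{Aut}(M)$ on $M$ so that $(\gamma_1\gamma_2) . z = \gamma_1 . (\gamma_2 . z)$, which is what makes the composition $\mathcal{R}_{\gamma_1}\mathcal{R}_{\gamma_2}$ appear in the correct order. With that convention fixed, the result follows immediately.
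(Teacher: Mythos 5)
Your proof is correct and is precisely the ``straightforward argument'' the paper alludes to without writing out: uniqueness of $\mathcal{R}_\gamma$ from fullness, then the homomorphism identity by computing $f((\gamma_1\gamma_2).z)$ in two ways and invoking that uniqueness. Nothing is missing.
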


Now, as already carried out in a proof above, let's lift $f$ to the universal cover $\D$ with projection $\pi : \D \rightarrow M$, by putting  $\tilde{f}: \D \rightarrow \mathbb{C}P^2$,  where $\tilde{f} = f \circ \pi$.
It is well  known that for each symmetry $(\gamma, \mathcal{R})$ of $f$  one finds on the universal cover 
$\D$  some automorphism $\tilde{\gamma}$  such that 
$\pi \circ \tilde{\gamma} = \gamma \circ  \pi$ and moreover
\begin{equation}\label{eq:sym_tilde_f}
\tilde{f}(\tilde{\gamma}. z) = \mathcal{R} \tilde{f}(z) \hspace{2mm} \mbox{for all} \hspace{2mm} z \in \D.
\end{equation}

\begin{lemma}
Let's keep the assumptions and the notation introduced above. If  $f: M \rightarrow \mathbb{C}P^2$ is full, then also  $\tilde{f}: \D \rightarrow \mathbb{C}P^2$ is full.
Moreover, if $\Gamma$ is a group of symmetries of $f$, then 
$\tilde{\Gamma} = \{ (\tilde{\gamma}, \mathcal{R}) \in \mathrm{Aut}(\D)\times 
 \mathrm{Iso}_0(\mathbb{C}P^2) | \pi \circ \tilde{\gamma} = \gamma \circ \pi \text{ and }
  (\gamma, \mathcal{R})\in \Gamma \}$ 
  is a group of symmetries of  $\tilde{f}$.
\end{lemma}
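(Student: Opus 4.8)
The plan is to prove the two assertions separately, both by reducing to elementary properties of the universal covering projection $\pi: \D \to M$. For the first assertion I would pull fullness back along $\pi$. Suppose $\mathcal{R} \in \mathrm{Iso}_0(\C P^2)$ satisfies $\mathcal{R}\tilde{f}(z) = \tilde{f}(z)$ for all $z \in \D$. Since $\tilde{f} = f \circ \pi$ and $\pi$ is a covering map, hence surjective onto $M$, this is equivalent to $\mathcal{R} f(w) = f(w)$ for every $w \in M$. Fullness of $f$ then forces $\mathcal{R} = \mathrm{id}$, which is precisely fullness of $\tilde{f}$. This step is immediate and needs no further input.

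For the second assertion I would verify in turn that (i) every element of $\tilde{\Gamma}$ is a symmetry of $\tilde{f}$, and (ii) $\tilde{\Gamma}$ is a subgroup of $\mathrm{Aut}(\D) \times \mathrm{Iso}_0(\C P^2)$. For (i), fix $(\tilde{\gamma}, \mathcal{R}) \in \tilde{\Gamma}$, so that $\pi \circ \tilde{\gamma} = \gamma \circ \pi$ for some $(\gamma, \mathcal{R}) \in \Gamma$. Equation \eqref{eq:sym_tilde_f} already furnishes \emph{one} lift $\tilde{\gamma}_0$ of $\gamma$ with $\tilde{f}(\tilde{\gamma}_0. z) = \mathcal{R}\tilde{f}(z)$. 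Any other lift of $\gamma$, and in particular $\tilde{\gamma}$, differs from $\tilde{\gamma}_0$ by precomposition with a deck transformation $\delta \in \pi_1(M)$. Because $\tilde{f} = f \circ \pi$ is invariant under $\pi_1(M)$ (as $\pi \circ \delta = \pi$), the relation $\tilde{f}(\tilde{\gamma}. z) = \mathcal{R}\tilde{f}(z)$ persists verbatim for $\tilde{\gamma}$. Hence every element of $\tilde{\Gamma}$ is genuinely a symmetry of $\tilde{f}$.

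For (ii) I would exploit that lifting is compatible with the group operations. A clean way to package this is to observe that $\tilde{\Gamma}$ is exactly the preimage of $\Gamma$ under the natural homomorphism $N \times \mathrm{Iso}_0(\C P^2) \to \mathrm{Aut}(M) \times \mathrm{Iso}_0(\C P^2)$, where $N$ is the normalizer of $\pi_1(M)$ in $\mathrm{Aut}(\D)$ and the first factor is the descent map $\tilde{\gamma} \mapsto \gamma$; note that any $\tilde{\gamma}$ occurring in $\tilde{\Gamma}$ does lie in $N$, since $\pi \circ \tilde{\gamma} = \gamma \circ \pi$ with $\gamma \in \mathrm{Aut}(M)$, as guaranteed by Theorem \ref{thm:symmetry}. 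A preimage of a subgroup under a homomorphism is a subgroup, so $\tilde{\Gamma}$ is a group. If one prefers a direct check, closure follows from $\pi \circ (\tilde{\gamma}_1 \tilde{\gamma}_2) = (\gamma_1\gamma_2) \circ \pi$ together with $(\gamma_1\gamma_2, \mathcal{R}_1\mathcal{R}_2) \in \Gamma$; the identity $(\mathrm{id}_{\D}, \mathrm{id})$ covers $(\mathrm{id}_M, \mathrm{id}) \in \Gamma$; and $\pi \circ \tilde{\gamma} = \gamma \circ \pi$ yields $\pi \circ \tilde{\gamma}^{-1} = \gamma^{-1} \circ \pi$, so inverses remain in $\tilde{\Gamma}$.

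The only genuinely delicate point, and the place where I would spend care, is step (i): equation \eqref{eq:sym_tilde_f} is guaranteed only for \emph{some} particular lift of $\gamma$, whereas $\tilde{\Gamma}$ by definition collects \emph{all} lifts. The deck invariance $\tilde{f} \circ \delta = \tilde{f}$ is what propagates the symmetry relation from one lift to every other. Fullness of $\tilde{f}$, established in the first part, additionally guarantees via the preceding lemma that the $\mathcal{R}$ attached to a given $\tilde{\gamma}$ is unambiguous, so that $\tilde{\Gamma}$ is well defined as a set of pairs. Everything else reduces to routine bookkeeping of covering-space identities.
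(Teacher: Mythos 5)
Your proof is correct. Note first that the paper itself offers no proof of this lemma: it is stated as a routine consequence of the covering-space facts recalled just before it, so there is nothing to compare against except the implicit ``well known'' argument, which is exactly what you supply. Both halves of your argument are sound: fullness pulls back along the surjection $\pi$, and $\tilde{\Gamma}$ is the preimage of $\Gamma$ under the descent homomorphism, hence a group. One remark on the point you flag as delicate: it is in fact immediate, because for \emph{any} lift $\tilde{\gamma}$ with $\pi \circ \tilde{\gamma} = \gamma \circ \pi$ one computes directly
\[
\tilde{f}(\tilde{\gamma}.z) = f\bigl(\pi(\tilde{\gamma}.z)\bigr) = f\bigl(\gamma.\pi(z)\bigr) = \mathcal{R}\, f\bigl(\pi(z)\bigr) = \mathcal{R}\,\tilde{f}(z),
\]
using only $\tilde{f} = f \circ \pi$ and the symmetry of $f$ itself; there is no need to single out the particular lift furnished by \eqref{eq:sym_tilde_f} and then transport the relation across deck transformations. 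Your detour through deck invariance is equivalent and harmless, but the direct computation shows the uniformity over all lifts for free.
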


As a special case, we have
\begin{corollary} \label{decktrafo}
We retain the assumptions and the notation of the previous lemmas.
Using the group of symmetries 
$\Gamma = \pi_1(M) \times \{I\}$ of $M$, where $\pi_1(M)$ acts trivially on $M$, we infer that $\tilde{\Gamma} \cong \pi_1(M) \times \{I\}$ is a group of symmetries of $\tilde{f}$, where here $\pi_1(M)$ acts on $\D$ as the group of deck transformations of $M$.
\end{corollary}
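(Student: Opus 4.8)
The plan is to specialize the immediately preceding lemma to the degenerate case in which the automorphism factor is trivial, so that the lifted group is forced to be the deck transformation group. First I would record that, viewing $\pi_1(M)$ as acting trivially on $M$, every element of $\Gamma = \pi_1(M) \times \{I\}$ is, as a symmetry of $f$, the trivial pair $(\mathrm{id}_M, I)$: since $f(\mathrm{id}_M . z) = f(z) = I \cdot f(z)$ holds for all $z \in M$, the pair $(\mathrm{id}_M, I)$ is a genuine symmetry, and $\Gamma$ is a (degenerate) group of symmetries of $f$ to which the previous lemma applies directly.

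Next I would identify the associated lifted group $\tilde{\Gamma}$. By its definition, $\tilde{\Gamma}$ collects all pairs $(\tilde{\gamma}, \mathcal{R}) \in \mathrm{Aut}(\D) \times \mathrm{Iso}_0(\C P^2)$ with $(\gamma, \mathcal{R}) \in \Gamma$ and $\pi \circ \tilde{\gamma} = \gamma \circ \pi$. Here $\gamma = \mathrm{id}_M$ and $\mathcal{R} = I$, so the covering condition reads $\pi \circ \tilde{\gamma} = \pi$, which is exactly the defining property of a deck transformation of $\pi : \D \rightarrow M$. Hence the automorphisms $\tilde{\gamma}$ appearing in $\tilde{\Gamma}$ are precisely the deck transformations, and the assignment $\tilde{\gamma} \mapsto (\tilde{\gamma}, I)$ identifies $\tilde{\Gamma}$ with the deck transformation group paired with $\{I\}$, giving $\tilde{\Gamma} \cong \pi_1(M) \times \{I\}$ as claimed.

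Finally I would check the symmetry relation, which in this case is automatic: for any deck transformation $\tilde{\gamma}$ one has $\tilde{f}(\tilde{\gamma}.z) = f(\pi(\tilde{\gamma}.z)) = f(\pi(z)) = \tilde{f}(z) = I \cdot \tilde{f}(z)$, using $\pi \circ \tilde{\gamma} = \pi$ together with $\tilde{f} = f \circ \pi$. Thus each $(\tilde{\gamma}, I) \in \tilde{\Gamma}$ is a symmetry of $\tilde{f}$, and the previous lemma already guarantees that these pairs form a group under composition.

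I do not anticipate any real obstacle, as the content is essentially bookkeeping atop the preceding lemma. The only point requiring a moment's care is the abuse of notation in ``$\pi_1(M)$ acts trivially on $M$'': one must keep distinct the trivial action on $M$, which collapses all of $\pi_1(M)$ to $\mathrm{id}_M$, from the genuinely free action on $\D$ by deck transformations. It is exactly this distinction that makes $\tilde{\Gamma}$ non-trivial even though $\Gamma$ is degenerate downstairs, and making this explicit is what turns the statement into a clean consequence of the lemma.
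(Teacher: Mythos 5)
Your proposal is correct and matches the paper's (implicit) argument: the paper states this corollary without proof precisely because it is the specialization of the preceding lemma to $\gamma=\mathrm{id}_M$, $\mathcal{R}=I$, where the lifting condition $\pi\circ\tilde{\gamma}=\pi$ singles out the deck transformations. Your bookkeeping, including the verification $\tilde{f}(\tilde{\gamma}.z)=f(\pi(\tilde{\gamma}.z))=f(\pi(z))=\tilde{f}(z)$, is exactly the intended reasoning.
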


%%%%%%%%%%%%%%%%%%%%%%%%%%
\subsection{The action of symmetries on horizontal lifts and
gauged frames}
\label{subsection: action of symmetries}

From now on, we restrict our attention to  a full minimal Lagrangian immersion $f:M \rightarrow \C P^2$  into $\C P^2$ from a Riemann surface $M$ different from $S^2$. 
Such immersions do not have any complex points (see \cite{DoMaB1}).
Thus the natural lift $\tilde{f}: \D\rightarrow \C P^2$ of $f$ to the universal cover $\D$ of $M$, as introduced above, 
also is a full minimal Lagrangian immersion. It follows from Corollary \ref{Cor:globle-lift} that $\tilde{f}$ 
is horizontally liftable  with its horizontal (Legendrian) lift $\tilde{\f}$. 

Assume that we have a group of symmetries of type $(\gamma, \mathcal{R})$ of $f$ 
such that
\begin{equation}
f(\gamma.z) = \mathcal{R} f(z), \quad z \in M.
\end{equation}

Note that  $\mathcal{R} =  \mathcal{R}_\gamma$ is uniquely determined, if $f$ is full.
As a consequence, the map $\gamma \mapsto \mathcal{R}$ is a homomorphism of groups.
From here on we will frequently write $[R]$ for the isometry of $\C P^2$ naturally induced
by some $R \in SU(3)$. 
For the horizontal lift $\tilde{\f}$   of the natural lift  $\tilde{f}$    we obtain

\begin{lemma}
Let $f$ be a full  minimal Lagrangian surface  and  $\tilde{\f}$  a
 horizontal lift of its natural lift $\tilde{f}$ to the universal cover $\D$.
If $(\gamma, \mathcal{R})$  is a symmetry of $f$, then
$\tilde{\f} (\tilde{\gamma}.z) = \mathring{c}_{\gamma} R_{\gamma} \tilde{\f}(z)$ for some $R_{\gamma} \in SU(3)$ satisfying $[R_{\gamma}] = \mathcal{R}$, some  $\mathring{c}_{\gamma} \in S^1$ and all $z\in \D$.
\end{lemma}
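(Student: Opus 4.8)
The plan is to exhibit two a priori different horizontal lifts of one and the same map $\D\to\C P^2$ and then invoke the uniqueness of horizontal lifts up to an $S^1$-factor. First, since $\mathcal{R}\in\mathrm{Iso}_0(\C P^2)=PSU(3)$ and the projection $SU(3)\to PSU(3)$ is surjective, I would choose some $R_\gamma\in SU(3)$ with $[R_\gamma]=\mathcal{R}$; such a choice is unique only up to a cube root of unity, but this ambiguity will be absorbed into the scalar $\mathring{c}_\gamma$ and does not affect the statement. Recall from \eqref{eq:sym_tilde_f} that the symmetry lifts to $\D$ as $\tilde{f}(\tilde{\gamma}.z)=\mathcal{R}\tilde{f}(z)$ for all $z\in\D$.

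Next I would introduce the two maps $\Phi_1(z):=\tilde{\f}(\tilde{\gamma}.z)$ and $\Phi_2(z):=R_\gamma\tilde{\f}(z)$ from $\D$ to $S^5$ and check that both project under the Hopf fibration $\pi_H$ to the single map $\mathcal{R}\tilde{f}=\tilde{f}\circ\tilde{\gamma}$. For $\Phi_1$ this is immediate from \eqref{eq:sym_tilde_f}, and for $\Phi_2$ it follows from $\pi_H\circ R_\gamma=[R_\gamma]\circ\pi_H=\mathcal{R}\circ\pi_H$ on $S^5$. Hence $\Phi_1$ and $\Phi_2$ are two lifts of the same map, and therefore differ pointwise by a smooth function $\mathring{c}_\gamma:\D\to S^1$. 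It then remains to upgrade this $S^1$-valued function to a constant, and this is where horizontality enters.

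The crucial step is to verify that both $\Phi_1$ and $\Phi_2$ are \emph{horizontal} lifts, so that the uniqueness clause of the horizontal lift theorem (\cite{DoMaB1}, Theorem 2.5, equivalently Corollary \ref{Cor:globle-lift}) forces $\mathring{c}_\gamma$ to be constant. For $\Phi_2$ this uses only that $R_\gamma$ is unitary: since the Hermitian product is $SU(3)$-invariant, $d\Phi_2\cdot\overline{\Phi_2}=R_\gamma\,d\tilde{\f}\cdot\overline{R_\gamma\tilde{\f}}=d\tilde{\f}\cdot\overline{\tilde{\f}}=0$ by the horizontality \eqref{horizontal} of $\tilde{\f}$. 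For $\Phi_1$ it uses that $\tilde{\gamma}$ is a \emph{holomorphic} automorphism of $\D$: writing $w=\tilde{\gamma}.z$, the chain rule gives $(\Phi_1)_z=\tilde{\gamma}'(z)\,\tilde{\f}_w(w)$ and $(\Phi_1)_{\bar z}=\overline{\tilde{\gamma}'(z)}\,\tilde{\f}_{\bar w}(w)$, whence $(\Phi_1)_z\cdot\overline{\Phi_1}=\tilde{\gamma}'(z)\big(\tilde{\f}_w\cdot\overline{\tilde{\f}}\big)=0$ and similarly for the $\bar z$ derivative, again by \eqref{horizontal}.

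The main obstacle, and really the only nontrivial point, is ensuring that horizontality is preserved under pullback by $\tilde{\gamma}$; this hinges on $\tilde{\gamma}$ being holomorphic rather than anti-holomorphic, which holds because it is an element of $\mathrm{Aut}(\D)$ lifting the biholomorphism $\gamma\in\mathrm{Aut}(M)$ (had $\tilde{\gamma}$ been anti-holomorphic, the roles of $\tilde{\f}_w$ and $\tilde{\f}_{\bar w}$ would mix and the argument would break). Once both $\Phi_1$ and $\Phi_2$ are known to be horizontal lifts of the single minimal Lagrangian immersion $\mathcal{R}\tilde{f}$ on the contractible domain $\D$, the uniqueness-up-to-$S^1$ statement yields $\Phi_1=\mathring{c}_\gamma\Phi_2$ with $\mathring{c}_\gamma\in S^1$ constant, which is exactly $\tilde{\f}(\tilde{\gamma}.z)=\mathring{c}_\gamma R_\gamma\tilde{\f}(z)$, with $[R_\gamma]=\mathcal{R}$ as required.
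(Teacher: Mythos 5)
Your proposal is correct and follows essentially the same route as the paper: both arguments exhibit $\tilde{\f}(\tilde{\gamma}.z)$ and $R_\gamma\tilde{\f}(z)$ as two horizontal lifts of the single map $\mathcal{R}\tilde{f}=\tilde{f}\circ\tilde{\gamma}$ and invoke the uniqueness of horizontal lifts up to a constant $S^1$-factor on the contractible domain $\D$. You merely spell out the horizontality checks (unitarity of $R_\gamma$ and holomorphy of $\tilde{\gamma}$) that the paper dismisses as ``easy to verify.''
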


\begin{proof} From the above discussion, there exists a symmetry $(\tilde{\gamma}, \mathcal{R})$ of $\tilde{f}$ such that \eqref{eq:sym_tilde_f} holds. 
 It is easy to verify that $\tilde{\f}(\tilde{\gamma}.z)$ is a horizontal lift of $\tilde{f}(\tilde{\gamma}.z)$ and 
$R_{\gamma}\tilde{\f}(z)$ is a horizontal lift of $\mathcal{R}\tilde{f}(z)$, where $\mathcal{R} = [R_{\gamma}]$, $R_{\gamma} \in SU(3)$. 
It follows from \eqref{eq:sym_tilde_f} that $\tilde{\f}(\tilde{\gamma}.z)$ and $R_{\gamma} \tilde{\f}(z)$ 
only differ by a constant  $\mathring{c}_{\gamma} \in S^1$. 
Hence $\tilde{\f}(\tilde{\gamma}.z) = \mathring{c}_{\gamma} R_{\gamma}\tilde{\f}(z)$ and the  claim follows.
\end{proof}

In Sections \ref{subsec:frames} and \ref{subsec:2.3},
we have defined the coordinate frame $\mathcal{F}$  and
the gauged frame $\F$ of some horizontal lift of any minimal Lagrangian immersion defined on a contractible domain $\D$.
In the present situation we define the frame
$\F$ for $\f$ by  
the frame for $\tilde{\f}$.

In Proposition \ref{prop:frame},
we have proven, that the frame for $\tilde{\f}$
is uniquely determined, if we assume that it is contained in $SU(3)$ and attains the value $I$ at some fixed base point $z_0$.
 We thus assume from here on that  the (gauged coordinate ) frame $\F$ for $\tilde{\f}$ is 
 contained in $SU(3)$ and attains the value $I$ at the fixed base point $z_0$.

Then we  have
 %that the global horizontal (Legendrian) lift $\f$ of liftable  $f$ is chosen such that the frame $\F$ for $\tilde{\f}$  is contained in $SU(3)$ and attains the value $I$ at some fixed base point $z_0$.
%(From Proposition 4.5 in \cite{DoMaB1}, we have)
\begin{theorem} \label{trafoIF} We use the notation and the definitions as in the previous lemma and just above.
If $\F: \D \rightarrow SU(3)$ denotes the gauged frame of $\tilde{\f}$ satisfying
$\F(z_0, \overline{z_0}) = I$, then 
\begin{equation}\label{eq:define R}
\F(\gamma.z, \overline{\gamma.z}) = c_{\gamma}R_{\gamma} \F(z, \bar{z}) k(\gamma, z,\bar{z})
\end{equation}
 for the same $R_{\gamma} \in SU(3)$ as in the last lemma,
and 
$k(\gamma, z ,\bar z) = 
\di (|\gamma'| / \gamma', |\gamma'| / \bar{\gamma}',1) \in K \cong U(1)$, where $\gamma^{\prime}=\gamma_z$.

Moreover, $c_{\gamma} = \mathring{c}_{\gamma}^{\frac{1}{3}}$ with $\mathring{c}_{\gamma}$ as in the lemma above 
and 
the map 
 ($\gamma\mapsto c_{\gamma}R_{\gamma}$) is well defined and a homomorphism of groups and 
$ k(\gamma, z,\bar{z})$ is a crossed homomorphism.
\end{theorem}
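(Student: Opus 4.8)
The plan is to transport the symmetry relation for the horizontal lift, established in the previous lemma, down through the two layers of structure: first to the coordinate frame $\mathcal{F}$, and then to the gauged frame $\F$.

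First I would differentiate the identity $\tilde{\f}(\tilde{\gamma}.z) = \mathring{c}_{\gamma}R_{\gamma}\tilde{\f}(z)$. Since $\tilde{\gamma}$ acts by a holomorphic M\"obius transformation, differentiating in $z$ and in $\bar z$ gives, at the point $w = \tilde{\gamma}.z$, the relations $\tilde{\f}_{w}(w) = \mathring{c}_{\gamma}R_{\gamma}\tilde{\f}_{z}(z)/\gamma'(z)$ and $\tilde{\f}_{\bar w}(w) = \mathring{c}_{\gamma}R_{\gamma}\tilde{\f}_{\bar z}(z)/\overline{\gamma'(z)}$, where $\gamma' = \gamma_z$. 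Because $\gamma$ is an isometry of the induced metric $g = 2e^{u}dzd\bar z$, we have $e^{u\circ\gamma}|\gamma'|^2 = e^u$, hence $e^{-u(w)/2} = |\gamma'|\,e^{-u(z)/2}$. Substituting these into $\mathcal{F} = (e^{-u/2}\f_z, e^{-u/2}\f_{\bar z}, \f)$ column by column, the common factor $\mathring{c}_{\gamma}R_{\gamma}$ factors out on the left while the powers of $\gamma'$ and $\overline{\gamma'}$ assemble into a diagonal matrix on the right. This yields $\gamma^*\mathcal{F} = \mathring{c}_{\gamma}R_{\gamma}\,\mathcal{F}\,k$ with $k = \di(|\gamma'|/\gamma',\,|\gamma'|/\overline{\gamma'},\,1)$; since $\det k = 1$, the factor $k$ lies in $K\cong U(1)$. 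This is exactly the computation already carried out for the deck-transformation case in Proposition~\ref{prop:general}, now with the extra isometry $R_\gamma$ present.

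Next I would pass to the gauged frame $\F = \mathcal{F}(\nu)\di(-i\lambda, -i\lambda^{-1}, 1)$, where $i\lambda^3\nu = 1$. Since the gauging factor and $k$ are both diagonal they commute, so the relation transports formally to $\gamma^*\F = (\text{scalar})\,R_{\gamma}\,\F\,k$ with the same $SU(3)$-matrix $R_\gamma$. The delicate point — which I expect to be the main obstacle — is pinning down that scalar, and in particular the factor $c_{\gamma} = \mathring{c}_{\gamma}^{1/3}$. The subtlety is that $\mathring{c}_{\gamma}$ is read off at the geometric value of the loop parameter, whereas $\F$ must lie in $\Lambda SU(3)_{\sigma}$ for all $\lambda$; because the spectral parameter $\nu$ enters as the third power of $\lambda$ through $i\lambda^3\nu = 1$, the geometric scalar propagates to the extended frame as its cube root. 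I would establish this by the same argument used for contractible domains in \cite{DoMaB1}, reducing to that computation rather than repeating it, which simultaneously fixes the compatibility of $c_{\gamma}R_{\gamma}$ with the twisting $\hat\sigma$. This gives \eqref{eq:define R}.

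Finally, for the algebraic assertions: the map $\gamma\mapsto c_{\gamma}R_{\gamma}$ being a homomorphism follows by applying \eqref{eq:define R} twice and comparing $(\gamma\mu)^*\F$ with $\gamma^*(\mu^*\F)$; the gauge factors recombine by the crossed-homomorphism identity, so that $c_{\gamma\mu}R_{\gamma\mu}$ and $c_{\gamma}R_{\gamma}\,c_{\mu}R_{\mu}$ both frame $(\gamma\mu)^*\F$ relative to the same $k$, whence they coincide by the uniqueness of the frame in Proposition~\ref{prop:frame}. Well-definedness is inherited from the same uniqueness, together with the fact that the lift ambiguity $R_{\gamma}\mapsto\omega R_{\gamma}$ (with $\omega$ a cube root of unity) is absorbed into $c_{\gamma}$, so that the product $c_{\gamma}R_{\gamma}$ is unaffected. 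That $k$ is a crossed homomorphism, namely $k(\gamma\mu,z,\bar z) = k(\mu,z,\bar z)\,k(\gamma,\mu(z),\overline{\mu(z)})$, is the chain rule $(\gamma\mu)'(z) = \gamma'(\mu(z))\,\mu'(z)$ applied to the diagonal entries, exactly as in the corollary following Proposition~\ref{prop:general}, so I would simply invoke that computation.
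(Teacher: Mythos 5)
Your proposal is correct and follows essentially the same route as the paper: the paper's two-line proof simply invokes the differentiation argument of Proposition~\ref{prop:general} (P.~Lang's computation) for the formula for $k$ and then applies the definition of the gauged coordinate frame together with the preceding lemma, which is exactly what you carry out in detail, including the homomorphism and crossed-homomorphism verifications. Your deferral to \cite{DoMaB1} for the factor $c_{\gamma}=\mathring{c}_{\gamma}^{1/3}$ also matches the paper, which likewise leaves that point to the definition of the extended frame rather than spelling it out.
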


\begin{proof}
The proof for the expression for $k$ is verbatim the same as the one for  $(\ref{define k})$.  
Applying the definition of the gauged coordinate frame $\F(\tilde\f)$  
and the result above for $\tilde{\f}$ implies the claim.
\end{proof}

%%%%%%%%%%%%%%%%%%%%%%%%%
\subsection{Symmetries for the loopified  quantities}

The theorem above  yields for the extended frame 
$\F(z, \bar{z},\lambda)= \F_\lambda (z, \bar{z}) \in \Lambda SU(3)_\sigma$, satisfying 
$\F (z_0, \bar{z}_0,\lambda) = I,$ and the equation
\begin{equation} \label{transF}
\F(\gamma.z, \overline{\gamma.z},\lambda) = \chi(\gamma,\lambda) \F(z, \bar{z},\lambda) 
k(\gamma, z, \bar{z}),
\end{equation}
 with $\chi\in \Lambda SU(3)_{\sigma}$.
Moreover, 

\begin{corollary}
Retaining the assumptions and the notation of equation \eqref{transF} above we obtain:

The map $\gamma \rightarrow \chi(\gamma,\lambda)$ is a homomorphism of groups satisfying 
$\chi(\gamma, \lambda = 1) = c_{\gamma} R_{\gamma}$.
Furthermore, the map $\gamma \rightarrow k(\gamma, \cdot)$ is a crossed homomorphism.
\end{corollary}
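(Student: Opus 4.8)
The plan is to obtain all three assertions as formal consequences of the transformation law \eqref{transF}, the only substantive input being the crossed homomorphism property of $k$. I would begin with the two immediate claims. Since at $\lambda = 1$ the extended frame $\F(z,\bar z,\lambda)$ coincides with the gauged frame $\F(z,\bar z)$ of Theorem \ref{trafoIF}, evaluating \eqref{transF} at $\lambda = 1$ and comparing with \eqref{eq:define R} gives $\chi(\gamma,1)\,\F(z,\bar z)\,k(\gamma,z,\bar z) = c_\gamma R_\gamma\,\F(z,\bar z)\,k(\gamma,z,\bar z)$; cancelling the invertible factors $\F(z,\bar z)$ and $k(\gamma,z,\bar z)$ yields $\chi(\gamma,1) = c_\gamma R_\gamma$. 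The crossed homomorphism property of $\gamma \mapsto k(\gamma,\cdot)$ is literally the identity \eqref{crossed homomorphism} already established in Section \ref{sec:3}: the gauge $k(\gamma,z,\bar z) = \di(|\gamma'|/\gamma',\,|\gamma'|/\bar{\gamma}',\,1)$ carries no $\lambda$-dependence, and the identity is a direct consequence of the chain rule $(\gamma\mu)'(z) = \gamma'(\mu.z)\,\mu'(z)$ for the M\"obius action. I would simply record this and reuse it.

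For the homomorphism property of $\gamma \mapsto \chi(\gamma,\lambda)$, I would compute $\F\big((\gamma\mu).z,\overline{(\gamma\mu).z},\lambda\big)$ in two ways. Applying \eqref{transF} directly to the composite $\gamma\mu$ gives
\begin{equation*}
\F\big((\gamma\mu).z,\overline{(\gamma\mu).z},\lambda\big) = \chi(\gamma\mu,\lambda)\,\F(z,\bar z,\lambda)\,k(\gamma\mu,z,\bar z).
\end{equation*}
Writing instead $(\gamma\mu).z = \gamma.(\mu.z)$ and applying \eqref{transF} first for $\gamma$ at the point $\mu.z$ and then for $\mu$ at $z$ gives
\begin{equation*}
\F\big(\gamma.(\mu.z),\overline{\gamma.(\mu.z)},\lambda\big) = \chi(\gamma,\lambda)\,\chi(\mu,\lambda)\,\F(z,\bar z,\lambda)\,k(\mu,z,\bar z)\,k(\gamma,\mu.z,\overline{\mu.z}).
\end{equation*}
By the crossed homomorphism identity the product $k(\mu,z,\bar z)\,k(\gamma,\mu.z,\overline{\mu.z})$ equals $k(\gamma\mu,z,\bar z)$, so both right-hand sides end in the common invertible factor $\F(z,\bar z,\lambda)\,k(\gamma\mu,z,\bar z)$. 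Cancelling it from the right leaves $\chi(\gamma\mu,\lambda) = \chi(\gamma,\lambda)\,\chi(\mu,\lambda)$ for every $\lambda$, which is the desired homomorphism property.

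The step I expect to demand the most care is not the algebra but keeping the bookkeeping of the composition order consistent: one must use the convention for the $\pi_1(M)$-action on $\D$ under which $(\gamma\mu).z = \gamma.(\mu.z)$, and match it against the order in which \eqref{crossed homomorphism} was stated, so that the two $k$-contributions on the right recombine correctly into $k(\gamma\mu,\cdot)$. The clean separation of the $z$-independent monodromy factors $\chi$ from the $z$-dependent gauge factors $k$ is exactly what the crossed homomorphism property guarantees; once the conventions are fixed there is no analytic obstacle, since the existence of $\chi \in \Lambda SU(3)_\sigma$ is already built into \eqref{transF}.
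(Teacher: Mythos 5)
Your argument is correct and is precisely the computation the paper leaves implicit (the corollary is stated without proof as an immediate consequence of \eqref{transF}): evaluating at $\lambda=1$ and comparing with \eqref{eq:define R} gives the first claim, and the two-fold application of \eqref{transF} to $(\gamma\mu).z=\gamma.(\mu.z)$ combined with the already-established crossed homomorphism identity \eqref{crossed homomorphism} for $k$ yields $\chi(\gamma\mu,\lambda)=\chi(\gamma,\lambda)\chi(\mu,\lambda)$ after cancelling the common invertible right factor. Your attention to the composition conventions is exactly the right point to be careful about, and with the paper's left-action convention the bookkeeping works out as you state.
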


%%%%%%%%%%%%%%%%%%%%%%%%%%%%%%%%
\subsection{Normalized frames, holomorphic frames and their transformation behaviour}
%%%%%%%%%%%%%%%%%%%%%%%%%%%%

As discussed in detail in Section 2.3 of \cite{DoMaB1} the loop group method associated with an extended frame uses two more special types of frames.

(1) \textbf{The normalized extended frame}. 

\begin{proposition}
Let $\F$ denote the extended frame of a full minimal Lagrangian
immersion  $f: M \rightarrow \C P^2$. Then there exists a discrete subset $\mathcal{S} \subset \D$ such that for all $ z \in \D\backslash \mathcal{S} $ the extended frame can be decomposed in the form
\begin{equation}
\F (z, \bar z, \lambda) = \F_-(z,\lambda) \F_+(z, \bar z, \lambda)
\end{equation}
with $\F_{-}\in \Lambda^{-}_* SL(3,\C)_\sigma$, $\F_{+} \in \Lambda^{+} SL(3,\C)_\sigma$.
Moreover, $\F_-$ is meromorphic in $z$ and we have 
\begin{equation}
\eta_-(z, \lambda) = \F_-(z,\lambda)^{-1} d\F_-(z, \lambda ) =
 \lambda^{-1} \eta_{-1} (z) dz. 
 \end{equation}
\end{proposition}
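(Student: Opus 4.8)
The plan is to obtain this decomposition as a direct application of the meromorphic (normalized) potential construction recorded earlier in the excerpt (Construction 2 and the accompanying theorem of \cite{DPW}), once its hypotheses have been checked in the present setting. First I would recall that, since $M \neq S^2$, the universal cover $\D$ is contractible, and that the frame $\F$ in question is by definition the extended frame of the natural lift $\tilde{f}: \D \rightarrow \C P^2$, normalized by $\F(z_0, \bar{z}_0, \lambda) = I$. By Proposition \ref{char-mLi} the frame $\F$ frames a primitive harmonic map $\D \rightarrow SU(3)/U(1)$ relative to $\sigma$, and the $6$-symmetric space $SU(3)/U(1)$ is compact, being a quotient of $SU(3)$. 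Hence all hypotheses of the meromorphic potential theorem are satisfied.

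Applying that theorem to $\F$ produces a discrete subset $\mathcal{S}\subset \D$ such that, for every $z \in \D\backslash \mathcal{S}$, the loop $\F(z,\cdot)$ lies in the big cell of the Birkhoff decomposition, giving $\F(z,\bar{z},\lambda) = \F_-(z,\lambda)\,\F_+(z,\bar{z},\lambda)$ with $\F_- \in \Lambda^{-}_{*}SL(3,\C)_\sigma$ and $\F_+ \in \Lambda^{+}SL(3,\C)_\sigma$. The same theorem guarantees that $z \mapsto \F_-(z,\cdot)$ is meromorphic on $\D$ with poles contained in $\mathcal{S}$, and that $\eta_- = \F_-^{-1}d\F_-$ is a meromorphic $(1,0)$-form containing only the power $\lambda^{-1}$, which is exactly the asserted form $\eta_-(z,\lambda) = \lambda^{-1}\eta_{-1}(z)\,dz$.

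For completeness I would indicate why the two distinctive features of $\eta_-$ hold. The $(1,0)$-property is immediate from meromorphicity: away from $\mathcal{S}$ one has $\bar{\partial}\F_- = 0$, so $d\F_- = \partial \F_-$ and $\eta_-$ carries only a $dz$-component. For the $\lambda$-expansion, differentiating $\F_- = \F\F_+^{-1}$ gives
\begin{equation*}
\eta_- = \F_-^{-1}d\F_- = \F_+\,(\F^{-1}d\F)\,\F_+^{-1} - (d\F_+)\,\F_+^{-1}.
\end{equation*}
Here $\F^{-1}d\F = (\lambda^{-1}U_{-1}+U_0)\,dz + (\lambda V_1 + V_0)\,d\bar{z}$ has no $\lambda$-power below $-1$, and this property is preserved both by conjugation with $\F_+ \in \Lambda^{+}SL(3,\C)_\sigma$ and by subtracting the term $(d\F_+)\F_+^{-1}\in \Lambda^{+}\mathfrak{g}^{\C}_\sigma$; hence $\eta_-$ has no power below $\lambda^{-1}$. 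On the other hand, $\F_- \in \Lambda^{-}_{*}SL(3,\C)_\sigma$ means $\F_-(z,\lambda) = I + \sum_{j\geq 1}\lambda^{-j}A_j(z)$, so $\eta_-$ contains only strictly negative powers of $\lambda$. These two constraints intersect in the single power $\lambda^{-1}$, yielding $\eta_- = \lambda^{-1}\eta_{-1}(z)\,dz$.

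I expect the only genuinely delicate point to be the meromorphic dependence of $\F_-$ on $z$ together with the discreteness of the exceptional set $\mathcal{S}$, i.e. that the set of $z$ for which $\F(z,\cdot)$ leaves the big cell is discrete and that $\F_-$ extends meromorphically across it. This rests on the fact that $z \mapsto \F(z,\cdot)$ is holomorphic into the loop group and that the big cell is open and dense, and it is precisely the content established in the cited theorem; accordingly it may simply be invoked here rather than reproven.
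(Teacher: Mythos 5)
Your proposal is correct and follows the same route the paper intends: the paper states this proposition without a separate proof, as a direct application of the normalized-potential theorem of \cite{DPW} (Construction 2 in Section 2.5) to the extended frame of the primitive harmonic map into the compact $6$-symmetric space $SU(3)/U(1)$. Your additional verification that $\eta_-$ is a $(1,0)$-form containing only the power $\lambda^{-1}$ (lowest power $\geq -1$ from gauging the Maurer--Cartan form by $\F_+$, only negative powers from $\F_-\in\Lambda^-_*SL(3,\C)_\sigma$) is the standard argument and is accurate.
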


The meromorphic matrix function $\F_-(z, \lambda)$ introduced above will be called 
\emph{normalized extended frame} associated with $f$ (or $\f$).
The meromorphic differential one-form $\eta_-$ introduced above will be called the
\emph{normalized potential} associated with $f$ (or $\f$).

From equation (\ref{transF}) one derives immediately
\begin{equation} \label{transFminus}
\F_-(\gamma.z, \lambda) = \chi(\gamma,\lambda) \F_-(z, \lambda) V_+(\gamma, z, \bar z, \lambda)
\end{equation}
with  $V_+ \in \Lambda^{+} SL(3,\C)_\sigma$.

Finally one also obtains a transformation formula for the normalized potential
\begin{equation} \label{transetaminus}
\eta_-(\gamma.z, \lambda) =  V_+(\gamma, z, \bar z, \lambda)^{-1} \eta_-(z, \lambda) V_+(\gamma, z, \bar z, \lambda) + V_+(\gamma, z, \bar z, \lambda)^{-1} dV_+(\gamma, z, \bar z, \lambda)
\end{equation}
with  $V_+$ as above.

%%%%%%%%%%%%%%%%%%%%%%%%
(2) \textbf{Holomorphic  extended frame.} 
 For a non-compact Riemann surface $M$ one can  avoid having to deal with meromorphic  matrix functions by  using \emph{holomorphic  extended frames}.

In this case one writes 
\begin{equation} \label{from F to C} 
\F(z, \bar z, \lambda) = C(z,\lambda) W_+(z, \bar z, \lambda)
\end{equation}
with $C\in \Lambda SL(3,\C)_{\sigma}$ and $W_+\in \Lambda^+SL(3,\C)_{\sigma}$
by solving a $\bar{\partial}$-equation  (verbatim as in \cite{DPW}).

Here $C$ is holomorphic in $ z \in  \mathbb{D}$  and we obtain
\begin{equation}
\eta^h_-(z, \lambda) = C(z,\lambda)^{-1} dC(z, \lambda ) = \sum_{j=-1}^{\infty}
 \lambda^j \eta^h_j (z) dz \hspace{2mm} \mbox{ with} \hspace{2mm} \eta^h_j (z) \in \mathfrak{g}^\C_{j\,{\mathrm{mod}}\, 6}.
\end{equation}
Note that $C(z, \lambda)$ is not uniquely determined.
The holomorphic matrix function $C(z, \lambda)$ introduced above is 
called a  \emph{holomorphic extended frame} associated with $f$ or $\f$.
The holomorphic  differential one-form $\eta^h_-$ introduced above is
called  \emph{a holomorphic potential} associated with $f$ (or $\f$).

 We will see in Theorem \ref{thm:cpt}  below  that for a compact $M$ one can find a meromorphic $C$ satisfying equation (\ref{from F to C}). 
For the following arguments we will only assume that we have any 
meromorphic $C$ satisfying equation (\ref{from F to C}).
The transformation formulas are almost verbatim as just above. From equation (\ref{transF}) one obtains
\begin{equation} \label{transC}
C(\gamma.z, \lambda) = \chi(\gamma,\lambda) C(z, \lambda) Q_+(\gamma, z, \bar z, \lambda)
\end{equation}
with  $Q_+ \in \Lambda^{+} SL(3,\C)_\sigma$
and $\chi(\gamma,\lambda) \in \Lambda SU(3)_{\sigma}$. Hence we obtain a transformation formula for the holomorphic potential
\begin{equation} \label{transeta}
\eta^h_-(\gamma.z, \lambda) =  Q_+(\gamma, z, \bar z, \lambda)^{-1} \eta^h_-(z, \lambda) Q_+(\gamma, z, \bar z, \lambda) + Q_+(\gamma, z, \bar z, \lambda)^{-1} dQ_+(\gamma, z, \bar z, \lambda)
\end{equation}
with  $Q_+$ as above.

%%%%%%%%%%%%%%%%%%%%%%%%%%%%%%%
\subsection{Constructing general minimal Lagrangian surfaces}
\label{subsec:construction}
%%%%%%%%%%%%%%%%%%%%%%%%%%%%%%%

 In this subsection we will outline how one can construct any minimal Lagrangian immersion from some Riemann surface $M \neq S^2$ to $\C P^2$.
 
 This outline is given for the most general case of ``potentials".
 It thus is fairly technical. For most applications a much simpler and easier 
 choice of potentials is possible. More precisely, in most cases we can
 assume without  loss of generality that the crossed homomorphism $Q_+$ occurring below is equal to $I$.
 For details on this see Section \ref{sec:inv frame}.
 
{\it Step 1:}  Let $M \neq S^2$ be a Riemann surface and $\pi: \D \rightarrow M$ its universal cover. Recall, in this case $\D$ is with loss of generality the complex plane 
 $\C$ or the unit disk $\mathcal{D}$.  Recall that the fundamental group 
 $\pi_1(M)$ of $M$ acts on $\D$ by M\"{o}bius transformations. Let $\eta$ be a meromorphic differental 1-form defined on $\D$,
 \begin{equation}
 \eta(z,\lambda) = \sum_{j = -1}^\infty  \lambda^j \eta_j(z),
 \end{equation}
 where $\lambda \in \C^*$ and $\eta_j(z) \in \mathfrak{g}_j.$
 
 In view of the results of the last subsection we require for all $\gamma \in \pi_1(M)$, 
$ \eta$ satisfies equation (\ref{transeta}) with 
$Q_+(\gamma, z,\lambda) \in \Lambda SL^+(3,\C)_{\sigma}$ .
In addition, we require
 \begin{proposition}
 $Q_+$ satisfies the crossed homomorphism property
 \begin{equation}
      Q_+(\gamma \mu, z, \bar z) = Q_+(\mu, z, \bar z) Q_+(\gamma, \mu(z), \overline{ \mu(z)}) 
  \hspace{2mm}  \text{for all} \hspace{2mm} \gamma,\mu \in \pi_1(M).
 \end{equation}
 \end{proposition}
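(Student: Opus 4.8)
The plan is to derive the cocycle identity for $Q_+$ from the transformation law of the holomorphic (respectively meromorphic) extended frame $C$ under the deck group $\pi_1(M)$, using that this group acts on $\D$ by composition of M\"obius transformations. First I would integrate the potential, $dC = C\eta$, to produce $C$; by equation (\ref{transC}) it then satisfies
\begin{equation*}
C(\gamma.z, \lambda) = \chi(\gamma,\lambda)\, C(z, \lambda)\, Q_+(\gamma, z, \bar z, \lambda),
\end{equation*}
where $\chi(\gamma,\lambda) \in \Lambda SU(3)_\sigma$ is independent of $z$, $Q_+(\gamma,\cdot) \in \Lambda^+ SL(3,\C)_\sigma$, and $\gamma \mapsto \chi(\gamma,\lambda)$ is a homomorphism of groups (the Corollary following (\ref{transF})).

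The core step is to evaluate $C((\gamma\mu).z)$ in two ways. Since the action is a left action, $(\gamma\mu).z = \gamma.(\mu.z)$, so applying (\ref{transC}) directly gives $C((\gamma\mu).z) = \chi(\gamma\mu)\,C(z)\,Q_+(\gamma\mu, z, \bar z)$, whereas applying it first for $\gamma$ at the point $\mu(z)$ and then for $\mu$ at $z$ gives
\begin{equation*}
C(\gamma.(\mu.z)) = \chi(\gamma)\chi(\mu)\, C(z)\, Q_+(\mu, z, \bar z)\, Q_+(\gamma, \mu(z), \overline{\mu(z)}).
\end{equation*}
Equating the two, invoking $\chi(\gamma\mu) = \chi(\gamma)\chi(\mu)$, and cancelling the common invertible left factor $\chi(\gamma)\chi(\mu)C(z)$ (legitimate since $C$ is $\Lambda SL(3,\C)_\sigma$-valued) leaves precisely
\begin{equation*}
Q_+(\gamma\mu, z, \bar z) = Q_+(\mu, z, \bar z)\, Q_+(\gamma, \mu(z), \overline{\mu(z)}),
\end{equation*}
the asserted crossed homomorphism property. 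In the meromorphic case I would first run this on $\D \setminus \mathcal{S}$ (the discrete union of the relevant pole sets) and then extend the identity to all of $\D$ by the identity theorem. This parallels verbatim the computation already carried out for the crossed homomorphism $k$ in (\ref{crossed homomorphism}), and one must track the left-action convention carefully, so that the factors appear in the order $Q_+(\mu,z)\,Q_+(\gamma,\mu(z))$ and with the correct $\bar z$-arguments.

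A conceptually cleaner and equivalent route works directly with the potential. Writing the gauge action as $g \ast \eta := g^{-1}\eta\, g + g^{-1}dg$, the transformation formula (\ref{transeta}) reads $\gamma^*\eta = Q_+(\gamma)\ast\eta$. Because pullback is contravariant, $(\gamma\mu)^* = \mu^*\gamma^*$, and because the gauge action satisfies $h\ast(g\ast\eta) = (gh)\ast\eta$, one computes $(\gamma\mu)^*\eta = \big(Q_+(\mu,z)\,Q_+(\gamma,\mu(z))\big)\ast\eta$; comparing with $(\gamma\mu)^*\eta = Q_+(\gamma\mu,z)\ast\eta$ reproduces the cocycle identity, now up to the gauge stabilizer of $\eta$.

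I expect the genuine obstacle to lie exactly at that last point: making the cancellation hold on the nose rather than merely modulo the stabilizer of $\eta$. Equivalently, the homomorphism property of $\chi$ and the crossed homomorphism property of $Q_+$ are two faces of the same identity, so one of them must be pinned down independently rather than simply assumed. I would resolve this through the normalization $C(z_0,\lambda) = I$: evaluating the two-way computation at $z = z_0$ exhibits $\chi(\gamma)\,Q_+(\gamma,z_0)$ as an Iwasawa-type factorization of $C(\gamma.z_0)$, unique up to the isotropy $K \cong U(1)$. Fixing the standard Iwasawa normalization of the $\Lambda^+$-factor removes this residual ambiguity, makes $\chi(\gamma,\lambda)$ an honest multiplicative function of $\gamma$, and thereby forces the crossed homomorphism property of $Q_+$ exactly. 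This normalization step is what converts the conceptual gauge-action computation into a rigorous identity.
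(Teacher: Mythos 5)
Your core computation is correct and is exactly the mechanism the paper relies on: evaluate $C\bigl((\gamma\mu).z\bigr)$ two ways, use that $\gamma\mapsto\chi(\gamma,\lambda)$ is a group homomorphism, and cancel the invertible factor $\chi(\gamma)\chi(\mu)C(z)$. Two remarks are in order, though. First, in the paper this Proposition is not actually proved where it is stated: in Step 1 of Section 4.5 the crossed homomorphism property is \emph{required} of the input datum $Q_+$ for the construction, and its justification is the converse direction you carry out (the same cocycle computation appears explicitly in Section 5 for $W_+(\gamma,z,\lambda)=V_+(z)k(\gamma,z)V_+(\gamma.z)^{-1}$, where it is reduced to the crossed homomorphism property of $k$). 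Second, your closing paragraph misdiagnoses how the circularity between ``$\chi$ is a homomorphism'' and ``$Q_+$ is a crossed homomorphism'' is broken. It is not broken by an Iwasawa normalization of the $\Lambda^+$-factor at $z_0$: replacing $\chi(\gamma)$ by the $\Lambda SU(3)_\sigma$-factor of the Iwasawa decomposition of $C(\gamma.z_0)$ would multiply $\chi(\gamma)$ by a $\gamma$-dependent element of $K$ which is itself only a crossed homomorphism (it involves $k(\gamma,z_0)$ with $\mu.z_0\neq z_0$), so the modified $\chi$ need not be multiplicative. The paper instead pins things down at the level of the extended frame: in \eqref{transF} the gauge $k(\gamma,z,\bar z)=\di(|\gamma'|/\gamma',|\gamma'|/\overline{\gamma'},1)$ is explicit, its cocycle identity \eqref{crossed homomorphism} is just the chain rule for $\gamma'$, and this forces $\chi$ to be a homomorphism; the $\chi$ in \eqref{transC} is the same one, after which your cancellation argument closes the proof with no residual stabilizer ambiguity. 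With that substitution your argument is complete; the gauge-action route via $\eta$ indeed only gives the identity modulo the stabilizer, as you note, and should be regarded as heuristic.
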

 
 {\it Step 2:} Solve the initial value problem $dC = C \eta,$ $ C(z_0,\lambda)= I$
 for some fixed base point $z_0$ which is not a pole of $\eta$ and assume that this ode has a meromorphic solution.
 \vspace{4mm}
 
 It is easy to see that
 $$ \gamma^*C(z,\lambda) = \rho(\gamma,\lambda) C(z, \lambda) Q_+(\gamma,z, \lambda)$$
 holds.
Note, since $Q_+$ satisfies the crossed homomorphism property  we obtain

\begin{proposition}
The map $\rho: \pi_1(M) \rightarrow  \Lambda SL(3,\C)_{\sigma}$ is a homomorphism of groups.
\end{proposition}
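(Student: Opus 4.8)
The plan is to derive the multiplicativity of $\rho$ directly from the two inputs already in hand: the decomposition $\gamma^* C(z,\lambda) = \rho(\gamma,\lambda)\,C(z,\lambda)\,Q_+(\gamma,z,\lambda)$, in which $\rho(\gamma,\lambda)$ is a constant loop independent of $z$, together with the crossed homomorphism property of $Q_+$. The one structural fact I would pin down first is that $\pi_1(M)$ acts on $\D$ on the left, so that $(\gamma\mu).z = \gamma.(\mu.z)$; the whole argument then amounts to evaluating $C((\gamma\mu).z,\lambda)$ in two ways and comparing.

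First I would apply the decomposition to the single element $\gamma\mu$, obtaining
$$C((\gamma\mu).z,\lambda) = \rho(\gamma\mu,\lambda)\,C(z,\lambda)\,Q_+(\gamma\mu,z,\lambda).$$
Then, using $(\gamma\mu).z = \gamma.(\mu.z)$, I would apply the $\gamma$-decomposition at the point $\mu.z$ and substitute the $\mu$-decomposition at $z$:
$$C(\gamma.(\mu.z),\lambda) = \rho(\gamma,\lambda)\,C(\mu.z,\lambda)\,Q_+(\gamma,\mu.z,\lambda) = \rho(\gamma,\lambda)\rho(\mu,\lambda)\,C(z,\lambda)\,Q_+(\mu,z,\lambda)\,Q_+(\gamma,\mu.z,\lambda).$$
Here it is essential that $\rho(\gamma,\lambda)$ carries no $z$-dependence, so that it passes unchanged through the second substitution; this is exactly the content of the ``easy to see'' remark preceding the proposition, and if needed it can be re-derived by inserting the ansatz into $dC = C\eta$ and using the transformation formula \eqref{transeta}.

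Equating the two expressions, the crossed homomorphism identity $Q_+(\gamma\mu,z) = Q_+(\mu,z)\,Q_+(\gamma,\mu.z)$ collapses the product of the two $Q_+$ factors on the right into precisely $Q_+(\gamma\mu,z,\lambda)$, which is the same factor already present on the left. Both sides thus share the common right factor $C(z,\lambda)\,Q_+(\gamma\mu,z,\lambda)$, an invertible element of $\Lambda SL(3,\C)_\sigma$; cancelling it gives $\rho(\gamma\mu,\lambda) = \rho(\gamma,\lambda)\rho(\mu,\lambda)$. Setting $\gamma=\mu=\mathrm{id}$ forces $\rho(\mathrm{id},\lambda)=I$, and multiplicativity then yields $\rho(\gamma^{-1},\lambda)=\rho(\gamma,\lambda)^{-1}$, so $\rho$ is a homomorphism into $\Lambda SL(3,\C)_\sigma$.

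The computation itself is routine; I expect the only real obstacle to be the bookkeeping of composition order. One has to line up the left action on $\D$ --- which makes the inner transformation the one by $\mu$ evaluated at $z$ and the outer one the transformation by $\gamma$ evaluated at $\mu.z$ --- with the exact order of the factors in the crossed homomorphism relation for $Q_+$. The crossed homomorphism property was formulated precisely so that these factors telescope in the correct order, and once the conventions are matched the homomorphism property falls out with no further input.
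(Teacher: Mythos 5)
Your argument is correct and is exactly the computation the paper leaves implicit: the paper offers no written proof beyond the remark that the crossed homomorphism property of $Q_+$ yields the claim, and your evaluation of $C((\gamma\mu).z,\lambda)$ in two ways, telescoping the $Q_+$ factors and cancelling the invertible right factor $C(z,\lambda)Q_+(\gamma\mu,z,\lambda)$, is the intended justification. The bookkeeping of the left action and the order of factors in the cocycle relation is consistent with the paper's conventions (compare the crossed homomorphism identity \eqref{crossed homomorphism}), so nothing further is needed.
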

 We require for all $\gamma \in \pi_1(M)$ and all $\lambda \in S^1$: 
 $$ \rho(\gamma,\lambda) \in  \Lambda SU(3)_{\sigma}.$$
 
{\it Step 3:} Perform the unique Iwasawa decomposition 
$$ C = \F L_+.$$

\begin{theorem}
 Under the assumptions and the notation introduced above the following statements hold for all $\gamma \in \pi_1(M)$ and all $\lambda \in S^1$:
 \begin{enumerate}
     \item $\tilde{\F}(\gamma.z, \overline{\gamma.z}, \lambda) = \rho(\gamma,\lambda)  \tilde{\F}(z, \bar z, \lambda) k(\gamma,z)$
     for all $z\in \D$ and some diagonal $k(\gamma,z) \in SU(3).$
     \item Let $\tilde{\f}$ denote the last column of $\tilde{\F}.$
      Then we obtain $\tilde{\f}: \D \rightarrow S^5$ and 
      $$\tilde{\f}(\gamma.z, \overline{\gamma.z}, \lambda) = \rho(\gamma,\lambda) \tilde{\f}(z, \bar{z})$$ for all $z\in \D$, $\lambda \in S^1.$ 
 \end{enumerate}
 \end{theorem}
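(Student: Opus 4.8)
The plan is to transport the transformation law for the integrated frame $C$ of Step~2 through the Iwasawa splitting of Step~3, and then to read off the behaviour of $\tilde{\F}$, and of its last column, by comparing two Iwasawa factorizations of one and the same loop. Concretely, Step~2 supplies the identity
\[
\gamma^*C(z,\lambda)=\rho(\gamma,\lambda)\,C(z,\lambda)\,Q_+(\gamma,z,\lambda),
\]
while Step~3 provides the pointwise Iwasawa decomposition $C=\tilde{\F}L_+$ with $\tilde{\F}\in\Lambda SU(3)_\sigma$ and $L_+\in\Lambda^+SL(3,\C)_\sigma$. First I would substitute the latter into the former.

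Applying $\gamma^*$ to $C=\tilde{\F}L_+$ on the left and substituting $C=\tilde{\F}L_+$ on the right yields the two factorizations
\[
(\gamma^*\tilde{\F})\,(\gamma^*L_+)\;=\;\gamma^*C\;=\;\bigl(\rho(\gamma,\lambda)\,\tilde{\F}\bigr)\,\bigl(L_+Q_+\bigr).
\]
Here the requirement imposed in Step~2, namely $\rho(\gamma,\lambda)\in\Lambda SU(3)_\sigma$ for all $\lambda\in S^1$, together with $\tilde{\F}\in\Lambda SU(3)_\sigma$, shows that $\rho(\gamma,\lambda)\tilde{\F}$ again lies in $\Lambda SU(3)_\sigma$; and $L_+Q_+\in\Lambda^+SL(3,\C)_\sigma$ since both factors do. Thus both sides of the displayed equation are genuine Iwasawa factorizations of the single loop $\gamma^*C$.

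The decisive point is the uniqueness of the Iwasawa decomposition up to the overlap $K:=\Lambda SU(3)_\sigma\cap\Lambda^+SL(3,\C)_\sigma$. A loop in this intersection is unitary on $S^1$ and extends holomorphically to the disc, so by a maximum-principle argument it is a constant loop, necessarily $\sigma$-fixed and lying in $SU(3)$; a direct check against the eigenspace $\mathfrak{g}_0$ identifies $K$ with the diagonal torus $\{\di(e^{i\theta},e^{-i\theta},1)\}\cong U(1)$. Consequently there is a unique $k(\gamma,z)\in K$ with $\gamma^*\tilde{\F}=\rho(\gamma,\lambda)\,\tilde{\F}\,k(\gamma,z)$ (and correspondingly $\gamma^*L_+=k(\gamma,z)^{-1}L_+Q_+$), which is exactly statement~(1). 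I expect verifying this uniqueness together with the identification of $K$ to be the main technical obstacle; everything before and after it is formal.

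For statement~(2) I would use that every $k\in K$ fixes $e_3$, since its $(3,3)$ entry equals $1$. Writing $\tilde{\f}$ for the last column of $\tilde{\F}$, right multiplication by $k(\gamma,z)$ leaves the last column unchanged, so taking last columns in~(1) gives $\tilde{\f}(\gamma.z,\overline{\gamma.z},\lambda)=\rho(\gamma,\lambda)\,\tilde{\f}(z,\bar z)$. Finally, $\tilde{\f}$ takes values in $S^5$ because at $\lambda\in S^1$ it is the last column of an element of $SU(3)$, hence a unit vector in $\C^3$.
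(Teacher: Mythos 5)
Your argument is correct and is exactly the standard one the paper implicitly relies on here (the theorem is stated in Step 3 without a written proof): push the relation $\gamma^*C=\rho\,C\,Q_+$ through the Iwasawa splitting $C=\tilde{\F}L_+$, use uniqueness of the Iwasawa decomposition up to the intersection $\Lambda SU(3)_\sigma\cap\Lambda^+SL(3,\C)_\sigma$, identify that intersection with the constant diagonal loops $\di(e^{i\theta},e^{-i\theta},1)$ fixed by $\sigma$, and observe that such $k$ fix $e_3$. Your identification of the ambiguity group agrees with the gauge $k(\gamma,z,\bar z)$ appearing in \eqref{define k} and \eqref{eq:define R}, so nothing is missing.
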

 \vspace{4mm}
 
 {\it Step 4:} In order to descend $\tilde{f}$ to a non-simply connected Riemann surface (say for $\lambda=1$), we require:
 For $\lambda = 1$  we obtain  
 $$\rho(\gamma, \lambda = 1) =  c(\gamma) I.$$
 
 Note that this implies $$ c(\gamma)^3 = 1.$$

As a consequence, assuming finally $ c \equiv 1$ 
 we can descend $\tilde{\f}$ to a map $$f : \hat{M} \rightarrow S^5 \rightarrow \C P^2.$$
 If $ \hat{\Pi} = Ker(c) = \pi_1(M)$, then $\hat{M} = M$. Otherwise $\hat{M}$ is a threefold cover of $M$.
 
 \begin{theorem}\label{thm:cons-mli}
The construction outlined above yields a minimal Lagrangian immersion 
 $f : M \rightarrow \C P^2$ and each minimal Lagrangian immersion of this type can be obtained this way.
 \end{theorem}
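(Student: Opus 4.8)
The statement has two halves---that the recipe of Steps 1--4 manufactures a bona fide minimal Lagrangian immersion $f:M\to\C P^2$, and that every such immersion arises in this way---and I would prove them separately, the forward half by assembling the results already in place and the converse half by reversing the loop-group decompositions. The unifying technical principle is that the scalar nature of the $\lambda=1$ monodromy is exactly what lets the surface descend, and that the uniqueness clauses of the various decompositions make the two procedures mutually inverse.

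\textbf{Forward direction.} The plan is to start from the output $\tilde\F$ of Step 3 and read off the surface from its last column $\tilde\f$. By item (2) of the theorem stated in Step 3 one has $\tilde\f(\gamma.z)=\rho(\gamma,\lambda)\tilde\f(z)$, the diagonal gauge $k$ being invisible because it fixes $e_3$. At $\lambda=1$ the Step-4 condition gives $\rho(\gamma,1)=c(\gamma)I$, so $\tilde\f(\gamma.z)=c(\gamma)\tilde\f(z)$; projecting by the Hopf map kills the scalar, whence $\tilde f=[\tilde\f]$ satisfies $\tilde f(\gamma.z)=\tilde f(z)$ for all $\gamma\in\pi_1(M)$ and descends to a well-defined map $f:M\to\C P^2$. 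That $f$ is a minimal Lagrangian immersion is automatic: by Theorem \ref{Th:DPW-hol} the Iwasawa factor $\tilde\F$ frames a primitive harmonic map, so by Proposition \ref{prop:frame} the map $\tilde f=[\tilde\F|_{\lambda=1}\,e_3]$ is a minimal Lagrangian immersion on $\D$ with metric $2e^u\,dz\,d\bar z$, a property inherited through the covering $\D\to M$. The scalar $c$ satisfies $c(\gamma)^3=1$ (forced by $\det\tilde\F=1$) and governs only the horizontal lift: on $\hat M=\ker c\backslash\D$ the lift $\tilde\f$ itself descends, producing the global horizontal lift of $\hat f=f\circ\tau$, and when $c\not\equiv1$ this $\hat M$ is a genuine threefold cover of $M$, reproducing exactly the dichotomy of Corollary \ref{3-fold cover} and Theorem \ref{covering to horizontal mLi}.

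\textbf{Converse direction.} Given a full minimal Lagrangian immersion $f:M\to\C P^2$ with $M\neq S^2$, I would lift it to $\tilde f=f\circ\pi$, take the horizontal lift $\tilde\f$ provided by Corollary \ref{Cor:globle-lift}, and normalize the extended frame $\F\in\Lambda SU(3)_\sigma$ by $\F(z_0,\lambda)=I$. Since deck transformations act trivially on $M$, the relevant symmetry group is $\pi_1(M)\times\{I\}$ (Corollary \ref{decktrafo}), so $\F$ obeys the transformation law \eqref{transF} with $\chi(\gamma,\cdot)\in\Lambda SU(3)_\sigma$; unitarity of both $\F$ and $k$ for $\lambda\in S^1$ places $\chi$ in $\Lambda SU(3)_\sigma$ for all such $\lambda$, while at $\lambda=1$ the fact that $\mathcal{R}=I$ together with fullness forces $R_\gamma$ to be a scalar, so $\chi(\gamma,1)=c(\gamma)I$ with $c(\gamma)^3=1$. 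Passing to a holomorphic (for $M$ non-compact) or meromorphic (for $M$ compact) extended frame $\F=CW_+$ and setting $\eta=C^{-1}dC$ yields precisely a potential of the form required in Step 1, satisfying \eqref{transeta} with crossed-homomorphism gauge $Q_+$ and monodromy $\rho=\chi$; by the results of Section \ref{sec:inv frame} one may even take $Q_+\equiv I$. Feeding this $\eta$ back into Steps 2--4 and invoking the uniqueness of the Iwasawa decomposition identifies the reconstructed frame with the original $\F$, hence the reconstructed surface with $f$.

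\textbf{The main obstacle.} The genuinely delicate point is not the differential geometry, which the DPW correspondence renders routine, but matching the two directions across the descent. One must check that the homomorphism $c$ and the threefold-cover bookkeeping of Corollary \ref{3-fold cover} and Theorem \ref{covering to horizontal mLi} are compatible with the monodromy condition of Step 4, so that the surface produced on $M$ in the forward direction is literally the surface whose frame is reconstructed in the converse. This rests entirely on uniqueness: of the horizontal lift up to a cube root of unity (Corollary \ref{Cor:globle-lift}), of the normalized frame (Proposition \ref{prop:frame}), and of the Iwasawa splitting, together with the existence of an invariant potential with $Q_+\equiv I$ furnished by Section \ref{sec:inv frame}. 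Once these are in hand the two constructions are inverse to one another and the theorem follows.
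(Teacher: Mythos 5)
Your proposal is correct and follows exactly the route the paper intends: the paper states Theorem \ref{thm:cons-mli} without a separate proof, treating it as the assembly of Steps 1--4 with Theorem \ref{Th:DPW-hol}, Proposition \ref{prop:frame}, the symmetry/descent results of Sections \ref{sec:3}--\ref{sec:4}, and the uniqueness of the Iwasawa and Birkhoff splittings, which is precisely what you do. Your observation that the scalar monodromy $\rho(\gamma,1)=c(\gamma)I$ is killed by the Hopf projection (so that $\tilde f$ descends to $M$ itself, while $\tilde{\f}$ descends only to $\hat M=\ker(c)\backslash\D$) is a slightly cleaner formulation of the paper's Step 4 and matches the dichotomy of Corollary \ref{3-fold cover}.
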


 %%%%%%%%%%%%%%%%%%%%%%%%%%%%%%%
\section{Invariant frames and potentials for groups of Deck transformations}
\label{sec:inv frame}

The transformation formulas stated in the last subsection 
are not easy to handle.
Fortunately, in the context of constructing minimal Lagrangian immersions on some general Riemann surface $M$ (different from $S^2$) one can simplify the situation considerably.

It seems to be useful to split the cases $M$ \emph{compact} and $M$  \emph{non-compact} into different subsections.

We consider a horizontally liftable minimal Lagrangian
immersion    $f: M \rightarrow \C P^2$,  its horizontal lift   
$\f:M \rightarrow  S^5$ and its gauged extended frame 
$\F:\mathbb{D} \rightarrow \Lambda SU(3)_\sigma$ defined on the 
universal covering  $\mathbb{D}$ of $M$.  

We recall from Theorem \ref{trafoIF} that in this case we can assume without loss of generality that
$\F (z_0, \bar{z}_0) = I$ holds.
We distinguish two cases:

%%%%%%%%%%%%%%%%%%%%%%%%%%%
\subsection{Invariant frames and potentials in the case where $M$ is non-compact:
\newline Case \texorpdfstring{$\D = \C$}{}: }\label{subsec:non-compact-C}
%%%%%%%%%%%%%%%%%%%%%%%%%%

In this case we have  either $ M =\C$, $M = {\C}^*$ or $M = \C / {\mathbb{Z}}$, a cylinder.
Moreover,  the fundamental group of $M$ acts on the universal cover $\C$ of $M$ by a discrete groups of translations. In particular, $\pi_1(M)=p\mathbb{Z}$ is abelian.  We thus obtain

\begin{theorem}
If $M$ is a non-compact Riemann surface with universal cover $\D = \C$, then the coordinate
frame $\F : \D \rightarrow SU(3)$ of some horizontally liftable minimal Lagrangian immersion $f: M \rightarrow \C P^2$ satisfies
$$\gamma^* \mathbb{F} = c_\gamma \mathbb{F},$$
where $\gamma$ is a translation.
\end{theorem}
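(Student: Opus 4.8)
The plan is to specialize the general transformation law of Theorem~\ref{trafoIF} to the case at hand, in which the relevant automorphisms are the deck transformations of the covering $\pi:\C\to M$, and to observe that two independent simplifications occur at once, each of which removes one of the potentially matrix-valued ingredients in \eqref{eq:define R}.

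First I would identify the symmetry attached to a deck transformation. By Corollary~\ref{decktrafo} the group $\pi_1(M)$, acting on $\D=\C$ as the group of deck transformations, furnishes symmetries of $\tilde{f}$ of the form $(\gamma, I)$, i.e. with trivial induced isometry $\mathcal{R}=\mathrm{id}$ of $\C P^2$. Consequently, in the notation of Theorem~\ref{trafoIF} the matrix $R_\gamma\in SU(3)$ satisfies $[R_\gamma]=\mathcal{R}=\mathrm{id}$ in $PSU(3)=\mathrm{Iso}_0(\C P^2)$. Since the kernel of $SU(3)\to PSU(3)$ is the center, this forces $R_\gamma$ to be scalar, say $R_\gamma=\omega_\gamma I$ with $\omega_\gamma^3=1$. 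Thus the only genuinely non-scalar objects remaining in \eqref{eq:define R} are the gauge $k(\gamma,z,\bar z)$ and the factor $\F(z,\bar z)$ itself.

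Second I would exploit that every $\gamma\in\pi_1(M)$ acts on $\C$ as a translation $\gamma(z)=z+p$. Hence $\gamma'(z)\equiv 1$, so that $|\gamma'|/\gamma'=|\gamma'|/\bar{\gamma}'=1$ and therefore the crossed homomorphism of Theorem~\ref{trafoIF} degenerates to $k(\gamma,z,\bar z)=\di(1,1,1)=I$. This is precisely the step that rules out any nontrivial $K$-gauge obstruction to scalarity, and it is the geometric heart of the statement: for a translational deck group the conformal factor $\gamma'$ carries no phase, so no diagonal twist survives.

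Assembling the two observations, \eqref{eq:define R} reduces to $\F(\gamma.z,\overline{\gamma.z})=(c_\gamma\omega_\gamma)\,\F(z,\bar z)$, so that, relabelling the scalar $c_\gamma\omega_\gamma\in S^1$ as $c_\gamma$, we obtain $\gamma^*\F=c_\gamma\F$ as claimed; the homomorphism property $\gamma\mapsto c_\gamma$ then descends from the corresponding statement for $\gamma\mapsto c_\gamma R_\gamma$ recorded after Theorem~\ref{trafoIF}. I do not expect a serious obstacle here: once Theorem~\ref{trafoIF} is in hand the argument is a direct specialization, and the only point that requires care is the justification that $R_\gamma$ is scalar, which rests on the triviality of the isometry attached to a deck transformation (Corollary~\ref{decktrafo}) together with the fullness of $f$.
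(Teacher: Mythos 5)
Your proof is correct and follows essentially the same route as the paper: the paper's (one-line) proof simply observes that for a translation the formula for $k(\gamma,z,\bar z)$ from Proposition \ref{prop:general} degenerates to $I$ because $\gamma'$ is constant, exactly your second observation. The only cosmetic difference is that you pass through the general symmetry law of Theorem \ref{trafoIF} and argue separately that $R_\gamma$ is central, whereas the paper invokes Proposition \ref{prop:general} directly, where deck transformations already enter with only the scalar $c(\gamma)$.
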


\begin{proof}
In view of $\gamma^{\prime}=0$, it suffices to recall the formula for $k$ in Proposition \ref{prop:general}.
\end{proof}

From this it follows immediately
\begin{theorem}
If $M$ is a non-compact Riemann surface with universal cover $\D = \C$, then, 
after introducing the loop parameter in the usual way,  the extended 
frame $\F_{\lambda}$ of some liftable minimal Lagrangian immersion 
$f: M \rightarrow \C P^2$ satisfies
$$\gamma^* \mathbb{F}_\lambda = 
\chi(\gamma,\lambda) \mathbb{F}_\lambda$$
for all $\gamma \in \pi_1(M)$.
\end{theorem}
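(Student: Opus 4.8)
The plan is to obtain the loopified relation directly from the non-loopified one just proven, using the single observation that the gauge factor $k$ is trivial for translations. Since $\pi_1(M)$ acts on $\D = \C$ as a discrete group of translations, every $\gamma \in \pi_1(M)$ has the form $\gamma(z) = z + p$, so $\gamma'(z) = 1$ is constant. Hence the diagonal gauge matrix appearing in \eqref{define k} and in Theorem \ref{trafoIF} degenerates:
$$ k(\gamma, z, \bar z) = \di\bigl(|\gamma'|/\gamma',\, |\gamma'|/\bar{\gamma}',\, 1\bigr) = \di(1,1,1) = I $$
for every deck transformation $\gamma$. This is precisely the reason the preceding theorem yields the clean relation $\gamma^* \F = c_\gamma \F$ at the level of the coordinate frame.

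Next I would introduce the spectral parameter in the usual way and pass to the extended frame $\F_\lambda = \F(z,\bar z,\lambda) \in \Lambda SU(3)_\sigma$ normalized by $\F(z_0, \bar z_0, \lambda) = I$. A deck transformation is a symmetry of $\tilde f$ with trivial isometry part (Corollary \ref{decktrafo}), so the general symmetry formula \eqref{transF} applies and reads
$$ \F(\gamma.z, \overline{\gamma.z}, \lambda) = \chi(\gamma, \lambda)\, \F(z, \bar z, \lambda)\, k(\gamma, z, \bar z), $$
where $\chi(\gamma, \cdot) \in \Lambda SU(3)_\sigma$ depends only on $\gamma$ and $\lambda$, and $\gamma \mapsto \chi(\gamma, \lambda)$ is a homomorphism satisfying $\chi(\gamma, 1) = c_\gamma R_\gamma$. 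Substituting $k(\gamma, z, \bar z) = I$ from the first step collapses this to $\gamma^* \F_\lambda = \chi(\gamma, \lambda)\F_\lambda$, which is exactly the assertion, valid uniformly for all $\gamma \in \pi_1(M)$ because the triviality of $k$ held for every translation.

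I do not anticipate a genuine obstacle: the substantive work already resides in \eqref{transF} and in the coordinate-frame theorem above. The only point meriting a line of care is that the left factor $\chi(\gamma, \lambda)$ is genuinely $z$-independent — it is a loop-group element, not a function on $\D$. This is guaranteed by the construction of $\chi$ in Section \ref{subsection: action of symmetries} as the loopification of the $z$-independent factor $c_\gamma R_\gamma$, together with the fact, recorded there, that $\chi(\gamma, \cdot)$ takes values in $\Lambda SU(3)_\sigma$ and defines a homomorphism on $\pi_1(M)$.
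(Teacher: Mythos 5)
Your proposal is correct and follows essentially the same route as the paper: the gauge factor $k$ is the identity because deck transformations of $\C$ are translations with $\gamma'\equiv 1$, and the loopified transformation formula \eqref{transF} then collapses to the stated relation. (Incidentally, you state the derivative correctly as $\gamma'=1$, whereas the paper's proof of the preceding theorem writes ``$\gamma'=0$'', evidently a typo.)
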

Since $\mathbb{F}$ and $\mathbb{F}_\lambda $ attain the value $I$ at the base point $z_0$,
for $\lambda =1$ we reproduce the last theorem.

%%%%%%%%%%%%%%%%%%%%%%%%%%%
\subsection{Invariant frames  and potentials in the case where $M$ is non-compact: 
Case \texorpdfstring{$\D = \mathcal{D} = \mathfrak{H}$}{}: }\label{subsec:non-compact-H}
%%%%%%%%%%%%%%%%%%%%%%%%%%

We have  considered so far the only possible universal covers $\C$ and $\mathcal{D}$, the unit disk. But in this section it is sometimes more convenient to
replace $\mathcal{D}$ by the biholomorphically equivalent domain $\mathfrak{H}$, the upper half-plane.
In this case the group of biholomorphic automorphisms is 
$Aut(\mathfrak{H}) = SL(2, \R)$.

In the previous subsection we have seen that the coordinate frame (and the associated extended frame) have a transformation behaviour, where
the crossed homomorphism for $k,$ as  stated in Proposition \ref{prop:general} does not show up.
The formula for $k$ stated in Proposition \ref{prop:general} indicates that in the present case the formula for $k$ will not be $I$
for general $\gamma \in \pi_1(M)$.

To obtain a formula similar to the case considered in the previous section (where $k$ is $I$)  we want to find an extended frame
(different from the coordinate frame, if necessary), where the crossed homomorphism 
$k: \pi_1(M)\times \mathfrak{H}\rightarrow K$, stated explicitly  
in  \eqref{define k} and \eqref{eq:define R}, does not show up.

We recall, that $k(\gamma,z, \bar z)$ involves $\gamma'(z)$.
In our present setting it turns out that all $\gamma \in \pi_1(M)$ have the form 
$$  \gamma(z) = \frac{az + b}{cz + d},$$
with the coefficient matrix in $SL(2,\R)$.

Putting 
\begin{equation}
    j(\gamma,z) = cz + d,
\end{equation}
we derive
\begin{equation}
    \gamma^\prime (z) = j(\gamma, z) ^{-2}.
\end{equation}

%%%%%%%%%%%%%%%%%%%%%%%%%%%%%%%%
Before we continue, we would like to spell out explicitly in our present notation,
 and differently from  \eqref{define k} and \eqref{eq:define R},
how $k(\gamma, z, \bar z)$ looks like in terms of $j(\gamma, z).$

\begin{proposition} \label{exolicitk}
Let  $\gamma(z) = \frac{az + b}{cz + d}$ be a Deck transformation of $\pi: \D\rightarrow M$ and $  j(\gamma,z) = cz + d$. Then $ \gamma^\prime (z) = j(\gamma, z) ^{-2}$ and we obtain
\begin{equation}\label{eq:k}
k(\gamma, z, \bar z )=\mathrm{diag}(\frac{j(\gamma, z)}{\overline{j(\gamma, z)}}, \frac{\overline{j(\gamma, z)}}{j(\gamma, z)}, 1).
\end{equation}

\end{proposition}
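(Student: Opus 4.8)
The plan is to reduce the entire statement to the formula for $k(\gamma,z,\bar z)$ already established in Proposition \ref{prop:general}, namely
\[
k(\gamma, z ,\bar z) = \di\!\left(\frac{|\gamma'(z)|}{\gamma'(z)},\ \frac{|\gamma'(z)|}{\overline{\gamma'(z)}},\ 1\right),
\]
and then to substitute the explicit M\"obius data. First I would record the elementary computation $\gamma'(z) = j(\gamma,z)^{-2}$: differentiating $\gamma(z) = (az+b)/(cz+d)$ gives $\gamma'(z) = (ad-bc)/(cz+d)^2$, and since the coefficient matrix lies in $SL(2,\R)$ we have $ad-bc = 1$, so $\gamma'(z) = (cz+d)^{-2} = j(\gamma,z)^{-2}$. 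This is the only place where membership in $SL(2,\R)$ (rather than merely $GL(2,\R)$) is used, and it is what makes the formula clean.

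Next I would feed this into the first diagonal entry. With $\gamma'(z) = j(\gamma,z)^{-2}$ we have $|\gamma'(z)| = |j(\gamma,z)|^{-2} = (j(\gamma,z)\,\overline{j(\gamma,z)})^{-1}$, so
\[
\frac{|\gamma'(z)|}{\gamma'(z)} = \frac{(j(\gamma,z)\,\overline{j(\gamma,z)})^{-1}}{j(\gamma,z)^{-2}} = \frac{j(\gamma,z)^{2}}{j(\gamma,z)\,\overline{j(\gamma,z)}} = \frac{j(\gamma,z)}{\overline{j(\gamma,z)}}.
\]
The second diagonal entry is the complex conjugate computation: $\overline{\gamma'(z)} = \overline{j(\gamma,z)}^{-2}$, whence $|\gamma'(z)|/\overline{\gamma'(z)} = \overline{j(\gamma,z)}/j(\gamma,z)$. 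The third entry is $1$ by definition. Substituting these three entries into the formula for $k$ yields exactly \eqref{eq:k}, completing the argument.

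There is essentially no obstacle here; the proposition is a direct rewriting of the already-proven formula once one observes $\gamma'(z)=j(\gamma,z)^{-2}$. The only point demanding a word of care is that $k$ must land in $K\cong U(1)$, i.e. the diagonal entries must be unimodular. This is automatic since each ratio $j/\overline{j}$ has modulus $1$, and the two nontrivial entries are mutual inverses (equivalently, complex conjugates), so the resulting diagonal matrix is indeed unitary with determinant $1$, consistent with the normalization $\det\F = 1$. Thus the verification amounts to the two short substitutions above together with this consistency remark, and no genuine difficulty arises.
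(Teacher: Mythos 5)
Your proposal is correct and follows essentially the same route as the paper: the paper likewise takes the formula for $k$ from Proposition \ref{prop:general}, observes that $\gamma'(z)=j(\gamma,z)^{-2}$ for a M\"obius transformation with coefficient matrix in $SL(2,\R)$, and substitutes. Your write-up merely spells out the two substitutions $|\gamma'|/\gamma'=j/\overline{j}$ and $|\gamma'|/\overline{\gamma'}=\overline{j}/j$ that the paper leaves implicit.
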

\begin{proof}

Substituting the expression of $\gamma$ in the proof of Proposition \ref{prop:general}

we derive 
$$\gamma^* \mathbb{F} k^{-1} = c_\gamma \mathbb{F},$$
where $k=\mathrm{diag}(\sqrt{\frac{\bar{\gamma}^{\prime}}{\gamma^{\prime}}},\sqrt{\frac{\gamma^{\prime}}{\bar{\gamma}^{\prime}}},1)$.
Since, for a M\"{o}bius transformation $\gamma$, we obtain $\gamma^{\prime}(z)=j(\gamma, z)^{-2}$ with $j(\gamma, z)=cz+d$, the claim follows. 
\end{proof}

Consider next $\pi: \D\rightarrow \pi_1(M) \backslash {\D}$. 
Since  $j(\gamma, z)$ is a crossed homomorphism with values in $\C^*$, we can apply Corollary 30.5 and Exercise 31.1 of \cite{Forster} and infer that there exists  a holomorphic function $h:\D\rightarrow \C^*$ such that
\begin{equation}\label{eq:j-spliting}
j(\gamma, z)=h(z)h(\gamma. z)^{-1}.
\end{equation}

Substituting into  \eqref{eq:k}, we derive

\begin{theorem}
Assume that $M$ is non-compact. For any frame $\mathbb{F}$ satisfying (\ref{transF}) 
there exists  a holomorphic function $h:\D\rightarrow \C^*$ and a real analytic function $p$ on $\D$ such that 
$k(\gamma, z, \bar z)=p(z, \bar{z})p(\gamma . z, \overline{\gamma . z})^{-1}$, where 
$p(z, \bar{z})=\mathrm{diag}(\frac{h(z)}{\overline{h(z)}}, \frac{\overline{h(z)}}{h(z)}, 1)$ and $j(\gamma, z)=h(z)h(\gamma.z)^{-1}$. 
\end{theorem}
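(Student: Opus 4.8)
The plan is to exploit the multiplicative splitting \eqref{eq:j-spliting} of the crossed homomorphism $j(\gamma,z)$ and propagate it through the explicit diagonal formula \eqref{eq:k} for $k$. First I would invoke the previous theorem, which guarantees the existence of a holomorphic $h:\D\rightarrow\C^*$ with $j(\gamma,z)=h(z)h(\gamma.z)^{-1}$; this is the genuine input, provided by Corollary 30.5 and Exercise 31.1 of \cite{Forster} applied to the fact that $j(\gamma,z)=cz+d$ is a $\C^*$-valued crossed homomorphism for the action of $\pi_1(M)$ on $\D$. Once $h$ is in hand, the remaining content is purely formal: I define $p(z,\bar z)=\di\!\left(\frac{h(z)}{\overline{h(z)}},\frac{\overline{h(z)}}{h(z)},1\right)$ and verify directly that $k(\gamma,z,\bar z)=p(z,\bar z)\,p(\gamma.z,\overline{\gamma.z})^{-1}$.

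The verification is a diagonal-entry computation. Substituting the splitting into \eqref{eq:k}, the first diagonal entry of $k$ is
\begin{equation}
\frac{j(\gamma,z)}{\overline{j(\gamma,z)}}
=\frac{h(z)h(\gamma.z)^{-1}}{\overline{h(z)}\,\overline{h(\gamma.z)}^{-1}}
=\frac{h(z)}{\overline{h(z)}}\cdot\left(\frac{h(\gamma.z)}{\overline{h(\gamma.z)}}\right)^{-1},
\end{equation}
which is exactly the $(1,1)$-entry of $p(z,\bar z)\,p(\gamma.z,\overline{\gamma.z})^{-1}$. The second entry is the reciprocal and follows by taking inverses, and the third entry is trivially $1=1\cdot 1^{-1}$. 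Since all three matrices involved are diagonal, their products commute entrywise and the three identities assemble into the matrix identity $k(\gamma,z,\bar z)=p(z,\bar z)\,p(\gamma.z,\overline{\gamma.z})^{-1}$. I would note in passing that $p$ takes values in $K\cong U(1)$ (the diagonal subgroup occurring in $k$), since each ratio $h/\overline h$ has modulus one, so $p$ is of the same type as the factors of $k$.

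I would finally record the regularity claims: $h$ is holomorphic and nowhere zero by construction, so $p(z,\bar z)$ is real analytic on $\D$ because its entries are smooth functions of $h$ and $\overline h$. The main obstacle here is not the algebra but the existence step that has already been discharged by the previous theorem, namely the solvability of the multiplicative cocycle equation $j(\gamma,z)=h(z)h(\gamma.z)^{-1}$ on the non-compact surface $\D$; this is precisely where non-compactness is used, since it guarantees the vanishing of the relevant sheaf cohomology that makes $j$ a coboundary. Everything downstream of that is the short, mechanical diagonal computation above, so the statement follows.
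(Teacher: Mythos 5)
Your proposal is correct and follows the paper's own route exactly: the paper obtains the splitting $j(\gamma,z)=h(z)h(\gamma.z)^{-1}$ from Corollary 30.5 and Exercise 31.1 of Forster (using that $j$ is a $\C^*$-valued crossed homomorphism on the non-compact $\D$), and then the theorem is proved by the same substitution into the diagonal formula \eqref{eq:k} that you carry out entrywise. The paper compresses this to the phrase ``Substituting into \eqref{eq:k}, we derive,'' so your explicit verification of the three diagonal entries and of the regularity of $p$ is a faithful, slightly more detailed version of the same argument.
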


\begin{corollary}\label{cor5.3}
Using the notation above, we set $\hat{\mathbb{F}}(z, \bar{z}) = \mathbb{F}(z, \bar{z}) p(z, \bar{z})$.
Then  
for all $\gamma \in \pi_1(M)$,
\begin{equation}
\gamma^* \hat{\mathbb{F}} = c_\gamma \hat{\mathbb{F}}.
\end{equation}
\end{corollary}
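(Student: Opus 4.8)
The plan is to verify the transformation formula for $\hat{\mathbb{F}}$ by direct substitution of its definition into the desired equation, using the transformation behaviours already established for $\mathbb{F}$ and $p$. First I would apply the pullback $\gamma^*$ to the definition $\hat{\mathbb{F}}(z,\bar{z}) = \mathbb{F}(z,\bar{z}) p(z,\bar{z})$, obtaining
\begin{equation}
\gamma^* \hat{\mathbb{F}} = (\gamma^*\mathbb{F})(\gamma^* p) = \mathbb{F}(\gamma.z, \overline{\gamma.z})\, p(\gamma.z, \overline{\gamma.z}).
\end{equation}

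Next I would invoke the two key inputs. From Proposition \ref{exolicitk} together with \eqref{eq:define R} (recall that in the non-compact setting $R_\gamma$ reduces to a scalar, so that \eqref{transF} reads $\gamma^*\mathbb{F} = c_\gamma \mathbb{F} k(\gamma, z, \bar{z})$), the frame satisfies $\mathbb{F}(\gamma.z, \overline{\gamma.z}) = c_\gamma \mathbb{F}(z, \bar{z}) k(\gamma, z, \bar{z})$. From the theorem immediately preceding this corollary, the crossed homomorphism splits as $k(\gamma, z, \bar{z}) = p(z, \bar{z})\, p(\gamma.z, \overline{\gamma.z})^{-1}$. Substituting both into the expression above gives
\begin{equation}
\gamma^* \hat{\mathbb{F}} = c_\gamma\, \mathbb{F}(z, \bar{z})\, p(z, \bar{z})\, p(\gamma.z, \overline{\gamma.z})^{-1}\, p(\gamma.z, \overline{\gamma.z}).
\end{equation}

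The final step is then purely algebraic: the factor $p(\gamma.z, \overline{\gamma.z})^{-1} p(\gamma.z, \overline{\gamma.z})$ cancels to the identity, leaving $\gamma^* \hat{\mathbb{F}} = c_\gamma\, \mathbb{F}(z, \bar{z})\, p(z, \bar{z}) = c_\gamma \hat{\mathbb{F}}(z, \bar{z})$, which is exactly the claim. I do not expect a genuine obstacle here, since all the analytic content has already been absorbed into the splitting result that produces $h$ and $p$; the only point requiring minor care is bookkeeping the arguments of $p$ so that the cancellation is between $p$ evaluated at the \emph{same} point $\gamma.z$, and confirming that the scalar $c_\gamma$ commutes past the diagonal matrix $p$ (which is immediate, as $c_\gamma \in S^1$ is a scalar). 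It is worth noting for the reader that this $\hat{\mathbb{F}}$ is no longer the coordinate frame but a gauge of it, and that the gauge $p$ is precisely engineered so that the $k$-dependence disappears, reducing the case $\D = \mathfrak{H}$ to the same clean form obtained in Section \ref{subsec:non-compact-C} for $\D = \C$.
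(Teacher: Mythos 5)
Your proof is correct and is precisely the argument the paper intends: the corollary is stated without proof as an immediate consequence of the splitting $k(\gamma,z,\bar z)=p(z,\bar z)\,p(\gamma.z,\overline{\gamma.z})^{-1}$ combined with the transformation formula $\gamma^{*}\mathbb{F}=c_{\gamma}\mathbb{F}\,k(\gamma,z,\bar z)$ for deck transformations, and your substitution and cancellation carry this out with the factors in the correct order. The only cosmetic remark is that for $\gamma\in\pi_{1}(M)$ the relevant transformation law is the one for deck transformations (where the isometry is trivial, so no $R_{\gamma}$ appears at all), rather than a specialization of the general symmetry formula; this does not affect the validity of your argument.
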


\begin{corollary}
Using the assumptions and the notation above, we introduce the loop parameter as usual. Then the extended frame $\hat{\mathbb{F}}_\lambda$ satisfies
\begin{equation}
\gamma^* \hat{\mathbb{F}}_\lambda = \chi(\gamma, \lambda)\hat{\mathbb{F}}_\lambda.
\end{equation}
\end{corollary}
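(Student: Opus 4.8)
The plan is to derive the loop-group version of Corollary \ref{cor5.3} directly from the unparametrized statement, by tracking how the gauge relation $\gamma^* \hat{\mathbb{F}} = c_\gamma \hat{\mathbb{F}}$ behaves once the spectral parameter $\lambda$ is introduced. The starting point is the transformation formula \eqref{transF}, namely $\F(\gamma.z, \overline{\gamma.z},\lambda) = \chi(\gamma,\lambda)\F(z,\bar z,\lambda) k(\gamma,z,\bar z)$, together with the theorem just above, which factors the crossed homomorphism $k$ as $k(\gamma,z,\bar z) = p(z,\bar z)\, p(\gamma.z,\overline{\gamma.z})^{-1}$ for the explicit diagonal gauge $p(z,\bar z)=\di(h/\bar h, \bar h/h, 1)$. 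The key observation is that $p$ is $z$-dependent only through a $K$-valued (diagonal) gauge, so it interacts well with the twisting and the Iwasawa splitting.

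First I would form $\hat{\mathbb{F}}_\lambda := \F_\lambda\, p$, the $\lambda$-dependent analogue of $\hat{\mathbb{F}} = \mathbb{F}p$. Applying $\gamma^*$ and substituting \eqref{transF} gives
\begin{equation}
\gamma^*\hat{\mathbb{F}}_\lambda = \chi(\gamma,\lambda)\,\F_\lambda\, k(\gamma,z,\bar z)\, (\gamma^* p).
\end{equation}
Now I would insert the factorization $k = p\,(\gamma^* p)^{-1}$, which rearranges the tail $k\,(\gamma^* p) = p\,(\gamma^* p)^{-1}(\gamma^* p) = p$, so that
\begin{equation}
\gamma^*\hat{\mathbb{F}}_\lambda = \chi(\gamma,\lambda)\,\F_\lambda\, p = \chi(\gamma,\lambda)\,\hat{\mathbb{F}}_\lambda.
\end{equation}
This is exactly the asserted identity. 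The only point that needs care is that $p$, being diagonal with unit-modulus entries, indeed lies in $K$ and commutes appropriately with the $\lambda$-insertions, so that the cancellation $k\,(\gamma^* p) = p$ survives unchanged after the spectral parameter has been switched on; this is where one uses that the loop-parameter insertion, built from \eqref{F}, only dresses the off-diagonal (root-space) components and leaves the diagonal gauge $p$ acting in the same way as in the $\lambda=1$ frame.

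The main obstacle I anticipate is purely bookkeeping rather than conceptual: one must confirm that the extended frame $\hat{\mathbb{F}}_\lambda$ obtained by gauging $\F_\lambda$ with the $\lambda$-independent diagonal $p$ is still a genuine element of $\Lambda SU(3)_\sigma$ satisfying the twisting condition, and that $\chi(\gamma,\lambda)$ is unchanged by the gauge (which it is, since $p$ is applied on the right). Given that $p$ takes values in the diagonal torus $K\cong U(1)$ fixed by $\sigma$, multiplication on the right by $p$ preserves the twisted loop group and does not alter the left factor $\chi$. Since $\hat{\mathbb{F}}_\lambda$ reduces to $\hat{\mathbb{F}}$ at $\lambda=1$ and both attain $I$ at the base point $z_0$, the corollary follows immediately, and no further integrability or closing condition needs to be checked at this stage.
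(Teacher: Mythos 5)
Your argument is correct and is exactly the (implicit) proof the paper intends: the transformation formula \eqref{transF} already holds for the extended frame with the same $\lambda$-independent gauge $k$, and inserting the splitting $k=p\,(\gamma^*p)^{-1}$ into $\gamma^*(\F_\lambda p)=\chi(\gamma,\lambda)\F_\lambda k\,(\gamma^*p)$ gives the cancellation immediately, with $p$ diagonal of determinant one lying in $K$ so that $\hat{\mathbb{F}}_\lambda$ stays in $\Lambda SU(3)_\sigma$ and the left factor $\chi$ is untouched. No gap.
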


The property just listed can also be obtained for some holomorphic extended frame.
This will be very helpful for finding a proper potential if one wants to apply the
loop group  method to the construction of minimal Lagrangian
immersions into $\C P^2$.

Recall from subsection \ref{subsection:loop method} that there always exists some holomorphic matrix function 
\begin{equation*}
C_0: \mathbb{D}\rightarrow \Lambda SL(3,\mathbb{C})_{\sigma},
\end{equation*}
such that 
\begin{equation*}
\F=C_0V_+,
\end{equation*}
with some real analytic matrix $V_+: \mathbb{D}\rightarrow \Lambda^{+} SL(3,\mathbb{C})_{\sigma}$ 
and $C_0(z_0,\lambda)=I$.
Thus 

\begin{eqnarray}
C_0(\gamma . z,\lambda)&=&\F(\gamma . z, \overline{\gamma . z}, \lambda) 
V_+(\gamma. z, \overline{\gamma . z}, \lambda)^{-1}\nonumber \\
&=&\chi(\gamma, \lambda) C_0(z,\lambda) V_+(z, \bar{z},\lambda) k(\gamma, z, \bar{z})
V_{+}(\gamma . z, \overline{\gamma . z}, \lambda)^{-1}. \label{eq:C}
\end{eqnarray}

Set 
\begin{equation*}
W_+(\gamma, z, \lambda):=V_+(z, \bar{z},\lambda) k(\gamma, z, \bar{z}) 
V_+(\gamma . z, \overline{ \gamma . z}, \lambda)^{-1}.
\end{equation*}
Then an easy computation shows
\begin{equation*}
W_+(\gamma \mu, z, \lambda)=W_+(\mu, z,\lambda) W_+(\gamma, \mu. z, \lambda),
\end{equation*}
that is, $W_+(\gamma, z, \lambda)$ is  a crossed homomorphism.
Moreover, from \eqref{eq:C} we see that $W_+(\gamma, z, \lambda)$ is holomorphic in terms of 
$z \in \mathbb{D}$.

\begin{theorem} 
Let $M= \pi_1(M) \backslash \mathbb{D}$  be a non-compact Riemann surface. Then for any horizontally liftable minimal Lagrangian immersion $f: M\rightarrow \mathbb{C}P^2$, 
there exists some holomorphic function $U_+(\cdot, \lambda)$
: $\mathbb{D}\rightarrow \Lambda^{+} SL(3,\mathbb{C})_{\sigma}$ such that
\begin{equation}\label{eq:W+spliting}
 W_+(\gamma, z, \lambda)=U_+(z,\lambda) U_+(\gamma\cdot z, \lambda)^{-1}
\end{equation} 
holds.
\end{theorem}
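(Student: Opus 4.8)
The statement is a non-abelian Cousin-type (coboundary) problem: $W_+(\gamma,z,\lambda)$ is a holomorphic crossed homomorphism with values in the complex Banach Lie group $G^+ := \Lambda^+ SL(3,\C)_\sigma$, and we must exhibit it as the coboundary $U_+(z,\lambda)U_+(\gamma\cdot z,\lambda)^{-1}$ of a single holomorphic map $U_+:\D\to G^+$. Equivalently, $W_+$ is a factor of automorphy for a holomorphic principal $G^+$-bundle over $M=\pi_1(M)\backslash \D$, and the claim is that this bundle is holomorphically trivial. This is precisely the non-abelian, loop-group-valued analogue of the splitting $j(\gamma,z)=h(z)h(\gamma\cdot z)^{-1}$ used above, which is the rank-one ($\C^*$-valued) case.

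The plan is to exploit the filtration of $G^+$ by powers of $\lambda$ and reduce to two abelian splittings that are already available on an open (hence Stein) Riemann surface. First, evaluation at $\lambda=0$ maps $G^+$ onto the abelian group $\exp\mathfrak{g}_0\cong\C^*$ with kernel the pro-unipotent subgroup $\Lambda^+_* := \{g\in G^+ : g(0)=I\}$. The induced $\C^*$-valued cocycle is split exactly as $j$ was, i.e.\ by $H^1(M,\mathcal{O}^*)=0$ (Corollary 30.5 of \cite{Forster}); gauging $W_+$ by the resulting factor reduces us to a cocycle with values in $\Lambda^+_*$, that is, one $\equiv I \bmod \lambda$. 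I would then solve order by order in $\lambda$: if the lowest nonconstant term is $\lambda^n A_n(\gamma,z)$, the cocycle identity forces $A_n$ to be an additive holomorphic $\mathfrak{g}_{n\,\mathrm{mod}\,6}$-valued crossed homomorphism, which by $H^1(M,\mathcal{O})=0$ (Behnke--Stein; Exercise 31.1 of \cite{Forster}) is a coboundary $A_n(\gamma,z)=B_n(z)-B_n(\gamma\cdot z)$ for some holomorphic $B_n$; gauging by $\exp(\lambda^n B_n(z))$ raises the lowest order by one. Iterating and composing these gauges produces the desired $U_+$ formally.

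The main obstacle is convergence. The inductive procedure yields an infinite product $U_+=\cdots\exp(\lambda^2 B_2)\exp(\lambda B_1)U_+^{(0)}$, and one must show it converges, in the weighted Wiener norm of the Appendix, to a genuine element of $G^+$ that is still holomorphic in $z\in\D$ and respects the $\sigma$-twisting. This requires a bounded solution operator for the additive Cousin problem on $\D$ together with the Banach-algebra (submultiplicativity) property of the norm, so that the corrections decay geometrically, $\|U_n-I\|=O(r^n)$, and both the product and its holomorphy in $z$ pass to the limit; the twisting is preserved automatically, since each graded correction lands in the correct eigenspace $\mathfrak{g}_{n\,\mathrm{mod}\,6}$. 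Alternatively, and more conceptually, one may note that $M$ is Stein, that $G^+$ is a connected complex Banach Lie group, and that open Riemann surfaces are homotopy equivalent to a wedge of circles, so every topological principal $G^+$-bundle is trivial; then the Oka--Grauert principle for bundles with complex Banach Lie structure group over Stein manifolds yields the holomorphic triviality directly, bypassing the explicit iteration and its estimates.
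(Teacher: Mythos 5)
Your closing ``alternative'' is in fact the paper's proof: the authors dispose of the theorem in one sentence by following the pattern of Theorem 3.2 of \cite{DH03} (equivalently Forster's treatment of factors of automorphy in \S 31 of \cite{Forster}) and invoking Theorem 8.2 of \cite{Bungart}, i.e.\ the Oka--Grauert principle for holomorphic bundles with Banach Lie structure group over Stein spaces, to get the vanishing of $H^1$ with values in $\Lambda^+SL(3,\C)_\sigma$; since $\Lambda^+SL(3,\C)_\sigma$ is connected and a non-compact Riemann surface is homotopy equivalent to a wedge of circles, the associated bundle is topologically, hence holomorphically, trivial, and the crossed homomorphism splits. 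So on that route you and the paper agree. Your \emph{primary} route --- peel off the $\lambda=0$ part as a $\C^*$-valued factor of automorphy, then kill the cocycle order by order in $\lambda$ using $H^1(\pi_1(M),\mathcal{O}(\D))=0$ in each eigenspace $\mathfrak{g}_{n\bmod 6}$ --- is genuinely different and more explicit (it would, for instance, produce $U_+$ by a concrete recursion), and the formal induction is sound: the cocycle identity does force the lowest-order term to be an additive holomorphic crossed homomorphism, and gauging by $\exp(\lambda^nB_n)$ preserves the cocycle property and the twisting. But the convergence step is not closed as written. The asserted geometric decay $\|U_n-I\|=O(r^n)$ has no visible source: the weighted Wiener norm of $W_+$ only gives summability of its graded pieces; the splitting operator for summands of automorphy on $\D$ coming from Forster's Runge-exhaustion argument is not obviously a bounded linear operator uniformly on $\D$ (only on compact subsets); and each gauge $\exp(\lambda^nB_n)$ perturbs all the higher-order terms $A_m$, $m>n$, so the inputs to the later Cousin problems are themselves changing and must be re-estimated at every stage. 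Making this scheme rigorous is essentially the content of Bungart's theorem, which is why the paper simply cites it; if you want the explicit route you would need to either exhibit a bounded solution operator for the $H^1$-splitting on $\D$ or restrict to an exhaustion by relatively compact subdomains and pass to the limit, neither of which is done in your sketch.
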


\begin{proof}
Following the proof of Theorem 3.2 of \cite{DH03} or the proof of Theorem 31.2 of \cite{Forster} and using Theorem 8.2 in \cite{Bungart} which implies the vanishing of $H^1(\mathbb{D}, \Lambda^+SL(3,\mathbb{C})_{\sigma})$, one obtains that $W_+(\gamma,z,\lambda)$ splits in $\Lambda^+ SL(3,\mathbb{C})_{\sigma}$.
\end{proof}

\begin{corollary}
 By defining 
\begin{equation}
C(z,\lambda)= C_0(z,\lambda) U_+(z,\lambda),
\end{equation}
we get a holomorphic map $C(z,\lambda)$ satisfying
\begin{equation}
C(\gamma . z, \lambda)=\chi(\gamma, \lambda) C(z,\lambda)
\end{equation}
for any $\gamma\in \pi_1(M)$.
\end{corollary}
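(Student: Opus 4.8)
The plan is to verify the claimed transformation law by a direct substitution, since all of the analytic content has already been settled: the splitting of $W_+$ in the preceding theorem and the transformation formula \eqref{eq:C} for $C_0$ together do all the work. Concretely, I would start from the definition $C(z,\lambda)=C_0(z,\lambda)U_+(z,\lambda)$ and evaluate it at $\gamma.z$, being careful to respect the non-commutative order of the factors.

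First I would apply the transformation formula for $C_0$, which is exactly \eqref{eq:C} once one reads off the definition of $W_+$, namely $C_0(\gamma.z,\lambda)=\chi(\gamma,\lambda)C_0(z,\lambda)W_+(\gamma,z,\lambda)$. This yields
\[
C(\gamma.z,\lambda)=\chi(\gamma,\lambda)\,C_0(z,\lambda)\,W_+(\gamma,z,\lambda)\,U_+(\gamma.z,\lambda).
\]
Next I would substitute the splitting \eqref{eq:W+spliting}, $W_+(\gamma,z,\lambda)=U_+(z,\lambda)U_+(\gamma.z,\lambda)^{-1}$, into the middle factor. Because $\chi$ multiplies on the left while all the $\Lambda^{+}SL(3,\mathbb{C})_{\sigma}$-valued factors sit on the right of $C_0$, the two consecutive occurrences $U_+(\gamma.z,\lambda)^{-1}U_+(\gamma.z,\lambda)=I$ cancel cleanly, leaving
\[
C(\gamma.z,\lambda)=\chi(\gamma,\lambda)\,C_0(z,\lambda)\,U_+(z,\lambda)=\chi(\gamma,\lambda)\,C(z,\lambda),
\]
which is the desired identity.

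Finally, for the holomorphicity of $C$ I would simply observe that $C_0$ is holomorphic in $z$ by its construction (it comes from the decomposition $\F=C_0V_+$ with $C_0(z_0,\lambda)=I$) and that $U_+(\cdot,\lambda)$ is holomorphic by the previous theorem; hence their product $C=C_0U_+$ is holomorphic in $z\in\mathbb{D}$, and it takes values in $\Lambda SL(3,\mathbb{C})_{\sigma}$. I do not expect any genuine obstacle here: the single point requiring attention is tracking the order of the factors so that the cancellation is legitimate, which it is precisely because both $W_+$ and $U_+$ are $\Lambda^{+}SL(3,\mathbb{C})_{\sigma}$-valued and appear consistently on the right of $C_0$.
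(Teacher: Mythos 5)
Your computation is correct and is exactly the argument the paper intends (the corollary is stated without proof precisely because it follows by this direct substitution of the splitting \eqref{eq:W+spliting} into the transformation formula \eqref{eq:C} for $C_0$, with the two $U_+(\gamma.z,\lambda)$ factors cancelling). The holomorphicity observation is likewise the standard one, so there is nothing to add.
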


By abuse of language, a matrix function $C$ satisfying the last equation above will be  called an \emph{extended invariant holomorphic frame} of $f$.

As a consequence, we have 

\begin{corollary}
 $\eta=C^{-1} dC$ is an invariant holomorphic potential under $\pi_1(M)$, i.e.,
 $\gamma^*\eta=\eta$
 for any $\gamma\in \pi_1(M)$.
 \end{corollary}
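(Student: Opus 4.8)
The plan is to reduce everything to the transformation law established in the immediately preceding corollary, namely $C(\gamma.z,\lambda) = \chi(\gamma,\lambda)\,C(z,\lambda)$ for all $\gamma \in \pi_1(M)$, and to exploit the decisive feature of this law: the left factor $\chi(\gamma,\lambda)$ depends only on $\gamma$ and $\lambda$, not on the point $z$. Since $\eta = C^{-1}dC$ is a matrix-valued $(1,0)$-form built functorially from $C$, and since pullback by the biholomorphism $\gamma$ commutes with the exterior derivative and with the pointwise algebra operations (matrix product and inverse), the whole argument will be a direct substitution rather than a genuine computation.

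Concretely, I would write $\gamma^*\eta = (\gamma^*C)^{-1}\,d(\gamma^*C)$, using naturality of $d$. Then I substitute $\gamma^*C = \chi(\gamma,\lambda)\,C$ and observe that $d\chi(\gamma,\lambda) = 0$ because $\chi(\gamma,\lambda)$ is constant in $z$; hence $d(\chi C) = \chi\,dC$. This yields
\[
\gamma^*\eta = (\chi C)^{-1}\chi\,dC = C^{-1}\chi^{-1}\chi\,dC = C^{-1}dC = \eta,
\]
which is exactly the asserted invariance $\gamma^*\eta = \eta$.

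It remains only to record why $\eta$ deserves the name \emph{holomorphic} potential: $C = C_0 U_+$ is holomorphic in $z$ as a product of the holomorphic frames $C_0$ and $U_+$, so $C^{-1}$ is holomorphic as well, and therefore $\eta = C^{-1}C_z\,dz$ is a holomorphic $(1,0)$-form. I do not expect a genuine obstacle here; the single point demanding care is the verification that $\chi(\gamma,\lambda)$ is truly independent of $z$, but this is already built into the statement of the previous corollary, where the nontrivial work (the splitting of the crossed homomorphism $W_+$ via the vanishing of $H^1(\mathbb{D}, \Lambda^+SL(3,\mathbb{C})_\sigma)$) has been carried out. Thus the present corollary is essentially a bookkeeping consequence of that splitting.
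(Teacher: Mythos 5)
Your argument is correct and is exactly the computation the paper intends: the corollary follows from $\gamma^*C = \chi(\gamma,\lambda)C$ with $\chi$ independent of $z$, so that $\gamma^*\eta = (\chi C)^{-1}\chi\,dC = C^{-1}dC = \eta$, with holomorphy inherited from $C = C_0 U_+$. The paper leaves this as an immediate consequence of the preceding corollary, and your write-up supplies precisely that bookkeeping.
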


Note, in the case under consideration we have  unitary monodromy and the closing condition 
$\chi(\gamma, \lambda=1)= c_\gamma I$ for $\gamma\in \pi_1(M)$ and some $c_\gamma  \in S^1$ satisfying $c_\gamma^3 =1$.
Summing up we have

\begin{theorem}\label{Thm 5.10}
Each minimal Lagrangian
immersion  from some non-compact Riemann surface $M$ into $\C P^2$
can be obtained by the loop group method from some 
invariant holomorphic potential defined on the universal cover $\mathbb{D}$ of $M$, i.e., some holomorphic potential on $\mathbb{D}$ satisfying $\gamma^*\eta=\eta$
 for any $\gamma\in \pi_1(M)$.
\end{theorem}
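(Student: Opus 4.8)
The plan is to establish the stated equivalence by assembling the invariant-frame construction carried out in Subsections~\ref{subsec:non-compact-C} and~\ref{subsec:non-compact-H}, treating the two universal covers $\D=\C$ and $\D=\mathfrak{H}$ together. For the forward direction I would start from an arbitrary minimal Lagrangian immersion $f\colon M \rightarrow \C P^2$ with $M$ non-compact, lift it naturally to $\tilde f\colon \D \rightarrow \C P^2$ on the universal cover, and pass to the horizontal lift $\tilde\f$ (which exists by Corollary~\ref{Cor:globle-lift} since $\D$ is contractible) together with its gauged extended frame $\F$ normalized by $\F(z_0,\bar z_0,\lambda) = I$, so that the monodromy law~\eqref{transF} holds for all $\gamma \in \pi_1(M)$. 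The first task is to remove the crossed homomorphism $k$ from this law: when $\D=\C$ the deck transformations are translations, $\gamma'\equiv 1$, and $k \equiv I$ outright; when $\D = \mathfrak{H}$ I would instead replace $\F$ by $\hat\F = \F p$ as in Corollary~\ref{cor5.3}, exploiting the scalar splitting~\eqref{eq:j-spliting} of the automorphy factor $j(\gamma,z)$. In either case one is left with a frame whose monodromy is a pure left translation, $\gamma^*\hat\F_\lambda = \chi(\gamma,\lambda)\hat\F_\lambda$ with $\chi(\gamma,\lambda) \in \Lambda SU(3)_\sigma$.

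Next I would move to the holomorphic picture by writing $\hat\F = C_0 V_+$ with $C_0$ holomorphic in $z$ and $C_0(z_0,\lambda) = I$; as in~\eqref{eq:C} the associated automorphy factor $W_+(\gamma,z,\lambda) = V_+\, k\, V_+^{-1}$ is a crossed homomorphism that is holomorphic in $z$ and takes values in $\Lambda^+ SL(3,\C)_\sigma$. Splitting it as $W_+(\gamma,z,\lambda) = U_+(z,\lambda)\,U_+(\gamma.z,\lambda)^{-1}$ and setting $C = C_0 U_+$ yields the invariant holomorphic frame with $C(\gamma.z,\lambda) = \chi(\gamma,\lambda)C(z,\lambda)$. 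Since $\chi(\gamma,\lambda)$ does not depend on $z$, a one-line computation gives, for $\eta = C^{-1}dC$, that $\gamma^*\eta = (C\circ\gamma)^{-1}d(C\circ\gamma) = C^{-1}\chi(\gamma,\lambda)^{-1}\chi(\gamma,\lambda)\,dC = \eta$, which is precisely the asserted invariance.

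For the converse I would reverse the recipe of Subsection~\ref{subsec:construction}. Given an invariant holomorphic potential $\eta$ on $\D$, solve $dC = C\eta$ with $C(z_0,\lambda)=I$; the invariance $\gamma^*\eta = \eta$ shows that $z \mapsto C(\gamma.z,\lambda)$ solves the same equation, so by uniqueness $C(\gamma.z,\lambda) = \chi(\gamma,\lambda)C(z,\lambda)$ with the constant matrix $\chi(\gamma,\lambda) = C(\gamma.z_0,\lambda)$, and $\gamma \mapsto \chi(\gamma,\cdot)$ is a homomorphism. Performing the Iwasawa decomposition $C = \F L_+$ produces an extended frame, hence a minimal Lagrangian immersion on $\D$; because projectivization onto $\C P^2$ absorbs the scalar ambiguity, the closing condition $\chi(\gamma,\lambda = 1) = c_\gamma I$ (with $c_\gamma \in S^1$, necessarily $c_\gamma^3 = 1$) is exactly what makes the resulting map descend to $M$, recovering $f$ via Theorem~\ref{thm:cons-mli}.

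The step I expect to be the main obstacle is the splitting of the crossed homomorphism $W_+$ inside $\Lambda^+ SL(3,\C)_\sigma$. In contrast to the finite-dimensional splitting~\eqref{eq:j-spliting} of the scalar factor $j(\gamma,z)$, this is an infinite-dimensional statement about a Banach--Lie-group-valued cocycle and rests on the vanishing of $H^1(\D,\Lambda^+ SL(3,\C)_\sigma)$. I would obtain it by following the scheme of Theorem~31.2 of~\cite{Forster} (equivalently Theorem~3.2 of~\cite{DH03}) and invoking Theorem~8.2 of~\cite{Bungart}, the extension of Grauert's triviality theorem for holomorphic principal bundles over non-compact (hence Stein) Riemann surfaces to Banach--Lie structure groups; it is here that the non-compactness of $M$ enters decisively.
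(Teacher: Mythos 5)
Your proposal is correct and follows essentially the same route as the paper: trivial $k$ for $\D=\C$, the scalar splitting of $j(\gamma,z)$ to produce the invariant frame $\hat{\F}$ for $\D=\mathfrak{H}$, passage to a holomorphic frame $C_0V_+$, splitting of the $\Lambda^+SL(3,\C)_\sigma$-valued crossed homomorphism $W_+$ via the Forster/DH03 scheme together with Bungart's vanishing of $H^1(\D,\Lambda^+SL(3,\C)_\sigma)$, and the resulting invariance of $\eta=C^{-1}dC$. You correctly single out the Banach-Lie cocycle splitting as the one genuinely non-elementary step, which is exactly where the paper also concentrates its effort.
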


\begin{remark}
 The proof for the existence of an extended invariant frame in the case of non-compact $M$ given above is a bit round about. In some cases, like in the case of $k$-noids, one can actually realize 
$\mathcal{T}_k$ as an open subset of $\C$ and thus can define the coordinate frame directly  using the coordinates of $\C$. Then the  pull back to the universal cover will yield an invariant frame. In all cases, the transition to extended frames (i.e. the introduction of the loop parameter) will preserve this property (since ``$k$'' does not change in this transition).

 Another general proof for non-compact $M$ goes as follows: Consider $k$ as stated explicitly above, (\ref{eq:k}). Clearly, all entries of $k$ are quotients of $j(z) = cz + d$ and of its complex conjugate, where $j$ is defined by the denominator of the transformation $\gamma$.
It is straightforward to verify that $j$ is a multiplicative holomorphic crossed homomorphism (which does not depend on $\lambda$ and is $\C^*$-valued). Therefore by the multiplicative version of \cite{Forster}, Theorem 28.4 we obtain that $j$ is a boundary. From this it is easy to see that actually also $k$ is a boundary. 
\end{remark}

%%%%%%%%%%%%%%%%%%%%%%%%%%%
\subsection{Invariant frames and potentials in the case where $M$ is compact}
\label{sec:compact}

In this section we assume that $M$ is compact and also $M \neq S^2$ (as always in this paper). Then the universal cover of $M$ is $\mathbb{C}$ or $\mathbb{D}$, in particular, non-compact. Then by Theorem 2.6.8 in \cite{Miyake} we obtain a meromorphic function $h:\D \rightarrow \C^*$ such that the splitting  \eqref{eq:j-spliting} holds for $j(\gamma, z)$ and $\gamma\in \pi_1(M)$. Thus we have

\begin{theorem}
Assume that $M\neq S^2$ is compact. For any frame $\F$ satisfying (\ref{transF})
there exists  a meromorphic function $h:\D\rightarrow \C^*$ and a function $p$ on $\D$ such that 
$k(\gamma, z, \bar{z})=p(z, \bar{z})p(\gamma . z, \overline{\gamma . z})^{-1}$, where $p(z, \bar{z})=\mathrm{diag}(\frac{h(z)}{\overline{h(z)}}, \frac{\overline{h(z)}}{h(z)}, 1)$ and $j(\gamma, z)=h(z)h(\gamma\cdot z)^{-1}$. 
\end{theorem}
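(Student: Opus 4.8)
The plan is to mirror the structure of the non-compact case (Proposition \ref{exolicitk} and the theorem immediately following it) and reduce everything to a factorization of the crossed homomorphism $j(\gamma,z)$, the only difference being that now we must allow $h$ to be meromorphic rather than holomorphic because the universal cover is $\C$ or $\mathcal{D}$ while $M$ is compact. First I would recall from Proposition \ref{exolicitk} that for a M\"{o}bius transformation $\gamma(z) = \tfrac{az+b}{cz+d} \in \pi_1(M)$ one has $\gamma'(z) = j(\gamma,z)^{-2}$ with $j(\gamma,z) = cz+d$, and that the gauge appearing in \eqref{transF} is
\begin{equation*}
k(\gamma,z,\bar z) = \mathrm{diag}\Bigl(\tfrac{j(\gamma,z)}{\overline{j(\gamma,z)}},\ \tfrac{\overline{j(\gamma,z)}}{j(\gamma,z)},\ 1\Bigr).
\end{equation*}
The key observation, verified by a direct application of the chain rule $(\gamma\mu)'(z) = \gamma'(\mu(z))\,\mu'(z)$ together with $\gamma'=j(\gamma,\cdot)^{-2}$, is that $j$ is a multiplicative holomorphic crossed homomorphism valued in $\C^*$, i.e. $j(\gamma\mu,z) = j(\gamma,\mu(z))\,j(\mu,z)$ up to the sign convention used in \eqref{eq:j-spliting}.

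The central step is the splitting \eqref{eq:j-spliting}, namely the existence of a function $h:\D\rightarrow\C^*$ with $j(\gamma,z) = h(z)\,h(\gamma.z)^{-1}$. This is exactly where the compact and non-compact cases diverge. In the non-compact case one invokes the vanishing of a first cohomology group (Corollary 30.5 / Exercise 31.1 of \cite{Forster}) to obtain a \emph{holomorphic} $h$. Here, since $M$ is compact and its universal cover $\D$ is $\C$ or $\mathcal{D}$, the statement already supplies the correct tool: by Theorem 2.6.8 in \cite{Miyake} the automorphy factor $j(\gamma,z)$ splits through a \emph{meromorphic} function $h:\D\rightarrow\C^*$. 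So the proof would simply cite this result to produce $h$, noting that meromorphy (rather than holomorphy) is forced precisely because on a compact $M$ there are no nonconstant global holomorphic objects with the required equivariance, and automorphic forms/factors for cocompact Fuchsian groups are the natural meromorphic substitute.

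Having obtained $h$, I would define
\begin{equation*}
p(z,\bar z) = \mathrm{diag}\Bigl(\tfrac{h(z)}{\overline{h(z)}},\ \tfrac{\overline{h(z)}}{h(z)},\ 1\Bigr)
\end{equation*}
and verify the claimed factorization $k(\gamma,z,\bar z) = p(z,\bar z)\,p(\gamma.z,\overline{\gamma.z})^{-1}$ by direct substitution: writing $j(\gamma,z) = h(z)h(\gamma.z)^{-1}$ into the formula for $k$ above, the $(1,1)$-entry becomes $\tfrac{h(z)h(\gamma.z)^{-1}}{\overline{h(z)}\,\overline{h(\gamma.z)}^{-1}}$, which rearranges to $\tfrac{h(z)}{\overline{h(z)}}\cdot\bigl(\tfrac{h(\gamma.z)}{\overline{h(\gamma.z)}}\bigr)^{-1}$, precisely the $(1,1)$-entry of $p(z,\bar z)p(\gamma.z,\overline{\gamma.z})^{-1}$; the $(2,2)$-entry is handled identically with the roles of numerator and denominator swapped, and the $(3,3)$-entry is trivially $1$. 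This is a routine computation entirely parallel to the non-compact theorem.

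The only genuine obstacle is the splitting itself, and that obstacle has essentially been absorbed into the hypothesis: the whole weight of the argument rests on Theorem 2.6.8 of \cite{Miyake}, which guarantees the meromorphic factorization for the automorphy factor of a cocompact Fuchsian group. Everything downstream is formal. One subtle point I would be careful to flag is that $p$ is now only a meromorphic-type object (its entries are quotients of $h$ and $\bar h$, so $p$ is real-analytic away from the zeros and poles of $h$), in contrast to the real-analytic $p$ of the non-compact case; accordingly the resulting gauged frame $\hat{\mathbb{F}} = \mathbb{F}p$ and the eventual potential will be meromorphic rather than holomorphic, which is the expected and correct outcome for compact $M$ and consistent with the dichotomy (holomorphic potentials for non-compact, meromorphic potentials for compact) emphasized throughout Section \ref{sec:inv frame}.
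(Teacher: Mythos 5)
Your proposal is correct and follows essentially the same route as the paper: the paper's entire argument is the sentence preceding the theorem, which cites Theorem 2.6.8 of \cite{Miyake} to obtain the meromorphic splitting $j(\gamma,z)=h(z)h(\gamma.z)^{-1}$ and then substitutes into the formula for $k$ from Proposition \ref{exolicitk}. Your additional remarks on the entrywise verification and on $p$ being only real-analytic away from the zeros and poles of $h$ are accurate elaborations of what the paper leaves implicit.
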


\begin{corollary} \label{cor5.10}
For compact $M\neq S^2$, using the notation above we set $\hat{\F}(z, \bar{z}) = \F(z, \bar{z}) p(z, \bar{z})$.
Then we obtain for all $\gamma \in \pi_1(M)$:
\begin{equation}
\gamma^* \hat{\F} = c_\gamma  \hat{\F}.
\end{equation}
\end{corollary}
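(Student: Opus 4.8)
The plan is to compute the pullback $\gamma^*\hat{\F}$ directly from the definition $\hat{\F}(z,\bar z) = \F(z,\bar z)\,p(z,\bar z)$ and reduce everything to the transformation law \eqref{eq:define R} for $\F$ together with the splitting of $k$ established in the theorem immediately above the corollary. Concretely, I would write
\begin{equation*}
\gamma^*\hat{\F}(z,\bar z) = \F(\gamma.z, \overline{\gamma.z})\, p(\gamma.z, \overline{\gamma.z}).
\end{equation*}
By Theorem \ref{trafoIF} (equation \eqref{eq:define R}) applied in the present compact setting, $\F(\gamma.z, \overline{\gamma.z}) = c_\gamma R_\gamma\,\F(z,\bar z)\,k(\gamma,z,\bar z)$, so substituting gives
\begin{equation*}
\gamma^*\hat{\F}(z,\bar z) = c_\gamma R_\gamma\,\F(z,\bar z)\,k(\gamma,z,\bar z)\,p(\gamma.z, \overline{\gamma.z}).
\end{equation*}

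The crucial algebraic point is that the theorem just above supplies the factorization $k(\gamma,z,\bar z) = p(z,\bar z)\,p(\gamma.z, \overline{\gamma.z})^{-1}$. Substituting this into the displayed expression causes the two copies of $p(\gamma.z, \overline{\gamma.z})$ to cancel, leaving
\begin{equation*}
\gamma^*\hat{\F}(z,\bar z) = c_\gamma R_\gamma\,\F(z,\bar z)\,p(z,\bar z) = c_\gamma R_\gamma\,\hat{\F}(z,\bar z).
\end{equation*}
This is almost the claimed identity; the only discrepancy is the presence of $R_\gamma$. I would then note that, exactly as in the non-compact case treated in Corollary \ref{cor5.3}, the matrix $R_\gamma$ has already been absorbed into the normalization giving $c_\gamma$, or equivalently that the relevant transformation constant appearing here is the scalar $c_\gamma$ once one works modulo the $U(1)$-stabilizer; the corollary statement is formulated with $c_\gamma$ understood as the full scalar transformation constant. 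Since $p$ is built only from the diagonal entries $h/\bar h$, it commutes appropriately with the diagonal gauge structure, so no extra terms are introduced.

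I expect the main obstacle to be bookkeeping rather than conceptual: one must verify that the meromorphic $p$ produced by Theorem 2.6.8 of \cite{Miyake} is genuinely diagonal and $SU(3)$-valued (away from poles of $h$), so that the cancellation $k = p\cdot(\gamma^*p)^{-1}$ is a legitimate identity of group elements and not merely of diagonal symbols, and that multiplying $\F$ on the right by $p$ preserves membership in the appropriate frame class. The delicate issue specific to the compact case is that $h$ is only meromorphic, so $p$ and hence $\hat{\F}$ will have singularities on a discrete set; I would remark that the transformation formula is an identity of meromorphic objects valid off this discrete set, which is exactly what is needed for the subsequent construction of an invariant meromorphic potential. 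The actual verification of the cancellation is then a one-line substitution, so the proof I would write is short, with a sentence reconciling the $R_\gamma$ normalization and a sentence flagging the meromorphic (rather than holomorphic) nature of the splitting.
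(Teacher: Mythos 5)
Your computation is the intended one: the corollary is an immediate substitution, $\gamma^*\hat{\F} = (\gamma^*\F)(\gamma^*p) = c_\gamma \F\, k(\gamma,z,\bar z)\, p(\gamma.z,\overline{\gamma.z}) = c_\gamma \F\, p(z,\bar z) = c_\gamma \hat{\F}$, using the splitting $k = p\cdot(\gamma^*p)^{-1}$ from the preceding theorem, and the paper supplies no further argument. Note that the cancellation is between \emph{adjacent} right factors, so your remark about $p$ ``commuting with the diagonal gauge structure'' is not needed.

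The one point to tighten is your disposal of $R_\gamma$. Invoking Theorem \ref{trafoIF} / equation \eqref{eq:define R} brings in $R_\gamma$ because that formula treats a general symmetry pair $(\gamma,\mathcal{R})$; but here $\gamma$ ranges over $\pi_1(M)$ acting as deck transformations, for which $f(\gamma.z)=f(z)$, i.e.\ $\mathcal{R}=\mathrm{id}$. The relevant transformation law is therefore the one in Proposition \ref{prop:general}(5), equation \eqref{define k}, namely $\gamma^*\F = c_\gamma\,\F\,k(\gamma,z,\bar z)$ with no $R_\gamma$ at all (any representative $R_\gamma$ with $[R_\gamma]=\mathrm{id}$ is a cube root of unity times $I$ and is absorbed into $c_\gamma$, which is consistent with Corollary \ref{cor:imc}). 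Your phrase ``once one works modulo the $U(1)$-stabilizer'' is not the mechanism and should be dropped: $\hat{\F}$ is an honest matrix-valued function and the identity is an equality of matrices, valid off the discrete polar set of the meromorphic $h$ — your closing remark on that last point is correct and worth keeping.
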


Frames with the property just stated are called
\emph{invariant frames}  with regard to $\pi_1(M)$.

As in the non-compact case, we consider next  holomorphic extended frames.
Proceeding as in the case, where $M$ is non-compact, we also write in the compact case
\begin{equation*}
\F (z,\bar{z},\lambda) = C_0 (z,\lambda)V_+ (z,\bar{z},\lambda),
\end{equation*}
and obtain the transformation formula
\begin{equation}
C_0(\gamma \cdot z,\lambda)=
\chi(\gamma, \lambda) C_0(z,\lambda) W_+(\gamma, z,\lambda). \label{eq:CC}
\end{equation}

As in the non-compact case  we want to find some $U_+ \in 
\Lambda SL(3,\C)_{\sigma}$  such that \eqref{eq:W+spliting} holds.  It turns out that in the present case this is only possible by choosing some meromorphic  $U_+$. 
We thus want to prove

\begin{theorem}\label{thm:cpt}
If $M\neq S^2$ is a compact Riemann surface, then for any minimal Lagrangian immersion $f: M\rightarrow \C P^2$,  there exists a meromorphic potential for $f$ which is invariant under $\pi_1(M)$. 
\end{theorem}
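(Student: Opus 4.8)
The plan is to mirror the non-compact argument (the one following \eqref{eq:W+spliting}), replacing its \emph{holomorphic} splitting of the crossed homomorphism $W_+$ by a \emph{meromorphic} one. Recall that, writing $\F(z,\bar z,\lambda)=C_0(z,\lambda)V_+(z,\bar z,\lambda)$ with $C_0$ holomorphic in $z$ and $C_0(z_0,\lambda)=I$, the transformation formula \eqref{eq:CC} produces a crossed homomorphism $W_+(\gamma,z,\lambda)$, holomorphic in $z\in\D$ and with values in $\Lambda^+SL(3,\C)_\sigma$. If I can produce a meromorphic map $U_+(\cdot,\lambda):\D\rightarrow\Lambda^+SL(3,\C)_\sigma$ satisfying the splitting \eqref{eq:W+spliting}, then setting $C(z,\lambda)=C_0(z,\lambda)U_+(z,\lambda)$ yields a meromorphic $C$ with $C(\gamma.z,\lambda)=\chi(\gamma,\lambda)C(z,\lambda)$; since $\chi(\gamma,\lambda)$ does not depend on the surface variable $z$, the potential $\eta=C^{-1}dC$ is then meromorphic and satisfies $\gamma^*\eta=\eta$, which is exactly the assertion of Theorem \ref{thm:cpt}. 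Thus the whole problem reduces to the meromorphic splitting of $W_+$, after which the construction of $C$ and $\eta$ is identical to the non-compact case.

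To obtain this splitting I would pass to the bundle picture. The cocycle $\gamma\mapsto W_+(\gamma,\cdot,\lambda)$ represents a holomorphic principal bundle over the compact quotient $M=\pi_1(M)\backslash\D$ whose structure group is the positive twisted loop group $\Lambda^+SL(3,\C)_\sigma$, and a solution of \eqref{eq:W+spliting} is precisely a trivialization of this bundle pulled back to $\D$. Over a \emph{compact} Riemann surface such a bundle need not be holomorphically trivial; this is exactly why one cannot hope for a holomorphic $U_+$ and why the resulting potential is only meromorphic. My plan would be to fix a point $p_0\in M$ and first trivialize the bundle over the open surface $M\setminus\{p_0\}$, which is non-compact and hence Stein; over it the relevant $H^1$ with coefficients in $\Lambda^+SL(3,\C)_\sigma$ vanishes by Theorem 8.2 of \cite{Bungart} (used exactly as in the proof of the non-compact splitting, and as in the proof scheme of \cite{DH03} or of Theorem 31.2 of \cite{Forster}). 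This furnishes a holomorphic $U_+$ on $\D\setminus\pi^{-1}(p_0)$ solving \eqref{eq:W+spliting} there, with singularities confined to the discrete $\pi_1(M)$-orbit $\pi^{-1}(p_0)$.

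The hard part will be the behaviour at the single puncture: I must upgrade ``holomorphic away from $p_0$'' to ``meromorphic at $p_0$'', and verify that the extended $U_+$ still takes values in the twisted positive loop group. In the local trivialization around $p_0$ the section is a holomorphic map from a punctured disk into $\Lambda^+SL(3,\C)_\sigma$, and such a map could a priori have an essential singularity; the content of the theorem is that one can choose the trivialization so that the singularity is at worst a pole, uniformly in $\lambda$ and compatibly with the $\sigma$-twisting. This is the loop-group-valued, meromorphic counterpart of Bungart's vanishing theorem, and it refines the scalar statement already invoked here, where the automorphy factor $j(\gamma,z)$ splits meromorphically via Theorem 2.6.8 of \cite{Miyake} (equivalently, via the existence of meromorphic sections of the associated line bundle on the compact $M$). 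Since only one point is involved, this is a local extension problem at $p_0$ whose poles then propagate equivariantly to the orbit $\pi^{-1}(p_0)$. Granting this meromorphic trivialization, the construction $C=C_0U_+$ and $\eta=C^{-1}dC$ described in the first paragraph produces a meromorphic potential invariant under $\pi_1(M)$, as in Corollary \ref{cor5.10} and its loop-group analogue, completing the proof.
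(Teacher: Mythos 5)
Your reduction of the theorem to a meromorphic splitting of the crossed homomorphism $W_+$ in \eqref{eq:W+spliting}, followed by $C=C_0U_+$ and $\eta=C^{-1}dC$, is exactly the reduction the paper makes. But the route you propose to obtain that splitting has a genuine gap at precisely the point you flag as ``the hard part.'' After trivializing over the Stein surface $M\setminus\{p_0\}$ via Bungart, you are left with a holomorphic $\Lambda^+SL(3,\C)_\sigma$-valued trivializing section on a punctured disk about $p_0$, and you need it to extend meromorphically rather than with an essential singularity. You correctly identify this as ``the content of the theorem,'' but you then simply grant it (``Granting this meromorphic trivialization\ldots''). No argument is given, and none of the results you invoke covers it: Bungart's Theorem 8.2 is a vanishing theorem over Stein/non-compact bases and says nothing about growth at a puncture, and the scalar splitting via Theorem 2.6.8 of \cite{Miyake} handles only the $\C^*$-valued automorphy factor $j(\gamma,z)$, not a loop-group-valued cocycle. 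So the proposal assumes the one statement that actually needs proof.

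The paper closes this gap by a different mechanism. Instead of puncturing $M$, it converts the crossed homomorphism $W_+$ into a genuine \v{C}ech cocycle $\{g_{ij}\}$ on the compact surface $M$ itself, using $\pi_1(M)$-charts as in \S 28.3 of \cite{Forster} (the functions $\tilde\Psi_i(z,\lambda)=W_+(\eta_i(z)^{-1},z,\lambda)^{-1}$ satisfy $\tilde\Psi_i(\gamma.z)=\tilde\Psi_i(z)W_+(\gamma,z)$, so $\tilde g_{ij}=\tilde\Psi_i\tilde\Psi_j^{-1}$ is $\pi_1(M)$-invariant and descends). The resulting cocycle defines a rank-three holomorphic vector bundle on $M$ depending holomorphically on $\lambda$ in the unit disk, and the meromorphic trivialization is supplied by Proposition 3.12 of \cite{Rohrl} --- meromorphic triviality of holomorphic vector bundles over a compact Riemann surface, with holomorphic dependence on the parameter. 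This is the external input that replaces your unproved local extension lemma. The paper then still has to do nontrivial bookkeeping that your proposal also leaves untouched: a preliminary reduction to $W_+(z,\lambda=0)=I$ (Step 1), the normalization $\det p_j=1$ and $p_j(z,\lambda=0)=I$ using compactness of $M$ (Step 3), and the restoration of the $\sigma$-twisting by splitting off the $\sigma$-fixed part and absorbing the globally constant complementary factor (Step 4). To repair your argument you would either need to prove the meromorphic extension lemma for $\Lambda^+SL(3,\C)_\sigma$-valued sections at the puncture directly, or follow the paper and route the problem through R\"ohrl's theorem on the compact surface.
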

\begin{proof}

The proof of the claim proceeds in several steps.

Step 1. Since $W_+$ is a cocycle, also the first term $W_+(z, \lambda = 0)$ 
of $W_+$ is a cocycle. This can be split by meromorphic functions. Hence, adjusting $C_0$ correspondingly we can assume without loss of generality that $W_+(z, \lambda =0) = I$.

Step 2. Let $\pi: \D\rightarrow M$ denote the universal covering of $M$. We know that $\pi$ is Galois and $\mathrm{Deck}(\D/M)$ is isomorphic to $\pi_1(M)$.
As in 28.3 of \cite{Forster}, one can choose an open covering $\mathcal{U}=(U_i)_{i\in I}$ of $M$ and $\pi_1(M)$-charts $\phi_i: \pi^{-1}(U_i)\rightarrow U_i\times \pi_1(M)$ with $\phi_i(z)=(\pi(z), \eta_i)$, where $\eta_i: \pi^{-1}(U_i)\rightarrow \pi_1(M)$ satisfies 
$$\eta_i(\gamma . z)=\gamma\eta_i(z), \text{ for any } z\in \pi^{-1}(U_i), \gamma\in \pi_1(M). $$
Define functions $\tilde{\Psi}_i: \pi^{-1}(U_i)\rightarrow \Lambda^{+}SL(3,\C)_{\sigma}$ by
$$\tilde{\Psi}_i(z, \lambda):=W_{+}(\eta_i(z)^{-1}, z, \lambda)^{-1}.$$
Now we claim that 
$$\tilde{\Psi}_i(\gamma . z,\lambda)=\tilde{\Psi}_i(z, \lambda) W_{+}(\gamma, z,\lambda)$$ 
for any $\gamma\in \pi_1(M)$. In fact, for any $z\in \pi^{-1}(U_i)$, we have by definition
$$\tilde{\Psi}_i(\gamma . z, \lambda)=W_{+}(\eta_i(\gamma . z)^{-1}, \gamma . z, \lambda)^{-1}
=W_{+}(\eta_i(z)^{-1}\gamma^{-1}, \gamma . z, \lambda)^{-1}.$$
The fact that $W_{+}(\gamma, z,\lambda)$ is a crossed homomorphism implies that
\begin{align*}
\tilde{\Psi}_i(\gamma . z, \lambda)&=(W_{+}(\gamma^{-1},\gamma . z, \lambda) W_{+}(\eta_i(z)^{-1},z, \lambda))^{-1}\\
&=W_{+}(\eta_i(z)^{-1},z, \lambda)^{-1}W_{+}(\gamma^{-1}, \gamma . z, \lambda)^{-1}\\
&=\tilde{\Psi}_i(z,\lambda) W_{+}(\gamma, z, \lambda),
\end{align*}
where the last equality follows from the fact that
$$I=W_{+}(\gamma,z, \lambda)W_{+}(\gamma^{-1}, \gamma . z, \lambda).$$

Step 3.  As a consequence we see that the functions
$$\tilde{g}_{ij} : \pi^{-1}(U_i)\cap \pi^{-1}(U_j)\rightarrow \Lambda^{+}SL(3,\mathcal{O}(\pi^{-1}(U_i)\cap \pi^{-1}(U_j)))$$
defined by 
$$\tilde{g}_{ij} :=\tilde{\Psi}_i\tilde{\Psi}_j^{-1}$$
are holomorphic in $z$ and invariant under the action of $\pi_1(M)$.

So $\{ \tilde{g}_{ij} \}$ descends to a cocycle   $\{g_{ij}\}$ on $M$ and the $g_{ij}$'s  are holomorphic in $z$ and $\lambda$  in the unit disk $\mathcal{D}$ and invertible for  $\lambda$.
The cocycle $g_{ij}$ defines a holomorphic vector bundle on $M$ which depends holomorphically on $\lambda\in \mathcal{D}$. The trivialization functions $g_{ij}(z,\lambda)\in \Lambda^+ SL(n,\mathbb{C})_{\sigma}$ and $g_{ij}(z, \lambda=0)=I$. 

By Proposition 3.12 of \cite{Rohrl}, this vector bundle is meromorphically (in $z$) trivial  and 
denpends holomorphically in $\lambda\in \mathcal{D}$. Thus there exists  $p_j\in \Lambda GL(3, \mathcal{M}(U_j))$ are meromorphic in $z$ and 
holomorphic  in $\lambda$ and invertible for $\lambda$ in the unit disk, such that
$$g_{ij}=p_i^{-1}p_j $$
defined on $U_i\cap U_j$.
Notice that $\det p_i|_{U_i\cap U_j}=\det p_j|_{U_i\cap U_j}$ and 
$p_i|_{U_i\cap U_j}(z,\lambda=0)=p_j|_{U_i\cap U_j}(z,\lambda=0)$ on $U_i\cap U_j$.
Therefore there is a global function $f: M\rightarrow \mathbb{C}$ and a constant matrix $p\in SL(3,\mathbb{C})$ on $M$ such that $f|_{U_j}=\det p_j$ and $p|_{U_j}=p_j(z,\lambda=0)$ for any $z\in U_j$. In fact, $f$ and $p$ are constant since $M$ is compact. 
Without loss of generality, we can assume $\det p_j=1$ and $p_j(z,\lambda=0)=I$ for all $z\in U_j$, i.e., 
$p_j \in \Lambda^+ SL(3,\mathbb{C})$ with the leading term $I$.

Step 4. All there is missing now is the property of  $p_j $ to be twisted in $\lambda$ relative to $\sigma$.
 
 Now replacing $\lambda$ by $t\lambda$ for $0<t \leq1$,  we get 
 $g_{ij}$ and $p_j$ such that 
  \begin{equation}\label{eq:p-twister}
 p_j(z,t\lambda)=p_i(z, t\lambda) g_{ij}(z, t\lambda).
 \end{equation}
  For $t>0$ small enough we can write
 $$p_j(z, t\lambda)=\exp X_j^{\rm{fix}}(z, t\lambda)\exp X^{\rm{rest}}_j(z, t\lambda),$$
 where $X_j^{\rm{fix}}(z, t\lambda)$ denotes the element in the loop Lie algebra fixed by $\sigma$ and $X^{\rm{rest}}_j(z, t\lambda)$ denotes the element in the complementary space. 
Then since the fixed set of $\sigma$ is a subalgebra, \eqref{eq:p-twister} implies 
$$p_j^{\rm{fix}}(z, t\lambda)=p_i^{\rm{fix}}(z, t\lambda) g_{ij}(z,t\lambda), \quad p_j^{\rm{rest}}(z, t\lambda)=p_i^{\rm{rest}}(z,t\lambda)$$
on $U_i\cap U_j$.
This yields a global matrix $p^{\rm{rest}}$ definded on $M$ such that 
$p^{\rm{rest}}|_{U_j}=p_j^{\rm{rest}}$. 
Then by the compactness of $M$, $p^{\rm{rest}}$ is constant. 
Now replace $p_j$ by $p_j (p^{\rm{rest}})^{-1}$.
Therefore we can assume without loss of generality that $p_j$ is fixed by $\sigma$ for all $\lambda$.
This finishes the proof.
\end{proof}

%%%%%%%%%%%%%%%%%%%%%%%%%%%%
\subsection{Riemann surfaces with abelian fundamental groups}
\label{sec:abelian pi_1}
The Riemann surfaces with abelian fundamental groups are well known:

1.The simply connected Riemann surfaces $S^2$, $\C,$ and  $\mathcal{D} = \{w\in\mathbb{C}| |w| < 1\}$.

2. a) The doubly infinite cylinder $\C^*$ with universal covering map $\C \rightarrow \C^*$, 
$w \rightarrow e^{iw}$, and fundamental group $\mathbb{Z}$,  acting on $\C$ by $w \rightarrow w + 2 \pi$.

     b) The onesided-finite cylinder $\mathcal{D}^*$ with universal covering map 
     $\mathcal{H} = \{w\in\mathbb{C}| \mathrm{Im} w >0 \} \rightarrow \mathcal{D}^*, w \rightarrow e^{iw}$, and fundamental group
     $\mathbb{Z}$ acting by  $w \rightarrow w + 2 \pi.$
 
     c) The twosided-finite cylinders $\mathbb{A}_r = \{z \in \C| r < |z| < 1\}, 0 < r <1,$ with universal covering map $\mathcal{H} 
     \rightarrow \mathbb{A}_r,
     w 
     \mapsto \exp(2 \pi i  \frac{\log w}{log \rho} )$, $\rho=|z|$, and fundamental group 
     $\mathbb{Z}$ acting by $w \rightarrow \rho w$. Note here $r = \exp(\frac{-2\pi^2}{\log \rho})$.
    
 3. The tori $\mathbb{T}_\tau, \mathrm{Im} \tau > 0, $ with universal covering map
  $\C \rightarrow \C / \mathcal{L}_\tau,$ fundemamntal group 
  $ \mathcal{L}_\tau = \mathbb{Z} \oplus \tau \mathbb{Z}$ and group action generated by
   $w \rightarrow w + 1$ and $w \rightarrow w + \tau$.
   
For more details we refer to \cite{Farkas-Kra}, section IV.6.
   
We would like to point out that the loop group method respects the conformal structure and thus produces all harmonic maps (respectively, minimal Lagrangian surfaces) from Riemann surfaces,
while preserving the conformal structure of the surface.

%%%%%%%%%%%%%%%%%%%%%%%%%%%%%%%
\section{Perturbed equivariant minimal Lagrangian cylinders in $\C P^2$}
\label{sec:pertubed}

In this section, we discuss the construction of perturbed equivariant  minimal Lagrangian surfaces. We first recall the characterization of equivariant minimal Lagrangian surfaces in $\mathbb{C}P^2$ and then single out the types of equivariant surfaces we want to perturb. 
%%%%%%%%%%%%%%%%%%%%%%%%%%%%%%
\subsection{The basic examples of equivariant minimal Lagrangian immersions in \texorpdfstring{$\C P^2$}{}}
\label{subsec:6.1}

In \cite{DoMaNL} and \cite{DoMaEX} the authors have discussed equivariant 
minimal Lagrangian immersions from the point of view of the loop group method and with
applications to equivariant cylinders and tori.

Referring to Section 4 of \cite{DoMaEX}, we start by defining an equivariant immersion
 as a mapping $f: M \rightarrow \C P^2$ that satisfies the condition  $f(\gamma_t.p)= R_t f(p)$ for all $p\in M$ and $t\in \mathbb{R}$, with 1-parameter group $(\gamma_t, R_t) \in (\mathrm{Aut}(M)\times \mathrm{Iso}(\C P^2))$.
A Riemann surface $M$  endowed with such an equivariant immersion $f$ is called an equivariant surface.

Classical complex analysis ( refer to \cite{Farkas-Kra}, Section IV.6.) then yields a complete list of surfaces $M$ that admit  1-parameter groups $\gamma_t\in \mathrm{Aut}(M)$.
 It turns out that there are two types of equivariant surfaces:
 
 (T)  Translationally equivariant surfaces.
 
  These are  those equivariant surfaces which can be represented on  a strip $\St$ and admit the operation
  $z \rightarrow  z + t$ as a group of symmetries,
  
  and 
  
  (R)  Rotationally equivariant surfaces.
  
  These are  those equivariant surfaces which admit  a 1-parameter group of rotations.
  
  There are three types of rotationally equivariant surfaces:
  
  (R0) These are the surfaces $M$, for which the group of rotations has a fixed point not contained in $M$, 
  like the point $z = 0$ for $ M =\C^*$, or $\D^*$ or an annulus.
  
  (R1)  These are the surfaces $M$, for which the group of rotations has exactly one fixed point  contained in $M$, like $z = 0$ for $M = \C$ or  $\D$.
  
  (R2) These are the surfaces $M$, for which the group of rotations has two fixed points  contained in $M.$
  
  Actually, only $M = S^2$, satisfies $(R2)$.

Summing up the results of section 4.3  and the last paragraph of section 4.2 of \cite{DoMaEX} we obtain: 
\begin{theorem}
Any  minimal Lagrangian immersion $f$ from $\C$ or $\D$ or $S^2$ into $\C P^2$ which is rotationally equivariant has a vanishing cubic Hopf differential, and therefore  is  totally  geodesic in $\C P^2$ and  its image is, up to isometries of $\C P^2$, contained in  $\R P^2$.
\end{theorem}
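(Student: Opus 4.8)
The plan is to prove the statement by showing that the cubic Hopf differential $\Psi = \psi\, dz^3$ vanishes identically, and then invoking the standard fact that a minimal Lagrangian surface in $\C P^2$ with vanishing second fundamental form is totally geodesic, hence an open piece of $\R P^2$. I begin by recalling from \eqref{eq:Codazzi} that $\psi_{\bar z} = 0$, so $\Psi$ is a \emph{holomorphic} cubic differential, and that (as noted just after \eqref{eq:phipsi}) it is globally and intrinsically defined on $M$, independent of the choice of horizontal lift and of the conformal coordinate. These two facts — holomorphicity and coordinate independence — are what make the symmetry argument work.

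First I would record how $\Psi$ transforms under the $1$-parameter group of symmetries $(\gamma_t, R_t)$. Since each $R_t \in \mathrm{Iso}_0(\C P^2) = PSU(3)$ is a holomorphic isometry of $\C P^2$ and $f \circ \gamma_t = R_t \circ f$, all the geometric ingredients entering $\psi$ — the induced metric, the complex structure, and the second fundamental form — are preserved, whence $\gamma_t^* \Psi = \Psi$ for every $t$. Within the present formalism this can also be read off from Theorem \ref{trafoIF}: writing $\gamma_t^*\F = c_{\gamma_t} R_{\gamma_t}\,\F\, k$ with $k$ diagonal and $c_{\gamma_t}R_{\gamma_t}$ constant in $z$, the Maurer--Cartan form obeys $\gamma_t^*(\F^{-1}d\F) = k^{-1}(\F^{-1}d\F)k + k^{-1}dk$, and comparing the off-diagonal entry carrying $\psi$ in \eqref{eq:UV1} reproduces $\gamma_t^*\Psi = \Psi$.

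The analytic heart of the argument is the following. In each of the three cases the rotation group has a fixed point lying \emph{in} $M$: for $M = \C$ or $M = \D$ (type $(R1)$) I normalize the coordinate so that $\gamma_t(z) = e^{it}z$ fixes $z = 0 \in M$, and for $M = S^2$ (type $(R2)$) I use a holomorphic coordinate around one of the two fixed poles in which the rotation again reads $z \mapsto e^{it}z$. Then $\gamma_t^*\Psi = \psi(e^{it}z)\,e^{3it}\,dz^3$, so invariance forces $\psi(e^{it}z)\,e^{3it} = \psi(z)$ for all $t$ and all $z$ near the fixed point. Expanding the holomorphic function as $\psi(z) = \sum_{n \ge 0} a_n z^n$ and comparing coefficients gives $a_n\, e^{i(n+3)t} = a_n$ for every $t$; since $n + 3 \ge 3 > 0$, this forces $a_n = 0$ for all $n$, so $\psi$ vanishes near the fixed point and hence, by the identity theorem, $\Psi \equiv 0$ on all of $M$. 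I would stress that the fixed point lying inside $M$ is exactly what drives the vanishing — this is precisely why the type $(R0)$ surfaces, whose rotation has no fixed point in $M$, are excluded from the statement and may carry a nonzero Hopf differential.

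Finally, once $\psi \equiv 0$, the structure equation \eqref{eq:mLsurfaces} degenerates to $u_{z\bar z} + e^u = 0$ and the frame matrices \eqref{eq:UV1} reduce to those of a totally geodesic immersion; equivalently, the fully symmetric cubic form of the Lagrangian immersion, which is carried by $\psi$, vanishes, so the second fundamental form vanishes. I then close by citing the classical classification of totally geodesic Lagrangian surfaces in $\C P^2$, according to which such a surface is, up to an element of $\mathrm{Iso}_0(\C P^2)$, an open piece of the standard $\R P^2 \subset \C P^2$. The step requiring the most care is the transformation law $\gamma_t^*\Psi = \Psi$: though geometrically evident, making it rigorous inside the loop-group frame formalism means tracking the diagonal gauge $k(\gamma_t, z, \bar z)$ and the conjugation by $R_{\gamma_t}$ from Theorem \ref{trafoIF}; the remaining steps are elementary complex analysis together with a standard classification result.
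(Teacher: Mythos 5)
Your proposal is correct. Note, however, that the paper does not actually supply a proof of this theorem: it is obtained by citation, ``summing up the results of section 4.3 and the last paragraph of section 4.2 of \cite{DoMaEX}'', so there is no in-paper argument to compare against line by line. Your self-contained argument is exactly the expected content of that citation: the Hopf differential $\Psi=\psi\,dz^3$ is holomorphic by \eqref{eq:Codazzi} and intrinsic to $f$ (independent of lift and coordinate), a rotational symmetry group on $\C$, $\D$ or $S^2$ necessarily has a fixed point \emph{inside} the domain (this is what distinguishes types $(R1)$/$(R2)$ from $(R0)$), and the invariance $\psi(e^{it}z)e^{3it}=\psi(z)$ kills every Taylor coefficient because the weight $3$ of the cubic differential shifts all exponents to $n+3\geq 3>0$; the identity theorem then gives $\Psi\equiv 0$ globally, and the classical classification of totally geodesic Lagrangian surfaces yields $\R P^2$. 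Two minor remarks: for $M=S^2$ the conclusion $\Psi\equiv 0$ is automatic without any equivariance, since $S^2$ carries no nonzero holomorphic cubic differentials (the bundle $K^3$ has negative degree), which is consistent with the paper's earlier observation that every minimal Lagrangian sphere is totally geodesic; and your loop-group verification of $\gamma_t^*\Psi=\Psi$ via Theorem \ref{trafoIF} is not needed, since the intrinsic characterization of $\Psi$ stated after \eqref{eq:phipsi} together with the unitarity of the lifted isometry $R_{\gamma_t}$ already gives the invariance directly.
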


Actually, the case $M = \D$ extends to the case $M = \C$ by Section 4.2 of \cite{DoMaEX}.

{\bf In this paper we only consider translationally equivariant surfaces and rotationally equivariant surfaces
of type $(R0)$.}
\vspace{2mm}

%%%%%%%%%%%%%%%%%%%%%%%%%%%%
\subsection{Periodic translationally equivariant minimal Lagrangian surfaces and rotationally equivariant surfaces of type $(R0)$}

Using the covering map
$$ \tilde{\pi}: \C \rightarrow \C^*, w \rightarrow \exp(iw),$$ 
we have the following relation between these two types of equivariant surfaces, which is essentially Theorem 4 of \cite{DoMaEX}.

\begin{theorem}
Any rotationally equivariant minimal Lagrangian immersion $f:M \rightarrow \C P^2$ defined 
on $M =\C^*, \mathbf{D}^*, \mathbf{D}_r$
can be extended without loss of generality to $\C^*$ and can be obtained from some 
$ 2 \pi$-periodic translationally equivariant minimal Lagrangian immersion defined on $\C$ by projection.
\end{theorem}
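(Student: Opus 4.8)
The plan is to use the stated covering $\tilde\pi:\C\rightarrow\C^*,\ w\mapsto\exp(iw)$ to turn the rotational symmetry into a translational one, and then to use the structure equations to push the resulting profile out to all of $\C$. Write $w=s+iv$, so that $|z|=e^{-v}$ and the rotation $z\mapsto e^{it}z$ of $\C^*$ becomes the real translation $w\mapsto w+t$, while the deck transformation $w\mapsto w+2\pi$ covers the identity on $\C^*$. Since we are in type $(R0)$ the rotation centre $0$ lies outside $M$, so $M\in\{\C^*,\mathbf D^*,\mathbf D_r\}$ is a subset of $\C^*$ whose $\tilde\pi$-preimage is, respectively, all of $\C$, the half-plane $\mathfrak H=\{v>0\}$, or a horizontal strip $\{0<v<-\log r\}$, each taken modulo $w\mapsto w+2\pi$. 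First I would pull $f$ back to $\tilde f=f\circ\tilde\pi$ on this preimage. The rotational equivariance $f(e^{it}z)=R_tf(z)$ then reads $\tilde f(w+t)=R_t\tilde f(w)$, so $\tilde f$ is \emph{translationally} equivariant, and it is automatically $2\pi$-periodic because $\tilde f(w+2\pi)=f(e^{iw})=\tilde f(w)$.

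Next I would exploit that the induced metric and the Hopf differential are preserved by the isometries $R_t$, hence invariant under $w\mapsto w+t$ for every real $t$. Thus in the $w$-coordinate the conformal factor depends only on $v$, say $u=u(v)$, and the Hopf coefficient $\psi$ is a holomorphic function independent of $s$; from $\partial_{\bar w}\psi=0$ and $\partial_s\psi=0$ one gets $\partial_v\psi=0$, so $\psi$ is constant. Feeding $u=u(v)$ and constant $\psi$ into the compatibility equation \eqref{eq:mLsurfaces} reduces it to the single second-order ODE $\tfrac14 u''(v)+e^{u}-|\psi|^2e^{-2u}=0$, while the Codazzi equation \eqref{eq:Codazzi} holds identically.

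The decisive step is to show this ODE has a solution defined for all $v\in\R$, so that $\tilde f$ extends from its original $v$-range to the whole plane. I would note that the equation is conservative, with first integral $\tfrac12(u')^2+V(u)=E$ where $V(u)=4e^{u}+2|\psi|^2e^{-2u}$. Since $V(u)\to+\infty$ as $u\to\pm\infty$, every solution is trapped in a bounded sublevel set $\{V\le E\}$, so $u$ and $u'$ remain bounded and no finite-$v$ blow-up can occur; the maximal solution is therefore defined on all of $\R$ (and is periodic, or constant when $E=\min V$). Consequently the profile $v\mapsto u(v)$ given on the original interval is the restriction of a globally defined solution, and the frame system \eqref{eq:mathbbF}, being linear in $\mathbb{F}$ with coefficients that are bounded functions of $v$ once $u$ is bounded, integrates to an extended frame on all of $\C$. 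This yields the extension of $\tilde f$ to a $2\pi$-periodic translationally equivariant minimal Lagrangian immersion on $\C$, and the lower bound on $e^u$ guarantees it is genuinely an immersion.

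Finally, since $\tilde f$ is $2\pi$-periodic it descends through $\tilde\pi$ to a minimal Lagrangian immersion on $\C^*=\C/2\pi\Z$, which agrees with $f$ on $M$ and realizes the asserted extension to $\C^*$ as the projection of the translationally equivariant immersion $\tilde f$ on $\C$. I expect the main obstacle to be precisely the global extendability of the profile: one must exclude finite-$v$ blow-up, which is exactly what the properness of $V$ secures, and then check that the $1$-parameter factor $\chi(t,\lambda)=\exp(t\,\xi(\lambda))\in\Lambda SU(3)_\sigma$ encoding the translational symmetry is entire in $t$, so that the profile and the symmetry patch into a frame over the whole plane without obstruction.
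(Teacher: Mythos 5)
Your argument is correct in substance, but it follows a genuinely different route from the paper. The paper gives no proof here at all: it defers to Theorem 4 of \cite{DoMaEX}, where the statement is obtained inside the loop group framework --- translationally equivariant minimal Lagrangian immersions are exactly those generated by constant potentials $D(\lambda)\,dw$ (see the proposition immediately following in the text), and since such a potential is defined on all of $\C$ and the Iwasawa decomposition is global, the extension to $\C$ and the $2\pi$-periodic descent come for free once the rotational symmetry has been converted to a translational one via $w\mapsto e^{iw}$. You instead argue directly with the structure equations: translation invariance forces $u=u(v)$ and $\psi$ constant, the Gauss equation becomes the conservative ODE $\tfrac14 u''+e^u-|\psi|^2e^{-2u}=0$ with first integral $\tfrac12(u')^2+4e^u+2|\psi|^2e^{-2u}=E$, properness of the potential prevents finite-$v$ blow-up, and the compatible linear frame system then integrates over the simply connected $\C$. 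This is more elementary and self-contained; the loop group route buys the explicit algebraic form of the generating data $D(\lambda)$, which the paper needs immediately afterwards. Two small points you should patch: (i) your properness claim $V(u)\to\infty$ as $u\to-\infty$ uses $\psi\neq 0$; for $\psi=0$ the surface is totally geodesic (an open piece of $\R P^2$) and the extension is trivial, or one can note that $|u'|\le\sqrt{2E}$ still precludes finite-time blow-up. (ii) The $2\pi$-periodicity of the \emph{extended} map is a priori only known on the original half-plane or strip; you get $\tilde f(w+2\pi)=[\F(2\pi,\lambda)]\,\tilde f(w)$ on all of $\C$ with a constant isometry, and must invoke real-analyticity (or fullness) to conclude from its triviality on an open set that the identity $\tilde f(w+2\pi)=\tilde f(w)$ holds globally, so that the descent to $\C^*$ is legitimate.
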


Translationally equivariant minimal Lagrangian surfaces can be constructed by the loop group method. We have shown

\begin{proposition} [Proposition 4 of \cite{DoMaNL}]
Up to isometries of $\mathbb{C}P^2$,  any translationally equivariant minimal Lagrangian surface 
can be generated by a potential $\tilde{\eta} (w, \lambda) = D(\lambda) dw,$ defined on $\C,$ and  with 
$D(\lambda) \in su(3)$ of the form 
\begin{equation}\label{eq:Delaunay}
D(\lambda)=\begin{pmatrix}0&-\lambda \bar{b}& \lambda^{-1}a\\
\lambda^{-1}b& 0& -\lambda \bar{a}\\
-\lambda \bar{a}& \lambda^{-1}a & 0
\end{pmatrix},
\end{equation}
where $a$ is a non-zero purely imaginary constant such that $-ia>0$ and   $b=\frac{i\psi}{a^2}$ is  
constant. 

Moreover, for the characteristic polynomial of $D(\lambda)$ we obtain
\begin{equation} \label{charpolD}
\det(\mu I-D(\lambda))=\mu^3+\beta \mu-2i \mathrm{Re}(\lambda^{-3}\psi),
\end{equation}
with $\beta=2|a|^2+|b|^2$.
\end{proposition}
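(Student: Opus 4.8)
The plan is to use the translational symmetry to force the holomorphic potential to have constant coefficients, to pin down those coefficients at a suitably chosen base point, and finally to read off the characteristic polynomial by a direct computation.

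First I would record the consequences of translational equivariance. Since the one-parameter group acts by $\gamma_t: w \mapsto w+t$ with $\gamma_t' \equiv 1$, the crossed homomorphism $k$ in Theorem \ref{trafoIF} is trivial, so the extended frame obeys $\F(w+t,\overline{w+t},\lambda) = \chi(t,\lambda)\F(w,\bar w,\lambda)$ with $\chi(\cdot,\lambda):\R \to \Lambda SU(3)_\sigma$ a homomorphism, hence $\chi(t,\lambda)=\exp(tD(\lambda))$ for a generator $D(\lambda)\in \Lambda su(3)_\sigma$. Because $R_t$ is an isometry, the induced data are translation invariant: $\psi$ is holomorphic and $w$-periodic, hence constant, while $u=u(\mathrm{Im}\,w)$. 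I would then use this equivariance to choose a holomorphic extended frame that is itself equivariant (by solving the relevant $\bar\partial$-problem translation-invariantly), so that, after normalizing $C(z_0,\lambda)=I$, one obtains $C(w,\lambda)=\exp(wD(\lambda))$ and therefore a constant holomorphic potential $\eta=C^{-1}dC=D(\lambda)\,dw$.

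Next I would identify $D(\lambda)$ explicitly. Along the real axis $\F(t,\lambda)=\exp(tD(\lambda))$, so the generator equals $D(\lambda)=(\F^{-1}\F_w+\F^{-1}\F_{\bar z})|_{z_0}=\lambda^{-1}U_{-1}+U_0+\lambda V_1+V_0$ evaluated at the base point, a value that is independent of $z_0$ up to conjugation. The diagonal blocks $U_0,V_0$ carry the factors $u_w,u_{\bar z}$, so I would choose $z_0$ at a critical point of $u$; such a point always exists because, for $u=u(\mathrm{Im}\,w)$, equation \eqref{eq:mLsurfaces} reduces to an autonomous second order ODE whose orbits are bounded and periodic (the associated potential $e^u+\tfrac12 e^{-2u}|\psi|^2$ is convex and coercive), so $u$ is constant or oscillatory and in either case has a critical point. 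There $u_w=u_{\bar z}=0$, the diagonal parts vanish, and reading $U_{-1},V_1$ off from \eqref{eq:mathbbF} yields exactly the matrix \eqref{eq:Delaunay} with $a=ie^{u(z_0)/2}$ (so $-ia=e^{u(z_0)/2}>0$) and $b=-i\psi e^{-u(z_0)}=i\psi/a^2$. The phrase ``up to isometries of $\C P^2$'' absorbs the freedom in the horizontal lift, i.e. the $SU(3)$-conjugation, together with the free choice of base point.

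Finally, for the characteristic polynomial I would compute directly, using that $D(\lambda)$ is trace-free, so $\det(\mu I-D(\lambda))=\mu^3-\tfrac12\tr(D(\lambda)^2)\mu-\det D(\lambda)$. A short computation gives $\tr(D^2)=-2(2|a|^2+|b|^2)=-2\beta$, producing the coefficient $\beta=2|a|^2+|b|^2$, and $\det D=\lambda^{-3}a^2b-\lambda^3\bar a^2\bar b$; substituting $a^2b=i\psi$ turns this into $i(\lambda^{-3}\psi+\lambda^3\bar\psi)=2i\,\mathrm{Re}(\lambda^{-3}\psi)$ for $\lambda\in S^1$, whence the constant term $-\det D(\lambda)=-2i\,\mathrm{Re}(\lambda^{-3}\psi)$ matches \eqref{charpolD}. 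The main obstacle is the constancy of the potential in the second paragraph: for non-constant $u$ the frame is not a plain exponential, and one must justify carefully that the equivariant choice of holomorphic gauge really gives constant coefficients and that the base point may be moved to a critical point of $u$ without changing the surface; the characteristic polynomial computation is then routine.
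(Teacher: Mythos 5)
The paper does not actually prove this proposition --- it quotes it from \cite{DoMaNL} --- but your argument reconstructs precisely the approach of that source: translational equivariance trivializes the crossed homomorphism $k$, forces $\psi$ to be constant and the (equivariantly gauged) holomorphic potential to have constant coefficient $D(\lambda)=\lambda^{-1}U_{-1}+U_0+\lambda V_1+V_0$ evaluated at the base point, and the base point is moved to a critical point of $u$ to kill the diagonal (exactly as recorded in Remark \ref{a and b}(1)), after which the trace/determinant computation is routine. Your identifications $a=ie^{u(z_0)/2}$, $b=-i\psi e^{-u(z_0)}=i\psi/a^2$, $\tr(D^2)=-2\beta$ and $\det D=2i\,\mathrm{Re}(\lambda^{-3}\psi)$ all check out; the only caveat is that your coercivity argument for the existence of a critical point of $u$ uses $\psi\neq 0$, but the excluded case is the totally geodesic one, which the paper sets aside anyway.
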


\begin{remark} \label{a and b}
$(1)$ The diagonal of $D(\lambda)$ vanishes, since we have chosen the base point $0 \in \C$ such that the metric of the corresponding surface has a
critical point at $w = 0$ (see Section 5.3 of \cite{DoMaNL}).

$(2)$ We always assume $b \neq 0$, which is equivalent to  $\psi \neq 0$, 
as  otherwise the image of $f$ would constitute an open portion of the real projective plane (see, for instance, Remark 4 of \cite{DoMaNL}).

$(3)$ The surface associated with the potential $\tilde{\eta}$ stated above is an immersion, since $a \neq 0$.

$(4)$ All translationally equivariant surfaces, considered on the maximal domain of 
definition $\mathbb{C}$,
are full and have a global horizontal lift  (see Proposition 3 in \cite{DoMaNL} and Theorem \ref{non-compact mLi Legendrian}).  
\end{remark}

We end this subsection by presenting several geometric results.

\begin{proposition}
The translationally equivariant immersion $\tilde{f}: \C \rightarrow \C P^2$ defined from 
$\tilde{\eta}(w,\lambda) = D(\lambda) dw$ above is 
full   if and only if $a\neq 0$ and $b\neq 0$.
\end{proposition}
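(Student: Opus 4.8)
The plan is to reduce the statement to a criterion on the Hopf differential. Since the potential \eqref{eq:Delaunay} satisfies $b = i\psi/a^2$, we have $\psi = -i a^2 b$, so that $\psi \neq 0$ holds precisely when both $a \neq 0$ and $b \neq 0$, while $a = 0$ or $b = 0$ forces $\psi = 0$. Thus it suffices to prove that $\tilde f$ is full if and only if $\psi \neq 0$. The underlying geometric dichotomy I would exploit is that non-fullness can occur only through the image degenerating, up to an ambient isometry, into a proper totally geodesic submanifold of $\C P^2$, and that the vanishing of $\psi$ is exactly the algebraic signature of such a degeneration.

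For the direction $\psi = 0 \Rightarrow$ not full I would distinguish two cases according to the block structure of $D(\lambda)$. If $a = 0$, then $D(\lambda)$ annihilates $e_3$ and preserves $\mathrm{span}\{e_1,e_2\}$, so after the Iwasawa splitting the last column of the frame stays proportional to $e_3$; the image collapses to the single point $[e_3]$ (consistent with Remark \ref{a and b}(3), recording that $a=0$ destroys the immersion property), and a point is certainly not full. If $a \neq 0$ but $b = 0$, then $\psi = 0$ and $\tilde f$ is a minimal Lagrangian immersion with vanishing cubic Hopf differential; by the same reasoning used for the rotationally equivariant case (vanishing cubic differential forces total geodesy and, up to isometry, image in $\R P^2$, cf. Remark \ref{a and b}(2)) the surface lies in a totally geodesic $\R P^2$ and is therefore not full.

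For the converse $\psi \neq 0 \Rightarrow$ full, I would argue that the image of $\tilde f$ cannot sit inside any proper totally geodesic submanifold. A totally geodesic complex line $\C P^1$ is excluded because $\tilde f$ is Lagrangian, while points and geodesics are excluded because $a\neq0$ makes $\tilde f$ a genuine immersion of $\C$. The only remaining possibility is a totally geodesic $\R P^2$, which would force the cubic Hopf differential to vanish, contradicting $\psi\neq0$. Hence $\tilde f$ is full.

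The step I expect to be the main obstacle is making this last exclusion uniform and rigorous, that is, ruling out every hidden reduction at once rather than case by case. The clean algebraic backbone is the characteristic polynomial \eqref{charpolD}: its only $\lambda$-dependent coefficient is $-2i\,\mathrm{Re}(\lambda^{-3}\psi)$, which is not identically zero precisely when $\psi\neq0$. Writing $D(\lambda)=\lambda^{-1}D_{-1}+\lambda D_1$, a reduction of $\tilde f$ would produce a non-scalar $A\in SU(3)$ in the common centralizer of the whole family $\{D(\lambda)\}_{\lambda\in S^1}$; here the symmetry and framing machinery of Section \ref{subsection: action of symmetries} enters, to pass from a symmetry of $f$ to a constant matrix commuting with the extended frame. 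One then checks that for $a,b\neq0$ the element $D_{-1}$ is regular (cyclic), so its centralizer is $\C[D_{-1}]$, and imposing commutation with $D_1$ collapses this to the scalars; equivalently, the genuine $\lambda$-dependence in \eqref{charpolD} prevents any $\lambda$-independent invariant subspace. This centralizer computation, together with a careful identification of what ``full'' should mean operationally (the absence of a proper totally geodesic reduction), is the technical heart of the argument.
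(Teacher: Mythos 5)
Your route is genuinely different from the paper's, and the paper's is much shorter: it takes $\mathcal{R}\in\mathrm{Iso}_0(\C P^2)$ with $\mathcal{R}\tilde f=\tilde f$ pointwise, lifts this to $R\tilde{\f}=\alpha(R)\tilde{\f}$ with $R\in SU(3)$ and $\alpha(R)$ independent of $w$, deduces $R\,\F(w,\bar w)=\alpha(R)\F(w,\bar w)$ for the normalized frame, and evaluates at the base point where $\F(0,0)=I$ to conclude $R=\alpha(R)I$, hence $\mathcal{R}=\mathrm{id}$ in $PSU(3)$. No classification of totally geodesic submanifolds and no centralizer computation is needed; the entire content is the normalization $\F(0,0)=I$.

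The substantive gap in your proposal is the operational identification of ``not full'' with ``contained in a proper totally geodesic submanifold.'' With the paper's actual definition (Section 4.1: $f$ is full iff the only $\mathcal{R}\in \mathrm{Iso}_0(\C P^2)$ with $\mathcal{R}f=f$ pointwise is the identity), non-fullness forces $f(M)$ to lie in the fixed-point set of a nontrivial $[R]$, which is a union of projectivized eigenspaces of $R$, i.e.\ points and copies of $\C P^1$ --- never a totally geodesic $\R P^2$. Indeed, if $[R]$ fixes an open subset of $\R P^2$ pointwise, then $Rx=c(x)x$ on a spanning set of real vectors, which forces $R$ to be scalar. So your argument that $b=0$ (image in $\R P^2$) implies non-fullness does not follow from the definition in force; this is exactly the step you flagged as the ``technical heart,'' and for that case it is not merely unproven but fails. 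Your converse direction is salvageable and in fact simpler than you make it: a Lagrangian immersed surface cannot sit in a point or in a complex line (the K\"ahler form restricts nontrivially to $\C P^1$), so the fixed-point-set observation already yields fullness without the regularity/centralizer computation for $D_{-1}$, which you do not complete and whose link to a pointwise-fixing isometry you do not establish. Note finally that the paper's own proof only addresses the ``if'' direction, so the ``only if'' half is not something you can check your answer against there.
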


\begin{proof}  
We want to apply the definition of the notion {\bf full} stated in Section 4.1 and choose some 
$\mathcal{R} \in Iso_0(\C P^2)$ satisfying $\mathcal{R} \tilde{f}(w) = \tilde{f}(w)$ for all $w \in \C$.
For the global horizontal lift $\tilde{\f}$ of $\tilde{f}$ this implies 
$R \tilde{\f}(w) = \alpha(R)\tilde{\f}(w)$ for some   
$ \alpha(R)  \in \C^*, |  \alpha(R) | = 1, $ and $R \in SU(3),$ where $\alpha$ does not depend on $w$, 
 since the relation between $\mathcal{R}$ and $R$ does not depend on $f$ and thus not on $w$.
  Next we use the definition of the frames $\mathcal{F}$ and $\F$ given in Section 2.5 of  \cite{DoMaB1}. 
  Since $\alpha$ does not depend on $w$, for the frame $\F$ satisfying $\F(0,0) = I$ this implies
 $R \F(w, \bar w) = \alpha(R)\F(w, \bar w) $.  An evaluation at $w = 0$ implies $R  = \alpha(R) I,$ whence 
 $\alpha (R) \in U(1)$.
 Since the isometries are in $PSU(3)$, $\mathcal{R} =\mathrm{id}$ follows. 
\end{proof}

From Lemma 1 of \cite{DoMaNL},  we derive:

\begin{theorem}
The translationally equivariant immersion $\tilde{f}: \C \rightarrow \C P^2$ defined from 
$\tilde{\eta}(w,\lambda) = D(\lambda) dw$ above is non-flat if and only if the characteristic 
polynomial  of $D(\lambda)$ has three different roots for all $\lambda \in S^1$.
Furthermore, 
the root $0$ occurs if and only if 
$\lambda^{-3} \psi $ is purely imaginary. This can only happen for 
 the six different values of $\lambda \in S^1$  satisfying $\lambda^6 = - \frac{\psi}{\bar \psi}$.
\end{theorem}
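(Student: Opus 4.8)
The plan is to separate the two assertions: the flatness criterion (first sentence) I would obtain from the curvature computation in \cite{DoMaNL}, while the statements concerning the root $0$ are an elementary consequence of the explicit characteristic polynomial \eqref{charpolD}.

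For the first part, I would begin with the one-line observation that $D(\lambda)\in su(3)$ for every $\lambda\in S^1$: using $\bar\lambda=\lambda^{-1}$ one checks directly from \eqref{eq:Delaunay} that $\overline{D(\lambda)}^{\,t}=-D(\lambda)$. Consequently all three roots of \eqref{charpolD} are purely imaginary, so the substitution $\mu=i\nu$ transforms \eqref{charpolD} into the real cubic $\nu^3-\beta\nu+2\,\mathrm{Re}(\lambda^{-3}\psi)=0$, whose discriminant equals $4\beta^3-108\,(\mathrm{Re}(\lambda^{-3}\psi))^2$. Since the eigenvalues are forced to be real in $\nu$, this discriminant is always nonnegative, and it vanishes exactly when two of the roots collide; as $\beta=2|a|^2+|b|^2>0$ and the maximum of $(\mathrm{Re}(\lambda^{-3}\psi))^2$ over $S^1$ equals $|\psi|^2$, the discriminant stays strictly positive for all $\lambda\in S^1$ precisely when $\beta^3>27|\psi|^2$. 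By Lemma 1 of \cite{DoMaNL}, flatness of $\tilde f$ corresponds exactly to such a degeneracy of the eigenvalues of $D(\lambda)$, so ``three distinct roots for all $\lambda\in S^1$'' is equivalent to non-flatness. The main obstacle here is conceptual rather than computational: one has to transcribe the curvature/flatness statement of Lemma 1 of \cite{DoMaNL} into the present normalization and confirm that its degeneracy condition is exactly the vanishing of the above discriminant.

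For the ``Furthermore'' part I would simply evaluate \eqref{charpolD} at $\mu=0$, obtaining the value $-2i\,\mathrm{Re}(\lambda^{-3}\psi)$. Hence $0$ is a root if and only if $\mathrm{Re}(\lambda^{-3}\psi)=0$, that is, if and only if $\lambda^{-3}\psi$ is purely imaginary; no further input is needed.

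Finally, to count the solutions on $S^1$ I would use $\bar\lambda=\lambda^{-1}$ to write $2\,\mathrm{Re}(\lambda^{-3}\psi)=\lambda^{-3}\psi+\lambda^{3}\bar\psi$. Setting this equal to zero and using $\psi\neq 0$ (guaranteed by Remark \ref{a and b}(2)) yields $\lambda^{6}=-\psi/\bar\psi$. Since $|-\psi/\bar\psi|=1$, this equation has exactly six distinct solutions, all of modulus one, namely the six sixth roots of $-\psi/\bar\psi$; these are the asserted six values, which completes the argument.
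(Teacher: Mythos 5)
Your proposal is correct and follows essentially the same route as the paper, which offers no written argument beyond the phrase ``From Lemma~1 of \cite{DoMaNL}, we derive'' -- i.e., both proofs rest on that external lemma for the equivalence between flatness and eigenvalue degeneracy, while the remaining assertions are elementary consequences of \eqref{charpolD}. Your added details (that $D(\lambda)\in su(3)$ forces the substituted cubic $\nu^3-\beta\nu+2\,\mathrm{Re}(\lambda^{-3}\psi)$ to have real roots and hence nonnegative discriminant $4\beta^3-108\,(\mathrm{Re}(\lambda^{-3}\psi))^2$, the evaluation of \eqref{charpolD} at $\mu=0$, and the count of the six solutions of $\lambda^6=-\psi/\bar\psi$ on $S^1$ using $\psi\neq 0$) are all correct and simply make explicit what the paper leaves to the reader.
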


Note that any flat minimal Lagrangian surface in $\mathbb{C}P^2$ is a parametrization of an open subset of the Clifford torus, up to isometries of $\mathbb{C}P^2$. 
For the Clifford torus, we have $a=b=i$ and $\psi=-1$, thus 
$D(\lambda)=\begin{pmatrix}
0&i\lambda & i\lambda^{-1}\\
i\lambda^{-1}& 0& i\lambda \\
i \lambda & i\lambda^{-1} & 0
\end{pmatrix}$. 
Moreover, the eigenvalues of $D(\lambda=1)$ are $i, i, -2i$.

{\bf In the rest of this section we only consider translationally equivariant 
immersions which are not flat and satisfy $\psi \neq 0$.}

Finally we consider periodic translational equivariant minimal Lagrangian immersions.

\begin{remark}
Since we consider here only one equivariant minimal Lagrangian immersion and (just below) only 
one perturbed equivariant minimal Lagrangian immersions (as opposed to more complex situations, like minimal Lagrangian trinoids etc.) the description of the case of periodic equivariant  minimal Lagrangian immersions is easy to describe.

Let $id_1(\lambda), id_2(\lambda)$ and $id_3(\lambda)$ denote the eigenvalues of $D(\lambda)$, then the results of \cite{DoMaNL} and \cite{DoMaEX} imply that the corresponding immersion $\tilde{f}$ descends
to  the cylinder $\C / p \mathbb{Z}$ , making the following diagram commutative
 \[
  \begin{diagram}
    \node{\C}   \arrow{s,l}{\hat{\pi}} \arrow{se,l}{\tilde{f}} \\
    \node{\C / p \mathbb{Z}}  \arrow{r,l}{f}\node{\C P^2} 
   \end{diagram}
\]
 if and only if $\tilde{f}$ is invariant under the translation by  $p >0$ if and only if  $p>0$ satisfies 
 $ p d_j(\lambda = 1) = 2 \pi k_j$, $j = 1, 2, 3$ for some integers $k_j$.
 \end{remark}

We recall from Corollary 2 of \cite{DoMaNL}

\begin{theorem}
Retaining the notation used  above, such a $p>0$ exists if and only if 
 $d_1(\lambda =1) / d_2(\lambda =1)$ is rational.
 More precisely, writing $d_1(\lambda =1) / d_2(\lambda =1) = l_1/l_2 $ with relatively prime
  integers $l_1, l_2$ we obtain 
   $\frac{p d_1(\lambda = 1) }{2 \pi l_1} = \frac{p d_2(\lambda = 1) }{2 \pi l_2}$.
As a consequence, the period is, as stated above, of the form 
 $ p d_j(\lambda = 1) = 2 \pi l_j$, $j = 1, 2.$
\end{theorem}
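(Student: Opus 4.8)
The plan is to reduce the periodicity condition to an elementary commensurability statement about the three eigenvalues of $D(\lambda = 1)$, the decisive input being that $D(\lambda=1) \in su(3)$ is traceless. First I would record the two structural facts I will use throughout. Since $D(\lambda=1)$ lies in $su(3)$, its eigenvalues $i d_1, i d_2, i d_3$ are purely imaginary, so the $d_j = d_j(\lambda = 1)$ are real, and tracelessness gives $d_1 + d_2 + d_3 = 0$; moreover, by the non-flatness hypothesis together with the previous theorem, the three eigenvalues are distinct at $\lambda = 1$, so at most one $d_j$ vanishes and at least two are nonzero. By the remark preceding the statement, $\tilde f$ descends to $\C / p\mathbb{Z}$ exactly when there exist integers $k_1, k_2, k_3$ with $p\, d_j = 2\pi k_j$ for $j = 1,2,3$, and this is the condition I must characterize.

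For the ``only if'' direction I would simply divide the equations $p\, d_1 = 2\pi k_1$ and $p\, d_2 = 2\pi k_2$ to conclude $d_1 / d_2 = k_1 / k_2 \in \mathbb{Q}$ (after relabeling, if necessary, so that $d_2 \neq 0$, which is legitimate since at most one eigenvalue vanishes). For the ``if'' direction, the key observation is that the relation $d_3 = -(d_1 + d_2)$ renders the third closing equation automatic: once $p\, d_1 = 2\pi k_1$ and $p\, d_2 = 2\pi k_2$ hold, one gets $p\, d_3 = -2\pi(k_1 + k_2)$, so $k_3 := -(k_1 + k_2)$ is automatically an integer. Hence I only need to arrange the first two equations. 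Writing $d_1 / d_2 = l_1 / l_2$ with $\gcd(l_1, l_2) = 1$ and choosing the signs of $l_1, l_2$ so that $2\pi l_1 / d_1 > 0$, I set $p = 2\pi l_1 / d_1 = 2\pi l_2 / d_2$ (the two expressions agree because $d_1 l_2 = d_2 l_1$); then $p\, d_1 = 2\pi l_1$ and $p\, d_2 = 2\pi l_2$, and $k_3 = -(l_1 + l_2)$ completes the triple, yielding the desired period.

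Finally I would read off the ``more precisely'' assertions. The identity $\frac{p\, d_1}{2\pi l_1} = \frac{p\, d_2}{2\pi l_2}$ is merely a restatement of $d_1 / d_2 = l_1 / l_2$. To see that the coprime choice isolates the fundamental period of the form $p\, d_j = 2\pi l_j$, I would note that any period forces $(k_1, k_2) = n(l_1, l_2)$ for some integer $n$, by coprimality of $l_1, l_2$; thus the positive periods are exactly $p = 2\pi n l_1 / d_1$ for the integers $n$ making this positive, and the value $n = 1$ (in the chosen sign convention) gives $p\, d_j = 2\pi l_j$ for $j = 1, 2$. I do not anticipate a serious obstacle: the argument is a commensurability computation whose only delicate points are the sign bookkeeping needed to guarantee $p > 0$ and the edge case of a vanishing eigenvalue, both dispatched by the tracelessness relation $d_1 + d_2 + d_3 = 0$ and the distinctness of the eigenvalues. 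The one genuinely substantive step is precisely that tracelessness collapses the three closing conditions into a single rationality condition on a ratio of eigenvalues.
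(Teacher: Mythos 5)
Your argument is correct, and the key point you isolate --- that tracelessness of $D(\lambda=1)\in su(3)$ (visible also in the absence of a $\mu^2$ term in \eqref{charpolD}) forces $d_1+d_2+d_3=0$ and hence makes the third closing condition $p\,d_3\in 2\pi\Z$ automatic --- is exactly the substance of the statement. Note that the paper itself supplies no proof here: the theorem is recalled verbatim as Corollary 2 of \cite{DoMaNL}, so there is nothing to compare against in this text; your commensurability argument, including the coprimality step identifying the fundamental period and the sign/vanishing-eigenvalue bookkeeping, is the natural one and fills that gap correctly.
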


As a corollary we obtain

\begin{theorem}
After some scaling by a positive real number, each periodic translationally  
equivariant minimal Lagrangian immersion is the universal cover of some 
rotationally equivariant minimal Lagrangian immersion of type $(R0)$.
\end{theorem}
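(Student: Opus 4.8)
The plan is to realise the scaling as a conformal reparametrization of the domain $\C$ that merely rescales the Delaunay potential, and then to read off the claim from the correspondence between $2\pi$-periodic translationally equivariant immersions and rotationally equivariant immersions of type $(R0)$ already recorded above (essentially Theorem~4 of \cite{DoMaEX}) via the covering $\tilde{\pi}:\C\to\C^*$, $w\mapsto\exp(iw)$.

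First I would start from a periodic translationally equivariant minimal Lagrangian immersion $\tilde{f}:\C\to\C P^2$, generated as in the Proposition by a potential $\tilde{\eta}(w,\lambda)=D(\lambda)\,dw$ of Delaunay form \eqref{eq:Delaunay}. Periodicity means, by the period Remark recalled above, that there is a $p>0$ with $p\,d_j(\lambda=1)=2\pi k_j$ for integers $k_j$, so that $\tilde{f}(w+p)=\tilde{f}(w)$. Next I would introduce the coordinate $\hat{w}$ by $w=c\hat{w}$ with the single positive real factor $c=p/2\pi$, and set $\hat{f}(\hat{w}):=\tilde{f}(c\hat{w})$. Under this reparametrization the potential becomes $\hat{\eta}(\hat{w},\lambda)=\hat{D}(\lambda)\,d\hat{w}$ with $\hat{D}(\lambda)=c\,D(\lambda)$, and since $\tilde{f}(w+t)=R_t\tilde{f}(w)$ we get $\hat{f}(\hat{w}+s)=R_{cs}\hat{f}(\hat{w})$, so $\hat{f}$ is again translationally equivariant. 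Moreover it is now $2\pi$-periodic, because $\hat{f}(\hat{w}+2\pi)=\tilde{f}(c\hat{w}+p)=\tilde{f}(c\hat{w})=\hat{f}(\hat{w})$.

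The key verification is that $\hat{D}$ is still of the admissible Delaunay shape \eqref{eq:Delaunay}. Here the parameters transform as $\hat{a}=ca$ and $\hat{b}=cb$, and since $\psi=-ia^2b$ the Hopf invariant scales as $\hat{\psi}=c^3\psi$; one then checks directly that the coupling $\hat{b}=i\hat{\psi}/\hat{a}^2$ is preserved, that $\hat{a}$ remains purely imaginary with $-i\hat{a}>0$, and that the characteristic polynomial \eqref{charpolD} rescales consistently with $\hat{\beta}=c^2\beta$. Hence $\hat{f}$ is a bona fide $2\pi$-periodic translationally equivariant minimal Lagrangian immersion. Finally I would invoke the correspondence theorem: being $2\pi$-periodic, $\hat{f}$ factors as $\hat{f}=g\circ\tilde{\pi}$ for a minimal Lagrangian immersion $g:\C^*\to\C P^2$, and the translation $\hat{w}\mapsto\hat{w}+s$ descends under $\tilde{\pi}$ to the rotation $z\mapsto e^{is}z$ whose fixed point $0$ lies outside $\C^*$, so $g$ is rotationally equivariant of type $(R0)$. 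Since $\tilde{\pi}:\C\to\C^*$ is the universal covering, $\hat{f}$ is exactly the universal cover of $g$, which is the assertion.

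I expect the only genuinely technical point to be the middle verification that rescaling by the single real factor $c$ keeps the data inside the Delaunay family \eqref{eq:Delaunay}, in particular that the constraint $b=i\psi/a^2$ together with the reality and positivity normalisation of $a$ survive the scaling; the descent step is then essentially a restatement of the already established $(R0)$–$2\pi$-periodic correspondence and requires no new work.
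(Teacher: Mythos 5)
Your proposal is correct and follows essentially the same route as the paper, which derives this statement as a corollary of the preceding period theorem by rescaling the period $p$ to $2\pi$ and invoking the established correspondence between $2\pi$-periodic translationally equivariant immersions and rotationally equivariant immersions of type $(R0)$ via $w\mapsto e^{iw}$. Your explicit check that the Delaunay data $(a,b,\psi,\beta)$ scales consistently under $w=c\hat{w}$ is a useful elaboration of a step the paper leaves implicit, but it is not a different method.
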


\begin{corollary}
The rotationally equivariant immersions of type $(R0)$ correspond in a 1-1-relation to the translationally equivariant minimal Lagrangian $2 \pi$-periodic immersions.
\end{corollary}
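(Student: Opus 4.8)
The plan is to package the two theorems immediately preceding this corollary into a single bijection, using the explicit covering map $\tilde{\pi}: \C \rightarrow \C^*$, $w \mapsto \exp(iw)$, whose group of deck transformations is exactly $2\pi\Z$ acting by $w \mapsto w + 2\pi$. The pivotal observation is the intertwining relation $\tilde{\pi}(w + t) = e^{it}\tilde{\pi}(w)$, which shows that the translation $\gamma_t: w \mapsto w + t$ on $\C$ projects to the rotation $R_t: z \mapsto e^{it}z$ on $\C^*$. Hence translational equivariance upstairs descends to rotational equivariance downstairs, and the rotation group on $\C^*$ has its fixed point $z = 0$ lying outside $\C^*$, which is precisely the defining feature of type $(R0)$.

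First I would define the forward map $\Psi$ sending a $2\pi$-periodic translationally equivariant minimal Lagrangian immersion $\tilde{f}: \C \rightarrow \C P^2$ to its quotient. Since $\tilde{f}$ is invariant under $w \mapsto w + 2\pi$, which generates the deck group of $\tilde{\pi}$, it descends to a well-defined immersion $f: \C^* \rightarrow \C P^2$ with $f \circ \tilde{\pi} = \tilde{f}$, exactly as in the commutative diagram of the remark above with $p = 2\pi$. The intertwining relation then shows $f$ is rotationally equivariant of type $(R0)$; minimality and the Lagrangian condition are local and hence inherited by the quotient, while fullness descends because $\tilde{\pi}$ is surjective. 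Next I would define the backward map $\Phi$ using the earlier theorem: any rotationally equivariant immersion of type $(R0)$, after extending without loss of generality to $\C^*$, is obtained by projection from a $2\pi$-periodic translationally equivariant immersion on $\C$, which is explicitly the natural lift $\tilde{f} = f \circ \tilde{\pi}$. Thus $\Phi(f) = f \circ \tilde{\pi}$ lands in the desired class.

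Finally I would verify that $\Phi$ and $\Psi$ are mutually inverse. For $\Psi \circ \Phi$, starting from an $(R0)$-immersion $f$ we form $\tilde{f} = f \circ \tilde{\pi}$ and then descend; since $\tilde{\pi}$ is surjective the descent is unique and returns $f$. For $\Phi \circ \Psi$, starting from $\tilde{f}$ we descend to $f$ and then lift, and $f \circ \tilde{\pi} = \tilde{f}$ holds by the very construction of $f$, so the natural lift reproduces $\tilde{f}$.

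The step I expect to be most delicate is the bijectivity bookkeeping, that is, confirming that fixing the period to be exactly $2\pi$---rather than an arbitrary $p > 0$ as in the scaling theorem---removes all ambiguity. Concretely one must check that distinct $2\pi$-periodic immersions descend to distinct $(R0)$-immersions and that no two inequivalent $(R0)$-immersions share the same lift. This reduces to the rigidity of $\tilde{\pi}$: its deck group is precisely $2\pi\Z$, so the translation period $2\pi$ corresponds to the full rotation by $2\pi$, which is the identity on $\C^*$, leaving no residual scaling freedom. Once this rigidity is recorded, the corollary follows as the assertion that $\Phi$ and $\Psi$ furnish the claimed one-to-one correspondence.
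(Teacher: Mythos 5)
Your proposal is correct and follows essentially the same route as the paper: the corollary is presented there as an immediate consequence of the two preceding theorems, and the subsequent remark spells out exactly your correspondence, pulling back along $\tilde{\pi}: w \mapsto e^{iw}$ to lift an $(R0)$-immersion to a $2\pi$-periodic translationally equivariant one and pushing down in the other direction. Your extra care about the bijectivity bookkeeping (the deck group of $\tilde{\pi}$ being exactly $2\pi\Z$) is a reasonable elaboration of what the paper leaves implicit.
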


\begin{remark}
$(a)$ To explain the relation between the rotationally equivariant surface and the translationally equivariant surface occurring in the corollary above, one should use the  universal cover map
$$\tilde{\pi} : \C \rightarrow \C^*,  w\mapsto z=e^{iw}$$
to relate the corresponding natural potentials
$\eta $ on $\C^*$ and $\tilde{\eta}$ on $\C$.
Let  
$$\tilde{\eta}(w, \lambda) =  D(\lambda) dw$$ 
denote the holomorphic potential on $\C$ for the 
 associated  translationally equivariant minimal Lagrangian immersion defined on $\C$. Then
$$ \tilde{\pi}^*(\eta) = \tilde{\eta}.$$
is equivalent  to  
$$\tilde{\eta}(w, \lambda) =  \frac{-i D(\lambda)}{z} dz.$$ 

$(b)$ More precisely, one can pull back any rotationally equivariant immersion
 $f: \C^* \rightarrow \C P^2$ 
of type $(R0)$ to a translationally equivariant immersion $ \tilde{f}: \C \rightarrow \C P^2$ 
by putting $\tilde{f}(w) = f(\tilde{\pi }(w))$. 
Note that $\tilde{f}$ is invariant under translation by $2 \pi$.
Conversely, given a translationally equivariant immersion $ \tilde{f}: \C \rightarrow \C P^2$ 
which is invariant under translations by $2 \pi$ one can push down this immersion to a
rotationally  equivariant immersion  $f: \C^* \rightarrow \C P^2$  defined on $\C^*$.

$(c)$ All the immersions  $\tilde{f}$ considered above can be constructed by the loop group method 
as outlined in Section \ref{subsec:construction}.
\end{remark}

%%%%%%%%%%%%%%%%%%%%%%%%%%%%
\subsection{Perturbed equivariant minimal Lagrangian surfaces in \texorpdfstring{$\C P^2$}{}}

In this subsection, we discuss minimal Lagrangian surfaces which are constructed 
from perturbed equivariant potentials.
Our approach generalizes arguments given in \cite{DW08}.
When comparing our work to the work done in loc.cit. one needs to keep in mind that 
there Hermitian Delaunay matrices were used, while here we use skew-hermitian matrices.

By the explanation of the next to  last subsection, we will perturb the  rotationally 
equivariant immersions of type $(R0)$.
Also, in order to keep the notation and the arguments as simple as possible, 
we use here somewhat special Delaunay matrices (see below).
The most general case can be dealt with by using the r-loop group formalism which has not 
yet been introduced into the discussion of minimal Lagrangian surfaces in $\C P^2$.

\begin{definition}
Let $\mathcal{D}_R$ be  a disk in $\mathbb{C}$ about $z=0$ with radius $R>0$.
Let $\eta(z,\lambda)$ be a holomorphic potential for a minimal Lagrangian surface 
in $\C P^2$ which is defined on $\mathcal{D}_R\backslash \{0\}$ and therefore has the form
\begin{equation}\label{perturbed potential}
\eta(z,\lambda) = \frac{-iD(\lambda)}{z} dz + \eta_+(z,\lambda), 
\end{equation}
where
 \begin{equation*}
\eta_+ (z, \lambda) =\sum_{m=0}^\infty \eta_{+,m}(\lambda) z^m dz
\end{equation*}
is holomorphic on $\mathcal{D}_R$ in $z$ and holomorphic in $\lambda\in \mathbb{C}^*$ with a pole in $\lambda$ at $\lambda = 0$ of order at most $1$. Moreover, 
 $D(\lambda)$ is a Delaunay matrix with $a,b\neq 0$. Then, in particular, the corresponding 
minimal Lagrangian immersion is full.
Assume, moreover, that  the $(1,3)$-coefficient of the $\lambda^{-1}$-term in 
$\eta$ never vanishes for all $z\in \mathcal{D}_R\backslash\{0\}$, which leads to an immersed surface. Such a potential is called a \emph{perturbed Delaunay potential} defined on $\mathcal{D}_R \backslash \{0\}$.
\end{definition}

For the record we state 
\begin{theorem}
The potential \eqref{perturbed potential} produces by the loop group method (Theorem \ref{Th:DPW-hol}) a full minimal Lagrangian immersion $f_{\eta}$ on $\mathcal{D}_R \backslash \{ x\in \mathcal{D}_R | x\leq 0\}$ to $\mathbb{C}P^2$. This minimal Lagrangian immersion is globally horizontally liftable.
\end{theorem}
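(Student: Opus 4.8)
The plan is to produce $f_\eta$ by the generalized DPW recipe of Remark~\ref{general DPW}, exploiting that the slit disk $\Omega := \mathcal{D}_R\setminus\{x\in\mathcal{D}_R : x\leq 0\}$ is simply connected. First I would note that $\eta$ is holomorphic on $\Omega$: its only singularity is the simple pole of $\frac{-iD(\lambda)}{z}\,dz$ at $z=0$, which lies on the removed slit, while $\eta_+$ is holomorphic on all of $\mathcal{D}_R$. Inspecting \eqref{eq:Delaunay} together with the hypotheses on $\eta_+$ shows that each $\lambda$-coefficient of $\eta$ lies in the appropriate $\mathfrak{g}_{j\bmod 6}$ and that only powers $\lambda^j$ with $j\geq -1$ occur, so $\eta\in\Lambda_{-1,\infty}$. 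Fixing a base point $z_*\in\Omega$ I would then solve $dC=C\eta$, $C(z_*,\lambda)=I$; since $\Omega$ is simply connected and $\eta$ is holomorphic there, the solution $C:\Omega\to\Lambda SL(3,\C)_\sigma$ is single-valued and holomorphic in $z$. This is precisely the role of the slit: on the punctured disk $\mathcal{D}_R\setminus\{0\}$ the residue $-iD(\lambda)$ would in general produce nontrivial monodromy of $C$, so the cut is what guarantees a single-valued frame.

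Next I would carry out the Iwasawa splitting $C=\F V_+$. Because the $6$-symmetric space $SU(3)/U(1)$ is compact this decomposition is global (cf.\ the Remark on compactness following Construction~1), yielding a smooth extended frame $\F:\Omega\to\Lambda SU(3)_\sigma$, and by Theorem~\ref{Th:DPW-hol} together with Proposition~\ref{prop:frame} the map $f_\eta:=\pi\circ\F|_{\lambda=1}$ is a minimal Lagrangian surface whose frame satisfies \eqref{eq:mathbbF}. To obtain the immersion property I would compute the leading term of the Maurer--Cartan form: writing $V_+=V_+^0+\lambda V_+^1+\cdots$, one checks that the $\lambda^{-1}$-coefficient of $\F^{-1}\F_z$ equals $V_+^0\,\eta_{-1}\,(V_+^0)^{-1}$, where $\eta_{-1}\in\mathfrak{g}_5$ is the $dz$-coefficient of the $\lambda^{-1}$-part of $\eta$ and $V_+^0\in\exp\mathfrak{g}_0$ is diagonal. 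Conjugation by such a diagonal factor only rescales the $(1,3)$-entry by a nonzero number, so by \eqref{eq:mathbbF} the quantity $ie^{u/2}$ is a nonzero multiple of the $(1,3)$-coefficient of the $\lambda^{-1}$-term of $\eta$. The standing assumption that this coefficient never vanishes on $\mathcal{D}_R\setminus\{0\}$ therefore forces $e^u>0$ on $\Omega$, i.e.\ the induced metric is nondegenerate and $f_\eta$ is an immersion.

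Horizontal liftability is then immediate: the last column $\f_\eta:=\F e_3|_{\lambda=1}$ is by construction a horizontal lift of $f_\eta$, and it is globally defined because $\F$ is single-valued on the simply connected $\Omega$ (one may alternatively invoke Corollary~\ref{Cor:globle-lift}, since $\Omega$ is contractible). For fullness I would show that $f_\eta$ is not totally geodesic: the $(2,1)$-coefficient of the $\lambda^{-1}$-part of $\eta$ has leading term $-ib/z$ with $b\neq 0$, and the same conjugation identity identifies the surface's Hopf quantity $-i\psi e^{-u}$ in \eqref{eq:mathbbF} with a nonzero multiple of this coefficient, whence $\psi\not\equiv 0$. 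Since a non-full minimal Lagrangian surface in $\C P^2$ must lie in a totally geodesic $\R P^2$ and hence have vanishing Hopf differential, $f_\eta$ is full, consistent with the fullness criterion recorded above for pure Delaunay potentials.

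I expect the main obstacle to lie not in the existence of the frame---which follows formally once the simple-connectivity of $\Omega$ is in place---but in the precise bookkeeping of the immersion step: one must verify that the Iwasawa gauge $V_+$ acts on the leading $\lambda^{-1}$-term by a diagonal conjugation compatible with the $\sigma$-twisting, and hence preserves the non-vanishing of the $(1,3)$-entry. Establishing that $V_+^0$ is indeed diagonal and that conjugation by it cannot cancel the prescribed $(1,3)$- and $(2,1)$-coefficients is the technical heart of the argument.
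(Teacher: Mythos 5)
The paper states this theorem ``for the record'' and supplies no proof at all, so there is nothing to compare your argument against; judged on its own, your proof is correct and is exactly the argument the authors evidently have in mind. The three pillars are all in place: (i) holomorphy of $\eta$ on the simply connected slit disk gives a single-valued $C$ with $dC=C\eta$, and global Iwasawa decomposition (compactness of $SU(3)$) gives the extended frame and hence, via Theorem \ref{Th:DPW-hol}, the primitive harmonic map; (ii) the $\lambda^{-1}$-coefficient of $\F^{-1}\F_z$ is $V_+^0\,\eta_{-1}\,(V_+^0)^{-1}$ with $V_+^0=\mathrm{diag}(d,d^{-1},1)\in K^{\C}$, so its $(1,3)$-entry is $d$ times that of $\eta_{-1}$; since $\f_z=\lambda^{-1}(U_{-1})_{13}\,\F e_1$ (as $U_0e_3=0$), the standing non-vanishing hypothesis is precisely the immersion condition; (iii) contractibility of the slit domain gives the global horizontal lift $\F e_3|_{\lambda=1}$ via Corollary \ref{Cor:globle-lift}. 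Two small refinements are worth recording. First, the identification of the $(1,3)$-entry with $ie^{u/2}$ for a \emph{real} $u$ strictly requires one further gauge by a unitary diagonal element of $K$; this does not affect the non-vanishing argument, but it is the step that puts the frame into the normal form \eqref{eq:mathbbF}. Second, your fullness argument routes through ``non-full $\Rightarrow$ totally geodesic $\R P^2$ $\Rightarrow$ $\psi\equiv 0$,'' which is consistent with the paper's conventions (Remark \ref{a and b}) but is slightly indirect; a cleaner argument, matching the paper's own proof of fullness for the unperturbed Delaunay case, is to normalize $\F$ at a base point and observe that $R\F=\alpha(R)\F$ evaluated there forces $R=\alpha(R)I$, hence $\mathcal{R}=\mathrm{id}$. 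Either way the conclusion stands, and your identification of the diagonal-conjugation step as the technical heart is accurate.
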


Finally we address the natural lift of $f_{\eta}$ to the (extended) universal cover
\begin{equation}
\pi: \mathcal{S}_{\eta}=\{w\in\mathbb{C}| \mathrm{Re}( w) \in \mathbb{R}, -\ln R < \mathrm{Im}(w) <\infty\}\rightarrow \mathcal{D}_R\backslash\{0\},\quad w\mapsto e^{iw}.
\end{equation}
The pull back of $\eta$ by $\pi$ will be denoted by $\tilde{\eta}$.
Since we have assumed that $\eta$ is defined on $\mathcal{D}_R\backslash\{0\}$, we obtain for all 
$z\in \mathcal{S}_{\eta}$, $\lambda\in\mathbb{C}^*$ and all  $n \in \mathbb{Z}$ 
\begin{equation}
\tilde{\eta}(z+2\pi n,\lambda)=\tilde{\eta}(z,\lambda).
\end{equation}

As a consequence we infer
\begin{lemma}
Let $\tilde{f}_{\eta}$ denote the (extended) lift of $f_{\eta}$ to $\mathcal{S}_{\eta}$, then there exists 
$\chi (\lambda) \in  \Lambda SL(3,\C)_{\sigma}$:
$$\tilde{f}_{\eta}(z+2\pi n, \lambda)=\chi(\lambda)^{n}  \tilde{f}_{\eta}(z,\lambda),$$
for all $n\in \Z$.
\end{lemma}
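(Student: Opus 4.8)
The plan is to run the standard periodic-monodromy argument at the level of the frame obtained by integrating the potential, and then push the resulting transformation law down to $\tilde f_\eta$. First I would solve the holomorphic initial value problem $dC = C\tilde\eta$, $C(z_0,\lambda)=I$, at some base point $z_0 \in \mathcal{S}_\eta$, producing the extended (holomorphic) frame from which $\tilde f_\eta$ is read off via the loop group recipe of Theorem \ref{Th:DPW-hol}. Two preliminary observations make this legitimate: since $\eta$ has the form \eqref{perturbed potential}, its pull-back $\tilde\eta$ takes values in the twisted loop algebra $\Lambda \mathfrak{sl}(3,\C)_\sigma$, so $C$ is $\Lambda SL(3,\C)_\sigma$-valued; and the only singularity of $\eta$, the pole at $z=0$, is pushed by $w\mapsto e^{iw}$ to the end $\mathrm{Im}(w)\to\infty$, which does not lie in $\mathcal{S}_\eta$, so $\tilde\eta$ is holomorphic on the simply connected $\mathcal{S}_\eta$ and $C$ exists globally there.

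Next I would exploit the $2\pi$-periodicity $\tilde\eta(z+2\pi,\lambda)=\tilde\eta(z,\lambda)$ recorded just above the statement. Setting $C_1(z,\lambda):=C(z+2\pi,\lambda)$ and differentiating, periodicity gives $dC_1=C_1\tilde\eta$, so $C_1$ solves the \emph{same} linear equation as $C$. Uniqueness of solutions of a linear holomorphic ODE then forces $C_1$ and $C$ to differ by a left factor that is independent of $z$, namely
\[
C(z+2\pi,\lambda)=\chi(\lambda)\,C(z,\lambda),\qquad \chi(\lambda):=C(z_0+2\pi,\lambda).
\]
Here $\chi(\lambda)\in \Lambda SL(3,\C)_\sigma$: the determinant is $1$ since $\det C\equiv 1$, and $\chi$ is fixed by the twisting automorphism $\hat\sigma$ because $C$ is. Iterating gives $C(z+2\pi n,\lambda)=\chi(\lambda)^n C(z,\lambda)$ for all $n\in\Z$ (negative $n$ using $\chi^{-1}$, again in the twisted group). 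Since $\tilde f_\eta$ is obtained from $C$ by selecting a fixed column, an operation manifestly equivariant under left multiplication by the constant loop $\chi(\lambda)$, the same constant governs its monodromy and yields $\tilde f_\eta(z+2\pi n,\lambda)=\chi(\lambda)^n\tilde f_\eta(z,\lambda)$.

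The routine core is the uniqueness step, which is what pins $\chi$ down as a constant in $z$; the only points needing a word are the twisted-group membership (determinant and $\hat\sigma$-invariance, both inherited from $C$) and the global solvability on $\mathcal{S}_\eta$. I expect the genuine obstacle to lie not in this Lemma but in the stage it prepares: at this point $\chi(\lambda)$ is known only to lie in $\Lambda SL(3,\C)_\sigma$, and it is precisely \emph{not} automatically unitary, because $\tilde\eta$ is a complex holomorphic potential and the Iwasawa passage to the genuine unitary frame does not commute with left multiplication by a non-unitary constant loop. This is why the statement can only assert $\chi(\lambda)\in\Lambda SL(3,\C)_\sigma$ rather than $\Lambda SU(3)_\sigma$; descending $\tilde f_\eta$ to a closed cylinder will require the separate, harder task of showing $\chi(\lambda)$ is conjugate into $\Lambda SU(3)_\sigma$ and satisfies the pointwise closing conditions, which is exactly the work carried out in the asymptotic analysis of the following subsections.
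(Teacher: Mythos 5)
Your monodromy argument for the holomorphic frame is exactly the one the paper has in mind: the authors state the lemma without proof, as an immediate consequence of the $2\pi$-periodicity of $\tilde\eta$, and the content they actually need downstream --- that the solution of $dC=C\tilde\eta$ on the simply connected strip $\mathcal{S}_\eta$ satisfies $C(z+2\pi n,\lambda)=\chi(\lambda)^n C(z,\lambda)$ for a $z$-independent $\chi(\lambda)\in\Lambda SL(3,\C)_\sigma$ --- is precisely what you establish, including the determinant and $\hat\sigma$-invariance checks and the observation that the pole of $\eta$ at $z=0$ is pushed off to the end $\mathrm{Im}(w)\to\infty$.

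The one step that does not hold up as written is the descent from $C$ to the surface. The immersion $\tilde f_\eta$ is not ``a fixed column of $C$'': by Theorem \ref{Th:DPW-hol} it is $[\F e_3]$, where $C=\F V_+$ is the Iwasawa decomposition, and you yourself observe in your closing paragraph that Iwasawa factorization does not commute with left multiplication by a non-unitary constant loop. Concretely, $C(z+2\pi)=\chi C(z)$ gives $\F(z+2\pi)=\chi\,\F(z)\,W_+(z)$ with $W_+(z)=V_+(z)V_+(z+2\pi)^{-1}\in\Lambda^{+}SL(3,\C)_{\sigma}$, and $W_+(z)e_3$ is not proportional to $e_3$ for $\lambda\neq 0$ (the $\lambda^1$- and $\lambda^2$-coefficients of such a loop lie in $\mathfrak{g}_1$ and $\mathfrak{g}_2$ and move $e_3$ into the $e_2$- and $e_1$-directions). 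Hence $[\F(z+2\pi)e_3]=\chi\cdot[\F(z)W_+(z)e_3]$ need not coincide with $\chi\cdot[\F(z)e_3]$, so the transformation law for the surface under the projective action of a merely $\Lambda SL(3,\C)_\sigma$-valued $\chi$ does not follow from equivariance of ``selecting a column.'' This is harmless for everything the paper does with the lemma: in the $*$-perturbed case of Theorems \ref{thm:Sch-Sch} and \ref{cor:1} the monodromy is $\chi(\lambda)=e^{2\pi D(\lambda)}$, which is unitary on $S^1$; a unitary left factor does commute with the Iwasawa splitting, and then the surface-level identity is immediate. So either read the lemma as a statement about the monodromy of the holomorphic frame (which is all that is used), or defer the surface-level identity until unitarity of $\chi$ is available --- but the last sentence of your second paragraph, as it stands, proves less than it claims.
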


For the $*$-perturbed potentials defined below we will show that $\chi(\lambda)$ 
 is unitary and can be
expressed using the Delaunay matrix $D(\lambda)$.

%%%%%%%%%%%%%%%%%%%%%%%
\subsection{$*$-perturbed  minimal Lagrangian immersions into \texorpdfstring{$\C P^2$}{23}}

We have discussed equivariant minimal Lagrangian immersions in detail in  \cite{DoMaNL} and \cite{DoMaEX}. For the examples illustrating this paper we will restrict to those immersions which are easiest to handle technically. 

\begin{definition} \label{def:*-Delaunay} 
Let $D(\lambda)$ be a Delaunay matrix. Recall from Remark \ref{a and b}
that we assume that the parameters $a$ and $b$ defining $D(\lambda)$ are both different from zero.
Moreover, if the eigenvalues of $-iD(\lambda)$ are all integers for $\lambda=1$,
then we say that $D(\lambda)$ is a \emph{$*$-Delaunay matrix}.
 
 Let $\eta(z,\lambda) = \frac{-iD(\lambda)}{z}  dz + \eta_+(z,\lambda) $ be a perturbed Delaunay  potential defined on $\mathcal{D}_R\backslash \{0\}$.
Then $\eta$ is said to be  a \emph{$*$-perturbed Delaunay potential} if 
\begin{itemize}
\item[(i)] $D(\lambda)$ is a $*$-Delaunay matrix.
\item[(ii)] There exists a positive integer  $N$ such that $j\mathrm{Id}+\mathrm{ad}(-iD(\lambda))$ is invertible for any $j>N$ and $\lambda \in S^1$. 
\item[(iii)]  $\eta_+(z,\lambda)= \mathring{\eta}_+ (z,\lambda)dz$, where $\mathring{\eta}_+(z,\lambda)=
\sum_{k=N}^{\infty} \mathring{\eta}_{+,k}(\lambda)  z^k \in \Lambda sl(3,\mathbb{C})_{\sigma} $
 with $k$ starting from the same $N$ as $j$  in (ii). 
\end{itemize}
The corresponding minimal Lagrangian immersion will be called a $*$-perturbed equivariant minimal Lagrangian immersion.
\end{definition}

\begin{remark}
\begin{enumerate}
\item For any given $*$-Delaunay matrix $D(\lambda)$,  the desired integer $N$ in (ii)  always exists by 
the compactness of $S^1$.
\item It is easy to see that in  (ii) it suffices to require  the condition on $\lambda$  
for $\lambda \in S^1$ only,
since then the condition holds for a (possibly small open) annulus containing $S^1$.
\item We would like to point out that in the definition above we
allow that the diagonal of $D(\lambda)$ 
does not vanish. This will be of importance if one considers several Delaunay matrices simultaneously.
When dealing with a single Delaunay matrix we will always assume that its diagonal vanishes.
\end{enumerate}
\end{remark}

Now we consider a $*$-perturbed Delaunay potential $\eta$ and present a special solution to the differential equation 
$dC=C\eta$. Namely, we prove

\begin{theorem}\label{thm:Sch-Sch}
Let $\eta=\frac{-i D(\lambda)}{z} dz+\mathring{\eta}_{+}(z,\lambda)dz$ be a $*$-perturbed Delaunay potential 
defined on   $\mathcal{D}_R\backslash\{0\}$,  where $\mathring{\eta}_+(z,\lambda)=
\sum_{k=N}^{\infty} \mathring{\eta}_{+,k}(\lambda)  z^k \in \Lambda sl(3,\mathbb{C})_{\sigma} $.
 Then  for any straight line segment $L_0 \subset \mathcal{D}_R$ connecting the origin to 
 a point on the circle of radius $R$
 there exists on $ \mathcal{D}_R\backslash L_0$ 
 a solution $H(z,\lambda)\in \Lambda SL(3,\mathbb{C})_{\sigma}$ to the differential equation  
\begin{equation}\label{eq:H}
dH = H \eta, 
\end{equation} 
 of the form
\begin{equation}\label{eq:H solution}
 H(z, \lambda) = e^{\ln z (-iD(\lambda))} P(z,\lambda),
 \end{equation}
with a unique function $P(z,\lambda)$ holomorphic in $z\in \mathcal{D}_R$ 
satisfying $P(0,\lambda) = I$ 
for all $\lambda\in \mathbb{C}^*$ and having an expansion of the form
$$P(z, \lambda) = I + \sum_{k=N +1}^{\infty} P_k (\lambda)  z^k \in 
\Lambda SL(3,\mathbb{C})_{\sigma} . $$

Furthermore, since $D(\lambda)$ and $\mathring{\eta}_{+}(z,\lambda)$ are contained in the twisted loop algebra $\Lambda sl(3,\mathbb{C})_{\sigma}$, we also have
\begin{equation}
P(z, \lambda) \in \Lambda SL(3,\mathbb{C})_{\sigma} \quad \mbox{and}  \quad H(z,\lambda) \in \Lambda SL(3,\mathbb{C})_{\sigma}
\end{equation}
 for all  $\lambda \in \mathbb{C}^*$ and all $z \in \mathcal{D}_R$  for which the respective function is defined.
 \end{theorem}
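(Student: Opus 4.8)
The plan is to reduce \eqref{eq:H} to a recursion for the Taylor coefficients of $P$, to solve this recursion using hypotheses (ii) and (iii), to establish convergence on all of $\mathcal{D}_R$ by a majorant argument uniform in $\lambda$, and finally to read off the $SL$- and $\sigma$-properties. Writing $A=A(\lambda):=-iD(\lambda)$ and substituting the ansatz \eqref{eq:H solution} into $H^{-1}H_z=\tfrac{A}{z}+\mathring\eta_+$, while using that $A$ commutes with $e^{\ln z\,A}$, one is led to the equation $P_z=\tfrac1z[P,A]+P\mathring\eta_+$ for the holomorphic factor. Expanding $P=\sum_{k\ge 0}P_k z^k$ with $P_0=I$ and comparing coefficients of $z^{k-1}$ yields
\begin{equation*}
\bigl(k\,\mathrm{Id}+\mathrm{ad}(A(\lambda))\bigr)P_k=\sum_{j+m=k-1,\ m\ge N}P_j\,\mathring\eta_{+,m}.
\end{equation*}

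Next I would solve this recursion. Because $\mathring\eta_+$ starts at order $N$ (hypothesis (iii)), the right-hand side vanishes for $1\le k\le N$, so the gap $P_1=\dots=P_N=0$ in the stated expansion is consistent and $P=I+O(z^{N+1})$. For $k\ge N+1$ hypothesis (ii) makes $k\,\mathrm{Id}+\mathrm{ad}(A(\lambda))$ invertible for $\lambda\in S^1$, so each $P_k(\lambda)$ is determined uniquely and recursively (for instance $P_{N+1}=((N+1)\mathrm{Id}+\mathrm{ad}(A))^{-1}\mathring\eta_{+,N}$), and is holomorphic in $\lambda$ on an annulus about $S^1$ where the inverse persists. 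This produces the unique \emph{formal} solution of the required shape.

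The \emph{main obstacle} is to show that $\sum_k P_k(\lambda)z^k$ actually converges on the full disk $\mathcal{D}_R$, uniformly for $\lambda\in S^1$; this is the step where I would adapt \cite{Sch-Sch}. The eigenvalues of $\mathrm{ad}(A(\lambda))$ are bounded on $S^1$, so by compactness $\|(k\,\mathrm{Id}+\mathrm{ad}(A(\lambda)))^{-1}\|\le C/k$ for all $k>N$ and all $\lambda\in S^1$. Feeding this into the recursion and setting $b_m=\sup_{\lambda\in S^1}\|\mathring\eta_{+,m}(\lambda)\|$ (so that $\hat b(z)=\sum b_m z^m$ is holomorphic on $\mathcal{D}_R$ by hypothesis), the majorant $p_k$ defined by $k\,p_k=C\sum_{j+m=k-1}p_j b_m$, $p_0=1$, dominates $\|P_k(\lambda)\|$. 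The decisive point is that the factor $1/k$ turns the majorant into the differential relation $\hat p'(z)=C\,\hat p(z)\hat b(z)$, whence $\hat p(z)=\exp\!\bigl(C\int_0^z\hat b\bigr)$ is holomorphic precisely on $\mathcal{D}_R$; this is exactly what yields the sharp radius $R$ rather than a smaller one. Consequently $P$ is holomorphic in $z\in\mathcal{D}_R$ and jointly holomorphic in $(z,\lambda)$, and $H=e^{\ln z\,A}P$ is single-valued and solves \eqref{eq:H} on $\mathcal{D}_R\setminus L_0$ once a branch of $\ln z$ is fixed off the cut $L_0$.

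Finally I would verify the algebraic claims. Since $\mathring\eta_+$ and $[P,A]$ are trace-free, $\tfrac{d}{dz}\det P=\det P\cdot\tr(P^{-1}P_z)=0$, so $\det P\equiv\det P(0)=1$, and likewise $\det H=1$. For the $\sigma$-twisting I would argue by uniqueness: because $A=-iD$ and $\mathring\eta_+$ lie in $\Lambda sl(3,\C)_\sigma$, the one-form $\eta$ is fixed by the twisting, while $\hat\sigma(I)=I$ and $\hat\sigma(e^{\ln z\,A})=e^{\ln z\,A}$ (as $\ln z\,A\in\Lambda sl(3,\C)_\sigma$). Hence $\hat\sigma(H)$ again solves $dG=G\eta$, and its holomorphic factor $\hat\sigma(P)$ satisfies the same ODE together with $\hat\sigma(P)=I+O(z^{N+1})$; by the uniqueness of the recursion it must coincide with $P$. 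Therefore $P(z,\cdot)\in\Lambda SL(3,\C)_\sigma$, and $H=e^{\ln z\,A}P\in\Lambda SL(3,\C)_\sigma$ as well, first for $\lambda$ in an annulus about $S^1$ and then, by the holomorphic dependence on $\lambda\in\C^*$, wherever the respective function is defined.
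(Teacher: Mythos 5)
Your argument is correct, but the analytic core is organized differently from the paper's. The paper follows Sch\"afke--Schmidt's Satz 1 quite literally: it works on an exhaustion of $\mathcal{D}_R$ by subdisks $\mathcal{D}_{R(m)}$, chooses $n_{\mathfrak{D}}$ so that $\tfrac{1}{n_{\mathfrak{D}}+1}\|\hat D\|+\tfrac{R}{n_{\mathfrak{D}}+2}\|\hat Q\|<1$, builds polynomial approximants $p_k$ together with auxiliary data $g_k,q_k$ by the induction \eqref{eq:B'}--\eqref{eq:D'}, and then invokes Hilfssatz a to solve a \emph{regular} ODE for the tail $u=z^{n+1}h$; uniqueness of $P$ is read off from the resulting explicit expansion. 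You instead extract the full coefficient recursion $\bigl(k\,\mathrm{Id}+\mathrm{ad}(-iD)\bigr)P_k=\sum_{j+m=k-1}P_j\mathring\eta_{+,m}$ and prove convergence on all of $\mathcal{D}_R$ by a scalar majorant $\hat p'=C\hat p\,\hat b$, exploiting the $O(1/k)$ bound on the resolvents. This is more self-contained (no exhaustion, no appeal to the Hilfss\"atze) and makes the uniqueness of $P$ and the role of hypotheses (ii)--(iii) completely transparent; the paper's route buys less bookkeeping at the price of citing \cite{Sch-Sch}. Your treatment of the twisting is also a genuine shortcut: you apply $\hat\sigma$ directly to $H$ and conclude $\hat\sigma(P)=P$ from the uniqueness of the holomorphic factor, whereas the paper first solves $dL=L\eta$ with $L(z_0)=I$ in the twisted group on the cut domain and then eliminates the constant $B(\lambda)$; both are valid, yours is shorter. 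Two small points you should make explicit. First, the conclusion $P,H\in\Lambda SL(3,\C)_\sigma$ is a statement about a Banach loop group (weighted Wiener norm), so the majorant should be run with $b_m=\|\mathring\eta_{+,m}\|_\omega$ and with the resolvent bound taken as an operator bound on the Banach algebra $\Lambda gl(3,\C)$ rather than as $\sup_{\lambda\in S^1}$ of pointwise matrix norms; the argument goes through verbatim (Cauchy estimates for Banach-space-valued holomorphic functions still give $\limsup b_m^{1/m}\le 1/R$), and this matches how the paper sets up $\mathfrak{A}=\mathfrak{L}(\mathfrak{R})$. Second, the uniform bound $\|(k\,\mathrm{Id}+\mathrm{ad}(-iD))^{-1}\|\le C/k$ should be justified by splitting into $k$ large (Neumann series, since $\|\mathrm{ad}(-iD)\|$ is finite) and the finitely many $N<k\le k_0$ (hypothesis (ii) plus compactness); you gesture at this but it is worth a sentence.
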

 
 \begin{proof}  For the proof, we consider an increasing sequence of reduced domains of definition, namely   
  $\mathcal{D}_{R(m)} \backslash\{0\}$ with $m \in \mathbb{N}$ and $0 < R(m) < R(m+1) < R$. 
  The union of these domains covers 
 $\mathcal{D}_R\backslash\{0\}$. 
 For each domain, we can apply the proof outlined below, assuming a uniform norm for matrix functions across its boundary.
 It is easy to verify that from this argument we obtain the claim as stated above.
 
 To simplify notation we will drop the label  
``$R(m)$"  from here on. 
 We thus consider  the domain  $\mathfrak{D} \backslash\{0\} = \mathcal{D}_{R(m)}  \backslash\{0\}$. 
 
 1) \, We 
 begin by considering  the differential equation $dH=H\eta$  on $\mathfrak{D}$ and construct, for any given $L_0$ as in the claim, a solution of the form given by
 \eqref{eq:H solution}. 
This  is equivalent to solving, with $P$ holomorphic in $z \in  \mathfrak{D} = \mathcal{D}_{R(m)}$,
$$dP=P\eta-\frac{-iD(\lambda)}{z}Pdz,$$
which leads to the equation
\begin{equation}\label{eq:dP}
\frac{dP}{dz}=\frac{1}{z}[P,-iD]+P \mathring{\eta}_+.
\end{equation}

Consider the Banach space 
$\mathfrak{A} = \mathfrak{L} (\mathfrak{R})=\Lambda gl(3,\mathbb{C}),$
which denotes the set of all continuous endomorphisms of  $\mathfrak{R}=\Lambda{\C}^3$  endowed with a standard matrix Banach norm.
 Let  $ \mathfrak{B} = \mathfrak{L}(\mathfrak{A})$  be the matrix Banach algebra (with norm) 
  analogously defined  as in the case of $\mathfrak{L} (\mathfrak{R})$.
 
Introduce the commutator operator $\hat{D}$  defined by
$\hat{D}(\lambda)\xi:=[\xi,-iD(\lambda)] = \mathrm{ad} (i D(\lambda)) (\xi)$ and the operator
$\hat{Q}(z,\lambda)\xi:= \xi \mathring{\eta}_+(z,\lambda)$ for any $\xi\in \mathfrak{A}$. 
Thus the equation \eqref{eq:dP} can be written as
\begin{equation}\label{eq:P}
\frac{dP(z,\lambda)}{dz}=(\frac{1}{z}\hat{D}(\lambda)+\hat{Q}(z,\lambda))P(z,\lambda).\quad 
\end{equation}

Adjusting the argument of  Satz 1 of \cite{Sch-Sch} to our special assumptions,  
we obtain that \eqref{eq:H} has a 
solution in $\mathfrak{A}$, the untwisted Banach loop algebra $\Lambda gl(3,\mathbb{C})$.
 We present the adjusted argument for the convenience of the reader in detail.

First, according  to Hilfssatz a of  \cite{Sch-Sch}  and the  fact that we consider here actually  
any reduced domain $\mathfrak{D} \backslash\{0\}   = \mathcal{D}_{R(m)} \backslash\{0\}$, 
 we choose $n_\mathfrak{D} \in \mathbb{N}$ with  $n_\mathfrak{D} >N$ such that
for all $z\in \bar{\mathfrak{D}} \backslash\{0\} $ and all $\lambda \in S^1$ we have:
\begin{equation}\label{eq:n_D}
\frac{1}{n_{\mathfrak{D}} +1} ||\hat{D}||+\frac{R}{n_\mathfrak{D}+2}||\hat{Q}||<1.
\end{equation}
We can clearly choose the positive integers $n_\mathfrak{D}$ as an increasing sequence of integers (according to the suppressed label $m$). 

By our choice of $n_\mathfrak{D}$ we know that $n_\mathfrak{D} \mathrm{Id}-\hat{D}$ is invertible. 
Then by using the proof of Hilfssatz b of  \cite{Sch-Sch}, for any integer  $k \geq 0$, 
we can find a polynomial $p_{k}(z,\lambda)$  in $z$ with coefficients in $\mathfrak{A}$ 
and $\mathrm{deg}\, p_{k}\leq k$, $p_{k}(0,\lambda) = I$ such that
\begin{equation}\label{eq:p_k}
\frac{d}{dz}p_{k}(z,\lambda)= T(z,\lambda) p_{k}(z,\lambda)-z^{k} g_{k+1}(z,\lambda),
\end{equation}
where $T(z,\lambda):=\frac{1}{z}\hat{D}(\lambda)+\hat{Q}(z,\lambda)$ and $g_{k+1}$ is 
holomorphic in $z$ in an open subdisk $\mathfrak{D}'$ of $\mathcal{D}_R$ containing 
$\mathfrak{D}$ and takes  values in  $\mathfrak{A}$.

In fact, we construct by induction for $ k = 0,1,...$ a sequence $g_k(z, \lambda)$ defined on $\mathfrak{D}'$,
$q_k(\lambda)$ for $\lambda \in S^1$ and $p_k(z, \lambda)$ defined for the same arguments as $g_k$,
such that \eqref{eq:p_k} and the following three equations hold:
\begin{eqnarray}
(k\mathrm{Id}-\hat{D}(\lambda))q_k (\lambda)&=&g_k(0,\lambda), \label{eq:B'}\\
p_{k+1}(z,\lambda)&=&p_{k}(z,\lambda)+z^{k+1} q_{k+1}(\lambda), \label{eq:C'}\\
g_{k+1}(z,\lambda)&=&\frac{1}{z}(g_k(z,\lambda)-g_k(0,\lambda))+\hat{Q}(z,\lambda) q_k(\lambda). \label{eq:D'}
\end{eqnarray}
Recall that $k \mathrm{Id}-\hat{D}$ is singular  for  $k \leq N$ and nonsingular for $k > N$. 
 
More precisely, we start the induction for $k = 0$ by setting  $g_0(z,\lambda) = 0$, 
 $q_0 (\lambda) = I$ and
 $p_0(z, \lambda) = I$. 
 Then for $k= 1$
  we define  $g_1(z, \lambda)$ from \eqref{eq:D'}, obtaining  
  $g_1(z, \lambda)  = \mathring{\eta}_+(z, \lambda)$.
 Note that this is compatible with \eqref{eq:p_k}.  Moreover, 
from \eqref{eq:B'} we  choose 
$q_1(\lambda)=0$  which is compatible with    $g_1(0,\lambda)=0.$ 
Finally, from \eqref{eq:C'} we infer now
$p_1(z,\lambda)=I.$

We continue this procedure by choosing, starting from the level $k$, 
first $g_{k+1}(z, \lambda)$ from  \eqref{eq:D'}, then $q_{k+1}(\lambda)$ from  \eqref{eq:B'}, and finally $p_{k+1}(z, \lambda) $  from \eqref{eq:C'}.

For the case $1 <  k  \leq N$,
considering equations \eqref{eq:B'}  and \eqref{eq:D'}, 
we can (and will) choose $q_k(\lambda) = 0$ and then will obtain  $g_2(0, \lambda) = \cdots=g_N(0,\lambda)=0$ and $g_{N+1}(0,\lambda)=\mathring{\eta}_{+,N}(\lambda)\neq 0$,
since 
\begin{equation}
g_{k+1}(z,\lambda)=\frac{g_k(z,\lambda)}{z}
=\cdots=\frac{g_1(z,\lambda)}{z^k}=\frac{\mathring{\eta}_+(z,\lambda)}{z^k}
=\sum_{l=N}^{\infty}\mathring{\eta}_{+,l}(\lambda)z^{l-k}.
\end{equation}
Notice that $p_2(z,\lambda)=p_1(z,\lambda)+z^2 q_2(\lambda)=p_1(z,\lambda)=I$ and similarly, $p_N(z,\lambda)=I$. Hence 
it is clear that  $p_k(z, \lambda) = I$ for $1<k\leq N$.

For the range $k>N$, note that 
$k \mathrm{Id}-\hat{D}$ is  nonsingular.   
Thus we determine  in the  second  step 
of our induction procedure 
$q_{k+1}(\lambda)$ uniquely from $g_{k+1}(0,\lambda)$.
More precisely,  for $k > N$ we infer from  \eqref{eq:B'}, \eqref{eq:C'} and \eqref{eq:D'}
 \begin{equation*}
 \begin{aligned}
 q_{k}(\lambda)&=(k\mathrm{Id}-\hat{D} (\lambda))^{-1}\mathring{\eta}_{+, k-1}(\lambda), \\
 p_k(z,\lambda)&=I+\sum_{s=N+1}^{k} z^s (s \mathrm{Id}-\hat{D}(\lambda))^{-1} \mathring{\eta}_{+,s-1}(\lambda),\\
g_{k+1}(0,\lambda) &=\mathring{\eta}_{+,k}(\lambda).  
 \end{aligned}
 \end{equation*}
  This proves that the assertion \eqref{eq:p_k} holds for $k$ and 
completes the induction. 

Remark that for $n>n_\mathfrak{D} >N$,
$$p_n(z,\lambda)=I+z^{N+1} q_{N+1}(\lambda)+\cdots +z^n q_{n}(\lambda).$$

We now set $g(z,\lambda)=g_{n+1}(z,\lambda)$ for some $n>n_\mathfrak{D}> N$  
as above.

It follows from Hilfssatz a in  \cite{Sch-Sch} that there exists a unique holomorphic map $h(z,\lambda)$  with respect to $z\in \mathcal{D}_R$ such that $u(z,\lambda):=z^{n+1} h(z,\lambda)$ satisfies
$$\frac{d}{dz}u(z,\lambda)=T(z,\lambda)u(z,\lambda)+z^n g(z,\lambda).$$

Finally, 
$$P(z,\lambda)=p_{n}(z,\lambda)+u(z,\lambda)$$ 
provides a solution to \eqref{eq:P}.
 
Such a $P$ leads to a solution of \eqref{eq:H solution}. 
Moreover, $P$ has the form 
\begin{equation*}
\begin{aligned}
P(z,\lambda)&=I+\sum_{s=N+1}^n z^{s} q_s(\lambda) +z^{n+1}h(z,\lambda)\\
&=I+z^{N+1}((N+1)\mathrm{Id}-\hat{D}(\lambda))^{-1}\mathring{\eta}_{+,N}(\lambda)+\cdots\\
&+\cdots +z^n(n \mathrm{Id}-\hat{D}(\lambda))^{-1}\mathring{\eta}_{+,n-1}(\lambda)+z^{n+1}h(z,\lambda),
\end{aligned}
\end{equation*}
which implies that $P$ is uniquely defined.

\medskip

2) \, Recall that the  twisting homomorphism for $\Lambda SL(3,\mathbb{C})_{\sigma}$ is denoted by $\hat{\sigma}$ in \eqref{sigma:twist} and  we also denote the differential $d\hat{\sigma}$ for $\Lambda sl(3,\mathbb{C})_{\sigma}$  by $\hat{\sigma}$.

Consider \begin{equation}\label{eq:ode-z_0}
\begin{cases}
dL=L\eta,\\
L(z_0,\lambda)=I,
\end{cases}
\end{equation}
for any (say  real positive)
$z_0 \in \mathcal{D}_R\backslash\{0\}$, 
and restrict the domain of definition to $\mathcal{D}' = \mathcal{D}_R \setminus \R_{\leq 0}.$
Now this domain is simply connected and we can solve the ode for $L$ in our twisted Banach Lie group.

Notice that $\eta$ lies in $\Lambda sl(3,\mathbb{C})_{\sigma}$, thus the solution $L$ also lies in $\Lambda SL(3,\mathbb{C})_{\sigma}$, but, so far, only for the cut domain.
 
 Since $e^{\ln z(-iD(\lambda))}P(z,\lambda)$ is  also a fundamental solution to the equation $dL=L\eta$,  in $\mathcal{D}'$ we obtain the following description for  the solution to \eqref{eq:ode-z_0}:
 $$L(z,\lambda)=B(\lambda) e^{\ln z(-iD(\lambda))}P(z,\lambda).$$

Define $\hat{L}:=\hat{\sigma}(L)$, $\hat{B}:=\hat{\sigma}(B)$, and $\hat{P}:=\hat{\sigma}(P)$. 
Then we have
$$\hat{L}(z,\lambda)=\hat{B}(\lambda) e^{\ln z (-iD(\lambda))} \hat{P}(z,\lambda),$$
and $L(z,\lambda)=\hat{L}(z,\lambda)$. 
Thus we derive
$$e^{\ln z (i D(\lambda))} \hat{B}^{-1}(\lambda) B(\lambda) e^{\ln z (-iD(\lambda))}=\hat{P}(z,\lambda) P^{-1}(z,\lambda).$$

Since the right side is holomorphic in $\mathfrak{D}'$ with limit $I$ at $z = 0$ it follows that $\mathrm{ad} D(\lambda) (\hat{B}^{-1}(\lambda)B(\lambda))=0$ holds.
Therefore the left side of the last equation is equal to  $\hat{B}^{-1}(\lambda) B(\lambda)$ and 
  constant in $z$.

  Notice that $\hat{P}(0, \lambda)P^{-1}(0, \lambda)=I$, hence $\hat{B}^{-1}(\lambda) B(\lambda)=I$ and $\hat{B}(\lambda)=B(\lambda)$ follows. Therefore, $\hat{P}(z,\lambda)=P(z,\lambda)$
  and it follows that $H(z,\lambda)=e^{\ln z (-iD(\lambda))}P(z,\lambda)$ is in $\Lambda SL(3,\mathbb{C})_{\sigma}$. 
 \end{proof}

\begin{theorem}\label{cor:1} 
All the  $*$-perturbed Delaunay potentials $\eta=\frac{-i D(\lambda)}{z}+\eta_+(z,\lambda)$ defined on $\mathcal{D}_R\backslash\{0\}$ 
yield, by an application of Remark \ref{general DPW}  to $H$,  full globally horizontally liftable minimal Lagrangian 
cylinders  $f$ satisfying 
$$f(z + 2 \pi,\lambda)=\chi(\lambda)f(z,\lambda),$$
with $\chi(\lambda)=e^{2\pi D(\lambda)}$.

In particular, the monodromy matrix $\chi(\lambda)$ is unitary for all $\lambda \in S^1$ and satisfies
$\chi(\lambda= 1)=I$, since
the Delaunay matrix $-i D(\lambda=1)$ has, by definition,  only integer eigenvalues.
\end{theorem}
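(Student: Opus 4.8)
The plan is to read off the monodromy of $f_\eta$ directly from the explicit solution $H(z,\lambda)=e^{\ln z(-iD(\lambda))}P(z,\lambda)$ produced by Theorem~\ref{thm:Sch-Sch}, and then to transport this monodromy through the Iwasawa decomposition to the extended frame, and hence to the immersion itself. First I would note that $P(z,\lambda)$ is single-valued and holomorphic on all of $\mathcal{D}_R$ with $P(0,\lambda)=I$, so the only source of multivaluedness in $H$ is the factor $e^{\ln z(-iD)}$. Continuing $H$ once counterclockwise around the puncture $z=0$ replaces $\ln z$ by $\ln z+2\pi i$, and since $e^{2\pi D(\lambda)}$ commutes with $e^{\ln z(-iD(\lambda))}$ (both being functions of $D(\lambda)$), the continuation of $H$ equals $e^{2\pi D(\lambda)}H$. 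Equivalently, pulling back to $\mathcal{S}_\eta$ via $w\mapsto z=e^{iw}$ and using $\ln z=iw$, the lift $\tilde H(w,\lambda)=e^{wD(\lambda)}P(e^{iw},\lambda)$ satisfies $\tilde H(w+2\pi,\lambda)=e^{2\pi D(\lambda)}\tilde H(w,\lambda)$. This already isolates the candidate monodromy $\chi(\lambda)=e^{2\pi D(\lambda)}$.

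Next I would confirm that $\chi(\lambda)$ is a twisted unitary loop. Writing $D(\lambda)=\lambda^{-1}A+\lambda B$ one checks from \eqref{eq:Delaunay} that $A\in\mathfrak{g}_5$ and $B\in\mathfrak{g}_1$, so $D\in\Lambda su(3)_\sigma$, and that $D(\lambda)$ is skew-Hermitian, hence lies in $su(3)$, for every $\lambda\in S^1$. Because $\exp$ maps the twisted loop algebra into the twisted loop group, and maps $su(3)$ into $SU(3)$, it follows that $\chi(\lambda)=e^{2\pi D(\lambda)}\in\Lambda SU(3)_\sigma$ is unitary for all $\lambda\in S^1$.

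The descent to a surface is then routine via uniqueness of the Iwasawa splitting. Writing $\tilde H(w,\lambda)=F(w,\lambda)V_+(w,\lambda)$ with $F\in\Lambda SU(3)_\sigma$ and $V_+\in\Lambda^+SL(3,\C)_\sigma$, the relation $\tilde H(w+2\pi)=\chi\tilde H(w)$ becomes the factorization $\tilde H(w+2\pi)=\bigl(\chi F(w)\bigr)V_+(w)$, in which $\chi F(w)\in\Lambda SU(3)_\sigma$ since $\chi\in\Lambda SU(3)_\sigma$. Uniqueness of the Iwasawa decomposition then forces $F(w+2\pi,\lambda)=\chi(\lambda)F(w,\lambda)$ and $V_+(w+2\pi,\lambda)=V_+(w,\lambda)$; passing to the last column (the horizontal lift) and projecting to $\C P^2$ yields the asserted equivariance $f(w+2\pi,\lambda)=\chi(\lambda)f(w,\lambda)$ on $\mathcal{S}_\eta$. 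That $f$ is full is part of the definition of a perturbed Delaunay potential (it uses $a,b\neq 0$), and the immersion property is guaranteed by the nonvanishing of the $(1,3)$-coefficient assumed there.

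Finally, for the closing of the cylinder at $\lambda=1$: since $D(1)\in su(3)$ is skew-Hermitian it is unitarily diagonalizable, and the $*$-Delaunay hypothesis says the eigenvalues of $-iD(1)$ are integers, so those of $D(1)$ are $ik_j$ with $k_j\in\Z$; hence $\chi(1)=e^{2\pi D(1)}$ has all eigenvalues $e^{2\pi i k_j}=1$ and equals $I$. Thus $F(\cdot,1)$ is genuinely $2\pi$-periodic, its last column descends to a global horizontal lift of $f$ on $\mathcal{D}_R\setminus\{0\}$, and $f$ itself descends to a minimal Lagrangian cylinder; this yields both the closing statement and the global horizontal liftability. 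I expect the only real bookkeeping to be the clean passage of the monodromy through the Iwasawa decomposition together with the verification that $e^{2\pi D}$ lands in $\Lambda SU(3)_\sigma$; the genuine analytic difficulty has already been absorbed into Theorem~\ref{thm:Sch-Sch}.
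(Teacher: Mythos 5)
Your proposal is correct and follows exactly the route the paper intends: the paper gives no explicit proof of this theorem, presenting it as a direct consequence of the representation $H=e^{\ln z(-iD(\lambda))}P(z,\lambda)$ from Theorem \ref{thm:Sch-Sch} together with the preceding lemma on the monodromy of $\tilde{f}_\eta$, and your argument (reading off $\chi(\lambda)=e^{2\pi D(\lambda)}$ from the multivaluedness of $\ln z$, checking $D(\lambda)\in\Lambda su(3)_\sigma$, pushing the monodromy through the unique Iwasawa decomposition, and using unitary diagonalizability of $D(1)$ with integer eigenvalues of $-iD(1)$ to get $\chi(1)=I$) supplies precisely the details the paper leaves implicit.
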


\begin{corollary}Under the assumptions of Theorem  \ref{cor:1}, a $*$-perturbed Delaunay potential associated to the Delaunay matrix $D(\lambda)$ yields a minimal Lagrangian cylinder for $\lambda=1$.
This last condition is satisfied since $\exp(-2\pi i D(\lambda=1))=I$.
\end{corollary}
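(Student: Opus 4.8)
The plan is to read off the corollary from Theorem~\ref{cor:1} by specializing its monodromy statement to $\lambda=1$. Theorem~\ref{cor:1} already produces a full, globally horizontally liftable minimal Lagrangian surface $f(\cdot,\lambda)$ on the strip $\mathcal{S}_\eta$ (the universal cover of $\mathcal{D}_R\setminus\{0\}$) satisfying $f(z+2\pi,\lambda)=\chi(\lambda)f(z,\lambda)$ with $\chi(\lambda)=e^{2\pi D(\lambda)}$, where $\chi(\lambda)$ acts through the isometry $[\chi(\lambda)]\in PSU(3)$. For generic $\lambda\in S^1$ the factor $[\chi(\lambda)]$ is a nontrivial rotation, so $f(\cdot,\lambda)$ is only translationally equivariant on the strip; the whole point is that at $\lambda=1$ it becomes trivial. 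Hence I would reduce the claim to verifying $[\chi(1)]=\mathrm{id}$, equivalently that $\chi(1)$ is scalar, since $f$ full then forces $f(z+2\pi,1)=f(z,1)$.

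The one genuine computation is the spectral evaluation of $\chi(1)=e^{2\pi D(1)}$. Because $D(\lambda)\in su(3)$, the matrix $D(1)$ is skew-Hermitian, hence diagonalizable, and its eigenvalues are precisely $i$ times the eigenvalues of $-iD(1)$. By the $*$-Delaunay hypothesis (Definition~\ref{def:*-Delaunay}(i)) the eigenvalues of $-iD(1)$ are integers $m_1,m_2,m_3$, so the eigenvalues of $D(1)$ lie in $i\Z$ and those of $\chi(1)$ equal $e^{2\pi i m_j}=1$. Diagonalizability then gives $\chi(1)=I$; in the notation of the statement this is the identity $\exp\!\big(2\pi i(-iD(1))\big)=I$, once the factor $2\pi i\cdot(-i)=2\pi$ is tracked. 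Tracking this exponent convention correctly (so that the integer-eigenvalue hypothesis yields the unitary $\chi(1)=I$ rather than the non-unitary $e^{2\pi(-iD(1))}$) is the only place where care is required, and it is the main, if modest, obstacle.

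Finally I would upgrade $\chi(1)=I$ to an honest cylinder. Writing the Iwasawa splitting $H=\mathbb{F}V_+$ used in the construction, the left monodromy $H(z+2\pi,1)=\chi(1)H(z,1)$ with $\chi(1)\in\Lambda SU(3)_\sigma$ passes through the splitting by uniqueness of the Iwasawa factorization, giving $\mathbb{F}(z+2\pi,1)=\chi(1)\mathbb{F}(z,1)=\mathbb{F}(z,1)$ and hence $f(z+2\pi,1)=[\mathbb{F}(z+2\pi,1)e_3]=f(z,1)$. Thus $f(\cdot,1)$ is $2\pi$-periodic in the strip coordinate and descends to a well-defined map on $\mathcal{S}_\eta/2\pi\Z\cong\mathcal{D}_R\setminus\{0\}$, a cylinder. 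The immersion property (non-vanishing of the $(1,3)$-coefficient of the $\lambda^{-1}$-term), together with the minimal Lagrangian and global liftability conclusions already recorded in Theorem~\ref{cor:1}, then identifies $f(\cdot,1)$ as a minimal Lagrangian cylinder.
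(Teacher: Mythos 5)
Your proposal is correct and follows essentially the same route as the paper: Theorem \ref{cor:1} already supplies the monodromy $\chi(\lambda)=e^{2\pi D(\lambda)}$, and the corollary reduces to the spectral computation that $-iD(1)$ being Hermitian with integer eigenvalues forces $\chi(1)=e^{2\pi i(-iD(1))}=I$, after which the unitary monodromy passes through the Iwasawa splitting and $f(\cdot,1)$ descends to the cylinder. Your care with the exponent convention is warranted (the formula $\exp(-2\pi i D(1))=I$ as printed is off by the factor you identify), and your version of the identity is the one consistent with $\chi(\lambda)=e^{2\pi D(\lambda)}$.
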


%%%%%%%%%%%%%%%%%%%%%%%%

\subsection{$*$-perturbed  minimal Lagrangian cylinders in \texorpdfstring{$\C P^2$}{} approximate Delaunay cylinders.}

When considering a perturbed Delaunay potential  $\eta(z,\lambda) = \frac{1}{z} (-iD(\lambda)) dz + \eta_0(z,\lambda) dz$, defined on a (possibly small) disk $\mathcal{D}_R$
about $z = 0$, one wonders how the actual Delaunay surface, defined by the  Delaunay potential 
$\eta_D (z, \lambda)=  \frac{1}{z} (-iD(\lambda)) dz$, relates to the surface defined by $\eta$.
 It is needless to say that one expects the perturbed Delaunay immersion to ``approximate" the  Delaunay immersion. For the case of constant mean curvature surfaces in $\R^3,$ this has been proven in 
 \cite{DW08} , Theorem 4.8.1 (See also \cite{KRS} 
 and   \cite{R}).
 Since our setting is different in several ways, we will give a proof of such an approximation result,
 but follow \cite{DW08}  mutatis mutandis. We will restrict to   $*$-perturbed Delaunay potentials 
 here, and leave the most general case to another publication, since in full generality one needs to use 
 the $r$-loop formalism which has not been introduced to the study of minimal Lagrangian surfaces so far.
 
 \begin{theorem}[Asymptotic behaviour]\label{Thm:asymp}
Let $\eta(z,\lambda) = \frac{1}{z} (-iD(\lambda)) dz + \eta_+(z,\lambda)$ be a 
 $*$-perturbed Delaunay potential. 
Let $H$ denote a solution to the ODE $dH = H \eta$ satisfying the  ``ZAP''  representation
$$ H = e^{\ln z (-iD(\lambda))} P(z,\lambda)$$
stated in Theorem \ref{thm:Sch-Sch}. 

Let  $H=\Phi V_+$  and  $e^{\ln z (-iD(\lambda))}=\Psi W_+$    be  unique 
 Iwasawa decompositions. Then we obtain for all sufficiently small $z \neq 0$:
 $$||\Phi-\Psi|| \leq C_1 |z|$$
 for some $0 < C_1$. Here $||\cdot ||$ is the weighted Wiener norm defined in the Appendix.
  
Finally, considering the actual minimal Lagrangian surfaces
$ f = [\Phi.e_3]$ and $f_D = [\Psi.e_3]$ we obtain the analogous estimates. In particular, the minimal Lagrangian cylinder $f$ defined from $\eta$  at $\lambda = 1$ approximates the equivariant minimal Lagrangian cylinder $f_D$ when $z$ approaches $z =0$. 
 \end{theorem}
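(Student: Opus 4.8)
The plan is to adapt the constant mean curvature argument of \cite{DW08} mutatis mutandis, organizing everything around a single algebraic identity between the two Iwasawa splittings and then controlling one conjugated perturbation in the weighted Wiener norm of the Appendix. First I would reduce the geometric assertion to an estimate for a single unitary loop. Combining $H=e^{\ln z(-iD)}P$ with the two Iwasawa decompositions $H=\Phi V_+$ and $e^{\ln z(-iD)}=\Psi W_+$ gives $\Phi V_+=\Psi W_+P$, hence
$$\Psi^{-1}\Phi = W_+\,P\,V_+^{-1}.$$
Since the left-hand side lies in $\Lambda SU(3)_{\sigma}$, the pair $(\Psi^{-1}\Phi,\,V_+)$ is precisely the Iwasawa splitting of $W_+P$. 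As $\Psi$ is the Delaunay frame, which is equivariant (Theorem \ref{cor:1}) and therefore has uniformly bounded weighted Wiener norm, one has $\|\Phi-\Psi\|=\|\Psi(\Psi^{-1}\Phi-I)\|\le C\,\|\Psi^{-1}\Phi-I\|$, so it suffices to prove $\|\Psi^{-1}\Phi-I\|\le C_1|z|$.

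Next I would record the decay of the holomorphic factor $P$. From the recursion established in Theorem \ref{thm:Sch-Sch}, namely $P_k(\lambda)=(k\,\mathrm{Id}-\hat{D}(\lambda))^{-1}\mathring{\eta}_{+,k-1}(\lambda)$ for $k>N$, together with condition (ii) of Definition \ref{def:*-Delaunay}, which by compactness of $S^1$ yields $\|(k\,\mathrm{Id}-\hat{D}(\lambda))^{-1}\|=O(1/k)$ uniformly in $\lambda\in S^1$, and the Wiener-algebra decay of the coefficients $\mathring{\eta}_{+,k}$, I would show that the series for $P-I$ converges in the weighted Wiener norm and satisfies $\|P(z,\lambda)-I\|\le C_0\,|z|^{N+1}$ for all sufficiently small $z$. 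The crucial point is that, because $\eta_+$ starts at order $N$ by condition (iii), the factor $P-I$ begins only at order $z^{N+1}$; this is the reservoir of extra powers of $z$ consumed in the next step.

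The heart of the argument is then a conjugation estimate. Writing the Iwasawa splitting $P=\exp(\xi)\,\mathrm{Pos}(P)$ with $\xi\in\Lambda su(3)_{\sigma}$ and $\|\xi\|=O(|z|^{N+1})$, and using $W_+\exp(\xi)=\exp(\mathrm{Ad}_{W_+}\xi)\,W_+$, I obtain $W_+P=\exp(\mathrm{Ad}_{W_+}\xi)\,\bigl(W_+\,\mathrm{Pos}(P)\bigr)$, where the second factor lies in $\Lambda^+SL(3,\mathbb{C})_{\sigma}$. Since $\mathrm{Ad}_{W_+}=\mathrm{Ad}_{\Psi^{-1}}\circ\mathrm{Ad}_{e^{\ln z(-iD)}}$ and $\Psi$ is unitary, the problem is to bound $\mathrm{Ad}_{e^{\ln z(-iD)}}\xi$. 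Decomposing $\xi$ into $\mathrm{ad}(-iD)$-eigencomponents shows that this conjugation multiplies the component of eigenvalue $\mu$ by $z^{\mu}$, and condition (ii) bounds the spectral spread of $\mathrm{ad}(-iD(\lambda))$ by $N$; thus in the weighted Wiener norm $\|\mathrm{Ad}_{W_+}\xi\|\le C\,|z|^{-N}\|\xi\|=O(|z|)$. A further Iwasawa splitting $\exp(\mathrm{Ad}_{W_+}\xi)=\mathrm{Uni}(G)\,\mathrm{Pos}(G)$ with $\|\mathrm{Uni}(G)-I\|=O(|z|)$, combined with uniqueness of the Iwasawa decomposition, identifies $\Psi^{-1}\Phi=\mathrm{Uni}(G)$ and so gives $\|\Psi^{-1}\Phi-I\|\le C_1|z|$, whence $\|\Phi-\Psi\|\le C_1|z|$.

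Finally, for the surfaces the maps $\Phi\mapsto[\Phi.e_3]$ and $\Psi\mapsto[\Psi.e_3]$ are smooth, hence Lipschitz on the relevant bounded set, so $\|f-f_D\|\le C\|\Phi-\Psi\|\le C'|z|$, and specializing to $\lambda=1$ yields the claimed approximation of the Delaunay cylinder as $z\to0$. I expect the main obstacle to be the conjugation estimate of the third paragraph: the factors $W_+$ and $V_+$ each blow up as $z\to0$, so $W_+(P-I)$ is not small in any naive sense, and the decisive observation is that factoring the unitary part of $P$ through $W_+$ amplifies it only by $|z|^{-N}$, which is beaten by the $|z|^{N+1}$ decay guaranteed by conditions (ii) and (iii). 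Making this amplification bound uniform in $\lambda\in S^1$ as $z\to0$ is exactly what the weighted Wiener norm is designed to deliver, and verifying its multiplicativity and the required $\mathrm{Ad}$-estimate for $e^{\ln z(-iD)}$ is the technical crux of the proof.
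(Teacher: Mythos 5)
Your proposal is correct and follows essentially the same route as the paper: both rest on the identity $W_+P=\Psi^{-1}\Phi\,V_+$ (the paper writes it as $W_+PW_+^{-1}=\Psi^{-1}\Phi\cdot V_+W_+^{-1}$), the observation that conjugation by $e^{\ln z(-iD)}$ scales the $(\tau,\mu)$-block by $z^{d_\tau-d_\mu}$ with bounded spectral projections, so that the $|z|^{N+1}$ decay of $P-I$ beats the $|z|^{-N}$ amplification, and finally the real-analyticity of the Iwasawa map at $I$ together with $\|\Psi\|=1$. The only (cosmetic) difference is that the paper expands $W_+PW_+^{-1}$ directly in the powers $z^{d_\tau-d_\mu+j}$ and then splits, whereas you first Iwasawa-split $P=\exp(\xi)\,\mathrm{Pos}(P)$ and push the unitary logarithm $\xi$ through $W_+$ by $\mathrm{Ad}$; the underlying estimate is identical.
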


\begin{proof}
Using  $H=\Phi V_+ =  e^{\ln z (-iD(\lambda))} P(z,\lambda),$ and 
$e^{\ln z (-iD(\lambda))}=\Psi W_+$ 
as  stated above, it is straightforward to verify
\begin{equation}\label{eq:4.8.2}
W_+PW_+^{-1}=\Psi^{-1}\Phi \cdot V_+W_+^{-1}.
\end{equation}
Note, since $V_+$ and $W_+$ have leading terms $V_0$ and $W_0$ with positive real diagonal entries, the decomposition above is the unique 
Iwasawa decomposition of $W_+PW_+^{-1}$.
 
Let  $d_1, d_2$ and $d_3$ denote the (real) eigenvalues of $-i D(\lambda).$

Then $e^{\ln z (-iD)}=z^{d_1}L_1+z^{d_2}L_2+z^{d_3}L_3$  with  Hermitian projections $L_j$ for $j=1,2,3$. 
It follows that 
\begin{equation}\label{eq:W_+}
W_+=\Psi^{-1}e^{\ln z (-iD)}:=z^{d_1}W_1+z^{d_2}W_2+z^{d_3}W_3,
\end{equation}
 where $W_j=\Psi^{-1}L_j$ are bounded for all values of $z\in\mathcal{D}_R$ and $\lambda$ in an annulus of $S^1$ for $j=1, 2, 3$. 
 
 Similarly, 
\begin{equation}\label{eq:W_+^-1}
W_+^{-1}=z^{-d_1} \hat{W}_1+z^{-d_2} \hat{W}_2+z^{-d_3} \hat{W}_3, 
\end{equation}
where $\hat{W}_j=L_j\Psi $ are bounded for all $z\in \mathcal{D}_R$ and $\lambda$ in the annulus of  $S^1$ for $j=1,2,3$. 
Substituting \eqref{eq:W_+}, \eqref{eq:W_+^-1} and  using the Taylor expansion of the 
holomorphic function $P(z, \lambda)$ at $z =0$
\begin{equation}
P(z,\lambda)=I+ \sum_{k=N}^{\infty }z^k P_k(\lambda),
\end{equation}
we obtain 
\begin{align*}
W_+PW_+^{-1}&=I+\sum_{j=N}^{\infty} ( z^j R_j + z^{d_1-d_2+j}Q_{1j2}+z^{d_2-d_1+j}Q_{2j1} 
+z^{d_2-d_3+j}Q_{2j3} \\
&\quad\quad +z^{d_3-d_2+j}Q_{3j2} +  z^{d_1-d_3+j}Q_{1j3}+ z^{d_3-d_1+j}Q_{3j1}),
\end{align*}
where $R_j=\sum_{\tau=1}^{3} W_{\tau}P_j\hat{W}_{\tau}$, $Q_{\tau j\mu}=W_{\tau}P_j \hat{W}_{\mu}$, with $\tau,\mu=1,2,3$, are bounded for all $z$ in a smaller disk around $0$ and $\lambda$ in a possibly smaller annulus of $S^1$.

Since we have assumed that $\eta$ is a $*$-perturbed Delaunay potential, we know that
all expressions $d_{\tau}-d_{\mu}+j$ are positive for all $\lambda$  in an annulus of $S^1$. As a consequence, $W_+PW_+^{-1}$ converges to $I$ as $z$ goes to $0$. Since the Iwasawa decomposition (of $W_+PW_+^{-1}$) is a real analytic map preserving $I$,  we also obtain that
$ \Psi^{-1}\Phi$ approaches  $I$ if $z$ goes to $0$.

Since $||\Psi||=1$, we have
$$||\Phi-\Psi||=||\Psi (\Psi^{-1} \Phi - I) ||\leq ||\Psi|| ||\Psi^{-1} \Phi - I ||
\rightarrow 0.$$

Note that this implies in particular, that for each $\lambda$  in a sufficiently small annulus of  $S^1$,  we have $\Phi(z, \lambda)-\Psi (z, \lambda) \rightarrow 0$ as $z \rightarrow 0$.

Finally, we obtain $\Phi e_3 -\Psi e_3\rightarrow 0$ in $\mathbb{C}^3$ as $|z|$ goes to $0$ and $|\lambda-1|< s$ small enough.  That is to say, we have the immersion $f=[\Phi e_3]$ approximates to the Delaunay immersion $f_D=[\Psi e_3]$ in $\mathbb{C}P^2$. 
\end{proof}
%%%%%%%%%%%%%%%%%%%%%%%%
\subsection{Almost-uniqueness of $*$-perturbed minimal Lagrangian cylinders in \texorpdfstring{$\C P^2$}{}}
\subsubsection{\textbf{Basic setup}}
In the above construction of $*$-perturbed minimal Lagrangian cylinders, we do not use any base point. Hence $H$ is not normalized at any $z\in \C\backslash \{0\}$. Nevertheless, one can apply the usual procedure as outlined in Section 2.  However, a priori, one looses uniqueness statements between minimal Lagrangian immersions and certain potentials.
 Fortunately, it turns out that for  $*$-perturbed minimal Lagrangian cylinders in $\C P^2$ and potentials as considered above, the relation is still ``basically unique", as explained in more detail at the end of this section.

Let's start by describing the procedure in the case under consideration.

Assume that  $\eta$ and $\eta^{\#}$ are two  $*$-perturbed Delaunay potentials given by
\begin{equation}
\begin{aligned}
\eta(z,\lambda)&=\frac{-iD(\lambda)}{z}dz+\eta_+(z,\lambda),\\
\eta^{\#}(z,\lambda)&=\frac{-iD^{\#}(\lambda)}{z} dz+\eta_+^{\#}(z,\lambda),
\end{aligned} 
\end{equation}
defined on $\mathcal{D}_R\backslash\{0\}$,  and with

 $$\eta_+(z,\lambda)=
\sum_{k=N}^{\infty}  \mathring{\eta}_{+,k}(\lambda)  z^k dz \hspace{2mm}\mbox{and} \hspace{2mm} 
\eta_+^{\#}(z,\lambda)=\sum_{k=N^{\#}}^{\infty}  \mathring{\eta}_{+,k}^{\#}(\lambda)  z^k dz$$

 for some especially  chosen $N>0$ and $N^{\#}>0$.

 Let $H$ denote the solution to $dH=H\eta$ given 
\begin{equation} \label{defH}
H(z,\lambda)=e^{\ln z (-iD(\lambda))}P(z,\lambda),
\end{equation}
 where $P(z,\lambda)$ is holomorphic for $z\in \mathcal{D}_R$ and $P(0,\lambda)=I$ for any $\lambda\in \mathbb{C}^*$. 
 
 Starting from $H$, we perform the (unqiue) Iwasawa decomposition 
 \begin{equation}\label{eq:H-6}
 H(z,\lambda)=\mathbb{F}(z,\bar{z},\lambda) V_+(z,\bar{z},\lambda),
 \end{equation}
 where $\mathbb{F}\in \Lambda SU(3)_{\sigma}$ and $V_+\in \Lambda^{+} SL(3,\mathbb{C})_{\sigma}$. 
 
From this one obtains the ``associated family" of minimal Lagrangian surfaces in $\C P^2$ given by 
$$f(z,\bar{z}, \lambda)=\mathbb{F}(z,\bar{z},\lambda) e_3 \mod  U(1).$$
In the same way we introduce $H^{\#}$, $\mathbb{F}^{\#}$ and  $f^{\#}$ relative to $\eta^{\#}$. 
For $\lambda=1$, we obtain
 $f(z,\bar{z}), f^{\#}(z,\bar{z}): \mathcal{D}_R\backslash\{0\}\rightarrow \C P^2$, the corresponding $*$-perturbed minimal Lagrangian cylinders with respect to $\eta$ and $\eta^{\#}$, respectively.

\begin{definition}
We say that $f$ and $f^{\#}$ as constructed above are \emph{ equivalent}, if there exists some 
$\gamma \in \mathrm{Aut}(\mathcal{D}_R \backslash \{0\}),$ where $0 < R < \infty,$ and some $W \in SU(3)$ such that
\begin{equation}\label{eq:sym0}
f^{\#}(z,\bar{z})=W \gamma^*f( z, \bar{z}),
\end{equation}
for all $z\in \mathcal{D}_R\backslash\{0\}$.
\end{definition}

\begin{remark}
(a) In the case of our domain of definition we have
$$\gamma: z \mapsto e^{is}z,$$
for some $s \in \mathbb{R}$.

(b) If $R=\infty$ is permitted, then also $z \mapsto \frac{b}{z}$, $b\in\C$, is an automorphism. But switching $z=0$ and $z=\infty$ means that an end at $z=0$ is changed to an end at $z=\infty$. So in this paper,  only 
$\gamma\cdot z=e^{is} z$ will be considered for some (fixed) $s \in \R$.

(c) The relation (\ref{eq:sym0}) basically states that two $*$-perturbed minimal Lagrangian cylinders 
 $f(z,\bar{z}) : \mathcal{D}_R\backslash\{0\}\rightarrow \C P^2$ 
 and  $f^{\#}(z,\bar{z}): \mathcal{D}_R^{\#} \backslash\{0\}\rightarrow \C P^2$, 
 with respect to the potentials $\eta$ and $\eta^{\#}$, respectively, are isometrically related as expressed by the diagram
  \[
  \begin{diagram}
    \node{ \mathcal{D}_R \backslash\{0\}}  \arrow{r,l}{f} \arrow{s,l}{\gamma} \node{\C P^2} \arrow{s,r}{W}  \\
    \node{ \mathcal{D}_R^{\#} \backslash\{0\}} \arrow{r,l}{f^{\#}}\node{\C P^2} \\
 \end{diagram}
\]
While here the commutativity of the diagram is an assumption, we expect that, as for other surface classes, 
 in case the induced metrics are complete, the isometry $W$ transporting  
$f$ to $f^{\#}$ does imply the existence of some automorphism 
$\gamma:  \mathcal{D}_R \backslash\{0\} \rightarrow  \mathcal{D}_R^{\#} \backslash\{0\}$
which makes the diagram complete.
\end{remark}

It follows from \eqref{eq:sym0} that there exists  $W(\lambda)\in \Lambda SU(3)_{\sigma}$ such that
\begin{equation}\label{eq:sym}
f^{\#}(z,\bar{z},\lambda)=W(\lambda) \gamma^*f( z, \bar{z}, \lambda).
\end{equation} 
Thus on the level of extended frames it holds
\begin{equation}
\mathbb{F}^{\#}(z,\bar{z},\lambda)=W(\lambda) \gamma^*\mathbb{F}(z,\bar{z},\lambda) K(z,\bar{z}),
\end{equation}
where $K(z,\bar{z})\in \Lambda^{+}SL(3,\mathbb{C})_{\sigma}$ is diagonal and independent on $\lambda$.
Using \eqref{eq:H-6} we infer
\begin{equation}\label{eq:H-sharp}
H^{\#}(z,\lambda)=W(\lambda) H(\gamma\cdot z, \lambda) L_+(z, \bar{z}, \lambda),
\end{equation}
with $L_+(z,\bar{z},\lambda)\in \Lambda^{+}SL(3,\C)_{\sigma}$.
Computing the Maurer-Cartan form of both sides of this equation yields
\begin{equation}\label{eq:gauge}
\eta^{\#}(z,\lambda)=(\gamma^*\eta)(z,\lambda)\# L_{+}(z,\bar{z},\lambda),
\end{equation}
where $\#$ means gauge transformation by $L_{+}(z,\bar{z},\lambda)$.

\subsubsection{\textbf{Getting rid of \texorpdfstring{$\gamma^*$}{}}}

Starting from $\eta^{\#}$ and 
\begin{equation}\label{eq:check-eta}
\check{\eta}:=\gamma^*\eta=-i\frac{D(\lambda)}{z}dz+\gamma^*\eta_{+}(z,\lambda),
\end{equation}
we obtain $H^{\#}(z,\lambda)$ and
\begin{equation}\label{eq:H_check}
\check{H}(z,\lambda)=e^{\ln z (-iD(\lambda))}\check{P}(z,\lambda),
\end{equation}
where $\check{P}(z,\lambda)$ is holomorphic for $z\in \mathcal{D}_R$ and $\check{P}(0,\lambda)=I$ for any $\lambda\in \mathbb{C}^*$.
Moreover, due to \eqref{eq:check-eta} and 
$$\check{\eta}=\check{H}^{-1} d\check{H}=\check{P}^{-1}\frac{-iD(\lambda)}{z} dz\check{P}+\check{P}^{-1}d\check{P},$$
we obtain (also using $\gamma (0) = 0$)
$$\check{P}(z,\lambda)=\gamma^* P(z,\lambda).$$
Thus,
$$\gamma^*H(z,\lambda)=e^{tD(\lambda)} \check{H}(z,\lambda),$$
and \eqref{eq:H-sharp} reads that
$$H^{\#}(z,\lambda)=W(\lambda) e^{tD(\lambda)} \check{H}(z,\lambda) L_{+}(z,\bar{z},\lambda).$$

Set $\check{W}(\lambda)=W(\lambda) e^{tD(\lambda)}\in \Lambda SU(3)_{\sigma}$. This implies 
\begin{equation}
H^{\#}(z,\lambda)=\check{W}(\lambda)\check{H}(z,\lambda)L_{+}(z,\bar{z},\lambda).
\end{equation}
From here we get $\mathbb{F}^{\#}$ and $\check{\mathbb{F}}$ as usual. Hence
\begin{equation}
\mathbb{F}^{\#}(z,\bar{z},\lambda)=\check{W}(\lambda)\check{\mathbb{F}}(z,\bar{z},\lambda)\check{K}(z,\bar{z}),
\end{equation}
and 
\begin{equation}
f^{\#}(z,\bar{z},\lambda)=\check{W}(\lambda)\check{f}(z,\bar{z},\lambda).
\end{equation}

Altogether we have shown

\begin{theorem} \label{reduce to gamma = id}
Let $f$ and $f^{\#}$ be equivalent $*$-perturbed minimal Lagrangian cylinders. Then the corresponding elements
of the associated family satisfy  (by equation (\ref{eq:sym}) )
\begin{equation}
f^{\#}(z,\bar{z},\lambda)=W(\lambda) \gamma^*f( z, \bar{z}, \lambda),
\end{equation} 
and thus are equivalent for corresponding $\lambda$.
\newline Put  (see equation \eqref{eq:check-eta})
\begin{equation*}
\check{\eta}:=\gamma^*\eta=-i\frac{D(\lambda)}{z}dz+\gamma^*\eta_{+}(z,\lambda),
\end{equation*}
Then the corresponding  $*$-perturbed minimal Lagrangian cylinder $\check{f}$ defined from $\check{\eta}$
satisfies 
\begin{equation} \label{nogamma}
f^{\#}(z,\bar{z},\lambda)=\check{W}(\lambda)\check{f}(z,\bar{z},\lambda),
\end{equation}
where  $\check{W}(\lambda)=W(\lambda) e^{tD(\lambda)}\in \Lambda SU(3)_{\sigma},$ where $\gamma \cdot z = e^{it}z.$

\end{theorem}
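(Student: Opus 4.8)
The plan is to push the equivalence relation \eqref{eq:sym0} through the entire loop group construction and to isolate the contribution of the automorphism $\gamma$ as an explicit unitary left factor, so that after removing $\gamma$ the two cylinders differ only by a twisted unitary loop. First I would promote the base-level equivalence $f^{\#}=W\gamma^{*}f$ at $\lambda=1$ to the whole associated family. Since both $f$ and $f^{\#}$ arise from potentials in $\Lambda SU(3)_{\sigma}$ via the DPW recipe, the constant isometry $W$ lifts to a loop $W(\lambda)\in\Lambda SU(3)_{\sigma}$, giving \eqref{eq:sym}; on the level of extended frames this reads $\mathbb{F}^{\#}=W(\lambda)\,\gamma^{*}\mathbb{F}\,K$ with $K$ diagonal, and inserting the Iwasawa splitting \eqref{eq:H-6} yields the relation \eqref{eq:H-sharp} between the ZAP solutions, $H^{\#}(z,\lambda)=W(\lambda)\,H(\gamma\cdot z,\lambda)\,L_{+}(z,\bar z,\lambda)$.

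The crux is the second step: computing $\gamma^{*}H$ explicitly from the ZAP form $H=e^{\ln z(-iD(\lambda))}P(z,\lambda)$ supplied by Theorem \ref{thm:Sch-Sch}. Writing $\gamma\cdot z=e^{it}z$ and choosing the branch of the logarithm on the simply connected cut domain so that $\ln(e^{it}z)=\ln z+it$, the exponential prefactor splits as $e^{(\ln z+it)(-iD)}=e^{tD}\,e^{\ln z(-iD)}$, producing precisely the unitary factor $e^{tD(\lambda)}$. Because $\gamma$ fixes the origin, $\check P:=\gamma^{*}P$ is again holomorphic on $\mathcal{D}_R$ with $\check P(0,\lambda)=I$, so by the uniqueness clause of Theorem \ref{thm:Sch-Sch} the function $\check H=e^{\ln z(-iD)}\check P$ is exactly the ZAP solution \eqref{eq:H_check} attached to $\check\eta=\gamma^{*}\eta$. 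This gives the clean identity $\gamma^{*}H=e^{tD(\lambda)}\check H$, which is the whole point of the maneuver.

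Finally I would substitute this identity into \eqref{eq:H-sharp} to obtain $H^{\#}=W(\lambda)\,e^{tD(\lambda)}\,\check H\,L_{+}$ and set $\check W(\lambda)=W(\lambda)\,e^{tD(\lambda)}$. Here I must check $\check W\in\Lambda SU(3)_{\sigma}$: this holds because $D(\lambda)$ lies in the twisted algebra (its $\lambda^{-1}$ and $\lambda^{1}$ parts sit in $\mathfrak{g}_{5}$ and $\mathfrak{g}_{1}$ respectively) and is skew-Hermitian on $S^{1}$, so $e^{tD(\lambda)}$ is a twisted unitary loop and the product with $W(\lambda)$ stays in $\Lambda SU(3)_{\sigma}$. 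Uniqueness of the Iwasawa decomposition, up to the residual constant diagonal $U(1)$-ambiguity absorbed into $\check K$, then forces $\mathbb{F}^{\#}=\check W\,\check{\mathbb{F}}\,\check K$; projecting onto the last column and reducing mod $U(1)$ annihilates both $\check K$ and the positive factor, leaving $f^{\#}=\check W\,\check f$, which is \eqref{nogamma}.

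The main obstacle is the second step: one must ensure that the branch choice realizing $\ln(e^{it}z)=\ln z+it$ is consistent on the cut domain $\mathcal{D}_R\setminus\mathbb{R}_{\leq 0}$ on which the ZAP solution is defined, and that $\gamma^{*}P$ genuinely coincides with the unique ZAP-normalized holomorphic factor $\check P$ of $\check\eta$ — this uses both $\gamma(0)=0$ and the uniqueness statement of Theorem \ref{thm:Sch-Sch}. Once the identity $\gamma^{*}H=e^{tD(\lambda)}\check H$ is secured, the remaining bookkeeping with the Iwasawa decomposition and the projection to $\C P^{2}$ is routine.
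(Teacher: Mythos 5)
Your proposal is correct and follows essentially the same route as the paper: it establishes \eqref{eq:sym} and \eqref{eq:H-sharp}, identifies $\check{P}=\gamma^{*}P$ via $\gamma(0)=0$ and the uniqueness in Theorem \ref{thm:Sch-Sch}, extracts the factor $e^{tD(\lambda)}$ from the logarithmic prefactor, and concludes by Iwasawa uniqueness. The only difference is that you spell out the branch-of-logarithm consistency and the membership $\check{W}(\lambda)\in\Lambda SU(3)_{\sigma}$ in more detail than the paper does.
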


Next we will investigate, what equation (\ref{nogamma}) means for $f$ and $\check{f}$.

%%%%%%%%%%
\subsubsection{\textbf{The case where $\gamma=\mathrm{id}$: implications for $D^{\#}(\lambda)$ and $D(\lambda)$}}
%%%%%%%%%
We can therefore simplify the setting by putting $\gamma=\mathrm{id}$ in \eqref{eq:sym}, i.e.,
\begin{equation}\label{eq:sym1}
f^{\#}(z,\bar{z},\lambda)=W(\lambda)f(z,\bar{z},\lambda),
\end{equation}
with $W(\lambda)\in \Lambda SU(3)$. Thus 
$\mathbb{F}^{\#}(z,\bar{z},\lambda)=W(\lambda)\mathbb{F}(z,\bar{z},\lambda)K(z,\bar{z})$.

Now we use Theorem \ref{Thm:asymp} and its proof. We write the extended frames for $f$ and $f^{\#}$ generated by the $*$-perturbed Delaunay potentials $\eta$ and $\eta^{\#}$, respectively,  by
\begin{equation}
\begin{aligned}
\mathbb{F}&=\Psi B, \text{ with } B\rightarrow I \text{ as } z\rightarrow 0,\\
\mathbb{F}^{\#}&=\Psi^{\#} B^{\#}, \text{ with } B^{\#}\rightarrow I \text{ as } z\rightarrow 0.
\end{aligned}
\end{equation}
 
 Taking \eqref{eq:sym1} into account, we get  
 $\Psi^{\#} B^{\#}=W(\lambda)\Psi B K$, hence 
 \begin{equation}\label{eq:3.3}
 \Psi^{\#}(z,\bar{z},\lambda)=W(\lambda) \Psi(z,\bar{z},\lambda) S(z,\bar{z},\lambda),
 \end{equation}
 where $S(z,\bar{z},\lambda):=BK{B^{\#}}^{-1} \in \Lambda SU(3)_{\sigma}$ and $S\rightarrow K(0,0)$ as $z\rightarrow 0$. 

Since $D^{\#}$ and $D$ generate  minimal Lagrangian Delaunay surfaces on $\mathcal{D}_R\backslash\{0\}$ with extended frames $\Psi^{\#}$ and $\Psi$, we  know without loss of generality $\Psi(1,1,\lambda)=I$ and 
$\Psi^{\#}(1,1,\lambda)=I$. 
Note that given an automorphism $\gamma_t: z\mapsto e^{it}z$ on $\mathcal{D}\backslash\{0\}$ we obtain
\begin{eqnarray}
(\gamma_t^*\Psi)(z,\bar{z},\lambda)&=&\chi(t,\lambda) \Psi(z,\bar{z},\lambda),\\
(\gamma_t^*\Psi^{\#})(z,\bar{z},\lambda)&=&\chi^{\#}(t,\lambda) \Psi^{\#}(z,\bar{z},\lambda).
\end{eqnarray}
From \eqref{eq:3.3} we obtain
\begin{equation}\label{eq:3.6}
\chi^{\#}(t,\lambda)\Psi^{\#}(z,\bar{z},\lambda)=W(\lambda)\chi(t,\lambda)\Psi(z,\bar{z},\lambda)\gamma_t^*S(z,\bar{z},\lambda).
\end{equation}
Since $\chi^{\#}$, $\chi$, $\Psi^{\#}$, $\Psi$ and $W$ are unitary, $\gamma_t^* S(z,\bar{z},\lambda)$, also $S(z,\bar{z},\lambda)$, are unitary and converge to $K(0,0)$ as $z\rightarrow 0$.
From \eqref{eq:3.6} and \eqref{eq:3.3} we derive
$$\chi^{\#}(t,\lambda)W(\lambda)\Psi(z,\bar{z},\lambda) S(z,\bar{z},\lambda)=W(\lambda)\chi(t,\lambda)\Psi(z,\bar{z},\lambda)\gamma_t^*S(z,\bar{z},\lambda),$$
which implies
$${\chi(t,\lambda)}^{-1} {W(\lambda)}^{-1}\chi^{\#}(t,\lambda)W(\lambda)=
\Psi(z,\bar{z},\lambda)(\gamma_t^*S(z,\bar{z},\lambda)  S(z,\bar{z},\lambda)^{-1})\Psi(z,\bar{z},\lambda)^{-1},$$
where the left side of  is independent of $z$ and the right side tends to $I$ as $z\rightarrow 0$. 
This leads to
\begin{eqnarray}
\chi^{\#}(t,\lambda)&=&W(\lambda)\chi(t,\lambda)W(\lambda)^{-1},\label{eq:3.7}\\
\gamma_t^*S(z,\bar{z},\lambda)&=&S(z,\bar{z},\lambda),
\end{eqnarray}
for all $t\in \mathbb{R}$.
In particular, $S$ only depends on the radius, not on the angle. 
Consequently, equivariant minimal Lagrangian immersions $f_{D^{\#}}$ and $f_{D}$ generated from $D^{\#}(\lambda)$ and $D(\lambda)$ satisfy
\begin{equation}\label{eq:3.9}
f_{D^{\#}}(z,\bar{z},\lambda)=W(\lambda) f_{D}(z,\bar{z},\lambda).
\end{equation}
And conversely, \eqref{eq:3.9} implies \eqref{eq:3.7}.

Since $W(\lambda)$ induces an isometry of $\mathbb{C} P^2$, the metrics of  $f_{D^{\#}}$ and $f_{D}$ coincide. 
We can thus assume without loss of generality that $D^{\#}(\lambda)$ and $D(\lambda)$ both have vanishing diagonal (choice of $x=0$ in \cite{DoMaNL}). Moreover, since $D(\lambda)=\Omega(0)$ (and same with $D^{\#}(\lambda)$) by Section 5.1 of \cite{DoMaNL} , we have for the defining coefficients $a^{\#}=a$ in \eqref{eq:Delaunay}.

On the other hand, the horizontal lifts $\mathfrak{f}_{D^{\#}}$ and   $\mathfrak{f}_D$ satisfy
$$\mathfrak{f}_{D^{\#}}(z,\bar{z},\lambda)=W(\lambda) \mathfrak{f}_{D}(z,\bar{z},\lambda).$$
Hence for the cubic forms we obtain
$$\psi_{D^{\#}}(z,\bar{z},\lambda)=\psi_{D}(z,\bar{z},\lambda)$$
by the defintion \eqref{eq:phipsi}.  

As a consequence,
\begin{theorem}\label{thm:3.10}
For two equivalent $*$-perturbed equivariant minimal Lagrangian cylinders, we obtain
\begin{equation}\label{eq:3.10}
\begin{aligned}
& D^{\#}(\lambda)=D(\lambda),\\
& [W(\lambda), D(\lambda)]=0. 
\end{aligned}
\end{equation}
\end{theorem}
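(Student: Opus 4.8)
The plan is to feed the explicit monodromy of the two Delaunay frames into the relation \eqref{eq:3.7} and combine it with the two facts already extracted just above the statement, namely $a^{\#}=a$ and $\psi_{D^{\#}}=\psi_{D}$. The single nontrivial input is an explicit formula for the equivariance factors $\chi(t,\lambda)$ and $\chi^{\#}(t,\lambda)$. First I would record that $\chi(t,\lambda)=e^{tD(\lambda)}$ (and likewise $\chi^{\#}(t,\lambda)=e^{tD^{\#}(\lambda)}$). Indeed, the Delaunay frame $\Psi$ is the unitary Iwasawa factor of $e^{\ln z\,(-iD(\lambda))}$, and the automorphism $\gamma_t\colon z\mapsto e^{it}z$ replaces $\ln z$ by $\ln z+it$, so that $\gamma_t^{*}\bigl(e^{\ln z\,(-iD)}\bigr)=e^{it(-iD(\lambda))}e^{\ln z\,(-iD)}=e^{tD(\lambda)}e^{\ln z\,(-iD)}$. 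For $\lambda\in S^1$ one has $D(\lambda)\in su(3)$ (as is seen directly from \eqref{eq:Delaunay} using $\bar\lambda=\lambda^{-1}$), hence $e^{tD(\lambda)}\in SU(3)$; by uniqueness of the Iwasawa splitting in the twisted loop group this identifies the unitary factor and gives $\gamma_t^{*}\Psi=e^{tD(\lambda)}\Psi$, i.e. $\chi(t,\lambda)=e^{tD(\lambda)}$. This is consistent with the period monodromy $\chi(\lambda)=e^{2\pi D(\lambda)}$ recorded in Theorem \ref{cor:1} (the case $t=2\pi$).

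For the first identity of \eqref{eq:3.10} I would simply combine the two facts obtained above. Recalling from the Delaunay proposition that $b=\tfrac{i\psi}{a^{2}}$, the equalities $a^{\#}=a$ and $\psi_{D^{\#}}=\psi_{D}$ give $b^{\#}=\tfrac{i\,\psi_{D^{\#}}}{(a^{\#})^{2}}=\tfrac{i\,\psi_{D}}{a^{2}}=b$. Since $D(\lambda)$ depends on $\lambda$ only through the constants $a$ and $b$ via the explicit form \eqref{eq:Delaunay}, this yields $D^{\#}(\lambda)=D(\lambda)$ for all $\lambda$, which is the first line of \eqref{eq:3.10}.

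For the commutator identity I would substitute the explicit monodromies into \eqref{eq:3.7}, obtaining $e^{tD^{\#}(\lambda)}=W(\lambda)\,e^{tD(\lambda)}\,W(\lambda)^{-1}$ for all $t\in\R$. Differentiating at $t=0$ gives $D^{\#}(\lambda)=W(\lambda)D(\lambda)W(\lambda)^{-1}$. Inserting the already established $D^{\#}=D$ then produces $D(\lambda)=W(\lambda)D(\lambda)W(\lambda)^{-1}$, that is, $[W(\lambda),D(\lambda)]=0$, completing the proof.

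The main obstacle I anticipate is the careful justification of the identification $\chi(t,\lambda)=e^{tD(\lambda)}$: one must keep track of the branch of $\ln z$ under $\gamma_t$, confirm that $e^{tD(\lambda)}$ is genuinely the unitary Iwasawa part rather than merely a factor absorbing into $\Lambda^{+}SL(3,\C)_{\sigma}$ (which is precisely where $D(\lambda)\in su(3)$ on $S^1$ and the uniqueness of the twisted Iwasawa decomposition are used), and check that the differentiation in $t$ is legitimate in the Banach loop-group setting. Once this identification is secured, the remaining steps are an immediate differentiation and substitution.
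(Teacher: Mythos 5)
Your proposal is correct and follows essentially the same route as the paper: the paper likewise obtains the theorem as a consequence of the relation \eqref{eq:3.7} together with $a^{\#}=a$ and $\psi_{D^{\#}}=\psi_{D}$, with $D^{\#}(\lambda)=D(\lambda)$ coming from $b=\tfrac{i\psi}{a^{2}}$ and the commutator identity from the monodromy conjugation. Your explicit identification $\chi(t,\lambda)=e^{tD(\lambda)}$ via the uniqueness of the Iwasawa factor of $e^{\ln z(-iD(\lambda))}$ (using $D(\lambda)\in su(3)$ on $S^1$) and the differentiation at $t=0$ merely spell out steps the paper leaves implicit.
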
 

%%%%%%%%
\subsubsection{\textbf{The case where $\gamma=\mathrm{id}$: classification}}
%%%%%%%%

Using what we proved in the last subsection, we recall:
\begin{equation*}
\begin{aligned}
f^{\#}(z,\bar{z},\lambda)&=W(\lambda) f(z,\bar{z},\lambda),\\
\mathbb{F}^{\#}(z,\bar{z},\lambda)&= W(\lambda) \mathbb{F}(z,\bar{z},\lambda)K(z,\bar{z}),\\
H^{\#}(z,\lambda)&= W(\lambda) H(z,\lambda)V_+(z,\bar{z},\lambda) K(z,\bar{z})V_+^{\#}(z,\bar{z},\lambda).
\end{aligned}
\end{equation*}
The last equation  can be written as
$$e^{\ln z(-iD^{\#}(\lambda))}P^{\#}(z,\lambda)=W(\lambda) e^{\ln z(-iD(\lambda))} P(z,\lambda) L_+(z,\lambda),$$
where $L_+(z,\lambda)=V_+(z,\bar{z},\lambda) K(z,\bar{z})V_+^{\#}(z,\bar{z},\lambda)$.
By Theorem \ref{thm:3.10}, this is equivalent to 
\begin{equation}\label{eq:4.1}
P^{\#}(z,\lambda)=W(\lambda)P(z,\lambda)L_+(z,\lambda).
\end{equation}

This implies that $L_+(z,\lambda)$ is holomorphic at $z=0$. Since $P(0,\lambda)=I$ and $P^{\#}(0,\lambda)=I$, we obtain
\begin{equation}\label{eq:4.2}
L_+(0,\lambda)=W(\lambda).
\end{equation}
As a consequence, 
\begin{equation}\label{eq:4.3}
\begin{aligned}
& L_+(0,\lambda)\in \Lambda^+ SL(3,\mathbb{C})\cap \Lambda SU(3)_{\sigma},\\
&  L_+(0,\lambda) \text{ and } W(\lambda)  \text{ are diagonal, unitary and } \lambda\text{-independent matrices}.
 \end{aligned}
 \end{equation}
 
 Applying $[W(\lambda), D(\lambda)]=0$ in \eqref{eq:3.10} and \eqref{eq:4.3}, we infer $W(\lambda)=\mathrm{diag} (u,v,w)$. Thus the commutation property for 
 $$D(\lambda)=\begin{pmatrix}
0&d_{12}(\lambda)&d_{13}(\lambda)\\
d_{21}(\lambda)&0&d_{23}(\lambda)\\
d_{31}(\lambda)&d_{32}(\lambda)&0
 \end{pmatrix}$$
 gives
 $$\begin{pmatrix}
0&u d_{12}(\lambda)&u d_{13}(\lambda)\\
v d_{21}(\lambda)&0&v d_{23}(\lambda)\\
w d_{31}(\lambda)&w d_{32}(\lambda)&0
 \end{pmatrix}=\begin{pmatrix}
0&v d_{12}(\lambda)&w d_{13}(\lambda)\\
u d_{21}(\lambda)&0&w d_{23}(\lambda)\\
u d_{31}(\lambda)&v d_{32}(\lambda)&0
 \end{pmatrix}$$
 for all $\lambda\in S^1$.
 
 By our assumptions on $D(\lambda)$, all $d_{ij}(\lambda)$ $(i\neq j)$ are generically non-vanishing. Thus we obtain
 \begin{equation}\label{eq:4.4}
 u=v=w.
 \end{equation}
 In particular, $W(\lambda) = \mathrm{id}$ when considered as an isometry of $\C P^2$.

 As a consequence,
 $$[f^{\#}(z,\bar{z},\lambda)]=[f(z,\bar{z},\lambda)].$$
 
 Moreover, \eqref{eq:4.1}, \eqref{eq:4.2} and \eqref{eq:4.4} imply
 $$P^{\#}(z,\lambda)=P(z,\lambda).$$
Hence, $P^{-1}dP=(P^{\#})^{-1}dP^{\#}$ and since $D(\lambda)=D^{\#}(\lambda)$, we infer
$$\eta_{+}^{\#}(z,\lambda)=\eta_{+}(z,\lambda).$$

All together, we have shown
\begin{theorem} \label{classification if gamma = id}
If $\eta$ and $\eta^{\#}$ are $*$-perturbed equivariant potentials inducing the same $*$-perturbed equivariant cylinders, in the sense of \eqref{eq:sym1}, by the procedure explained
in this section, then $\eta(z,\lambda)=\eta^{\#}(z,\lambda)$. 
\end{theorem}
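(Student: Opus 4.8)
The plan is to transport the equivalence relation \eqref{eq:sym1} from the level of surfaces down to the holomorphic solutions $H$ and $H^{\#}$ of $dH=H\eta$, where the rigid ZAP normal form of Theorem \ref{thm:Sch-Sch} makes the two potentials directly comparable. First I would lift the hypothesis $f^{\#}=W(\lambda)f$, with $W(\lambda)\in\Lambda SU(3)_{\sigma}$, to the extended frames via the uniqueness of the Iwasawa decomposition, obtaining $\mathbb{F}^{\#}=W\mathbb{F}K$ with a diagonal $K\in\Lambda^{+}SL(3,\mathbb{C})_{\sigma}$, and then to the holomorphic frames, giving $H^{\#}=W\,H\,L_{+}$ for some $L_{+}\in\Lambda^{+}SL(3,\mathbb{C})_{\sigma}$. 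Substituting the ZAP representations $H=e^{\ln z(-iD)}P$ and $H^{\#}=e^{\ln z(-iD^{\#})}P^{\#}$, with $P(0,\lambda)=P^{\#}(0,\lambda)=I$ and both holomorphic at the puncture, turns this into the identity $e^{\ln z(-iD^{\#})}P^{\#}=W\,e^{\ln z(-iD)}P\,L_{+}$.

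The decisive input is Theorem \ref{thm:3.10}, which supplies $D^{\#}=D$ and $[W,D]=0$. Since $W$ then commutes with $e^{\ln z(-iD)}$ and the two exponential prefactors coincide, I may cancel them (the exponential is invertible) and be left with the clean relation \eqref{eq:4.1}, namely $P^{\#}=W\,P\,L_{+}$. Evaluating at $z=0$ and using $P(0)=P^{\#}(0)=I$ shows that $L_{+}$ extends holomorphically across $z=0$, and one reads off \eqref{eq:4.2}, $L_{+}(0,\lambda)=W(\lambda)$. Because $L_{+}(0,\lambda)$ lies in $\Lambda^{+}SL(3,\mathbb{C})_{\sigma}$ while $W(\lambda)$ lies in $\Lambda SU(3)_{\sigma}$, the intersection argument recorded in \eqref{eq:4.3} forces $W$ to be a $\lambda$-independent diagonal unitary matrix.

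Next I would use the commutation $[W,D]=0$ algebraically. Writing $W=\operatorname{diag}(u,v,w)$ and comparing entries against a Delaunay matrix $D(\lambda)$ whose off-diagonal coefficients $d_{ij}(\lambda)$ are generically non-vanishing (this is precisely where the standing hypothesis $a,b\neq 0$ on \eqref{eq:Delaunay} enters) pins down $u=v=w$, as in \eqref{eq:4.4}. Hence $W$ is scalar, so $W=\mathrm{id}$ as an isometry of $\mathbb{C}P^{2}$ and $[f^{\#}]=[f]$. Feeding $W=\operatorname{diag}(u,u,u)$ back into $P^{\#}=W\,P\,L_{+}$, together with $L_{+}(0,\lambda)=W(\lambda)$ and $\det=1$, collapses the relation to $P^{\#}=P$; since also $D^{\#}=D$, computing $P^{-1}dP=(P^{\#})^{-1}dP^{\#}$ yields $\eta_{+}^{\#}=\eta_{+}$ and therefore $\eta=\eta^{\#}$.

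I expect the genuine obstacle to be concentrated in Theorem \ref{thm:3.10}, whose proof rests on the asymptotic approximation of Theorem \ref{Thm:asymp}: one must control the Iwasawa factors of the two Delaunay frames near the puncture $z=0$ in order to extract both $D^{\#}=D$ and $[W,D]=0$ from the limiting behaviour. Once that rigidity is secured, the remaining steps are either the standard normalization argument for elements of $\Lambda^{+}SL(3,\mathbb{C})_{\sigma}\cap\Lambda SU(3)_{\sigma}$ or the elementary linear algebra of matrices commuting with a Delaunay matrix, both of which are routine.
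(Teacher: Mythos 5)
Your proposal is correct and follows essentially the same route as the paper: reduce \eqref{eq:sym1} to the frame identity $H^{\#}=W\,H\,L_{+}$, invoke Theorem \ref{thm:3.10} to cancel the exponential prefactors and obtain $P^{\#}=W\,P\,L_{+}$, evaluate at $z=0$ to get $L_{+}(0,\lambda)=W(\lambda)\in\Lambda^{+}SL(3,\mathbb{C})_{\sigma}\cap\Lambda SU(3)_{\sigma}$, and then use $[W,D]=0$ with the generically non-vanishing off-diagonal entries of $D(\lambda)$ to force $W$ scalar, whence $P^{\#}=P$ and $\eta^{\#}=\eta$. Your closing remark that the real work is hidden in Theorem \ref{thm:3.10} (via the asymptotics of Theorem \ref{Thm:asymp}) is also an accurate reading of where the paper's argument carries its weight.
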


%%%%%%%%%%%%%%%%%%%%%
\subsection{Classification of equivalent  $*$-perturbed equivariant cylinders}

Putting together Theorem \ref{reduce to gamma = id} and Theorem \ref{classification if gamma = id} we obtain

\begin{theorem} [Classification of equivalent  $*$-perturbed equivariant cylinders]
Let $f$ and $f^{\#}$ be  $*$-perturbed minimal Lagrangian cylinders derived from the 
 $*$-perturbed Delaunay potentials    $\eta$ and $\eta^{\#}$ given by
\begin{equation}
\begin{aligned}
\eta(z,\lambda)&=\frac{-iD(\lambda)}{z}dz+\eta_+(z,\lambda),\\
\eta^{\#}(z,\lambda)&=\frac{-iD^{\#}(\lambda)}{z} dz+\eta_+^{\#}(z,\lambda),
\end{aligned} 
\end{equation}
defined on $\mathcal{D}_R\backslash\{0\}$.
\newline If $f$ and $f^{\#}$ are equivalent, then $f^{\#}$ is the  $*$-perturbed equivariant cylinder defined 
from $\gamma^* \eta =  -i\frac{D(\lambda)}{z}dz+\gamma^*\eta_{+}(z,\lambda),$ where $\gamma \in S^1.$
\newline In particular, all  $*$-perturbed equivariant cylinders which are equivalent to a given  $*$-perturbed equivariant cylinder $f$ form the circle of potentials $\gamma^*\eta$.
\end{theorem}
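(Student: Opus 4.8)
The plan is to assemble the final statement directly from the two preceding theorems, which already carry all the analytic content; the remaining work is essentially bookkeeping. First I would invoke Theorem \ref{reduce to gamma = id}. Starting from two equivalent $*$-perturbed cylinders $f$ and $f^{\#}$, this theorem produces an automorphism $\gamma\cdot z = e^{it}z \in \mathrm{Aut}(\mathcal{D}_R\backslash\{0\})$ and the pulled-back potential $\check{\eta}=\gamma^*\eta$ of \eqref{eq:check-eta}, whose associated $*$-perturbed cylinder $\check{f}$ satisfies $f^{\#}(z,\bar{z},\lambda)=\check{W}(\lambda)\check{f}(z,\bar{z},\lambda)$ with $\check{W}(\lambda)=W(\lambda)e^{tD(\lambda)}\in\Lambda SU(3)_\sigma$. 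This step has effectively absorbed the $\gamma^*$, reducing the problem to an isometric relation with $\gamma=\mathrm{id}$.

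Next I would verify that $\check{\eta}$ is again a legitimate $*$-perturbed Delaunay potential, so that the classification result applies to the pair $(\check{\eta},\eta^{\#})$. This is immediate from the explicit form \eqref{eq:check-eta}: the singular term $-iD(\lambda)z^{-1}dz$ is invariant under $z\mapsto e^{it}z$ (since $dz/z$ is), so $\check{\eta}$ retains the same Delaunay matrix $D(\lambda)$, while the regular part becomes $\gamma^*\eta_+(z,\lambda)=\sum_{k\geq N}\mathring{\eta}_{+,k}(\lambda)\,e^{ikt}z^k\,dz$, still holomorphic on $\mathcal{D}_R$ and still beginning at the same index $N$. Hence conditions (i)--(iii) of Definition \ref{def:*-Delaunay} hold verbatim for $\check{\eta}$. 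With this in hand, the relation $f^{\#}=\check{W}\check{f}$ is precisely the equivalence \eqref{eq:sym1} (the case $\gamma=\mathrm{id}$) between the cylinder generated by $\eta^{\#}$ and the cylinder generated by $\check{\eta}$. Applying Theorem \ref{classification if gamma = id} to this pair then forces $\eta^{\#}(z,\lambda)=\check{\eta}(z,\lambda)=(\gamma^*\eta)(z,\lambda)$, which is the first assertion: $f^{\#}$ is exactly the $*$-perturbed equivariant cylinder defined from $\gamma^*\eta$.

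For the ``in particular'' statement I would run the argument in reverse to see that every potential $\gamma^*\eta$, $\gamma\in S^1$, does yield a cylinder equivalent to $f$. Indeed, the computation in the subsection ``Getting rid of $\gamma^*$'' gives $\gamma^*H=e^{tD(\lambda)}\check{H}$, hence $\gamma^*f = e^{tD(\lambda)}\check{f}$ as maps to $\C P^2$; thus $\check{f}=e^{-tD(\lambda)}\,\gamma^* f$ is equivalent to $f$ via the automorphism $\gamma$ and the isometry induced by $e^{-tD(\lambda)}\in\Lambda SU(3)_\sigma$. Combined with the uniqueness just proved, this shows the equivalence class of $f$ is exactly the family $\{\gamma^*\eta : \gamma\in S^1\}$. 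Since $\gamma\cdot z=e^{is}z$ is parametrized by $e^{is}\in S^1$, this family is a circle, establishing the final clause.

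The only genuinely substantive point is the rigidity inside Theorem \ref{classification if gamma = id}, namely that $[W(\lambda),D(\lambda)]=0$ together with the generic non-vanishing of the off-diagonal entries $d_{ij}(\lambda)$ forces $W(\lambda)$ to be scalar, whence $P^{\#}=P$ and $\eta_+^{\#}=\eta_+$. Were the two cited theorems not already established, this is the step I would expect to be the main obstacle. As it stands, however, that work is done, and the present proof is a short concatenation of Theorem \ref{reduce to gamma = id} and Theorem \ref{classification if gamma = id} plus the elementary check that $\gamma^*$ preserves the class of $*$-perturbed Delaunay potentials.
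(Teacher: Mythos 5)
Your proof is correct and follows essentially the same route as the paper: apply Theorem \ref{reduce to gamma = id} to absorb $\gamma^*$ into a modified potential $\check{\eta}=\gamma^*\eta$ with the same Delaunay matrix, then invoke Theorem \ref{classification if gamma = id} to conclude $\eta^{\#}=\check{\eta}$. The two details you add beyond the paper's proof --- the explicit check that $\gamma^*$ preserves the class of $*$-perturbed Delaunay potentials, and the reverse direction showing every $\gamma^*\eta$ yields a cylinder equivalent to $f$ --- are sensible completions of steps the paper leaves implicit.
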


\begin{proof} 
Let $f$ and $f^{\#}$ be equivalent $*$-perturbed minimal Lagrangian cylinders. Then the corresponding elements
of the associated family satisfy  (by equation (\ref{eq:sym}) )
\begin{equation}
f^{\#}(z,\bar{z},\lambda)=W(\lambda) \gamma^*f( z, \bar{z}, \lambda),
\end{equation} 
and thus are equivalent for corresponding $\lambda$.
\newline Putting  (see equation (\ref{eq:check-eta}))
\begin{equation*}
\check{\eta}:=\gamma^*\eta=-i\frac{D(\lambda)}{z}dz+\gamma^*\eta_{+}(z,\lambda),
\end{equation*}
Then by Theorem \ref{reduce to gamma = id}   the corresponding  $*$-perturbed minimal Lagrangian cylinder $\check{f}$ defined from $\check{\eta}$
satisfies 
\begin{equation} \label{nogamma}
f^{\#}(z,\bar{z},\lambda)=\check{W}(\lambda)\check{f}(z,\bar{z},\lambda),
\end{equation}
where  $\check{W}(\lambda)=W(\lambda) e^{tD(\lambda)}\in \Lambda SU(3)_{\sigma}$. 
Now Theorem \ref{classification if gamma = id} finishes the proof.
\end{proof} 

%%%%%%%%%%%%%
\section{Appendix A: About norms}
%%%%%%%%%%%%%%%%%%%%%
\subsection{Introduction}  In this paper we will exclusively use weighted Wiener norms,
which occur in many places. For the purposes of this study, we mainly refer to 
the appendix of \cite{GoWa}.
The statements therein primarily focus on scalar algebras, as well as on 
untwisted matrix algebras and untwisted  matrix Lie algebras,
which naturally form Banach (resp. Banach Lie) algebras.

The algebras and Lie algebras considered in this paper, however, are generally twisted.
Nevertheless,  due to the fact that the twisted subalgebras of these Banach algebras are closed and have a closed complement in the untwisted ones, the twisted algebras also inherit the structure of Banach algebras.

The norms for the matrix algebras, while expressed simply in terms of coefficients in the untwisted setting, exhibit different explicit expressions in the twisted case due to coinciding or related coefficients.
In the formulas below, we will omit explicit expressions and directly apply the formulas/relations provided for the untwisted case.

%%%%%%%%%%%%%%%%%%%%%%%%%%%%%%%
\subsection {Scalar weighted Wiener norms and algebras}\label{sec:6.7}

\vspace{3mm}

A function $\omega : \mathbb{Z} \rightarrow (0,\infty)$ is called a \emph{weight}, if 
$\omega (k + l) \leq \omega (k) \omega(l), \hspace{2mm} k,l \in \mathbb{Z}$.
Note that this implies $1 \leq \omega(0)$.

A weight $\omega$ is called \emph{symmetric}, if 
$$ \omega(-k) = \omega (k), \hspace{2mm} k \in \mathbb{Z}.$$
In this case we have $1 \leq \omega(0)^\frac{1}{2} \leq \omega (k)$ for all $k \in \mathbb{Z}$.

For a symmetric weight $\omega$ we define
$$ A_\omega = \{f: S^1 \rightarrow \C, \lambda \mapsto \sum_{ n \in \mathbb{Z}} a_n \lambda^n, ||f||_\omega < \infty\},$$
where 
$$||f||_\omega = \sum_{n \in \mathbb{Z}} |a_n| \cdot \omega (n).$$

It is sometimes useful to use
$$ A_\omega^+ = \{f: S^1 \rightarrow \C, \lambda \mapsto \sum_{ n \in \mathbb{Z}, n > 0} a_n \lambda^n,
 ||f||_\omega < \infty \}$$
 and analogously $A_\omega^-,$ since then we can write 
 $$ A_\omega = A_\omega^+ \oplus \mathbb{C}\cdot 1 \oplus  A_\omega^-,$$
 which shows that $A_\omega$ is ``decomposing" in the sense of 
 Gohberg \cite{Gohberg}.

Two classes of symmetric weights are

\begin{enumerate}
\item {\bf Polynomial weights}
$$ \omega_a (k) = (1+  |k| )^a, \text{ for } a \geq 0,$$
\item {\bf Gevrey class weights} 
$$ \omega_{t,s}(k) = \exp(t \cdot |k|^s), \text{ for }   t > 0,\,  0 < s < 1.$$
\end{enumerate}

Both weights satisfy $\omega (0) = 1.$
One easily verifies
\begin{theorem}[cf. Appendix of \cite{GoWa} or Section 1 of \cite{DGS}] \label{Thm:norm-vector} 
For any of the above two  basic weights   (which satisfy $\omega(0) = 1$),  we have  
\begin{enumerate}
\item $|| \cdot ||_\omega$  is a norm.
\item $||fg||_\omega \leq ||f||_\omega \cdot ||g||_\omega$ and $||1||_\omega  = 1$.
\item $A_\omega$ is a unital commutative Banach algebra with norm $|| \cdot ||_\omega$ . 
\item If $*$ denotes complex conjugation, then $||f^*||_\omega  = ||f||_\omega $, whence 
$A_\omega$ is a commutative Banach *-algebra.
\item Each of the two types of weights is of {\bf non-analytic type}, meaning
$$ \lim_{n \rightarrow \infty} \omega(n)^{\frac{1}{n}}= 1.$$
\end{enumerate}
\end{theorem}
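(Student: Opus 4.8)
The plan is to treat the five assertions in turn, noting that only the completeness in (3) carries genuine analytic content, while the remaining items follow directly from the defining inequality $\omega(k+l)\le\omega(k)\omega(l)$, the symmetry $\omega(-k)=\omega(k)$, and the normalization $\omega(0)=1$. First I would dispose of (1): writing $f=\sum_n a_n\lambda^n$, positivity and definiteness of $\|f\|_\omega=\sum_n|a_n|\omega(n)$ hold because each $\omega(n)>0$, homogeneity is immediate, and the triangle inequality reduces to $|a_n+b_n|\le|a_n|+|b_n|$ summed against the positive weights. For (2) I would compute the Cauchy product: if $h=fg$ then $h=\sum_n c_n\lambda^n$ with $c_n=\sum_k a_k b_{n-k}$, so that
\[
\|fg\|_\omega=\sum_n\Big|\sum_k a_kb_{n-k}\Big|\,\omega(n)\le\sum_n\sum_k|a_k|\,|b_{n-k}|\,\omega(n).
\]
Applying $\omega(n)=\omega\big(k+(n-k)\big)\le\omega(k)\omega(n-k)$ and reindexing by $m=n-k$ factors the double sum into $\|f\|_\omega\|g\|_\omega$; since $1$ has the single coefficient $a_0=1$, the identity $\|1\|_\omega=\omega(0)=1$ is clear.

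For (3) the only substantive step is completeness. I would first record that symmetry together with $\omega(0)=1$ forces $\omega(k)\ge\omega(0)^{1/2}=1$ via $\omega(0)\le\omega(k)\omega(-k)=\omega(k)^2$; hence $\sum_n|a_n|\le\|f\|_\omega<\infty$, so each element of $A_\omega$ is represented by an absolutely (hence uniformly) convergent Fourier series and genuinely defines a continuous function on $S^1$. The map $\Phi\colon A_\omega\to\ell^1(\mathbb Z)$, $f\mapsto(a_n\omega(n))_{n\in\mathbb Z}$, is then a linear bijection (each $\omega(n)\ne0$) that is isometric by construction, so completeness of $A_\omega$ is inherited from that of $\ell^1(\mathbb Z)$. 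Commutativity and associativity come from the corresponding properties of pointwise multiplication of functions (equivalently, convolution of coefficient sequences), and unitality from the constant function $1$; combined with (2) this makes $A_\omega$ a unital commutative Banach algebra.

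Assertion (4) is where the symmetry of $\omega$ is indispensable: with $f^*(\lambda)=\overline{f(\lambda)}$ on $S^1$ one has $f^*=\sum_n\overline{a_{-n}}\,\lambda^n$ because $\bar\lambda=\lambda^{-1}$, whence $\|f^*\|_\omega=\sum_n|a_{-n}|\,\omega(n)=\sum_m|a_m|\,\omega(-m)=\|f\|_\omega$, the last equality using $\omega(-m)=\omega(m)$; that $*$ is an involutive antilinear algebra homomorphism is then routine. Finally (5) is a direct computation for each weight: for the polynomial weight $\omega_a(n)^{1/n}=\exp\!\big(\tfrac{a}{n}\ln(1+n)\big)\to1$, and for the Gevrey weight $\omega_{t,s}(n)^{1/n}=\exp(t\,n^{s-1})\to1$ since $s-1<0$ (for a general weight one could instead invoke Fekete's subadditive lemma applied to $\log\omega$, but here the explicit forms make this unnecessary). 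I expect no real obstacle; the one point meriting care is the completeness argument in (3), where one must verify that the weighted $\ell^1$ identification is bijective and \emph{isometric} rather than merely bounded, and must confirm absolute convergence of the series so that the algebra genuinely consists of functions on $S^1$.
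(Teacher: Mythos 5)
Your proof is correct and is exactly the standard verification that the paper itself omits (it says ``One easily verifies'' and defers to the Appendix of \cite{GoWa} and Section 1 of \cite{DGS}): submultiplicativity via the Cauchy product and $\omega(k+l)\le\omega(k)\omega(l)$, completeness via the isometric identification with $\ell^1(\mathbb Z)$ (using $\omega\ge 1$ to guarantee absolute convergence of the Fourier series), the $*$-isometry from the symmetry of $\omega$, and the direct limit computations for the two explicit weights. No gaps; you correctly isolate completeness in (3) as the only step with real content.
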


From here on, we will only consider a norm with the above properties. The specific form of the norm we choose is not relevant for our geometric results.

Note that Proposition A3 in \cite{GoWa} is incorrect; however, it is not utilized in our present work.

%%%%%%%%%%%%%%%%%%%%%%%%%%%%%%%%%%%%%
\subsection{Weighted Wiener Banach matrix $*$-algebras}\label{sec:6.8}

Let $\mathrm{Mat}(n, A_\omega)$ denote the algebra of $n \times n$-matrices with entries in $A_\omega$, where 
 $A_\omega$ is equipped with a norm satisfying the properties listed in Theorem \ref{Thm:norm-vector}.
  
 For an $n$-tuple $\vec{v}$ of elements of  $A_\omega$, i.e. an element  in $A_\omega^n$,
 we set
$$ ||\vec{v}||_\omega  =  \sum_{1 \leq k \leq n} ||v_k||_\omega.$$
This norm defines the structure of a Banach space on $A_\omega^n$.

For $T \in \mathrm{Mat}(n, A_\omega)$,  the induced operator norm is given by \begin{equation}\label{eq: T-norm}
 || T||_\omega = \max_{1 \leq j \leq n} ( \sum_{i = 1}^n ||T_{ij}||_\omega),
 \end{equation}
which defines the structure of a Banach space on $\mathrm{Mat}(n, A_\omega).$

Moreover, we have
\begin{theorem} Using the norms defined above for vectors $\vec{v}$ and matrices $T, B \in \mathrm{Mat}(n, A_\omega)$
yields the following statements:
\begin{enumerate}
\item  $ || T \vec{v}||_\omega \leq  || T||_\omega  || \vec{v}||_\omega$,
\item  $|| TB||_\omega \leq  || T||_\omega  || B||_\omega$,
\item  $|| Id ||_\omega = 1$,
\item $ ||T^*||_\omega = ||T||_\omega,$ where $^*$ denotes complex conjugation.
\item $\mathrm{Mat}(n, A_\omega)$  is a unital 
Banach $*$-algebra relative to the weighted Wiener norm $||\cdot ||_{\omega|}$.
\end{enumerate}
\end{theorem}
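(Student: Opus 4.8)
The plan is to recognize $||\cdot||_\omega$ on $\mathrm{Mat}(n,A_\omega)$ as the operator norm induced by the vector norm $||\cdot||_\omega$ on the Banach space $A_\omega^n$; once this identification is in place, statements (1)--(3) become instances of standard facts about operator norms, and only (4) and (5) require separate (short) arguments. To verify the identification, write $(T\vec v)_i = \sum_j T_{ij}v_j$ and combine the triangle inequality on $A_\omega$ with the submultiplicativity $||fg||_\omega \le ||f||_\omega ||g||_\omega$ of the scalar norm (part (2) of Theorem \ref{Thm:norm-vector}) to get $||T\vec v||_\omega \le \sum_j ||v_j||_\omega \big(\sum_i ||T_{ij}||_\omega\big) \le \big(\max_j \sum_i ||T_{ij}||_\omega\big)\,||\vec v||_\omega$. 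This is exactly (1), and it shows that the operator norm of $T$ is at most $\max_j\sum_i ||T_{ij}||_\omega$. The reverse inequality follows by evaluating on the unit vectors $e_j$ (all entries $0$ except a single $1\in A_\omega$, with $||1||_\omega=1$ by part (2) of Theorem \ref{Thm:norm-vector}), since $||Te_j||_\omega = \sum_i ||T_{ij}||_\omega$. Hence $||T||_\omega$ is the operator norm, whereupon (2) is automatic because operator norms are submultiplicative, and (3) holds because $\mathrm{Id}$ acts as the identity operator (directly, $\max_j\sum_i||\delta_{ij}\cdot 1||_\omega = ||1||_\omega = 1$).

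For (4), complex conjugation acts coefficient-wise on each entry, and by part (4) of Theorem \ref{Thm:norm-vector} (which is where the \emph{symmetry} of the weight $\omega$ enters) it leaves the weighted Wiener norm of every entry $T_{ij}$ invariant. Consequently each of the column sums $\sum_i ||T_{ij}||_\omega$ is unchanged, and therefore so is their maximum, giving $||T^*||_\omega = ||T||_\omega$.

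Finally, for (5) I would assemble the pieces. The underlying algebraic structure (an associative unital ring whose multiplication is $A_\omega$-bilinear) is immediate, and (1)--(3) exhibit $||\cdot||_\omega$ as a submultiplicative norm with $||\mathrm{Id}||_\omega = 1$. Completeness is obtained by comparison with the $\ell^1$-norm on the $n^2$ entries, using the elementary equivalence $\tfrac1n\sum_{i,j}||T_{ij}||_\omega \le ||T||_\omega \le \sum_{i,j}||T_{ij}||_\omega$ together with the completeness of $A_\omega$ (part (3) of Theorem \ref{Thm:norm-vector}): the finite direct sum $A_\omega^{n^2}$ is complete, and completeness is preserved under equivalent norms. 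Combining this Banach-algebra structure with the norm-invariant involution of (4) yields the unital Banach $*$-algebra; the twisted version is a closed $*$-subalgebra with closed complement in the untwisted one, as noted in the introduction to this appendix, and so inherits the same structure. The steps are all routine, so the only point demanding genuine care is the bookkeeping for the involution, namely checking that it is simultaneously compatible with the norm and anti-multiplicative, $ (TB)^* = B^* T^*$, which is exactly where the commutativity of $A_\omega$ and the symmetry of $\omega$ are used.
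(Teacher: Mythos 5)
The paper offers no proof of this theorem --- it is stated as a routine verification, with the general reference to the appendix of \cite{GoWa} --- so there is no ``paper's route'' to compare against. Your verification of (1)--(3) is the standard one and is correct: the identification of $\|T\|_\omega=\max_j\sum_i\|T_{ij}\|_\omega$ as the operator norm induced by the $\ell^1$-type vector norm, via the triangle inequality plus the scalar submultiplicativity $\|fg\|_\omega\le\|f\|_\omega\|g\|_\omega$, and the reverse inequality by evaluating on the vectors $e_j$ with $\|1\|_\omega=1$, is exactly how one should do it; (2) and (3) then follow. The completeness argument in (5) via the equivalence $\tfrac1n\sum_{i,j}\|T_{ij}\|_\omega\le\|T\|_\omega\le\sum_{i,j}\|T_{ij}\|_\omega$ and completeness of $A_\omega^{n^2}$ is also fine.

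The one genuine problem is your treatment of the involution in (5). If $T^*$ is entrywise complex conjugation --- which is what the paper's wording says and what your proof of (4) uses (each column sum $\sum_i\|T_{ij}\|_\omega$ is preserved) --- then $(TB)^*_{ij}=\overline{\sum_k T_{ik}B_{kj}}=\sum_k\overline{T_{ik}}\,\overline{B_{kj}}=(T^*B^*)_{ij}$, so the map is a conjugate-linear \emph{automorphism}, not an anti-automorphism: $(TB)^*=T^*B^*$, and your claimed identity $(TB)^*=B^*T^*$ is false in general. Commutativity of $A_\omega$ does not rescue this, because the obstruction is the noncommutativity of matrix multiplication, not of the entries. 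To obtain a genuine $*$-involution one must take the conjugate transpose $T^*=\overline{T}^{\,t}$ (then $(TB)^*=B^*T^*$ does follow from commutativity of $A_\omega$), but for that choice the isometry in (4) becomes the assertion that the maximal row sum equals the maximal column sum, which fails for the operator norm as defined (consider a matrix whose only nonzero entries lie in one row). So with the norm \eqref{eq: T-norm} one cannot have both (4) and anti-multiplicativity simultaneously; one fix is to pass to the equivalent norm $\sum_{i,j}\|T_{ij}\|_\omega$, which is submultiplicative, complete, and isometric under conjugate transpose. This imprecision is arguably inherited from the paper's own loose statement (``$^*$ denotes complex conjugation''), but your proof commits to a specific false identity, so the last paragraph needs to be repaired along these lines.
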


%%%%%%%%%%%%%%%%%%%%
\subsection{Banach Lie groups modeled on the weighted Wiener algebra}
Here we refer to \cite{Lang} for  the definition of a Lie subgroup in a Banach Lie group, Lie subalgebra, etc. 
 
In this paper, exclusively subgroups of $SL(n, A_\omega)$ occur. We put

$$ SL(n, A_\omega) = \{g \in \mathrm{Mat}(n, A_\omega): \det g = 1 \}.$$
Clearly, $SL(n, A_\omega)$ is closed in $\mathrm{Mat}(n, A_\omega)$  and thus is a Banach Lie group
 with Lie algebra
$$ sl(n, A_\omega) = \{R \in \mathrm{Mat}(n, A_\omega): \tr R = 0 \}.$$

Moreover, the closed Banach Lie algebra 
$$\{ q \mathrm{Id} : q \in A_\omega \}$$
 is complementary to $ sl(n, A_\omega) $ in $\mathrm{Mat}(n, A_\omega)$.
Thus, $ SL(n, A_\omega)$ is a Banach Lie subgroup of $ GL(n, A_\omega)$.

In addition to the group $SL(n, \C)$ also (real) subgroups $G$, like $SU(3)$, of $SL(n, \C)$ occur.
 To include loop groups taking value in a subgroup $G$ of $SL(n, \C)$
  one introduces the notation:
  $$\Lambda G := \{ g: S^1\rightarrow G  : ||g||_{\omega} < \infty \}$$
  and analogously  for the complex subgroup  
   $G^\C$
  we write
  $$ \Lambda G^\C = \{ g: S^1\rightarrow G^{\mathbb{C}} : \, ||g||_{\omega} < \infty  
 \hspace{2mm} \mbox{and} \hspace{2mm}  \det g(\lambda) = 1\}. $$
 
 Actually, many more different types of loop
 groups are used.
 We will write out the case of twisted loop groups in the following section.
 The untwisted objects are defined in an obvious way analogously.

 %%%%%%%%%%%%%%%%%%
 \section*{Appendix B: Twisted Lie groups and Lie algebras}
 
 In this paper we will consider exclusively \emph{twisted}  loop groups contained in $\Lambda SL(3, \C)$ and 
 their  corresponding loop algebras.
 Moreover, only one specific outer involution $\sigma$ of $ SL(3, A_\omega)$ and the twisting automorphism 
 $\hat{\sigma}$ defined in section \ref{sec:2} 
will occur in this paper (unless specifically something different will be stated explicitly). 

As a  matter of fact, in this paper we consider the simply connected, maximal  real subgroup 
$G = SU(3)$ of $SL(3, \C).$ In particular, we have $G^\C = SL(3, \C).$

We will write down below the definitions of the various loop subgroups used in this paper.

Note that $\hat{\sigma}$ satisfies $\hat{\sigma}(gh)=\hat{\sigma}(g)\hat{\sigma}(h)$. Moreover, $\hat{\sigma}$ can be induced on $\Lambda \mathfrak{g}^{\C}$ with  the same order.
Thus $\Lambda \mathfrak{g}^{\C}$ is the direct sum of the finite order (closed) eigenspaces of $\hat{\sigma}$. 
By $\mathcal{D}$ we
denote the interior of the unit disk  as in section \ref{sec:2} and by $\E=\{\lambda\in S^2 | |\lambda|>1\}$  the exterior of the unit disk. 
Set 
\begin{eqnarray*}
 \Lambda G_{\sigma}^{\mathbb C}&=&\{ g: S^1\rightarrow G^{\C} | \, ||g||_{\omega}<\infty, \, (\hat{\sigma} g)(\lambda)=g(\lambda) \},\\
\Lambda^{+} G^{\mathbb C}_{\sigma}&=&\{g\in \Lambda G^{\mathbb{C}}_{\sigma} | \, g \text{ extends holomorphically to } \mathcal{D}, g(0)\in K^{\mathbb C}\},\\
\Lambda^{+}_B G_{\sigma}^{\mathbb C}&=&\{ g\in \Lambda^{+} G_{\sigma}^{\mathbb C} |\, 
g(0)\in B \},\\
\Lambda^{-} G^{\mathbb C}_{\sigma}&=&\{ g\in \Lambda G^{\mathbb{C}}_{\sigma} | \, g \text{ extends holomorphically to } \mathbb{E}, g(\infty)\in K^{\mathbb C}\},\\
\Lambda^{-}_{*} G^{\mathbb C}_{\sigma}&=&\{ g\in \Lambda^{-} G^{\mathbb C}_{\sigma} | \, g(\infty)=e\},
\end{eqnarray*}
where $K^{\mathbb C} = KB$ is the unique Iwasawa decomposition of $K^{\mathbb C}$ relative to $K$.
(Recall that $K \cong SU(2, \C)$ is compact and  $K^\C$ is a simply connected complex Lie group.)

In this paper, we will always equip $\Lambda G_{\sigma}^{\mathbb C}$ with a weighted Wiener norm.
Unless anything else is stated explicitly, the Wiener topology of absolute convergence of the Fourier coefficients can be chosen.
Actually, due to the fact that the eigenspaces of $\hat{\sigma}$ are closed subspaces of $\Lambda\mathfrak{g}^{\C}$, we see that $\Lambda G_{\sigma}^{\mathbb C}$ is a Lie subgroup of $\Lambda G^{\C}$
(see sections \ref{sec:6.7} and \ref{sec:6.8}). 
Then the group
$\Lambda G^{\mathbb C}_{\sigma}$ becomes a complex Banach Lie group  with Lie algebra
\begin{equation*}
 \Lambda \mathfrak{g}^{\mathbb{C}}_{\sigma}:=\{\xi \in   \Lambda \mathfrak{g}^{\mathbb{C}}|
  (\hat{\sigma}\xi)(\lambda))=\xi(\lambda)\}
\end{equation*}

If $\xi \in \Lambda \mathfrak{g}^{\mathbb C}_{\sigma}$, its Fourier decomposition is
$$\xi=\sum_{l \in \mathbb{Z}} \lambda^l\xi_l, \quad \xi_l \in \mathfrak{g}_l$$
and the Lie subalgebras of $\Lambda \mathfrak{g}^{\mathbb C}_{\sigma}$ corresponding to the subgroups
$\Lambda G_{\sigma}$, $\Lambda^{+} G^{\mathbb C}_{\sigma}$ and $\Lambda^{-} G^{\mathbb C}_{\sigma}$ are
\begin{eqnarray*}
\Lambda \mathfrak{g}_{\sigma}&=&\Lambda \mathfrak{g}^{\mathbb C}_{\sigma}\cap \mathfrak{g},
\\
\Lambda^{+}\mathfrak{g}^{\mathbb C}_{\sigma}&=&\{\xi\in \Lambda \mathfrak{g}^{\mathbb C}_{\sigma} |
\xi_l=0 \text{ for } l<0, \xi_0\in \mathfrak{k}\},\\
\Lambda^{-}\mathfrak{g}^{\mathbb C}_{\sigma}&=&\{\xi\in \Lambda \mathfrak{g}^{\mathbb C}_{\sigma} |
\xi_l=0 \text{ for } l>0, \xi_0\in \mathfrak{k}\}.
\end{eqnarray*}

Similar conditions hold for the remaining two Lie algebras.

We finish this subsection by quoting the two splitting theorems which are of 
crucial importance for the application of the loop group method.

The first of these theorems is due to Birkhoff, who invented it for the loop group of $GL(n,\C)$ in an attempt to solve Hilbert's 21'{st} problem. 

\begin{theorem}[Birkhoff Decomposition]
Let $G$ be a compact real Lie group. Then the multiplication $\Lambda^{-}_{*} G^{\mathbb C}_{\sigma} \times \Lambda^{+} G^{\mathbb C}_{\sigma} \rightarrow \Lambda G^{\mathbb C}_{\sigma}$ is a 
complex analytic diffeomorphism onto the open, connected  and dense subset 
$\Lambda^{-}_{*} G^{\mathbb C}_{\sigma}\cdot \Lambda^{+} G^{\mathbb C}_{\sigma}$ of  $\Lambda G^{\mathbb C}_{\sigma},$
called the big (left Birkhoff) cell. 

In particular, if $g\in \Lambda G^{\mathbb C}_{\sigma}$ is contained in the big cell,  then
$g$ has a unique decomposition $g=g_{-}g_{+}$, where $g_{-}\in \Lambda_{*}^{-} G_{\sigma}^{\mathbb C}$ 
and $g_{+}\in \Lambda^{+}G^{\mathbb C}_{\sigma}$.
\end{theorem}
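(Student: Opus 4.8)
The plan is to follow the classical argument of Pressley--Segal for loop groups, adapted to the twisted Wiener setting, organizing everything around one algebraic splitting and one analytic fact. The starting point is the topological direct sum decomposition
\[
\Lambda \mathfrak{g}^{\C}_{\sigma} = \Lambda^{-}_{*}\mathfrak{g}^{\C}_{\sigma} \oplus \Lambda^{+}\mathfrak{g}^{\C}_{\sigma},
\]
obtained by splitting a twisted Laurent series $\xi = \sum_l \lambda^l \xi_l$ into its strictly negative part and its non-negative part. Since the two associated projections act on Fourier coefficients by multiplication with $0$ or $1$, they have operator norm at most $1$ for the weighted Wiener norm (the weight enters only as a fixed factor $\omega(l)$ on each coefficient), so this is genuinely a topological direct sum of closed subspaces and not merely an algebraic one. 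This linear model is what the whole factorization rests on.

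First I would prove uniqueness, which simultaneously gives injectivity of the multiplication map $\mu$. If $g_-^{(1)} g_+^{(1)} = g_-^{(2)} g_+^{(2)}$, then $(g_-^{(2)})^{-1} g_-^{(1)} = g_+^{(2)}(g_+^{(1)})^{-1}$; the left-hand side extends holomorphically to $\E$ with value $e$ at $\infty$, the right-hand side extends holomorphically to $\mathcal{D}$, so together they define a holomorphic $SL(3,\C)$-valued map on the whole Riemann sphere. Each matrix entry is a bounded entire function, hence constant by Liouville, and the normalization at $\infty$ forces the common value to be $e$. Thus both factors agree and $\mu$ is injective.

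Next I would show that $\mu$ is a local biholomorphism. It is complex analytic because multiplication and inversion in $\Lambda G^{\C}_{\sigma}$ are, and its differential at the identity $(e,e)$ is exactly the addition map $(X,Y)\mapsto X+Y$ of the Lie-algebra splitting above, hence a Banach-space isomorphism; the holomorphic inverse function theorem then makes $\mu$ a local biholomorphism near $(e,e)$. Using left translation by $g_-$ and right translation by $g_+$, which are biholomorphisms preserving the factorization type, one transports this to every point of the domain, so $\mu$ is everywhere a local biholomorphism. Combined with injectivity, this shows the image $\Lambda^{-}_{*}G^{\C}_{\sigma}\cdot \Lambda^{+}G^{\C}_{\sigma}$ is open and that $\mu$ is a biholomorphism onto it. Connectedness is then immediate, since $\Lambda^{-}_{*}G^{\C}_{\sigma}$ and $\Lambda^{+}G^{\C}_{\sigma}$ are each connected (the scaling $\lambda \mapsto t\lambda$, $t\to 0$, preserves the twisting and retracts them onto $\{e\}$ and onto the connected group $K^{\C}$ respectively), and the image is a continuous image of their product.

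The one genuinely hard point is density of the big cell. Here I would identify the factorizable elements with those admitting a canonical Wiener--Hopf (Gohberg--Krein) factorization in the decomposing Banach algebra $\mathrm{Mat}(3, A_\omega)$, equivalently with the index-zero stratum in the Grassmannian model, and invoke the approximation theory for such factorizations: any loop can be perturbed within $\Lambda G^{\C}_{\sigma}$ to one whose associated Toeplitz/projection operator is invertible, which is precisely the factorizability condition. Checking that this perturbation can be carried out inside the twisted, determinant-one, Wiener-normed subgroup, rather than in the ambient untwisted $GL$ algebra where the classical theorems are stated, is the main obstacle, and is where the compactness of $G$ together with the closedness and complementedness of the twisted subalgebra noted in the appendix do the real work. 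Once density is established, all assertions of the theorem follow.
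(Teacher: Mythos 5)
The paper does not prove this theorem at all: it is quoted in Appendix~B as a classical result (attributed to Birkhoff, with the standard references \cite{PS} and \cite{DPW} for the untwisted and twisted settings), so there is no in-paper argument to compare yours against. Judged on its own terms, your reconstruction follows the standard Pressley--Segal/Gohberg route, and the parts you actually carry out are correct: the Liouville (or, even more simply, Fourier-coefficient comparison) argument for uniqueness and injectivity; the observation that $d\mu_{(e,e)}(X,Y)=X+Y$ realizes the topological splitting $\Lambda\mathfrak{g}^{\mathbb C}_{\sigma}=\Lambda^{-}_{*}\mathfrak{g}^{\mathbb C}_{\sigma}\oplus\Lambda^{+}\mathfrak{g}^{\mathbb C}_{\sigma}$ (note that in the twisted case $\xi_0\in\mathfrak{g}_0=\mathfrak{k}^{\mathbb C}$ is automatic, so the splitting really is all of the loop algebra); the translation trick making $\mu$ a local biholomorphism everywhere, hence a biholomorphism onto an open set; and connectedness via the scaling retractions (for $\Lambda^{-}_{*}G^{\mathbb C}_{\sigma}$ you want $\lambda\mapsto\lambda/t$, $t\to 0$, rather than $\lambda\mapsto t\lambda$, but this is cosmetic).

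The one place where your proposal is a plan rather than a proof is exactly the one assertion with real content: density of the big cell. You correctly identify it as the hard point and point to Gohberg--Krein/Wiener--Hopf factorization and the index-zero stratum of the Grassmannian, but you do not carry out the reduction: one must show that a general twisted, determinant-one loop in the weighted Wiener class can be approximated by loops whose associated Toeplitz operator is invertible, and that this approximation can be performed inside $\Lambda G^{\mathbb C}_{\sigma}$ (e.g.\ by first approximating by trigonometric polynomial loops, which is where non-analyticity of the weight matters, then using the algebraic Birkhoff factorization $g=g_-Dg_+$ with diagonal middle term and perturbing away $D$, and finally checking compatibility with the twisting). As written, density is asserted by appeal to external theory whose applicability in the twisted setting is flagged as ``the main obstacle'' but not resolved. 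Since the paper itself treats the entire theorem as a citation, this is an acceptable level of detail for the present context, but it should be stated as a citation (to \cite{PS}, \cite{DPW}, or \cite{Gohberg}) rather than presented as a complete proof.
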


The second crucial loop group splitting theorem is the following

\begin{theorem}[Iwasawa decomposition]

Let $G$ be a real compact Lie group. Then the multiplication map 
$$\Lambda G_{\sigma}\times \Lambda^{+}_B G_{\sigma}^{\mathbb C} 
\rightarrow \Lambda G_{\sigma}^{\mathbb C}$$
is a real-analytic diffeomeorphism of Banach Lie groups.
\end{theorem}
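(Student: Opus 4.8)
The plan is to prove that the multiplication map
$$\mu : \Lambda G_{\sigma}\times \Lambda^{+}_B G_{\sigma}^{\mathbb C} \longrightarrow \Lambda G_{\sigma}^{\mathbb C}$$
is a real-analytic \emph{bijective local diffeomorphism}; since a bijective local diffeomorphism of Banach manifolds is automatically a diffeomorphism, this suffices. I would organize the argument around three logically independent ingredients: a closed Banach-space splitting of the Lie algebra (giving that $\mu$ is a local diffeomorphism, hence has open image), the triviality of the overlap of the two factors (giving injectivity), and a surjectivity argument (the genuinely hard part, where compactness of $G$ is used).

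First I would establish the infinitesimal Iwasawa splitting
$$\Lambda \mathfrak{g}^{\mathbb C}_{\sigma} = \Lambda \mathfrak{g}_{\sigma}\oplus \Lambda^{+}_{\mathfrak b}\mathfrak{g}^{\mathbb C}_{\sigma}$$
as a topological direct sum of closed subspaces. The construction is mode-by-mode on the Fourier expansion $\xi=\sum_{l}\lambda^l\xi_l$: the finite-dimensional Iwasawa decomposition $\mathfrak{g}^{\mathbb C}=\mathfrak{g}\oplus\mathfrak{b}$ coming from $K^{\mathbb C}=KB$ handles the zero mode, the reality condition defining $\Lambda\mathfrak{g}_{\sigma}$ (the fixed-point condition of the antilinear involution $\hat\tau$) pairs the strictly positive and strictly negative modes, and the resulting projections act coefficient-wise through bounded endomorphisms. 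Because the weight $\omega$ is symmetric, these projections are continuous in the weighted Wiener norm, so the splitting is indeed a Banach direct sum. The derivative $d\mu$ at $(e,e)$ is then the addition map $\Lambda\mathfrak{g}_{\sigma}\oplus\Lambda^{+}_{\mathfrak b}\mathfrak{g}^{\mathbb C}_{\sigma}\to\Lambda\mathfrak{g}^{\mathbb C}_{\sigma}$, a topological isomorphism, and the inverse function theorem for Banach Lie groups (cf. \cite{Lang}) shows $\mu$ is a local analytic diffeomorphism near $(e,e)$. Left translation by elements of $\Lambda G_{\sigma}$ and right translation by elements of $\Lambda^{+}_B G^{\mathbb C}_{\sigma}$ upgrade this to a local diffeomorphism at every point and, in particular, show that the image of $\mu$ is open.

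For injectivity I would show $\Lambda G_{\sigma}\cap\Lambda^{+}_B G^{\mathbb C}_{\sigma}=\{e\}$. An element of the intersection is $G$-valued on $S^1$, extends holomorphically to $\mathcal D$, and lies in $B$ at $0$; the maximum principle together with boundedness forces it to be a constant loop with value in $G\cap B=K\cap B$, and the uniqueness of the Iwasawa decomposition $K^{\mathbb C}=KB$ gives $K\cap B=\{e\}$. Hence if $F_1 B_1=F_2 B_2$ then $F_2^{-1}F_1=B_2 B_1^{-1}$ lies in this intersection and equals $e$, proving $\mu$ injective.

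The main obstacle is surjectivity. Since the image is open and $\Lambda G^{\mathbb C}_{\sigma}$ is connected (which I would deduce from the simple-connectedness of $G^{\mathbb C}=SL(3,\mathbb{C})$), it suffices to prove the image is also closed; openness together with closedness in a connected space then forces the image to be everything. Here compactness of $G$ is essential, and the delicate analytic point is that $\Lambda G_{\sigma}$ is \emph{not} norm-compact, so one cannot simply extract convergent subsequences of unitary factors $F_n$ when $g_n=F_n B_n\to g$. The cleanest route I would take is a two-step reduction: first invoke the Iwasawa decomposition in the continuous loop group category, where existence and uniqueness are classical (Pressley--Segal), to obtain a continuous factorization $g=F B_{+}$ of any $g$ in the Wiener class; then prove the regularity statement that $F$ and $B_{+}$ again lie in the weighted Wiener class. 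The regularity step is the heart of the matter: one recovers the Fourier data of the factors from that of $g$ through the Banach-algebra operations of $A_{\omega}$ and the bounded projections of the splitting above, so that finiteness of $\|g\|_{\omega}$ propagates to $\|F\|_{\omega}$ and $\|B_{+}\|_{\omega}$. Granting this, $\mu$ is a bijective local diffeomorphism and therefore a real-analytic diffeomorphism of Banach Lie groups, as asserted.
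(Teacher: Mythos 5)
The first thing to say is that the paper does not prove this statement at all: immediately after the theorem it records that the result is classical for untwisted loop groups (Pressley--Segal) and was extended to the twisted setting in \cite{DPW}, so there is no in-paper argument to compare yours against. Your outline is the standard skeleton for such a proof, and two of its ingredients are essentially sound: the mode-by-mode Banach splitting $\Lambda\mathfrak{g}^{\mathbb C}_{\sigma}=\Lambda\mathfrak{g}_{\sigma}\oplus\Lambda^{+}_{\mathfrak b}\mathfrak{g}^{\mathbb C}_{\sigma}$ does hold (the symmetry $\omega(-k)=\omega(k)$ is exactly what makes the projection sending $\xi_{l}$ to $\tau(\xi_{-l})$ bounded in the weighted Wiener norm), and the inverse-function-theorem-plus-translation argument correctly yields that $\mu$ is everywhere a local analytic diffeomorphism with open image.

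There are, however, two genuine gaps. First, injectivity: ``the maximum principle together with boundedness forces it to be a constant loop'' is not an argument --- a bounded holomorphic matrix function on $\mathcal D$ with unitary boundary values is not constant merely because it is bounded. One needs, e.g., the following: the columns $v_j$ of $g$ satisfy $\|v_j\|=1$ on $S^1$, hence $\|v_j\|^2\leq 1$ on $\mathcal D$ by subharmonicity, while Hadamard's inequality and $\det g\equiv 1$ give $\prod_j\|v_j\|\geq 1$, so $\|v_j\|^2\equiv 1$; since $\Delta\|v_j\|^2=4\sum_i|v_{ij}'|^2$, all entries are constant, and then $g\in K\cap B=\{e\}$. (Alternatively, glue $g$ on $\mathcal D$ with $(\overline{g(1/\bar\lambda)}^{\,t})^{-1}$ on $\mathbb{E}$ to a holomorphic map of $\mathbb{P}^1$.) Second, and more seriously, surjectivity: you announce the open-plus-closed-in-connected strategy, then switch to ``factorize in the continuous category and bootstrap regularity,'' and carry out neither; the sentence beginning ``Granting this'' concedes precisely the point at issue. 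The propagation of $\|g\|_{\omega}<\infty$ to the factors $F$ and $B_{+}$ is the entire analytic content of the theorem in the weighted Wiener setting --- it is what the Goodman--Wallach ``decomposing algebra'' machinery (and the non-analytic-type condition $\lim_n\omega(n)^{1/n}=1$, which makes $A_\omega$ inverse-closed) is for --- and it does not follow formally from the factors being ``recovered through Banach-algebra operations,'' since the Iwasawa factorization is not given by any convergent series of such operations. As written, the proposal reduces the theorem to an unproved regularity claim at least as hard as the theorem itself; to close it you must either cite \cite{PS}, \cite{DPW} or \cite{GoWa} for that step (as the paper in effect does for the whole statement) or supply the argument.
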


This result is well known for untwisted loop groups (see, e.g. \cite{PS})  and was extended to the twisted setting in  \cite{DPW}.  

\begin{acknow}
This work was done mostly during the first named author’s visits at Tsinghua University and the second named author’s visit at the Technical University of Munich. The authors are grateful to both institutions for their generous support and hospitality. The second named author was supported by National Natural Science Foundation of China (Grant Nos. 11831005, 11961131001 and 11671223). 
\end{acknow}

\noindent {\textbf{Remark}} The work leading to this paper and the contents of \cite{DoMaB1} were  originally planned to be contained in a paper with title ``Some new examples of minimal Lagrangian surfaces in $\mathbb{C}P^2$".
 We apologize to all readers for any inconvenience this may have caused them.

%%%%%%%%%%%%%%%%%%%%%%%%%%%%%%%%%%%%%%%%

\end{document}